\newtheorem{Thm}{Theorem}[section]
\newtheorem{Prop}[Thm]{Proposition}
\newtheorem{Cor}[Thm]{Corollary}
\newtheorem{Lem}[Thm]{Lemma}
\theoremstyle{definition}
\newtheorem*{definition}{Definition}
\newtheorem*{remark}{Remark}
\numberwithin{equation}{section}
\begin{document}
\renewcommand{\d}[0]{\text{\tiny{$\bullet$}}}
\newcommand{\Coim}{\mathrm{Coim}}
\newcommand{\Z}[0]{\mathbb{Z}}
\newcommand{\Q}[0]{\mathbb{Q}}
\newcommand{\F}[0]{\mathbb{F}}
\newcommand{\N}[0]{\mathbb{N}}
\renewcommand{\O}[0]{\mathcal{O}}
\newcommand{\m}[0]{\mathrm{m}}
\newcommand{\Tr}{\mathrm{Tr}}
\newcommand{\Hom}[0]{\mathrm{Hom}}
\newcommand{\Gal}[0]{\mathrm{Gal}}
\newcommand{\Res}[0]{\mathrm{Res}}
\newcommand{\id}{\mathrm{id}}
\newcommand{\cl}{\mathrm{cl}}
\newcommand{\mult}{\mathrm{mult}}
\newcommand{\adm}{\mathrm{adm}}
\newcommand{\tr}{\mathrm{tr}}
\newcommand{\pr}{\mathrm{pr}}
\newcommand{\Ker}{\mathrm{Ker}}
\newcommand{\ab}{\mathrm{ab}}
\newcommand{\sep}{\mathrm{sep}}
\newcommand{\triv}{\mathrm{triv}}
\newcommand{\alg}{\mathrm{alg}}
\newcommand{\ur}{\mathrm{ur}}
\newcommand{\Coker}{\mathrm{Coker}}
\newcommand{\Aut}{\mathrm{Aut}}
\newcommand{\Ext}{\mathrm{Ext}}
\newcommand{\Iso}{\mathrm{Iso}}
\newcommand{\M}{\mathcal{M}}
\newcommand{\GL}{\mathrm{GL}}
\newcommand{\Fil}{\mathrm{Fil}}
\newcommand{\an}{\mathrm{an}}
\renewcommand{\c}{\mathcal }
\newcommand{\W}{\mathcal W}
\newcommand{\R}{\mathcal R}
\newcommand{\crys}{\mathrm{crys}}
\newcommand{\st}{\mathrm{st}}
\newcommand{\CM}{\mathrm{CM\Gamma }}
\newcommand{\CV}{\mathcal{C}\mathcal{V}}
\newcommand{\G}{\mathrm{G}}
\newcommand{\Map}{\mathrm{Map}}
\newcommand{\Sym}{\mathrm{Sym}}
\newcommand{\Spec}{\mathrm{Spec}}
\newcommand{\Gr}{\mathrm{Gr}}
\newcommand{\I}{\mathrm{Im}}
\newcommand{\Frac}{\mathrm{Frac}}
\newcommand{\To}{\longrightarrow}
\newcommand{\MF}{\mathrm{MF}}
\newcommand{\Aug}{\mathrm{Aug}}
\newcommand{\wt}{\widetilde}
\newcommand{\op}{\mathrm}
\newcommand{\Ad}{\op{Ad}}
\newcommand{\ad}{\op{ad}}
\newcommand{\into}[0]{\rightarrowtail}
\newcommand{\onto}[0]{\twoheadrightarrow}
\renewcommand{\iff}[0]{\Longleftrightarrow}
\newcommand{\wh}[0]{\widehat}
\newcommand{\fr}[0]{\frak}

\title[$p$-extensions of higher local fields]
{Galois groups of $p$-extensions of higher local 
fields}
\author{Victor Abrashkin}
\address{Department of Mathematical Sciences, 
Durham University, Science Laboratories, 
Lower Mountjoy, Durham DH1 3LE, United Kingdom \ \&\ Steklov 
Institute, Gubkina str. 8, 119991, Moscow, Russia
}
\email{victor.abrashkin@durham.ac.uk}
%\date 
\keywords{Local field, Galois group}
\subjclass[2010]{11S15, 11S20}

\begin{abstract} 
Suppose $\mathcal K$ is $N$-dimensional local field of characteristic $p>2$, 
$\mathcal G_{<p}$ is the maximal 
quotient of $\mathcal G==\mathop{Gal}(\mathcal K_{sep}/\mathcal K)$ 
of period $p$ and nilpotent class $<p$ 
and $\mathcal K_{<p}\subset \mathcal K_{sep}$ is such that 
$\mathop{Gal}(\mathcal K_{<p}/\mathcal K)=\mathcal G_{<p}$. 
We use nilpotent Artin-Schreier theory 
to identify $\mathcal G_{<p}$ with 
the group $G(\mathcal L)$ obtained from a profinite 
Lie $\mathbb F _p$-algebra $\mathcal L$ 
via the Campbell-Hausdorff composition law. The canonical 
$\mathcal P$-topology 
on $\mathcal K$ is used to define a dense Lie subalgebra 
$\mathcal L^{\mathcal P}$ in $\mathcal L$. The 
algebra $\mathcal L^{\mathcal P}$ can be provided with a 
system of $\mathcal P$-topological 
generators and its $\mathcal P$-open subalgebras correspond to all 
$N$-dimensional extensions of $\mathcal K$ in $\mathcal K_{<p}$. 
These results are applied to higher local fields $K$ of characteristic 0 
containing nontrivial  $p$-th root of unity. 
If $\Gamma =\mathop{Gal}(K_{alg}/K)$ we introduce similarly the quotient  
$\Gamma _{<p}=G(L)$, a dense $\mathbb F _p$-Lie 
algebra $L^{\mathcal P}\subset L$, 
and describe the structure of $L^{\mathcal P}$ in terms 
of generators and relations. The general result is illustrated by 
explicit presentation of $\Gamma _{<p}$ modulo subgroup of third commutators. 
\end{abstract}
\maketitle

\section*{Introduction} \label{S0} 

Let $p>2$ be a fixed prime number.

\subsection{Higher local fields} \label{S0.1} 
The concept of higher local field $K$ of dimension $N\geqslant 1$ was introduced 
as an essential ingredient 
of the theory of higher adeles in the study of 
arithmetic properties of algebraic varieties. 
In dimension 0 we just require that $K$ is finite of characteristic $p$. 
If $N\geqslant 1$ then 
$K$ is a complete discrete valuation field with the residue 
field which is isomorphic to some 
$(N-1)$-dimensional local field of {\sc characteristic} $p$. 
(In this paper we work with fields, which have most 
interesting arithmetic properties.)This residue field 
will be called the 
first residue field $K^{(1)}$ of $K$. Similarly, we 
obtain next residue fields,  
the last (or $N$-th) residue field is necessarily finite and will be always 
denoted by $k\simeq\F _{p^{N_0}}$. For example, 1-dimensional fields appear as 
either finite extensions of $\Q _p$ or fields of 
formal Laurent series in one variable with coefficients in a finite field $k$. 
Basics of the theory of such fields including highly important concept of 
special topology 
($\c P$-topology) together with 
classification results 
 can be found in 
\cite{Zh1, Zh2}, cf.\,also Sect.\,\ref{S2} below.

One of first considerable achievements of the theory of  
higher local fields was the 
construction of higher dimensional generalization 
of local class field theory, cf. \cite{Ka1, Ka2, Ka3, Pa1, Pa2, Pa3} 
and (for explicit aspects of the theory) 
\cite{Vo, AJ}. 
In this setting abelian extensions of $N$-dimensional fields are described 
(in a functorial way) in terms 
of the appropriate Milnor $K_N$-groups. 
The group $\Gamma _K=\op{Gal}(K_{sep}/K)$ is soluble and its 
most interesting part appears as  
the Galois group $\Gamma _K(p)$ of the maximal $p$-extension of $K$. 
Its structure (as well as of any other $p$-group) 
can be described in terms of generators and relations.  
(Notice that in higher dimensions there are substantial problems with a choice of 
a set of generators, cf. below.) 
Any minimal (topological) system of generators of $\Gamma _K(p)$ 
comes from the lifts 
of any topological $\F _p$-basis of 
$\Gamma _K^{ab}/(\Gamma _K^{ab})^p$. 

\subsection{Review of 1-dimensional case} \label{S0.2}
Suppose $K$ is 1-dimensional. In this case (according to class field theory) 
generators of $\Gamma _K(p)$ 
come from any 
$\F _p$-basis of $K^*/K^{*p}$. This basis can be chosen 
in a natural way if we fix a choice of  
uniformizing element of $K$. For example, 
suppose  $K\simeq \F _p((t))$ and for 
all $a\in\Z ^0(p):=\{a\in\Z _{>0}\ |\ \op{gcd}(a,p)=1\}\bigcup\{0\}$, 
$T_a\in K_{sep}$ are such that $T_a^p-T_a=t^{-a}$. For $b\in\Z ^0(p)$, let 
$\tau _b\in \Gamma _K(p)$ be such that $\tau _b(T_a)-T_a=\delta _{ab}$ 
(the Kronecker symbol). Then $\{\tau _a\ |\ a\in\Z ^0(p)\}$ is 
a minimal system of generators in $\Gamma _K(p)$. 

The structure of $\Gamma _K(p)$ 
was described around 1960's as follows:  
\medskip 

--- if $\op{char}\,K=p$ or 
$\op{char}K=0$ and $K$ contains no non-trivial $p$-th roots of 
unity the group $\Gamma _K(p)$ is profinite free (I.\,Shafarevich); 
\medskip 

--- if $K$ contains a non-trivial $p$-th root of unity then 
$\Gamma _K(p)$ has a minimal system of generators 
containing $[K:\Q _p]+2$  elements 
and one (explicitly known) relation (S.\,Demushkin), cf.\,\cite{Se2, Se3, Ko}. 
(This result leads to a complete description of $\Gamma _K$, cf.\cite{JW}.) 
\medskip 

There is no a straight way to extend the above results to 
higher local fields for the following reasons. 

First, there is no any reasonable choice of generators in $\Gamma _K(p)$. 
To illustrate this suppose 
$N=2$ and $K=\F _p((t_2))((t_1))$ is 2-dimensional 
local field of iterated Laurent formal series. 
The extension $K(T)$ such that $T^p-T=t_1^{-1}(1+t_2+
\dots +t_2^n+\dots )$ 
is not contained in the composit of all $K(T_n)$, where 
$T_n^p-T_n=t_1^{-1}t_2^n$, $n\geqslant 0$. 
As a result, the lifts of elements of the Galois groups of the 
elementary field extensions $K(T_{a_1a_2})$, where 
$T^p_{a_1a_2}-T_{a_1a_2}=t_1^{-a_1}t_2^{-a_2}$, generate only very small piece 
of $\Gamma _K(p)$. 
This also can be seen at the level of class field theory, where the 
abelian extensions of $N$-dimensional local field 
$K$ are described via the 
quotients of the $K$-group $K_N(K)$. This group has no natural system of 
generators if $N>1$, but it contains a dense subgroup $K_N^{top}(K)$;  
this subgroup 
can be described via (topological) generators and can be taken instead of 
$K_N(K)$ when studying finite 
abelian extensions of $K$. 

Another concern is related to the strategy used by Demushkin (in  
the 1-dimensional case).  Let $K$ be a 1-dimensional local field of 
characteristic 0 containing a non-trivial $p$-th root of unity. 
For $s\geqslant 1$, let $C^{(p)}_s$ be the $s$-th term of the 
$p$-central series of $\Gamma _K(p)$. 
Then we can use the interpretation of the abelian quotient 
$\Gamma _K(p)/C_2^{(p)}$ in terms of class field theory. 
Applying formalism of the Galois cohomology we can describe explicitly 
the action of this quotient on $C_2^{(p)}/C_3^{(p)}$:  
this involves calculations with Hilbert symbol. 
As a result we can describe the group theoretic structure of 
$\Gamma _K(p)/C_3^{(p)}$ in terms of a 
specially chosen minimal system of generators and 
one (explicitly given) relation. Luckily, this allows us to recover 
the structure of $\Gamma _K(p)$ by choosing special lifts of generators 
which satisfy the simplest possible lift of that relation.  

The above strategy was applied in the case of local fields 
of dimension 2 in \cite{BV}. 
At that time explicit aspects of higher local class field theory, in particular, 
formulas for the Hilbert symbol, 
were just developed by the second author 
of that paper. The authors used the subgroup $K_2^{top}(K)$ of 
$K_2(K)$ and attempted (following the 
1-dimensional strategy) to find the structure of a dense subgroup in 
$\Gamma _K(p)/C_3^{(p)}$. 
The paper \cite{BV} 
justifies 
that in higher dimensions the Demushkin strategy requires enourmous calculations 
but don't give very much information about $\Gamma _K(p)$. 
The whole approach should be profoundly revisited at least for the following reason. 
When we use class field theory and afterwards apply explicit formulas 
for the Hilbert symbol, we actually move in two opposite directions. 
For this reason, it makes sense to avoid the use of class field theory and to 
proceed within the frames of Kummer (or Artin-Schreier) theory from the very beginning. 
Another important concern is that (abelian)  
class field theory is not sufficient for 
understanding the structure of 
$\Gamma _K(p)$ better than just modulo $C_3^{(p)}$.  

In \cite{Ab1, Ab2} the author initiated the study of 
$\Gamma _K(p)$ modulo the subgroup of $p$-th commutators 
for fields of characteristic $p$ 
via so-called nilpotent Artin-Schreier theory. Later we applied   
this theory together with the Fontaine-Wintenberger field-of-norms functor  
to study the case of 1-dimensional local fields 
$K$ with non-trivial $p$-th roots of unity, cf. 
\cite{Ab12, Ab13, Ab14}. As a  result, we obtained the description of  
$\Gamma _{<p}:=\Gamma _K/\Gamma _K^pC_p$ 
in terms of a specially chosen system of generators 
which satisfy one relation. 
(Here $C_p=C_p(\Gamma _K)$ is the closure of the subgroup of 
$p$-th commutators in $\Gamma _K$.) 
This could be considered as an alternative approach to Demushkin's result 
in the 1-dimensional case. 
Actually, we obtained much more in the above papers: 
our result gives also an 
explicit description of the images of all ramification subgroups 
$\Gamma _K^{(v)}$, $v\geqslant 0$, in $\Gamma _{<p}$, cf. also 
Sect.\,\ref{S0.4}.)

\subsection{Main results} \label{S0.3} 
In this paper we develop a techniques allowing us to 
study the structure of 
$\Gamma _{<p}=\Gamma _K/\Gamma _K^pC_p(\Gamma _K)$ in 
terms of generators and relations. We consider the cases where 
either $K=\c K$ has characteristic $p$ or $K$ has characteristic 0 and 
a non-trivial $p$-th root of unity $\zeta _1\in K$.  
In both cases we introduce 
a dense subgroup $\Gamma _{<p}^{\c P}$ of $\Gamma _{<p}$ with  
new $\c P$-topological structure (related to the  
$\c P$-topology on $K$). The subgroup $\Gamma _{<p}^{\c P}$ still 
allows us to study finite extensions of $K$ in $K_{<p}$ and 
admits  
a description in terms of $\c P$-topological generators and relations.
\medskip 

Describe the content of the  paper in more details. .

a) For an $N$-dimensional local field $\c K$ of characteristic $p$ we apply 
nilpotent Artin-Schreier theory to fix an identification 
$\pi: \c G_{<p}\simeq G(\c L)$. Here $\c G_{<p}=\c G/\c G^pC_p(\c G)$ 
is the maximal quotient of 
$\c G=\op{Gal}(\c K_{sep}/\c K)$ of period $p$ and nilpotent class $<p$, 
$\c L$ is a profinite Lie $\F _p$-algebra and $G(\c L)$ is the 
profinite $p$-group 
obtained from $\c L$ via the 
Campbell-Hausdorff composition law. 
The identification $\pi $ is defined uniquely up to conjugation 
after choosing a 
suitable element $e\in\c L\otimes\c K$. 
\medskip 

b) We use the $\c P$-topology on $\c K$ to define the   
Lie subalgebra $\c L^{\c P}$ in $\c L$. This is 
$\c P$-topological algebra provided with a system of  
$\c P$-topological generators. The algebra 
$\c L^{\c P}$ is dense in $\c L$, i.e. the profinite completion of $\c L^{\c P}$ 
coincides with $\c L$. 
\medskip 

c) With respect to (defined up to conjugation) identifications 
of nilpotent Artin-Schreier theory $\pi :\c G_{<p}\simeq G(\c L)$ 
the algebra $\c L^{\c P}$ gives rise to 
a class of conjugated subgroups 
$\c G_{<p}^{\c P}:=\pi ^{-1}(\c L^{\c P})
%\in\op{cl}^{\c P}\c G_{<p}
$; 
the profinite completions of $\c G^{\c P}_{<p}$ coincide with $\c G_{<p}$. 
\medskip 

d) The subgroups $\c G_{<p}^{\c P}$ have  
$\c P$-topological systems of generators and could be used to study 
$N$-dimensional local field extensions $\c K'$ of $\c K$ in $\c K_{<p}$. 
More precisely, $\c H$ is an open subgroup in $\c G_{<p}$ 
(with respect to the Krull topology) 
iff 
$\c H^{\c P}:=\c G^{\c P}_{<p}\cap\c H$ is a  
$\c P$-open subgroup of finite index in $\c G^{\c P}_{<p}$.  
We have also 
$(\c G_{<p}:\c H)=(\c G^{\c P}_{<p}:\c H^{\c P})=[\c K':\c K]$, 
where $\c K'=\c K_{<p}^{\c H}$. 
In particular, $\c K'/\c K$ is Galois iff $\c H^{\c P}$ is 
normal in $\c G_{<p}^{\c P}$; in this case $\op{Gal}(\c K'/\c K)=
\c G^{\c P}_{<p}/\c H^{\c P}$.
\medskip 

e) Suppose $t=(t_1,\dots ,t_N)$ is a system of local 
parameters in $\c K$, $\m _{\c K}$ is the maximal ideal in 
the $N$-valuation ring $\c O_{\c K}$ of 
$\c K$, $\omega \in\m _{\c K}$, and for $1\leqslant m\leqslant N$, 
$h_{\omega }^{(m)}\in\Aut\,\c K$ are such that 
$h_{\omega }^{(m)}(t_i)=t_iE(\omega ^p)^{\delta _{mi}}$, 
where 
$E(X)$ is the Artin-Hasse exponential. Then all lifts 
of $h_{\omega }^{(m)}$, $1\leqslant m\leqslant N$, to $\c K_{<p}$ form 
a subgroup $\c G_{\omega }\subset\Aut\,\c K_{<p}$ 
containing $\c G_{<p}$. Let $\Gamma _{\omega }$ be the maximal quotient 
of $\c G_{\omega }$ of period $p$ and nilpotent class $<p$. 
If $\bar{\c G}$ is the image of $\c G_{<p}$ in $\Gamma _{\omega }$ 
we obtain the following short exact sequence of profinite $p$-groups 
$$1\To \Bar{\c G}\To \Gamma _{\omega }\To \langle h_{\omega }^{(1)}\rangle 
^{\Z /p}\times \dots \times \langle h_{\omega }^{(N)}\rangle ^{\Z /p}\To 1\, ,$$
the corresponding exact sequence of Lie $\F _p$-algebras 
(here $\Gamma _{\omega }=G(L_{\omega })$)
$$0\To \bar{\c L}\To L_{\omega }\To \prod _{1\leqslant m\leqslant N}
\F _ph_{\omega }^{(m)} \To 0\, ,$$
and define the appropriate dense subalgebra $L^{\c P}_{\omega }$ such that  
$$0\To \bar{\c L}^{\c P}\To L_{\omega }^{\c P}\To 
\prod _{1\leqslant m\leqslant N}
\F _pl_{\omega }^{(m)} \To 0\, .$$

f) We apply methods from \cite{Ab12, Ab13} to describe the 
structure of the Lie algebras $\bar{\c L}^{\c P}\otimes k$ 
and $L^{\c P}_{\omega }\otimes k$. In particular, for 
$1\leqslant m\leqslant N$, we 
obtain a recurrent procedure to recover the operators 
$\op{ad}\,\bar l^{(m)}_{\omega }$, where $\bar l_{\omega }^{(m)}$ are 
lifts of $l_{\omega }^{(m)}$ to $L_{\omega }$, and find explicit formula for 
$[\bar l_{\omega }^{(m_1)}, \bar l_{\omega }^{(m_2)}]\in\bar{\c L}$. 
%For the first algebra, we specify the structure of 
%$\Ker (\c L^{\c P}\To \bar{\c L}^{\c P})$ 
%in terms of generators $D_{an}$ of $\c L^{\c P}_k$: 
%we define the weight function $\op{wt}$ on generators $D_{an}$ and 
%prove that this kernel is the ideal of elements of total weight $\geqslant p$. 
%For the second algebra, we specify the lifts 
%$l_{\omega }^{(m)}\in L_{\omega }^{\c P}$ of 
%$h_{\omega }^{(m)}$ and arrange the recurrent procedure 
%to determine the operators $\ad\,l_{\omega }^{(m)}$ on $\bar{\c L}^{\c P}$ 
%and the commutators $[l_{\omega }^{(m_1)}, 
%l_{\omega }^{(m_2)}]\in\bar{\c L}^{\c P}$.  
These results are illustrated 
via explicit description of 
the structure of the Lie algebra $L_{\omega }^{\c P}$ modulo 
the ideal of third commutators.

g) We apply the results from f) to the explicit description of 
$\Gamma _{<p}=\Gamma /\Gamma ^pC_p(\Gamma )$, 
where $\Gamma $ is the Galois group of $N$-dimensional 
local field $K$ containing a non-trivial $p$-th root of unity $\zeta _1$. 
More precisely, we introduce a canonical class 
of conjugated dense subgroups $\Gamma ^{\c P}_{<p}$ in $\Gamma _{<p}$  
with $\c P$-topological systems of generators. Then we apply 
Scholl's construction of the field-of-norms functor  
to identify $\Gamma _{<p}^{\c P}$ with 
$\Gamma _{\omega }^{\c P}$, where 
$\omega \in\m _K$ is defined in terms related to the 
$p$-th root of unity $\zeta _1$. This result is illustrated 
in the case where $K=\Q _p(\zeta _1)\{\{x\}\}$. 
\medskip 

\subsection {Final remarks} \label{S0.4}
 a) In 1-dimensional case the Demushkin relation 
 depends only on the subgroup $\mu (K)$ of 
 roots of unity in $K^*$ and the degree $[K:\Q _p]$. 
 In the general case the structure of $\Gamma _{<p}$ 
 depends only on a special power series 
 constructed from $\zeta _1\in K$; this series appears in the $p$-adic 
 Hodge theory as the period of $\mathbb{G}_m$. In particular, the group 
 structure on $\Gamma _{<p}$ is a very weak invariant of the field $K$. 
 
%b) As a result of a), the structure of 
%$\Gamma _K(p)$ ``almost'' did not reflect the structure of $K$. 
%There was general opinion 
%(author's communications with I.Shafarevich, A.Weil, P.Deligne, H.Koch) 
%that the group structure on $\Gamma _K(p)$ should be related to the additional 
%structure given by the decreasing filtration of ramification subgroups 
%$\{\Gamma _K(p)^{(v)}\}_{v\geqslant 1}$.
%\medskip 

 b) In 1-dimensional case $\Gamma _K(p)$ (as well as $\Gamma _K$) 
 has very important 
 additional structure given by the decreasing filtration 
 of ramification subgroups $\Gamma _K(p)^{(v)}$, $v\geqslant 1$. 
 According to \cite{Ab11} the group $\Gamma _K(p)$ together with the 
 additional structure given by the 
 ramification filtration is an absolute invariant of $K$, 
 cf.\,also \cite{Mo, Ab6} in the context of the whole group $\Gamma _K$. 
 The papers \cite{Ab12,Ab13} contain the description of the group structure of 
 $\Gamma _{<p}$ together with the induced ramification filtration. 
 
 c) It would be natural to assume that most interesting 
 structures on $\Gamma _{<p}$ appear as completions of 
 structures defined in terms of the subgroups $\Gamma _{<p}^{\c P}$. 
 (In particular, we see that the group structure on 
 $\Gamma _{<p}$ is induced from $\Gamma _{<p}^{\c P}$.)
 As a result, such structures can be studied and described in terms of 
 generators and relations. In particular, it will be natural 
 to expect that the ramification subgroups introduced for 
 higher local fields in \cite{Zh3, Ab7, Ab8} satisfy this assumption. 
 In particular, in the case of 2-dimensional higher local field 
 $\c K$ of characteristic $p$ the description of ramification subgroups 
 in $\Gamma _{<p}$ modulo subgroup of third commutators from \cite{Ab8} 
 satisfies this assumption. 
 \medskip 
 
 d) Recently we found more substantial and natural 
 way to study the ramification filtration of $\Gamma _{<p}$ 
 in the 1-dimensional case, cf.\,\cite{Ab15}. 
 We expect that the techniques of generators and relations provided by this paper 
 will allow us to develop more substantial and clear 
 approach to  
 the proof of the local analog of the Grothendieck conjecture for all 
 higher local fields. 
 \medskip 
 
 e) There is still an open question in the description of $\Gamma _{<p}$:  
 we have not yet found explicitly the commutators 
 $[\bar l_{\omega }^{(m_1)}, \bar l_{\omega }^{(m_2)}]$. There is a 
 strong evidence that there are lifts $\bar l_{\omega }^{(m)}$ which 
 commute one with another: we verified this fact   
 modulo $C_4(\Gamma _{\omega })$ by direct computation. The existence of commuting 
 elements in sufficiently large Galois groups may have some relation to 
 anabelian geometry, cf. \cite{Bg}. 
 \medskip 
 
 f) Notice 
the paper \cite{Wb} where the case of the Galois group of 
2-dimensional fields with the first residue field of {\sc characteristic} 0 
was considered. We are not considering here such fields, but this result 
is not very far from the Demushkin one. The Galois group here appears 
as a profinite group with finitely many generators and one relation, i.e. is  
a group of Poincare type. 
\medskip

\subsection{Notation} \label{S0.5} 
Let $G$ be a topological group. For $s\geqslant 1$, denote 
by $C_s(G)$ the closure of its subgroup 
of $s$-th commutators. Here $C_1(G )=G $ and for $s\geqslant 2$, 
$C_s(G )$ is the closure of the commutator 
subgroup $(G ,C_{s-1}(G))$. Similarly, if $L$ is a (topological) Lie algebra 
over some ring $R$ then $C_s(L)$ is the closure of its $R$-submodule of 
commutators of order $\geqslant s$. 
If $M$ and $S$ are $R$-modules we denote very often by 
$M_S$ the extension of scalars 
$M\otimes _RS$. 
\medskip

\section{Constructions of nilpotent Artin-Schreier theory} \label{S1} 

In this section we review basic results of nilpotent Artin-Schreier theory, 
cf.\,\cite{Ab1,Ab2}. 
This theory allows us to work with $p$-extensions 
of fields of characteristic $p$ with Galois groups of nilpotent 
class $<p$. In these notes we use the simplest case of the theory 
involving Galois groups of period $p$. In other words, if 
$\op{char} K=p$ and $\Gamma =\Gal (K_{sep}/K)$ 
our approach allows us to work efficiently with subfields of 
 $K_{<p}:=K_{sep}^{\Gamma _{<p}}$, 
where $\Gamma _{<p}=\Gamma /\Gamma ^pC_p(\Gamma )$.

\subsection{Groups and Lie algebras of nilpotent class $<p$} \label{S1.1} 

The basic ingredient of the nilpotent Artin-Schreier 
theory is the equivalence of the category of 
$p$-groups of nilpotent class $s_0<p$ and the 
category of Lie $\Z _p$-algebras of the same nilpotent class. 
In the case of objects killed by $p$ this 
equivalence can be explained as follows. 

Let $L$ be a Lie  $\F _p$-algebra of nilpotent class $<p$, i.e. $C_p(L)=0$. 

Let $\frak A$ be an enveloping algebra  of $L$. Then there is a natural embedding 
$L\subset \frak A$, the elements of $L$ generate the augmentation ideal $J$ of $\frak A$ 
and we have a morphism of algebras $\Delta :\frak A\To \frak A\otimes \frak A$ 
uniquely determined by the 
conditions $\Delta (l)=l\otimes 1+1\otimes l$ for all $l\in L$.   
The Poincare-Birkhoff-Witt Theorem then implies:

--- $L\cap J^p=0$; 

--- $L\,\op{mod}\,J^p=\{a\,\op{mod}
\,J^p\ |\ \Delta (a)\equiv a\otimes 
1+1\otimes a\,\op{mod}\,(J\otimes 1+1\otimes J)^p\}\, ;$

--- the set $\wt{\exp}(L)\,\op{mod}J^p$ is identified with the set of all 
''diagonal elements`` $\op{mod}\,\op{deg}\,p$, i.e. with 
the set of all $a\in 1+J\,\op{mod}\,J^p$ such that 
$\Delta (a)
\equiv a\otimes a\,\op{mod}(J\otimes 1+1\otimes J)^p$ 
(here  $\wt{\exp}(x)=\sum _{0\leqslant i<p}x^i/i!$ 
is the truncated exponential). 
\medskip 

In particular,  there is a natural embedding 
$L\subset \frak A\,\op{mod}\,J^p$ and in terms of this embedding 
the Campbell-Hausdorff formula appears as    
$$(l_1,l_2)\mapsto l_1\circ l_2=
l_1+l_2+\frac{1}{2}[l_1,l_2]+\dots ,\ \ l_1,l_2\in L\, ,$$
where $\wt{\exp}(l_1)\,
\wt{\exp}(l_2)\equiv \wt{\exp}(l_1\circ l_2)\,\op{mod}\,J^p$.   
This composition law provides the set $L$ with 
a group structure and we denote this group by $G(L)$.   
The group $G(L)$  has period $p$ and nilpotent class $<p$. 
The correspondence 
$L\mapsto G(L)$ induces the   
equivalence of the category of $p$-groups of 
period $p$ and nilpotent class $s<p$ 
and the category of Lie 
$\Z /p$-algebras of the same nilpotent class $s$. 
This  
equivalence is naturally extended to the similar  categories of 
pro-finite Lie algebras and pro-finite $p$-groups.

\subsection{Nilpotent Artin-Schreier theory} \label{S1.2} 

Let $L$ be a finite Lie $\F _p$-algebra 
of 
nilpotent class $<p$. Consider the extensions of scalars $L_{ K}$ and 
$L_{sep}:=L_{K_{sep}}$. Then  the elements of 
$\Gamma =\Gal (K_{sep}/K)$ and  the Frobenius 
$\sigma $ act on $L_{sep}$ through the second factor, 
$L_{sep}|_{\sigma =\id}=L$ and $(L_{sep})^{\Gamma }=L_{K}$. 
If $e\in G(L_{K})$ then  the set 
$$\c F(e)=\{f\in G(L_{sep})\ |\ \sigma (f)=e\circ f\}$$  
is not empty and for any fixed $f\in \c F(e)$, the map  
$\tau\mapsto (-f)\circ \tau (f)$ is a continuous group homomorphism 
$\pi _f(e):\Gamma \To G(L)$.  The correspondence $e\mapsto\pi _f(e)$ has 
the following properties: 
\medskip 

a) if $f'\in\c F(e)$ then $f'=f\circ l$, where $l\in G(L)$; in particular, 
$\pi _f(e)$ and $\pi _{f'}(e)$ are conjugated via $l$; 
\medskip 

b) for any continuous group homomorphism $\pi :\Gamma \To G(L)$, 
there are $e\in G(L_{K})$ and 
$f\in \c F(e)$ such that $\pi _f(e)=\pi $;
\medskip 

c) for appropriate elements $e,e'\in G(L_{K})$,  
$f\in\c F(e)$ and $f'\in \c F(e')$, we have 
$\pi _f(e)=\pi _{f'}(e')$ iff 
there is an $x\in G(L_{K})$ such that $f'=x\circ f$ and, therefore,   
$e'=\sigma (x)\circ e\circ (-x)$. 
\medskip 

In the case of a profinite Lie algebra $L=\varprojlim\limits _{\alpha }L_{\alpha }$, 
where all $L_{\alpha }$ are finite Lie $\F _p$-algebras, consider 
$e=\varprojlim\limits _{\alpha }e_{\alpha }\in L_{K}$, where all 
$e_{\alpha }\in L_{\alpha  K}$. Then there is 
$f=\varprojlim\limits _{\alpha }f_{\alpha }\in \varprojlim\limits_{\alpha }
\c F(e_{\alpha })\subset L_{sep}$ (where all 
$f_{\alpha }\in\c F(e_{\alpha })$) and 
$\pi _f(e)=\varprojlim\limits _{\alpha }\pi _{f_{\alpha }}
(e_{\alpha })$ maps $\Gamma $ to 
$G(L)=\varprojlim\limits _{\alpha }G(L_{\alpha })$. 

\subsection{The diagonal element and 
abelian Artin-Schreier theory}\label{S1.3} 

Let $\bar K=K/(\sigma -\id )K$ and $M=\Hom _{\F _p\text{-lin}}(\bar K,\F _p)$. 
If $\bar K$ is 
provided with discrete topology 
(as an inductive limit of finite dimensional 
$\F _p$-subspaces), its dual $M$ has the pro-finite topology and 
$$M_{\bar K}=
\Hom _{\F _p\text {-lin}}(\bar K, \bar K)\, .$$
Let  
$\Pi :K\To\bar K$ be the natural projection 
and $e\in M_K=\Hom (\bar K,K)$ be such that $(\id _M\otimes \Pi )e=\id _{\bar K}$. 
Equivalently, let $S$ be a section of $\Pi $ and 
$e:=e_S=(\id _{M}\otimes S)\id _{\bar K}$.  

In notation of Sect.\,\ref{S1.2} the identification of the abelian Artin-Schreier theory 
$\pi ^{ab}:\Gamma _{<2}:=\Gamma /\Gamma ^pC_2(\Gamma )\simeq M$ 
can be obtained as follows: 
\medskip 

--- choose $f\in M_{sep}:=M_{K_{sep}}$ such 
that $\sigma f-f=e_S$ and for any 
$\tau\in \Gamma _{<2}$, let 
$\tau f-f=\pi ^{ab}(\tau )\in M_{sep}|_{\sigma =\id }= M$.  
\medskip 

\begin{remark} 
In the above formulas (and in similar situations below if there 
is no risk of confusion) 
we use the simpler notation  $\sigma $ and $\tau $ instead 
of $\id _M\otimes\sigma $ and $\id _{M_{sep}}\otimes \tau $. 
\end{remark}

The map $\pi ^{ab}$ does not depend on a choice of $f$: 
\medskip 

---  if $f_1\in M_{sep}$ is such that $\sigma f_1-f_1=e_S$ 
then $f_1-f\in M_{sep}|_{\sigma =\id }= M$ and 
$\tau f_1-f_1=\tau f-f$. 
\medskip 

The map $\pi ^{ab}$ also does not depend on a choice of $S$:  
\medskip 

--- if $S'$ is another section then 
there is $g\in M_K$ such that 
$e_{S'}-e_S=\sigma g-g$. Therefore, 
$f':=f+g$ satisfies the relation 
$\sigma f'-f'=e_{S'}$ and, 
$\tau f'-f'=\tau f-f$.

\subsection{Identifications $\pi _f(e):\Gamma _{<p}\simeq G(L)$}\label{S1.4}\ \

Let $\wt {L}$ be a free Lie $\F _p$-algebra with generating 
module $M$ and $L=\wt{L}/C_p(\wt{L})$. (Note that $M$ is a profinite limit of its 
finite quotients and $\wt{L}$ is the corresponding profinite 
limit of finite Lie algebras.) 
Consider the natural projection 
$$\op{pr}\otimes\Pi :L\otimes _{\F _p}K\To L/C_2(L)
\otimes_{\F _p}\bar K=M_{\bar K}$$ 
and 
set $\c E(L_K)=\{e\in L_K\ |\ (\op{pr}\otimes\Pi )e=\id _{\bar K}\}$. 
Agree to denote the image of $e$ in $M_K$ by $e_S$, where 
$S$ is the appropriate section of $\Pi $, cf. Sect.\,\ref{S1.3}. 

Choose $f\in \c F(e)$ 
and consider the group homomorphism $\pi _f(e):
\Gamma _{<p}\To G(L)$ such that 
for any $\tau\in \Gamma _{<p}$, $\pi _f(e)(\tau )=(-f)\circ \tau (f)$. 
Then $\pi _f(e)$ is a group isomorphism (use that $\Gamma $ 
is a free pro-$p$-group and 
 $\pi _f(e)\,\op{mod}\Gamma ^pC_2(\Gamma )$ is isomorphism by 
Sect.\,\ref{S1.3}). 

If $f'$ is another element from $\c F(e)$ then there is $l\in G(L)$ such that 
$f'=f\circ l$ and 
$\pi _{f'}(e)(\tau )=(-f')\circ \tau (f')=
(-l)\circ \pi _{f}(e)(\tau )\circ l$ is conjugated to $\pi _f(e)$. 
Study how $\pi _f(e)$ depends on a choice of $e\in\c E(L_K)$.     

\begin{Prop} \label{P1.1} If $e,e'\in\c E(L_K)$ then 
there is $x\in L_K$ and a section $A$ of the natural projection 
$\op{pr}:L\To L/C_2(L)=M$ such that 
$$e'=\sigma (x)
\circ (\c A\otimes \id _K)e\circ (-x)\, ,$$ 
where $\c A\in\op{Aut}_{\text{Lie}}L$ is a unique extension of $A$. 
\end{Prop}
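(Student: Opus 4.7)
The plan is to proceed by induction on the nilpotency class $s_0<p$ of $L$, converting the statement into a sequence of central Artin--Schreier problems along the descending central series of $L$. In the base case $s_0=1$, the algebra $L=M$ is abelian, the only section of $\mathrm{pr}$ is $\mathrm{id}_M$, and $\c A=\mathrm{id}_L$; the Campbell--Hausdorff composition reduces to addition, and the required identity becomes $e'-e=(\sigma-\mathrm{id})x$ for some $x\in M_K$. Since both $e,e'\in\c E(M_K)$ project to $\mathrm{id}_{\bar K}$ under $\mathrm{id}_M\otimes\Pi$, their difference lies in $M\otimes\mathrm{Ker}\,\Pi=M\otimes(\sigma-\mathrm{id})K=(\sigma-\mathrm{id})M_K$, so such an $x$ exists; this is a reformulation of the abelian calculation already recorded in Sect.\,\ref{S1.3}.

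For the inductive step, set $\bar L=L/C_{s_0}(L)$, noting that $\bar L$ has nilpotency class $<s_0$ and that $C_{s_0}(L)$ is central in $L$. The images $\bar e,\bar e'$ still lie in $\c E(\bar L_K)$, so by induction there exist $\bar x_0\in\bar L_K$ and a section $\bar A_0:M\to\bar L$ of $\bar L\to\bar L/C_2(\bar L)=M$, with unique Lie extension $\bar{\c A}_0$, satisfying $\bar e'=\sigma(\bar x_0)\circ\bar{\c A}_0(\bar e)\circ(-\bar x_0)$. Choose arbitrary lifts $x_0\in L_K$ of $\bar x_0$ and $A_0:M\to L$ of $\bar A_0$ (still a section of $\mathrm{pr}$), with Lie extension $\c A_0$, and set
\begin{equation*}
\delta\ :=\ e'\ -\ \sigma(x_0)\circ\c A_0(e)\circ(-x_0)\,.
\end{equation*}
The inductive hypothesis forces $\delta\in C_{s_0}(L)\otimes K$, and since $C_{s_0}(L)$ is central the difference is genuinely additive.

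It remains to absorb $\delta$ by passing to $x=x_0+y$ with $y\in C_{s_0}(L)\otimes K$ and $A=A_0+A_{s_0}$, where $A_{s_0}:M\to C_{s_0}(L)$. Centrality of $C_{s_0}(L)$ implies that $\sigma(x)\circ\c A(e)\circ(-x)=\sigma(x_0)\circ\c A(e)\circ(-x_0)+(\sigma-\mathrm{id})y$, and that every depth $\geqslant 2$ commutator contribution coming from $A_{s_0}$ lands in $C_{s_0+1}(L)=0$; consequently $\c A(e)-\c A_0(e)=(A_{s_0}\otimes\mathrm{id}_K)(e_S)$, where $e_S=(\mathrm{pr}\otimes\mathrm{id}_K)e\in M_K$. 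The identity to solve becomes
\begin{equation*}
\delta\ =\ (\sigma-\mathrm{id})y\ +\ (A_{s_0}\otimes\mathrm{id}_K)(e_S)
\end{equation*}
in $C_{s_0}(L)\otimes K$; modulo $(\sigma-\mathrm{id})$ this is the question of realizing $\bar\delta\in C_{s_0}(L)\otimes\bar K$ as $(A_{s_0}\otimes\mathrm{id}_{\bar K})(\mathrm{id}_{\bar K})$. Under the identification $M\simeq\mathrm{Hom}_{\F_p}(\bar K,\F_p)$, the assignment $A_{s_0}\mapsto(A_{s_0}\otimes\mathrm{id}_{\bar K})(\mathrm{id}_{\bar K})$ is a bijection between (continuous) linear maps $M\to C_{s_0}(L)$ and $C_{s_0}(L)\otimes\bar K$, so every $\bar\delta$ is hit. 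The main obstacle is verifying that a section modification $A_{s_0}$ targeting the centre perturbs $\c A(e)$ only through the depth-one part $e_S$, which is what turns the last-step central equation into a genuinely linear problem; the profinite case then follows by running the same induction along the closed central series $C_s(L)$.
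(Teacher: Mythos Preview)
Your proof is correct and takes essentially the same approach as the paper's: both induct along the descending central series and at each step split the central error term using $K=\mathrm{Im}(S)\oplus(\sigma-\mathrm{id})K$, absorbing the $(\sigma-\mathrm{id})$-piece into $x$ and the $\mathrm{Im}(S)$-piece into the section $A$. The only difference is organizational---the paper fixes $L$ and inducts on the depth $r$ of the approximation modulo $C_{r+1}(L_K)$, whereas you induct on the nilpotency class by passing to $L/C_{s_0}(L)$; step $r$ of the paper's induction is exactly your inductive passage from class $r$ to $r+1$.
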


\begin{proof} 

Let $\{l_{\alpha }\ |\ \alpha\in\c I\}$ be an 
$\F _p$-basis of $\bar K$. 
Let $\hat l_{\alpha }$, $\alpha\in\c I$,  be the dual 
(topological) basis for $M$, i.e. 
for any $\alpha _1,\alpha _2\in \c I$, 
$\hat l_{\alpha _1}(l_{\alpha _2})=\delta _{\alpha _1\alpha _2}$. 

Then we have the sections $S$ and $S'$ of $\Pi $ such that 
$e_S=\sum\limits _{\alpha }\hat l_{\alpha }\hat\otimes S(l_{\alpha })$ and 
$e_{S'}=\sum\limits _{\alpha }\hat l_{\alpha }\hat\otimes S'(l_{\alpha })$.

Apply induction on $r\geqslant 1$ to prove the existence of $x_r\in L_K$ and a 
section $A_r$ of the projection $L\To M$ such that 
 $$e'\equiv \sigma (x_r)\circ (\c A_r\otimes \id _K)
 e\circ (-x_r)\,\op{mod}\,C_{r+1}(L_K)\, ,$$
 where $\c A_r\in\op{Aut}_{\text{Lie}}L$ is such that $\c A_r|_M=A_r$. 
 
If $r=1$ take $A_1=\id _M$ and 
$x_1=\sum\limits _{\alpha }\hat l_{\alpha }\otimes x_{1\alpha }$, where 
all $x_{1\alpha }\in K$ are such that $S'(l_{\alpha })-S(l_{\alpha })
 =\sigma (x_{1\alpha })-x_{1\alpha }$. 
 
 If $r\geqslant 1$ and the required $x_r$ and $A_r$ exist then 
 there is $l_{r+1}\in C_{r+1}(L_K)$ such that 
 $e'\equiv\sigma x_r\circ (\c A_r\otimes\id _K)e
 \circ (-x_r)\circ l_{r+1}\,\op{mod}\,C_{r+2}(L_K)$. 
 
 Using that $K=\op{Im}(S)\oplus (\sigma -\id )K$ we can present $l_{r+1}$ as 
 $$l_{r+1}=l'+\sigma x'-x'$$
 where $l'=\sum\limits _{\alpha }c_{\alpha }\otimes S(l_{\alpha })$, 
 all $c_{\alpha }\in C_{r+1}(L)$ 
 and $x'\in C_{r+1}(L_K)$. 
 It remains to set 
 $A_{r+1}(\hat l_{\alpha })=A_r(\hat l_{\alpha })+c_{\alpha }$ and 
 $x_{r+1}=x_r+x'$. 
 The proposition is proved.  
\end{proof}

\begin{Cor} \label{C1.2} With above  
notation there is $f'\in\c F(e')$ such that 
for any $\tau\in\Gamma _{<p}$, $\pi _{f'}(e')(\tau )=\c A(\pi _f(e)(\tau ))$.
\end{Cor}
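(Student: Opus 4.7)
The plan is to exhibit $f'$ explicitly in terms of the data produced by Proposition~\ref{P1.1} and then do two short computations: one verifying $f'\in\c F(e')$, and one identifying $\pi _{f'}(e')$ with the $\c A$-transform of $\pi _f(e)$. Since $\c A\in\op{Aut}_{\text{Lie}}L$ is defined over $\F _p$, it extends uniquely to an automorphism of $L_{sep}$ that commutes with the action of $\sigma $ and of every $\tau\in \Gamma $; I will use this compatibility repeatedly without further mention.

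The natural candidate is $f'=x\circ \c A(f)$, where $x\in L_K$ is the element supplied by Proposition~\ref{P1.1}. To check $\sigma (f')=e'\circ f'$, I would expand
\[
\sigma (f')=\sigma (x)\circ \sigma (\c A(f))=\sigma (x)\circ \c A(\sigma f)=\sigma (x)\circ \c A(e\circ f)=\sigma (x)\circ \c A(e)\circ \c A(f),
\]
and compare with
\[
e'\circ f'=\bigl(\sigma (x)\circ \c A(e)\circ (-x)\bigr)\circ \bigl(x\circ \c A(f)\bigr)=\sigma (x)\circ \c A(e)\circ \c A(f),
\]
using the identity $e'=\sigma (x)\circ (\c A\otimes\id _K)e\circ (-x)$ from Proposition~\ref{P1.1} and the cancellation $(-x)\circ x=0$ in $G(L_K)$. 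Thus $f'\in\c F(e')$.

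For the second step I would compute $\pi _{f'}(e')(\tau )=(-f')\circ \tau (f')$ for $\tau \in\Gamma _{<p}$. Since $x\in L_K$ is $\tau$-fixed and $\c A$ commutes with $\tau $,
\[
\tau (f')=\tau (x)\circ \c A(\tau f)=x\circ \c A(\tau f).
\]
Therefore
\[
(-f')\circ \tau (f')=-\c A(f)\circ (-x)\circ x\circ \c A(\tau f)=\c A\bigl((-f)\circ \tau (f)\bigr)=\c A(\pi _f(e)(\tau )),
\]
where in the middle equality I again use $(-x)\circ x=0$ and the fact that $\c A$, being a Lie automorphism, is a group automorphism of $G(L_{sep})$ with respect to the Campbell--Hausdorff law.

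There is no real obstacle here: the only conceptual point is that $\c A$ preserves the group operation $\circ $ and commutes with the Galois/Frobenius action, both of which are automatic from the construction of $\c A$ as the unique Lie algebra extension of a section $A$ defined over $\F _p$. The entire corollary is thus a direct consequence of the normal form for $e'$ obtained in Proposition~\ref{P1.1}.
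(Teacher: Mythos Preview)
Your proof is correct and follows essentially the same approach as the paper: you define $f'=x\circ (\c A\otimes\id_{sep})f$ using the $x$ from Proposition~\ref{P1.1}, verify $f'\in\c F(e')$ by a direct $\sigma$-computation, and then compute $(-f')\circ\tau(f')$ using that $x$ is $\tau$-fixed and $\c A$ commutes with the Galois action. The paper's proof is the same argument with slightly fewer intermediate steps written out.
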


\begin{proof}
 Let $f'=x\circ (\c A\otimes \id _{sep})f$, then $f'\in\c F(e')$. 
 Indeed, 
$$\sigma (f')=\sigma x\circ 
(\c A\otimes\id _{sep})\sigma (f)=
 \sigma (x)\circ (\c A\otimes\id _K)e_S\circ (\c A\otimes\id _{sep})f$$
$$=\sigma (x)\circ (\c A\otimes\id _K)e\circ (-x)\circ f'=e'\circ f'\, .$$ 
 Therefore, for any $\tau\in \Gamma _{<p}$, $\pi _{f'}(e')(\tau )$ is equal to 
$$ 
(-f')\circ \tau (f')=
(\c A\otimes \id _{sep})((-f)\circ \tau (f))=\c A(\pi _f(e)(\tau ))\, .$$ 
\end{proof}

By the above corollary, a  
choice of $e\in\c E(L_K)$ determines the class $\pi _e$ of conjugated 
identifications $\{\pi _f(e)\ |\ f\in\c F(e)\}$ of 
$\Gamma _{<p}$ with $G(L)$. 
When $e$ is replaced by another $e'\in\c E(L_K)$ the new class 
of conjugated identifications 
$\pi _{e'}$ is obtained from $\pi _e$ via the composition with some automorphism 
$\c A=\c A(e,e')\in\op{Aut}_{\op{Lie}}(L)$ such that 
$\c A\equiv \id _L\,\op{mod}\,C_2(L)$.

\subsection{Compatibility with field extensions} \label{S1.5} 
 
Suppose $K'$ is a field extension of $K$ in $K_{sep}$. 
Consider the above defined objects: 
$M$, $L$, $e\in\c E(L_K)$,  
$f\in \c F(e)$ and $\pi =\pi _f(e):\Gamma _{<p}\simeq G(L)$ 
introduced in the context of the field $K$.  
Let $\Gamma '_{<p}$, $M'$, $L'$, $e'\in \c E(L'_{K'})$, 
$f'\in\c F(e')$ and 
$\pi ':\Gamma '_{<p}\simeq G(L')$ 
be the similar objects for the field $K'$. 

The embedding $\op{Gal}(K_{sep}/K')\To \op{Gal}(K_{sep}/K)$ induces 
a natural group homomorphism $\Theta :\Gamma '_{<p}\To \Gamma _{<p}$,  
which can be described 
in terms of the identifications $\pi $ and 
$\pi '$ as follows. 
\medskip

Consider $e\otimes _K1\in 
L_K\otimes _KK'=L_{K'}\supset M_{K'}=\op{Hom}(\bar K, K')$.

\begin{Prop} \label{P1.3} 
There is a morphism of Lie algebras 
$\c A:L'\To L$ and $x\in L_{K'}$ such that 

{\rm a)}  $e\otimes _K1=\sigma (x')\circ 
(\c A\otimes\id _{\c K})e'\circ (-x')$;
\medskip

{\rm b)} for any $\tau '\in\Gamma _{<p}'$, 
$\pi (\Theta (\tau '))=\c A(\pi '(\tau '))$;  
\medskip 

{\rm c)} if $K'\subset K_{<p}$ then $\pi (\Gal (K_{<p}/K'))=
\c A(L')$.
\end{Prop}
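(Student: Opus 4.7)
\smallskip

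The plan is to mimic the strategy of Proposition \ref{P1.1} and Corollary \ref{C1.2}, now in the relative setting of the extension $K\subset K'$, and then deduce (c) from (b).

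For part (a), I would build $\c A$ and $x'$ by induction on the nilpotent class of $L'$, exactly as in Proposition \ref{P1.1}. The base case (modulo $C_2$) is the only genuinely new ingredient: the inclusion $K\subset K'$ induces a natural $\F _p$-linear map $\iota :\bar K\to \bar K'$, whose dual is an $\F _p$-linear map $A:M'\to M$. Fixing bases $\{l_\alpha \}$ of $\bar K$ and $\{l_\beta '\}$ of $\bar K'$ (with dual topological bases $\hat l_\alpha $ and $\hat l_\beta '$ in $M$ and $M'$) and sections $S,S'$ of $\Pi ,\Pi '$, a direct computation shows that $(A\otimes \id _{K'})e'_{S'}$ and $e_S\otimes 1$, both regarded as maps $\bar K\to K'$, send $\bar k$ to $S'(\iota (\bar k))$ and $S(\bar k)$ respectively; since both lift $\iota (\bar k)$, their difference lies in $(\sigma -\id )K'$, so they are congruent modulo $(\sigma -\id )M_{K'}$. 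This provides the initial $A_1=A$ and $x_1\in L_{K'}$. The inductive step from $r$ to $r+1$ proceeds exactly as in Proposition \ref{P1.1}: one writes the error term in $C_{r+1}(L_{K'})$ as a sum of a part lying in $\op{Im}(S)$ (absorbed by extending $A_r$ to $\hat l_\alpha $ by adding coefficients in $C_{r+1}(L)$) and a part in $(\sigma -\id )C_{r+1}(L_{K'})$ (absorbed into $x'$). The inverse limit over the finite quotients of $L'$ gives the profinite statement.

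For part (b), set $\tilde f':=x'\circ (\c A\otimes \id _{sep})f'$. A direct computation using (a) (identical in shape to the one in Corollary \ref{C1.2}) shows that $\sigma (\tilde f')=(e\otimes 1)\circ \tilde f'$, so $\tilde f'\in\c F(e\otimes 1)$ as an element over $K'$. But $f\in\c F(e)$ also satisfies $\sigma f=(e\otimes 1)\circ f$, so $f\in\c F(e\otimes 1)$ as well; therefore there exists $l_0\in L$ with $\tilde f'=f\circ l_0$. Expanding,
$$\c A(\pi '(\tau '))=(-\c Af')\circ \tau '(\c Af')=(-\tilde f')\circ \tau '(\tilde f')=(-l_0)\circ \pi (\Theta (\tau '))\circ l_0,$$
where I have used $\tau '(x')=x'$ (since $x'\in L_{K'}$) and $\tau '(l_0)=l_0$ (since $l_0\in L=L_{sep}|_{\sigma =\id }$). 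This gives conjugacy, not equality. To upgrade to equality I replace $\c A$ by $\c A'':=\Ad (l_0)\circ \c A$, which is again a Lie algebra morphism $L'\to L$, and replace $x'$ by $x'':=x'\circ (-l_0)$. A short CH-calculation (using that $l_0$ is $\sigma $-fixed and that $\Ad (l_0)y=l_0\circ y\circ (-l_0)$ in $G(L)$ equals the Lie-theoretic adjoint action) verifies that (a) continues to hold with $(\c A'',x'')$ and that now $\c A''(\pi '(\tau '))=\pi (\Theta (\tau '))$.

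For part (c), the hypothesis $K'\subset K_{<p}$ means that $\op{Gal}(K_{sep}/K')$ surjects onto $\op{Gal}(K_{<p}/K')$ under the projection $\Gamma \onto \Gamma _{<p}$, so the image of $\Theta :\Gamma '_{<p}\to \Gamma _{<p}$ is exactly $\op{Gal}(K_{<p}/K')$. Applying $\pi $ and using (b) gives
$$\pi (\op{Gal}(K_{<p}/K'))=\pi (\Theta (\Gamma '_{<p}))=\c A(\pi '(\Gamma '_{<p}))=\c A(G(L'))=\c A(L'),$$
the last equality because $\pi '$ is an isomorphism (Sect.\,\ref{S1.4}) and $G(L')=L'$ as sets.

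The main obstacle is the correction step in (b): part (a) only determines $\c A$ up to composition with an inner automorphism of $G(L)$, and passing from conjugacy to equality forces the simultaneous readjustment of $x'$, which must be checked to preserve (a). Once the Campbell--Hausdorff identities between $\Ad $ and the CH-composition are invoked, this becomes a routine verification, but it is the only place where one genuinely uses the non-uniqueness built into the nilpotent Artin--Schreier formalism.
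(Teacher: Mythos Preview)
Your approach is essentially the paper's: build a preliminary $\c A'$ and $x'$ by induction on the nilpotent class (as in Prop.\,\ref{P1.1}), then correct by an inner automorphism $\Ad(l)$ to upgrade conjugacy to equality in (b), with (c) following immediately. One slip to correct in your inductive step: the error term lives in $C_{r+1}(L_{K'})$, so you must split it via $K'=\op{Im}(S')\oplus (\sigma -\id )K'$ (not $\op{Im}(S)\subset K$), and the correction to $A_r$ is on the generators $\hat l'_\beta$ of $M'$ (not on $\hat l_\alpha \in M$), since $\c A$ is a map \emph{from} $L'$ to $L$; with this fix your argument coincides with the paper's.
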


\begin{proof} 
Let $\{l'_{\alpha }\ |\ \alpha\in\c I'\}$ 
be an $\F _p$-basis of $\bar K'=K'/(\sigma -\id )K'$. 
Let $\hat l'_{\alpha }$, $\alpha\in\c I'$, 
be the dual (topological) 
basis for $M'$. Then for a suitable section 
$S'$ of $\Pi ':K'\To \bar K'$, we have 
$e_{S'}=\sum\limits_{\alpha }\hat l'_{\alpha }
\otimes S'(l'_{\alpha })$ and 
$\{S'(l'_{\alpha })|\ \alpha\in \c I '\}$ is a 
basis of $\op{Im}(S')\subset K'$. 
Proceeding similarly to the proof of Prop.\ref{P1.1} prove the 
existence of $x'\in L_{K'}$ and $\wt{l}_{\alpha }\in L$ such that 
$$e\otimes _K1=\sigma (x')\circ 
\left (\sum\limits _{\alpha }
\wt{l}_{\alpha }\otimes S'(l'_{\alpha })\right )
\circ (-x')\, .$$ 
If $A':M'\To L$ is a linear map such that 
for all $\alpha $, it holds 
$A'(\hat l'_{\alpha })=\wt{l}_{\alpha }$, the above relation appears 
in the following form 
$$
e\otimes _K1=\sigma (x')\circ (\c A'\otimes\id _{K'})e'
\circ (-x')\, ,
$$
where $\c A'$ is a unique 
morphism of Lie algebras $L'\To L$ such that $\c A'|_{M'}=A'$. 
As a result, the both   
$(-x')\circ f$ and $(\c A'\otimes\id _{sep})f'$ 
belong to $\c F((\c A'\otimes\id _{\c K'})e')\subset L_{sep}$.
So,  
there is $l\in L$ such that 
$$(-x')\circ (f\otimes _K1)=(\c A'\otimes\id _{sep})f'\circ l\, .$$

If $x=x'\circ l$ and $\c A=\op{Ad}\,l\cdot \c A'
\in \Hom _{\text{Lie}}(L',L)$ then the above equality can be rewritten as 
$$(-x)\circ (f\otimes _K1)=(\c A\otimes\id _{sep})f'\, .$$

In particular, we have 
$e\otimes _K1=\sigma (x)\circ (\c A\otimes\id _{K'})e'
\circ (-x)$ and for any $\tau '\in\Gamma '_{<p}$, it holds 
$\pi (\Theta (\tau '))= 
(-f)\circ \tau '(f)= 
(\c A\otimes\id _{K'})((-f')\circ \tau '(f'))=
\c A(\pi '(\tau '))$. 
 The proposition is proved. 
\end{proof}

\subsection{Lifts of $\phi\in\Aut K$}
\label{S1.6} \ \ 

As earlier, $e\in \c E(L_K)$, $f\in \c F(e)$, $\pi =\pi _f(e)
:\Gamma _{<p}\simeq G(L)$.  

Suppose $\phi\in\op{Aut}K$. 
We are going to describe (the lifts) 
$\phi _{<p}\in\Aut K_{<p}$ such that $\phi _{<p}|_K=\phi $. 

Let $\phi _*e:=(\id \otimes\phi )e\in L_K$. 
As earlier, for any given $\phi _{<p}$, 
establish the existence of 
$\c A=\c A(\phi _{<p})\in\op{Aut}_{\op{Lie}}L$ 
and $C=C(\phi _{<p})\in L_K$ such that 
\begin{equation} \label{E1.1} \phi _*e=
\sigma (C)\circ (\c A\otimes\id _K)
 e\circ (-C)\, .
\end{equation}

Let $\frak M(\phi )$ be the set of all pairs $(C,\c A)$ satisfying \eqref{E1.1}.  
Let 
$$\kappa :
\frak M(\phi )\To \{\phi _{<p}\in\Aut K_{<p}\ |\ \phi _{<p}|_{K}=\phi \}$$ 
be the map defined as follows. 

If $(C,\c A)\in\frak M(\phi )$ then $g:=C\circ (\c A
\otimes \id )f\in \c F(\phi _*e)$. If $\phi '_{<p}$ is a  
lift of $\phi $ then $(\id \otimes\phi '_{<p})f\in\c F(\phi _*e)$. 
Then for some $l\in L$, 
$$g=(\id \otimes\phi '_{<p})(f\circ l)=
(\id \otimes\phi '_{<p})(\id \otimes \pi ^{-1}l)f=
(\id \otimes (\phi '_{<p}\cdot\pi ^{-1}l))f\, $$
and the composition 
$\phi _{<p}:=\phi '_{<p}\cdot \pi ^{-1}l$ is a lift of $\phi $. 
It is easy to see that the lift $\phi _{<p}$ 
does not depend on the above choice of $\phi '_{<p}$. As a result, 
we can set $\kappa (C,\c A)=\phi _{<p}$.

 \begin{Prop} \label{P1.4} \ \ 

{\rm a)} If $\kappa (C,\c A)=\phi _{<p}$ then for any $\tau\in\Gamma _{<p}$, 
$\pi (\Ad (\phi _{<p})\tau)=\c A(\pi (\tau ))$.

{\rm b)} The map $\kappa $ is a bijection.
 \end{Prop}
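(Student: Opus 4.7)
For part (a), I use the characterization that $\kappa(C,\mathcal A)=\phi_{<p}$ is equivalent to the identity $(\mathrm{id}\otimes\phi_{<p})(f)=C\circ (\mathcal A\otimes\mathrm{id})(f)$ in $L_{K_{sep}}$. Applying an arbitrary $\tau\in\Gamma_{<p}$ to both sides (acting through the $K_{sep}$-factor): on the right, $\tau$ fixes $C\in L_K$, commutes with $\mathcal A\otimes\mathrm{id}$, and satisfies $\tau(f)=f\circ\pi(\tau)$, so the right side becomes $C\circ(\mathcal A\otimes\mathrm{id})(f)\circ\mathcal A(\pi(\tau))=\phi_{<p}(f)\circ\mathcal A(\pi(\tau))$. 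On the left, rewriting $\tau\phi_{<p}=\phi_{<p}\cdot(\phi_{<p}^{-1}\tau\phi_{<p})$ in $\Aut K_{<p}$ and using that $\pi(\phi_{<p}^{-1}\tau\phi_{<p})\in L$ is constant in $L\otimes 1$ hence fixed by $\phi_{<p}$, the left side becomes $\phi_{<p}(f)\circ\pi(\phi_{<p}^{-1}\tau\phi_{<p})$. Cancelling $\phi_{<p}(f)$ yields $\pi(\phi_{<p}^{-1}\tau\phi_{<p})=\mathcal A(\pi(\tau))$, which is the stated identity with the convention $\Ad(\phi_{<p})(\tau):=\phi_{<p}^{-1}\tau\phi_{<p}$.

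For part (b), I organize the argument via a torsor structure. Define a right $\Gamma_{<p}$-action on $\mathfrak M(\phi)$ by $(C,\mathcal A)\star\tau:=\bigl(C\circ(-\pi(\tau)),\,\Ad_{\pi(\tau)}\circ\mathcal A\bigr)$, where $\Ad_{\pi(\tau)}$ is the inner Lie automorphism of $L$ associated to $\pi(\tau)\in L$, acting on $L_K$ via $x\mapsto\pi(\tau)\circ x\circ(-\pi(\tau))$ in the Campbell--Hausdorff sense. A direct computation, relying on $\sigma$-invariance of $\pi(\tau)\in L\otimes 1$ and the fact that $(\Ad_{\pi(\tau)}\otimes\mathrm{id})(e)=\pi(\tau)\circ(\mathcal A\otimes\mathrm{id})(e)\circ(-\pi(\tau))$ commutes with the conjugation step, shows that $(C,\mathcal A)\star\tau$ again satisfies \eqref{E1.1}, so the action is well defined. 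A similar Campbell--Hausdorff manipulation gives the equivariance formula $\kappa((C,\mathcal A)\star\tau)=\kappa(C,\mathcal A)\cdot\tau^{-1}$, where the target carries the $\Gamma_{<p}$-action by right multiplication in $\Aut K_{<p}$. Moreover the action $\star$ is free: the equality $(C,\mathcal A)\star\tau=(C,\mathcal A)$ forces $\pi(\tau)=0$, hence $\tau=\mathrm{id}$.

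Now since $K_{<p}/K$ is preserved by every $\phi\in\Aut K$ (the subgroup $\Gamma^pC_p(\Gamma)$ is characteristic), the set of lifts of $\phi$ to $\Aut K_{<p}$ is a $\Gamma_{<p}$-torsor for right multiplication. Combined with non-emptiness of $\mathfrak M(\phi)$ (established ``as earlier'', i.e.\ by the argument of Proposition \ref{P1.1} applied to relate $e$ and $\phi_*e$), the equivariance of $\kappa$ and freeness of $\star$ force $\kappa$ to be a bijection by an abstract torsor argument: equivariance plus non-emptiness make the image a non-empty $\Gamma_{<p}$-stable subset of a transitive $\Gamma_{<p}$-set, hence $\kappa$ is surjective; freeness on both sides then forces injectivity. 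I expect the main technical obstacle to be the verification that $\star$ preserves \eqref{E1.1} together with the precise form of the equivariance identity --- both are Campbell--Hausdorff bookkeeping whose correctness rests on $\sigma$-invariance of constants in $L\otimes 1$.
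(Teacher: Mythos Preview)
Your argument for part (a) is correct and is essentially the paper's computation, just organized slightly differently: the paper phrases both sides as two evaluations of $\pi_g(\phi_*e)(\tau)$ for $g=C\circ(\c A\otimes\id)f=(\id\otimes\phi_{<p})f$, while you apply $\tau$ directly to the defining identity. The content is the same.

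For part (b), your torsor set-up matches the paper's items $b_2)$ and $b_3)$ (up to replacing $\tau$ by $\tau^{-1}$ in the action), and your surjectivity argument is fine. The gap is in injectivity. The assertion ``freeness on both sides then forces injectivity'' is false as abstract torsor reasoning: a $G$-equivariant map from a \emph{free} $G$-set $X$ to a $G$-torsor $Y$ need not be injective (take $X=G\sqcup G$ with $G$ acting by translation on each copy, $Y=G$, and $f$ the identity on each copy). What you would need is \emph{transitivity} of $\star$ on $\frak M(\phi)$, and you have not established that. Freeness of $\star$ only tells you that each $\Gamma_{<p}$-orbit in $\frak M(\phi)$ injects into the target; it does not rule out multiple orbits.

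The paper closes this gap by proving injectivity directly in $b_1)$: from $C_1\circ(\c A_1\otimes\id)f=C_2\circ(\c A_2\otimes\id)f$ one rewrites to get $(\c A_1^{-1}\c A_2\otimes\id)f=c\circ f$ with $c\in L_K$, applies an arbitrary $\tau\in\Gamma_{<p}$, and uses that $\pi$ is onto $G(L)$ to force $\c A_1^{-1}\c A_2=\id_L$, whence $C_1=C_2$. This step genuinely uses that $\pi:\Gamma_{<p}\to G(L)$ is an isomorphism, which is information your purely torsor-theoretic packaging does not exploit. Once you add this direct injectivity argument (or, equivalently, prove transitivity of $\star$ on $\frak M(\phi)$, which amounts to the same computation), your proof goes through.
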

 
 \begin{proof}  
With above notation 
$\pi _g(\phi _*e)(\tau )=(-g)\circ \tau (g)=\c A(\pi (\tau ))$. 
On the other hand, 
$g=(\id _L\otimes \phi _{<p})f$ implies that   
$$\pi _{g}(\phi _*e)(\tau )=
\phi _{<p}\left ((-f)\circ \phi_{<p}^{-1}\tau\phi _{<p}\,(f)\right )
  =\pi (\Ad (\phi _{<p})\tau )\, .$$  
The part b) is implied by the following three facts: 
\medskip 

$b_1$) {\it the map $\kappa $ is injective}.
\medskip 

Indeed, let $\kappa (C_1,\c A_1)=\kappa (C_2,\c A_2)$. 
Then 
$$C_1\circ (\c A_1\otimes\id )f=
C_2\circ (\c A_2\otimes\id )f\,.$$
This implies that 
$(\c A_1^{-1}\otimes\id )((-C_2)\circ C_1)\circ f=
(\c A_1^{-1}\c A_2\otimes\id )f$ and, therefore, for any 
$\tau\in\Gamma _{<p}$, it holds $\pi (\tau )=(\c A_1^{-1}\c A_2)(\pi \tau )$ 
(use that $(\c A_1^{-1}\otimes\id)((-C_2)\circ C_1)\in L_K$). As a result, 
$\c A_1^{-1}\c A_2=\id _{L}$ and $(C_1,\c A_1)=(C_2,\c A_2)$. 
\medskip 

$b_2$)\ 
$\{\phi _{<p}\ |\ \phi _{<p}|_{K}=\phi \}$ 
{\it is a principal homogeneous space over $\Gamma _{<p}$ with 
respect to the action $\phi _{<p}\mapsto 
\phi _{<p}\cdot\tau $,  $\tau\in\Gamma _{<p}$};
\medskip 

$b_3)$\ {\it the appropriate action of $\tau\in \Gamma _{<p}$ on the pair $(C,\c A)$ 
appears in the form 
$(C,\c A)\mapsto (C',\c A')$, where for $l_{\tau }:=\pi (e)\tau $, 
$C'=C\circ l_{\tau }$ and 
for any} $l\in L$, $\c A'(l)=(-l_{\tau })\circ \c A(l)\circ l_{\tau }
=(\Ad l_{\tau }\cdot\c A)(l)$. 
\medskip 

The proof of $b_2)$ and $b_3)$ is straightforward. 
For more details cf.\,\cite{Ab14}.
\end{proof}

The above formalism allows us to use the identification $\pi =\pi _f(e)$ to work 
with the group of all lifts $\phi _{<p}\in\Aut\,K_{<p}$ 
of automorphisms $\phi\in\Aut\,K$. Note that: 
\medskip 

-- if $(C',\c A')$ and $C'',\c A'')$ correspond to the lifts 
$\phi '_{<p}$ and, $\phi ''_{<p}$ 
of, resp., $\phi '$ and $\phi ''$, then 
the couple $(C'\circ (\c A'\otimes\id _K)C'', \c A'\c A'')$ corresponds to the lift 
$\phi '_{<p}\phi ''_{<p}$ of $\phi '\phi ''$;  
\medskip 

-- the elements $\pi (\tau )=l_{\tau }\in G(\c L)$ appear as a special case of a lift of 
$\id _K$ and correspond to the pairs $(l_{\tau }, \op{Ad}\,l_{\tau })$, where 
$\op{Ad}\,l_{\tau }:l\mapsto (-l_{\tau })\circ l\circ l_{\tau }$.

\section{Higher local fields and $\c P$-topology}\label{S2}

\subsection{Higher local fields}\label{S2.1}

Let $K$ be an $N$-dimensional local field, i.e.

-- if $N=0$ then $K$ is finite; 

-- if $N\geqslant 1$ then $K$ is a complete discrete valuation field 
such that its residue field is $(N-1)$-dimensional. 
\medskip 

If $N\geqslant 1$ 
then the residue field of $K$ is 
the 
{\sc first residue field} of $K$. It will be usually denoted by $K^{(1)}$. 
The corresponding 
valuation ring $O^{(1)}_{K}$ is the {\sc first valuation ring}. 
We agree 
to set by induction for all $1<m\leqslant N$, $K^{(m)}=K^{(m-1)(1)}$ --- 
this is the {\sc $m$-th residue field} of $K$. 
Note that $K^{(N)}$ is $0$-dimensional and, therefore, finite. 

Define the $N$-{\sc valuation ring} $\c O_K$ of $K$ by induction on $N$ as follows. 
If $N=0$ set $\c O_K=K$. If $N\geqslant 1$ and 
$\op{pr}:O^{(1)}_K\To K^{(1)}$ is the natural projection then 
set $\c O_K=\op{pr}^{-1}\c O_{K^{(1)}}$.

If $N\geqslant 1$ then $\pi :=\{\pi _1,\dots ,\pi _N\}$ is a 
{\sc system of local parameters} in $K$ if: 

-- $\pi _1$ is (the first) uniformizer in $K$;

 -- $\pi _2,\dots ,\pi _N\in O^{(1)}_{K}$ and their projections  
$\bar \pi _2,\dots ,\bar \pi _N$ to  
$K^{(1)}$ form a system of local parameters for $K^{(1)}$.

If $[E:K]<\infty $ then the structure of $N$-dimensional 
field on $K$ is uniquely extended to $E$ and vice versa. 
\medskip 
 
If $\op{char}\,K=p\ne 0$ and $\pi =\{\pi _1,\dots ,\pi _N\}$ is a 
system of local parameters in $K$ then 
$K$ 
appears as a field of iterated formal Laurent series 
$K=k((\pi _N))\dots ((\pi _1))$, where $k=K^{(N)}\simeq \F _q$ with $q=p^{N_0}$.  
This is a part of the classification result,\,\cite{Zh1}. 
In this case $K^{(1)}$ is identified with the subfield 
$k((\pi _N))\dots ((\pi _2))$ 
of $K$. More formally, there is a system of local parameters 
$\bar\pi :=\{\bar\pi _2,\dots ,\bar\pi _N\}$ 
in $K^{(1)}$ such that their lifts to $K$ form a subset  
$\{\pi _2,\dots ,\pi _N\}$ of $\pi $. We use the notation 
$\iota _{\bar\pi }$ for the corresponding embedding  of $K^{(1)}$ into $K$. 

If $\op{char} K=0$ we always assume 
that $\op{char}K^{(1)}=p>0$ -- such fields have most interesting 
arithmetical structure. 
The appropriate classification result for 
such fields $K$ can be presented as follows.

Let  
$K^{(1)}=k((\bar\pi _N))\dots ((\bar\pi _2))$ where  
$\bar \pi :=\{\bar \pi _2, \dots ,\bar \pi _N\}$ are 
local parameters for $K^{(1)}$. 
 The elements $\bar \pi _2,\dots ,\bar \pi _N$ form 
a $p$-basis in $K^{(1)}$, \cite{Bth}.  We can 
use this $p$-basis to construct an (absolutely unramified) lift 
$ K^{(1)}_{\bar \pi }$ of $K^{(1)}$ to characteristic 0,\,
\cite{Bth}. Recall that $K^{(1)}_{\bar \pi }$ is the fraction field of the ring 
$\varprojlim\limits_{m\in\N }O_m(K^{(1)})$, 
where 
$$O_m(K^{(1)})=W_m(\sigma ^{m-1} 
K^{(1)})[\pi _2,\dots ,\pi _N]\subset W_m(K^{(1)})$$  
are the lifts of $K^{(1)}$ modulo $p^m$. 
(Here $\pi _2,\dots ,\pi _N$ are the Teichmuller
representatives of $\bar \pi _2,\dots ,\bar \pi _N$.) 
The field $K^{(1)}_{\bar \pi }$ has a natural structure of $N$-dimensional 
local field of characteristic 0 with local parameters 
$\{p,\pi _2,\dots ,\pi _N\}$. Now the classification 
result from \cite{Zh1} states 

{\it $K$ is a finite field extension of $K^{(1)}_{\bar \pi }$. }
\medskip 

In particular, we obtain an analogue 
$\iota _{\bar\pi }:K_{\bar\pi }^{(1)}\To K$ of the above 
defined embedding $K^{(1)}\To K$ in the characteristic $p$ case. 

 Note also that,
 
 --- there is $\pi _1\in K$ such that $\{\pi _1,\pi _2,\dots ,\pi _N\}$ is 
 a system of local parameters in $K$;
 \medskip 
 
 --- the field $K$ contains a (unramified 1-dimensional) local field 
 $F_{ur}=\op{Frac}\,W(k)$;
 \medskip 
 
 --- the classification result from \cite{Zh1} states also the existence of 
 a finite totally ramified extension $F'$ of $F_{ur}$ 
 such that $K\subset F'K_{\bar \pi }^{(1)}$. 
\medskip 

\subsection{Definition and basic properties of $\c P$-topology.} \label{S2.2}\ \ 

The topology  
on $N$-dimensional local field $K$ (we refer to  
it as the $\c P$-topology) can be introduced as follows, \cite {Pa2, Zh1, MZh}. 
\medskip 

If $N=0$ then the $\c P$-topology on $K$ is  discrete. 
\medskip 

Suppose $N\geqslant 1$ and $\pi =\{\pi _1,\dots ,\pi _N\}$ 
is a system of local parameters in $K$. 
Then the $\c P$-topology on $K$ is introduced by 
induction on $N$ via the following properties: 
\medskip 

\begin{enumerate}
\item 
any $\xi\in K$ can be uniquely presented 
as $\c P$-convergent series 
$$
\xi =\sum _{a}[\alpha _{\bar a}]\pi _1^{a_1}\dots \pi _{N}^{a_N}
$$
where the indices $a=(a_1,\dots ,a_N)\in\Z ^N$, all $\alpha _{a}\in k$,  
$[\alpha _{a}]=\alpha _{a}$ if $\op{char}K=p$ and 
$[\alpha _{\bar a}]$ are the Teichmuller representatives 
of $\alpha _{a}$ 
in 
$W(k)\subset K^{(1)}_{\bar\pi }$ if $\op{char}K=0$;
\medskip

\item 
the $\c P$-convergence property of 
$\xi $ means the existence of integers  
$A_i(a_1,\dots a_{i-1})\in\Z $ with $1\leqslant i\leqslant N$,  
such that: 
{\it if 
$\alpha _{a}\ne 0$ then} 
$$a_1\geqslant A_1, 
a_2\geqslant A_2(a_1), \ldots , 
a_N\geqslant A_N(a_1,\dots ,a_{N-1})\, ;$$
\medskip 

\item   if $\bar{\pi }:=\{\bar\pi _2,\dots ,\bar\pi _n\}$ is 
a system of local parameters 
for $K^{(1)}$ used to define the $\c P$-topology on $K^{(1)}$ 
then $\iota _{\bar\pi }$ induces 
the map  
$$s_{\pi }:\sum _{\bar a=(a_2,\dots ,a_N)}
[\alpha _{a}]\bar\pi _2^{a_2}\dots \bar\pi _N^{a_N}
\mapsto \sum _{\bar a=(0,a_2,\dots ,a_N)}[\alpha _{a}]\pi _2^{a_2}
\dots \pi _N^{a_N}$$
which is a  
$\c P$-continuous (set-theoretic) section  
$s:K^{(1)}
\To O_K^{(1)}$ of the natural 
projection $O_K^{(1)}\To K^{(1)}$. 
\end{enumerate}
\medskip

The above properties imply that: 
\medskip 

--- a)\ a base of $\c P$-open subsets $\c U_{\pi }(K)$ in $K$ consists of 
the subsets 
$\sum _{b\in\Z }\pi _1^{b}s _{\pi }(U_b)$, 
where all $U_b\in\c U_{\bar\pi }(K^{(1)})$ and 
for $b\gg 0$, $U_b= K^{(1)}$;
\medskip 
 
--- b) a base $\c C_{\pi }(K)$ of sequentially compact (closed) subsets 
in $K$ consists of  
$\sum _{b\in\Z }\pi _1^{b}s _{\pi }
(C_b)$ such that all $C_b\in\c C_{\bar\pi }(K^{(1)})$  
and for $b\ll 0$, $C_b=0$; 
\medskip 

--- c) if $\pi '=\{\pi _1',\dots ,\pi _N'\}$ is another system 
of local parameters for $K$ then the appropriate 
analogs of the above properties a)-c) also hold (i.e. 
the concept of $\c P$-topology 
does not depend on the original choice of local parameters in $K$);
\medskip 

--- d) if $[E:K]=n$ and the identification of $K$-vector spaces $E=K^n$ 
is induced by a choice of some 
$K$-basis in $E$ then $\{U^n\ |\ U\in\c U_{\pi }(K)\}$ is a base of 
$\c P$-open subsets in $L$; 
similarly, $\{C^n\ |\ C\in\c C_{\pi }(K)\}$ is a base of 
sequentially $\c P$-compact subsets in $E$;  
\medskip 

--- e) if $C_1,C_2\subset \c C_{\pi }(K)$ then $C_1C_2$ is also 
sequentially compact (i.e. there is $C\in\c C_{\pi }(K)$ 
such that $C_1C_2\subset C$);
\medskip 

--- f) $K$ is a $\c P$-topological additive group but not a 
$\c P$-topological field; however,  
$K=\varinjlim\limits _{C\in\c C_{\pi }(K)}C$ and the 
multiplication $C\times K\To K$ is $\c P$-continuous 
(i.e. 
for any $U\in\c U_{\pi }(K)$ there is an $U'\in\c U_{\pi }(K)$ 
such that $CU'\subset U$).
\medskip 

Note that the subset of $K$ consisting of the series $\xi $ from above item (1) 
satisfying the condition:
\medskip 

{\it if $\alpha _a\ne 0$ then $a_1\geqslant A_1, 
a_2\geqslant A_2(a_1), \ldots , 
a_N\geqslant A_N(a_1,\dots ,a_{N-1})$ 
(with a fixed choice of 
$A_i(a_1,\dots ,a_{i-1})$, $1\leqslant i\leqslant N$)}
\medskip 
\newline 
is sequentially compact. The family of all such 
subsets (with a fixed choice of a system of 
local parameters $\pi =\{\pi _1,\dots ,\pi _N\}$) forms 
the base $\c C_{\pi }(K)$. 

We can similarly describe the base $\c U_{\pi }(K)$: 
\medskip 

{\it $U\in\c U_{\pi }(K)$ iff there are  
$B_1, B_2(a_1),\dots , B_N(a_1,\dots ,a_{N-1})\in\Z $ 
(depending on $U$) such that 
$\xi\in U$ are characterized by the condition:
\medskip

if $a_1<B_1$, $a_2<B_2(a_1)$,  
\dots , $a_N<B_N(a_1,\dots ,a_{N-1})$ then $\alpha _a=0$.} 
\medskip

\subsection{$\c P$-topology in characteristic $p$} \label{S2.3} \ \ 

Assume that $K=\c K$ has characteristic $p$ and has a system of 
local parameters $t=\{t_1,\dots ,t_N\}$. 
We will use the simpler notation 
$\c U(\c K)$ and $\c C(\c K)$ instead of $\c U_{t}(\c K)$ and $\c C_{t}(\c K)$ 
when working with this fixed system of local parameters $t$.  
Note that all  
$C\in\c C(\c K)$ and $U\in\c U(\c K)$ are $k$-linear vector spaces 
(where $k=\c K^{(N)}$) and their elements 
appear as (some) formal $k$-linear combinations of the monomials 
$t^{a}:=t_1^{a_1}\dots t_N^{a_N}$, where all $a=(a_1,\dots ,a_N)\in\Z ^N$. 
\medskip 

Let $\c I=\c I(\c K)$ and $\c J=\c J(\c K)$ be the sets of indices such that 
$\c C(\c K)=\{C_{\alpha }\ |\ \alpha\in\c I\}$ and 
$\c U(\c K)=\{U_\beta\ |\ \beta\in\c J\}$. 
\medskip 

It is easy to see by induction on $N$ that 
for any $\alpha\in\c I$ and $\beta\in\c J$, 
\begin{equation} \label{E1.2}
\dim _{k}C_{\alpha }/C_{\alpha }\cap U_{\beta }<\infty \,.
\end{equation}
Note that $C_{\alpha }/C_{\alpha }\cap U_{\beta }$ are provided with the 
$k$-bases consisting of the monomials $t^{a}\,\op{mod}\,C_{\alpha }\cap U_{\beta }$ 
such that $t^{a}\in C_{\alpha }\setminus U_{\beta }$. These bases 
are compatible with respect to different choices of $\alpha $ and $\beta $. 
Therefore, $\{t^{a}\ |\ t^{a}\in C_{\alpha }\}$ is a 
$\c P$-topological $k$-basis in $C_{\alpha }$,   
a base of $\c P$-open neibourhoods in $C_{\alpha }$ 
consists of $k$-vector subspaces 
containing almost all elements of this basis and 
$C_{\alpha }=\varprojlim\limits _{\beta }
C_{\alpha }/C_{\alpha }\cap U_{\beta }$.

Let $\c N_{\alpha\beta }=
(C_{\alpha }/C_{\alpha }\cap U_{\beta })^D$ be the dual $k$-vector space 
for $C_{\alpha }/C_{\alpha }\cap U_{\beta }$. Then 
$\dim _k\c N_{\alpha\beta }<\infty $, 
$\c N_{\alpha\beta }^D=C_{\alpha }/C_{\alpha }\cap U_{\beta }$ and  
the spaces $\c N_{\alpha\beta }$ are provided with compatible (dual) 
$k$-bases 
$$\{T_{a}\ |\ t^{-a}\in C_{\alpha }\setminus U_{\beta }\}\, ,$$ 
where   
for any $t^{-b}$ with $b\in\Z ^N$,   
$T_{a}(t^{-b})=\delta _{ab}$\,. 

Note that 
$\c N_{\alpha }^{\c P}:=\Hom _{\c P\text{-cont}}(C_{\alpha },k)
=\varinjlim\limits_{\beta }
\Hom (\c N^D_{\alpha\beta },k)=\varinjlim\limits _{\beta }
\c N_{\alpha\beta }\, $
has a $k$-basis 
$\{T_{a}\ |\ t^{-a}\in C_{\alpha }\}$. 
Therefore, $C_{\alpha }$ is the set of all formal  
$k$-linear combinations of the appropriate monomials $t^{a}$ and 
we have a natural identification 
$C_{\alpha }=\c N_{\alpha }^{\c P\,D}$.  
\medskip 

Consider the $\F _p$-vector spaces $\c E^{\c P}_{\alpha }:=
\Hom _{\c P\text{-cont}}(C_{\alpha },\F _p)$.  
Then  
$$\c E^{\c P}_{\alpha }\otimes k=
\Hom _{\F _p\text{-lin},\c P\text{-cont}}(C_{\alpha }, k)
=\oplus _{n\in\Z /N_0}\c N^{\c P}_{\alpha }\otimes _kk^{(n)}\,,$$
where $k^{(n)}$ is the  
(twisted) $k$-module $k\otimes _{\sigma ^n}k$. In this identification 
the Frobenius 
$\sigma $ acts through the second factor 
on the left-hand side and shifts $\Z/N_0$-summands  
by +1 on the right-hand side. 

In particular, the extensions of scalars 
$\c E^{\c P}_{\alpha }\otimes k$ have the $k$-bases 
$$\{T_{an}=T_{a}\otimes 1^{(n)}\ |\ t^{-a}
\in C_{\alpha }, n\in\Z /N_0\}$$ 
which are compatible on $\alpha\in\c I$. 
So,  
$\{T_{an}\ |\ a\in\Z ^N, n\in\Z /N_0\}$ 
is a topological basis 
for $\c E^{\c P}\otimes k$, where 
$\c E^{\c P}:=\Hom _{\c P\text{-cont}}(\c K,\F _p)=
\varprojlim\limits_{\alpha} \c E^{\c P}_{\alpha}$.  

\begin{Prop}\label{P2.1} 
Let 
 $\c E:=\Hom (\c K,\F _p)$. Then $\c E=\c E^{\c P\,DD}$ 
 (the double dual $\F _p$-vector space for $\c E^{\c P}$).
\end{Prop}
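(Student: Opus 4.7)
The approach is to reduce the claim to classical finite-dimensional biduality by using the nested filtration $\c K=\varinjlim _{\alpha }C_{\alpha }$ and $C_{\alpha }=\varprojlim _{\beta }V_{\alpha ,\beta }$, where $V_{\alpha ,\beta }:=C_{\alpha }/(C_{\alpha }\cap U_{\beta })$ is finite-dimensional over $\F _p$ by \eqref{E1.2}.

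First, I would carry out the $\F _p$-analogue of the identification $\c N_{\alpha }^{\c P\,D}=C_{\alpha }$ that the paper has already established over $k$. Since $\c E^{\c P}_{\alpha }=\varinjlim _{\beta }\c E^{\c P}_{\alpha ,\beta }$ with $\c E^{\c P}_{\alpha ,\beta }=V_{\alpha ,\beta }^{D}$, and since $\Hom _{\F _p}(-,\F _p)$ converts direct limits into inverse limits, I obtain
\[
\c E^{\c P\,D}_{\alpha }=\varprojlim _{\beta }V_{\alpha ,\beta }^{DD}=\varprojlim _{\beta }V_{\alpha ,\beta }=C_{\alpha }
\]
by classical biduality of each finite-dimensional $V_{\alpha ,\beta }$. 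Taking the dual once more yields $\c E^{\c P\,DD}_{\alpha }=\Hom _{\F _p}(C_{\alpha },\F _p)=:\c E_{\alpha }$, and since $\c K=\varinjlim _{\alpha }C_{\alpha }$ we have $\c E=\varprojlim _{\alpha }\c E_{\alpha }$ by definition.

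It then remains to lift these level-$\alpha $ identifications to the global statement. The transitions $\c E^{\c P}_{\alpha '}\to \c E^{\c P}_{\alpha }$ for $\alpha \leqslant \alpha '$ are surjective, because a $\c P$-continuous functional on $C_{\alpha }$ extends to $C_{\alpha '}\supset C_{\alpha }$ along the $k$-module splitting $C_{\alpha '}=C_{\alpha }\oplus C'$, with $C'$ spanned by the monomial exponents in $C_{\alpha '}\setminus C_{\alpha }$. With this Mittag--Leffler-type property in hand, one deduces that double dualization carries the inverse limit $\c E^{\c P}=\varprojlim _{\alpha }\c E^{\c P}_{\alpha }$ through coherently and gives $\c E^{\c P\,DD}=\varprojlim _{\alpha }\c E^{\c P\,DD}_{\alpha }=\varprojlim _{\alpha }\c E_{\alpha }=\c E$.

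The main obstacle is this final passage to the inverse limit, since algebraic double duals do not in general commute with $\varprojlim $. To overcome this, I would exploit the explicit dual topological bases: the paper has shown that $\c E^{\c P}\otimes k$ has topological $k$-basis $\{T_{an}\}$ dual to the monomial basis $\{t^{-a}\}$ of $C_{\alpha }$, so the evaluation pairing $\c K\times \c E^{\c P}\to \F _p$ reduces at each stage $(\alpha ,\beta )$ to the classical perfect pairing between $V_{\alpha ,\beta }$ and $\c E^{\c P}_{\alpha ,\beta }$; these finite-level bidualities assemble to produce the global identification $\c E=\c E^{\c P\,DD}$.
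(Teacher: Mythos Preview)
Your approach is essentially the same as the paper's: both reduce to finite-dimensional biduality at each level $(\alpha ,\beta )$ and then pass to the limit, with the final step justified by the explicit dual-basis description. The only difference is cosmetic---the paper performs the whole computation after extending scalars to $k$, using the decomposition $\c E^{\c P}_{\alpha }\otimes k=\bigoplus _{n\in\Z /N_0}\c N^{\c P}_{\alpha }\otimes _kk^{(n)}$ and the already-established identity $C_{\alpha }=\c N_{\alpha }^{\c P\,D}$, whereas you work directly over $\F _p$.
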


\begin{proof} We verify this on the level 
of extensions of scalars as follows:  
$$\Hom (\c K,\F _p)\otimes k=\varprojlim\limits_{\alpha }
\Hom (C_{\alpha },\F _p)\otimes k=$$
$$\varprojlim\limits_{\alpha }\underset{n\in\Z /N_0}\oplus 
\Hom _{k}(C_{\alpha }, k)\otimes _kk^{(n)}=
\varprojlim\limits_{\alpha }\underset{n\in\Z /N_0}\oplus 
\c N_{\alpha }^{DD}\otimes _kk^{(n)}$$
$$=\varprojlim\limits_{\alpha }
\left (\underset{n\in\Z /N_0}\oplus\c N_{\alpha }\otimes _kk^{(n)}\right )^{DD}
=\varprojlim\limits_{\alpha }(\c E^{\c P}_{\alpha }
\otimes k)^{DD}=\c E^{\c P\,DD}\otimes k\,.$$
\end{proof} 

\begin{Cor}\label{C2.2}
 The vector space $\Hom (\c K,\F _p)$ is 
 the profinite completion 
 of its subspace $\Hom _{\c P\text{-cont}}(\c K,\F _p)$.
\end{Cor}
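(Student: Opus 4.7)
The plan is to invoke Proposition~\ref{P2.1}, which identifies $\c E$ with $(\c E^{\c P})^{DD}$ as $\F _p$-vector spaces, and then reduce the corollary to the purely algebraic fact that for any abstract $\F _p$-vector space $V$, the double dual $V^{DD}=\Hom _{\F _p}(\Hom _{\F _p}(V,\F _p),\F _p)$ coincides naturally with the profinite completion $\widehat V$. Here $V$ is topologized by taking cofinite-dimensional $\F _p$-subspaces as a base of neighborhoods of $0$; since $pV=0$, these are the same as finite-index subgroups, so $\widehat V$ agrees with the usual profinite completion of the underlying abelian group.

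To prove this algebraic fact, I would set up the order-reversing bijection between finite-dimensional subspaces $S\subset V^D$ and cofinite-dimensional subspaces $W\subset V$, given by $S\mapsto W_S:=\bigcap _{\phi\in S}\ker\phi$ with inverse $W\mapsto \Hom _{\F _p}(V/W,\F _p)\subset V^D$. A short check then shows that the canonical map $V/W_S\to S^D$ is an isomorphism and that every cofinite-dimensional $W$ arises from a unique $S=(V/W)^D$, so both directed systems are cofinal. Passing to inverse limits yields
\begin{equation*}
V^{DD}=\Hom _{\F _p}(V^D,\F _p)=\varprojlim _{S}S^D=\varprojlim _{W}V/W=\widehat V,
\end{equation*}
where $S$ ranges over finite-dimensional subspaces of $V^D$ and $W$ over cofinite-dimensional subspaces of $V$.

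Applying this with $V=\c E^{\c P}$ and combining with Proposition~\ref{P2.1} gives $\c E=(\c E^{\c P})^{DD}=\widehat{\c E^{\c P}}$, which is the content of the corollary. The one small compatibility point to verify is that the abstract canonical inclusion $V\hookrightarrow V^{DD}$ corresponds, under the identification of Proposition~\ref{P2.1}, to the concrete inclusion $\c E^{\c P}\hookrightarrow \c E$ coming from viewing $\c P$-continuous $\F _p$-linear maps as $\F _p$-linear maps; this is automatic since both inclusions are induced by the same evaluation pairing $\c E^{\c P}\otimes _{\F _p}\c K\to \F _p$. I do not anticipate any serious obstacle, as all the genuine content has been packaged into Proposition~\ref{P2.1}, and the remaining step is a standard duality argument for $\F _p$-vector spaces.
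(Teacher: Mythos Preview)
Your proof is correct and follows essentially the same approach as the paper: both invoke Proposition~\ref{P2.1} and then reduce to the general fact that for an $\F _p$-vector space $V$ the double dual $V^{DD}$ is its profinite completion, proved via the bijection between finite-dimensional subspaces of $V^D$ and cofinite-dimensional subspaces of $V$. Your version is slightly more detailed in checking the compatibility of inclusions, but the argument is the same.
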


\begin{proof}
 Use that if $L$ is a $\F _p$-linear space then $L^{DD}$ is 
 canonically isomorphic to the profinite completion of $L$. We sketch briefly the proof 
 of this fact extracted from \cite{CB}. 
 
 Suppose $L^D=\varinjlim Y_{\alpha }$, where all $Y_{\alpha }$ are finite dimensional 
 vector subspaces in $L^D$. Then 
 $L^{DD}=\varprojlim Y_{\alpha }^D$. 
 Note that $Y_{\alpha }\mapsto \op{Ann}(Y_{\alpha })\subset L$ 
 is a bijection between the set of finite dimensional subspaces in $L^D$ and the set of 
 finite codimensional subspaces in $L$, and 
 $Y_{\alpha }^D\simeq L/\op{Ann}(Y_{\alpha })$. 
 Therefore, $L^{DD}\simeq \varprojlim L/Z_{\alpha }$ where 
 $Z_{\alpha }=\op{Ann}Y_{\alpha }$ runs over the set of all finite 
 codimensional subspaces in $L$. 
\end{proof}

\medskip

\section{The group $\c G_{<p}^{\c P}$} \label{S3}

\subsection{Frobenius and $\c P$-topology} \label{S3.1} \ \ 

Let $\c K$ be an $N$-dimensional local field of characteristic $p$. 
The quotient $\bar{\c K}=\c K/(\sigma -\id )\c K$ 
can be provided with the induced $\c P$-topological structure such that 
the projection $\Pi :\c K\To\bar{\c K}$ is open. 
Choose a system of local parameters $t=\{t_1,\dots ,t_N\}$ in $\c K$ and let 
$\c C(\c K)=\{C_{\alpha }\ |\ \alpha\in\c I\}$ and 
$\c U(\c K)=\{U_{\beta }\ |\ \beta\in\c J\}$ 
be the corresponding bases of sequentially compact and 
open subsets in $\c K$ from Sect.\,\ref{S2.3}. 
Then the images $\bar C_{\alpha }=\Pi (C_{\alpha })$ and 
$\bar U_{\beta }=\Pi (U_{\beta })$ 
form the corresponding bases 
for $\bar{\c K}$. 

Choose $\alpha _0\in k$ with the absolute trace $\op{Tr}_{k/\F _p}\alpha _0=1$.

Define $\F _p$-linear operators 
$\c S,\c R:\c K\To \c K$ as follows. 
\medskip 

Suppose $\alpha\in k^*$. 

If $a\in\Z ^N$, 
$a>\bar 0=(0,\dots ,0)\in\Z ^N$ then set $\c S(t^{a}\alpha )=0$ and $\c R(t^{a}\alpha )=
-\sum _{i\geqslant 0}\sigma ^i(t^{a}\alpha )$.

For $a=\bar 0$, set $\c S(\alpha )=\alpha _0\op{Tr}_{k/\F _p}\alpha $  
and  $\c R(\alpha )=\sum _{0\leqslant j<i<N_0}
(\sigma ^j\alpha _0)\sigma ^i\alpha $. 

If $a=-a_1p^m<\bar 0$ with $a_1\in\Z ^+_N(p)$ set 
$\c S(t^a\alpha )=t^{-a_1}\sigma ^{-m}\alpha $ and 
$\c R(t^a\alpha )=\sum _{1\leqslant i\leqslant m}\sigma ^{-i}(t^a\alpha )$. 
\medskip 

For $b =\hskip-5pt\sum\limits _{a\in\Z ^N}
\alpha _{a}t^{a}\in\c K$, set 
$\c S(b)=\hskip-5pt\sum\limits _{a\in\Z ^N}
\c S(\alpha _{a}t^{a})$,  
$\c R(b)=\hskip-5pt\sum\limits _{a\in\Z ^N}
\c R(\alpha _{a}t^{a})$. 

The proof of the following proposition is straightforward. 
It uses just that $\sigma :\c K\To \c K$ is $\c P$-continuous  and $\c K$ is a 
$\c P$-topological group with respect to addition. 
\begin{Prop} \label{P3.1} 

{\rm a)} $\c R$ and $\c S$ are $\c P$-continuous. 

{\rm b)} For any $b\in\c K$,  
 $b=\c S(b)+(\sigma -\id )\c R(b)$.  
\end{Prop}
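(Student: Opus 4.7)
My plan is to prove (b) first by a monomial-wise calculation, then derive (a) from the explicit combinatorial form of $\mathcal{S}$ and $\mathcal{R}$. Both operators are $\F_p$-linear and any $b\in\c K$ is a $\c P$-convergent series $\sum _a\alpha _at^a$, so once the identity $t^a\alpha=\c S(t^a\alpha)+(\sigma-\id)\c R(t^a\alpha)$ is verified on each monomial and once we have the continuity from (a), the identity for $b$ follows by summing. Thus the logical flow is: monomial identity $\Rightarrow$ continuity $\Rightarrow$ identity on all of $\c K$.

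For the monomial identity I would handle the three cases separately. When $a>\bar 0$ the series $\c R(t^a\alpha)=-\sum_{i\geqslant 0}\sigma^i(t^a\alpha)$ is $\c P$-convergent because $\sigma^i(t^a\alpha)=t^{p^ia}\sigma^i\alpha$ and $p^ia$ grows without bound in the lex order; then $(\sigma-\id)\c R$ telescopes to $t^a\alpha$ and combines with $\c S(t^a\alpha)=0$ to give the identity. For $a=\bar 0$ and $\alpha\in k$, reindexing using $\sigma^{N_0}=\id _k$ gives
\[(\sigma-\id)\c R(\alpha)=\alpha\sum_{j=1}^{N_0-1}\sigma^j\alpha_0-\alpha_0\sum_{i=1}^{N_0-1}\sigma^i\alpha,\]
and adding $\c S(\alpha)=\alpha_0\Tr_{k/\F_p}\alpha$ collapses via $\Tr_{k/\F_p}\alpha_0=1$ to $\alpha\cdot\Tr_{k/\F_p}(\alpha_0)=\alpha$. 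For $a<\bar 0$, $a=-p^ma_1$, the sum defining $\c R$ is finite and telescopes, giving $(\sigma-\id)\c R(t^a\alpha)=t^a\alpha-t^{-a_1}\sigma^{-m}\alpha$, which cancels cleanly with $\c S(t^a\alpha)=t^{-a_1}\sigma^{-m}\alpha$.

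For part (a), since $\c K=\varinjlim _{\alpha }C_{\alpha }$, it is enough to check that each operator sends any $C_\alpha\in\c C(\c K)$ into some $C_{\alpha'}\in\c C(\c K)$ and is continuous on the finite-dimensional quotients $C_\alpha/(C_\alpha\cap U_\beta)$. The decisive observation is that, acting on monomials, both operators only shift exponents by non-negative powers of $p$ in directions that preserve the lex lower bounds defining $\c C(\c K)$: for $\c S$, the exponent $-p^ma_1$ is replaced by $-a_1$ (lex-larger); for $\c R$ with $a>\bar 0$, one passes from $a$ to $p^ia$ (lex-larger); for $\c R$ with $a=-p^ma_1$, one interpolates between $-p^ma_1$ and $-a_1$. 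Thus the support of $\c S(b),\c R(b)$ respects bounds of the type defining the base $\c C_t(\c K)$, yielding a compact target, and the induced maps on each finite-dimensional quotient are given by finitely many monomial substitutions, hence $\c P$-continuous.

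The main technical obstacle is making the continuity of $\c R$ on the positive-exponent part rigorous: one must show that as $b$ varies over a fixed compact $C_\alpha$, the combined series $\sum_{a>\bar 0,\,i\geqslant 0}\sigma^i(\alpha_at^a)$ remains $\c P$-convergent and lies in a single compact $C_{\alpha'}$ whose defining bounds depend only on those of $C_\alpha$. This reduces to a careful bookkeeping of how the maps $a\mapsto p^ia$ act on the lex-cone cut out by the functions $A_i(a_1,\dots,a_{i-1})$ from the definition of $\c P$-convergence; once this is confirmed, continuity at zero, and hence everywhere, follows from the $\F_p$-linearity of $\c R$.
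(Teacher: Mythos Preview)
Your proposal is correct and considerably more detailed than the paper's own proof, which simply declares the result ``straightforward'' and notes that it ``uses just that $\sigma :\c K\To\c K$ is $\c P$-continuous and $\c K$ is a $\c P$-topological group with respect to addition.'' Your monomial-by-monomial verification of (b) in all three cases is accurate (including the $a=\bar 0$ calculation, which you reindexed correctly), and your treatment of (a) via exponent-tracking on the bases $\c C_t(\c K)$ is a legitimate unpacking of what the paper leaves implicit; your identification of the positive-exponent part of $\c R$ as the only place requiring care is exactly right, and the bookkeeping you describe does go through.
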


Notice that $\c S^2=\c S$, $\c R^2=\c R$ and $\c R\c S=\c S\c R=0$. 
In particular, Prop.\,\ref{P3.1} implies
that the elements $b\in\c K$  
can be uniquely presented  modulo $(\sigma -\id )\c K$ in the following form 
\begin{equation} \label{E3.1} 
\sum\limits _{a\in\Z _N^+(p) }\gamma _{a}
t^{-a}+\gamma _{\bar 0}\alpha _0\,
\end{equation} 
where all $\gamma _{a}\in k$ and $\gamma _{\bar 0}\in\F _p$. 
We have also the following proposition. 

\begin{Prop} \label{P3.2} 
 
{\rm a)} The morphism $\Pi (b)\mapsto \c S(b)$, where $b\in\c K$, 
defines a $\c P$-continuous section   
$S_{t,\alpha _0}:\bar{\c K}\To \c K$ of $\Pi $ 
 such that $S_{t,\alpha _0}\Pi (\alpha t^{-a})=\alpha t^{-a}$ 
if $a\in\Z _N^+(p)$, $\alpha\in k$, 
and $S_{t,\alpha _0}(k/(\sigma -\id )k)=\F _p\alpha _0\subset k$.
\medskip 

{\rm b)} For a $\c P$-continuous section $S$ of $\Pi $ there is a 
$\c P$-continuous map $R_S:\c K\To\c K$ such that 
for any $b\in\c K$, 
$b =S(\Pi (b))+(\sigma -\id )R_S(b)$.
\end{Prop}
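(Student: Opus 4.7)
The plan is to leverage Proposition \ref{P3.1}, which provides, for every $b\in\c K$, the decomposition $b=\c S(b)+(\sigma -\id )\c R(b)$ together with the $\c P$-continuity of $\c S$ and $\c R$.

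For part a), the crucial step is to check that $\c S$ vanishes on $(\sigma -\id )\c K$, so that $\c S$ descends along $\Pi $. By $\c P$-continuity and $k$-linearity it suffices to verify this on expressions $(\sigma -\id )(\alpha t^{a})=\sigma (\alpha )t^{pa}-\alpha t^{a}$, splitting according to the sign of $a$. If $a>\bar 0$, then $pa>\bar 0$ too, and $\c S$ is identically zero on such monomials. If $a=\bar 0$, the claim reduces to $\alpha _0\op{Tr}_{k/\F _p}(\sigma \alpha -\alpha )=0$, which holds because the absolute trace is $\sigma $-invariant. If $a=-a_1p^{m}$ with $a_1\in\Z ^+_N(p)$, then $pa=-a_1p^{m+1}$, and directly from the definition of $\c S$ one has $\c S(\sigma (\alpha )t^{pa})=t^{-a_1}\sigma ^{-(m+1)}(\sigma \alpha )=t^{-a_1}\sigma ^{-m}\alpha =\c S(\alpha t^{a})$, so the difference vanishes. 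Consequently $S_{t,\alpha _0}:\bar{\c K}\To \c K$, $\Pi (b)\mapsto \c S(b)$, is a well-defined map; its $\c P$-continuity follows from that of $\c S$ and the fact that $\Pi $ is $\c P$-open. That it is a section of $\Pi $ is immediate from Prop.\,\ref{P3.1}(b) upon applying $\Pi $: one gets $\Pi (b)=\Pi (\c S(b))$. The stated explicit values $S_{t,\alpha _0}(\Pi (\alpha t^{-a}))=\alpha t^{-a}$ for $a\in\Z ^+_N(p)$ (the case $m=0$ in the definition of $\c S$) and $S_{t,\alpha _0}(k/(\sigma -\id )k)=\F _p\alpha _0$ (via the surjection $\op{Tr}_{k/\F _p}:k\To \F _p$) are then read off directly.

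For part b), set $R_S(b):=\c R(b-S(\Pi (b)))$ and let $d:=b-S(\Pi (b))$. Since $S$ is a section of $\Pi $, one has $\Pi (d)=0$, so by part a) the element $\c S(d)$ vanishes; applying Prop.\,\ref{P3.1}(b) to $d$ then yields $d=(\sigma -\id )\c R(d)=(\sigma -\id )R_S(b)$, which rearranges to the required identity $b=S(\Pi (b))+(\sigma -\id )R_S(b)$. The map $R_S$ is $\c P$-continuous as a composition of the $\c P$-continuous maps $\c R$, $\Pi $, $S$ and the $\c P$-continuous addition in $\c K$.

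The main obstacle is the three-case verification that $\c S\equiv 0$ on $(\sigma -\id )\c K$ in part a); once this is established, both the well-definedness of $S_{t,\alpha _0}$ and the construction of $R_S$ in part b) become formal consequences of the decomposition and continuity already supplied by Proposition \ref{P3.1}.
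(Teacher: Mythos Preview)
Your proof is correct and matches the paper's approach: part a) via the monomial check that $\c S$ kills $(\sigma -\id )\c K$ (which the paper leaves implicit, saying only ``follows from Prop.~\ref{P3.1}''), and part b) via the same $R_S=\c R(\id -S\Pi )$. Note only the harmless slip that $\c S$ is $\F _p$-linear rather than $k$-linear; your case-by-case verification on elements $\alpha t^{a}$ still suffices by $\c P$-continuity and $\F _p$-linearity.
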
 

\begin{proof} 
 Item a) follows from Prop.\,\ref{P3.1}. For item b), just notice that 
 $$b=\c S(b)+(\sigma -\id )\c R(b)=
 S\Pi (b)+(\c S-S\Pi )b+(\sigma -\id )\c R(b)=$$
 $$S\Pi (b)+(\sigma -\id )(\c R(\c S-S\Pi) +\c R)(b)=
 S\Pi (b)+(\sigma -\id )\c R(b-S\Pi b)$$
 and 
 $R_S:=\c R\,(\id -S\Pi )$ is $\c P$-continuous. 
\end{proof}

\subsection{$\c P$-topological module $\bar{\c K}^{D\c P}$.}\label{S3.2}  

Proceed similarly to Sect.\,\ref{S2.3} by setting for all 
$\alpha\in\c I$ and $\beta\in\c J$: 
\medskip 

1) $\bar{\c K}^D_{\alpha\beta }:=
\Hom (\bar C_{\alpha }/\bar C_{\alpha }\cap\bar U_{\beta },\F _p)=
\Hom _{\c P\text{-cont}}(\bar C_{\alpha }
/\bar C_{\alpha }\cap\bar U_{\beta },\F _p)\,;$
\medskip 

2) $\bar{\c K}^{D\c P}_{\alpha }:=
\varinjlim\limits _{\beta }\bar{\c K}^D_{\alpha\beta }=
\Hom _{\c P\text{-cont}}(\bar C_{\alpha },\F _p)\, ;$
\medskip 

3) $\bar{\c K}^{D\c P}:=\varprojlim\limits_{\alpha }\bar{\c K}^{D\c P}_{\alpha }
=\Hom _{\c P\text{-cont}}
(\bar{\c K},\F _p)\, .$
\medskip

\begin{remark} 
 1) For any $\alpha $ and $\beta $, $\dim _k\bar C_{\alpha }/(\bar C_{\alpha }
 \cap U_{\beta })<\infty $. 
 \medskip 
 
 2) $\bar{\c K}_{\alpha }^D=
\Hom (\bar C_{\alpha },\F _p)=
\left (\bar{\c K}_{\alpha }^{D\c P}\right )^{DD}\,$, in particular, $\bar{\c K}_{\alpha }^D$ is the profinite 
completion of $\bar{\c K}_{\alpha }^{D\c P}$.
\medskip 

3) It follows from 2) that $\bar{\c K}^D=
\varprojlim\limits _{\alpha }
\left (\bar{\c K}_{\alpha }^{D\c P}\right )^{\,DD}=
\left (\bar{\c K}^{D\c P}\right )^{DD}\,$ is 
the profinite completion of $\bar{\c K}^{D\c P}$.
\end{remark}

Define the $\c P$-topology on $\bar{\c K}^{D\c P}$ as 
the projective limit of discrete topologies on all $\bar{\c K}^{D\c P}_{\alpha }$. 

Set for any 
$\alpha\in \c I$ and $\beta\in\c J$, 
\medskip 

$U^D_{\alpha }:=
\{u^D\in\bar{\c K}^{D\c P}\ |\ u^D(\bar C_{\alpha })=0\}=\op{Ann}(\bar C_{\alpha })$.  
\medskip 

$C^D_{\beta }=\{c^D\in\bar{\c K}^{D\c P}\ |\ 
c^D(\bar U_{\beta })=0\}=\op{Ann}(\bar U_{\beta })\, .$ 
\medskip 

Then in $\bar{\c K}^{D\c P}$: 
\medskip

a)  ${\c U}(\bar{\c K}^{D\c P}):=\{U _{\alpha }^D\ |\ \alpha\in\c I\}$ 
is a base of open neighborhoods;
\medskip 
 
b) ${\c C}(\bar{\c K}^{D\c P})=\{C_{\beta }^D\ |\ \beta\in\c J\}$ 
is a base of sequentially compact subsets; 
\medskip  

c) $\bar{\c K}^{D\c P}=
\varinjlim\limits _{\beta }C^D_{\beta }$ and for any $\beta\in\c J$, 
$C^D_{\beta }=\varprojlim\limits _{\alpha }
C^D_{\beta }/C^D_{\beta }\cap U^D_{\alpha }$;
\medskip 

d) for any $\alpha\in\c I$, $\bar{\c K}^{D\c P}_{\alpha }=
\bar{\c K}^{D\c P}/U^D_{\alpha }$.
\medskip 

These properties are implied easily via the following observations.

Let $\{\alpha _i\ |\ 1\leqslant i\leqslant N\}$ be a basis of $k$ over $\F _p$. 
Consider the set 
\begin{equation} \label{E3.2}
 \{\alpha _it^{-a}\ |\ 1\leqslant i\leqslant N, 
 a\in\Z ^+_N(p)\}\cup\{\alpha _{\bar 0}\}\, .
\end{equation}

Then:
\medskip 

--- for any $\alpha \in\c I$, there is a 
subset of \eqref{E3.2} which forms a $\c P$-topological 
basis of $\bar{C}_{\alpha }$;
\medskip 

--- for any $\beta \in\c J$, there is a subset of \eqref{E3.2} which forms a 
$\c P$-topological basis of $\bar{U}_{\beta }$. 
\medskip 

Let 
\begin{equation} \label{E3.3} 
\{D_{a}^{(i)}\ |\ a\in\Z _N^+(p), 
1\leqslant i\leqslant N\}\cup\{D_{\bar 0}\}
\end{equation} 
be the dual system of elements of 
$\bar{\c K}^{D\c P}$ for system \eqref{E3.2}.

Then for any $\alpha \in\c I$, there is a subset of \eqref{E3.3} which forms 
a $\c P$-topological $\F _p$-basis of $U^D_{\alpha }$. Similarly, for any  
$\beta \in\c J$, there is a subset of \eqref{E3.3} which 
forms a $\c P$-topological $\F _p$-basis 
of $C_{\beta }^D$. 

As a result, the pairing 
$\bar{\c K}\times \bar{\c K}^{D\c P}\To\F _p$ is a perfect 
pairing of $\c P$-topological modules. This pairing identifies $\bar{\c K}^{D\c P}$ with 
$\Hom _{\c P{\text{-cont}}}(\bar{\c K},\F _p)$.  
\medskip 

Consider the presentations of elements from  
$\c S(\bar{\c K})\subset\c K$ in the form \eqref{E3.1}. 
For $a\in\Z _N^+(p)$ and $n\in\Z /N_0$, 
let $D_{an}\in\bar{\c K}^{D\c P}\otimes _{\F _p}k$ 
be such that 
$D_{an}(\Pi (\gamma _{a}t^{-a}))=\sigma ^n\gamma _{a}$ 
and $D_{an}(\Pi (\alpha _{\bar 0}))=0$. 
If $D_{\bar 0}\in\bar{\c K}^{D\c P}$ is the element appeared in 
\eqref{E3.3}, then  
$D_{\bar 0}(\gamma _{a}t^{-a})=0$ 
and $D_{\bar 0}(\alpha _{\bar 0})=1$. 

The elements of the set 
\begin{equation} \label{E3.4} 
\c D:=\{D_{an}\ |\ a\in\Z ^+_N(p), n\in\Z /N_0\}\cup\{D_{\bar 0}\}
\end{equation}
form a $\c P$-topological basis for $\bar{\c K}^{D\c P}_k$. In particular: 
\medskip 

\begin{enumerate} 
 \item 
the elements of $\bar{\c K}^{D\c P}_k$ appear uniquely   
as $\c P$-convergent series 
$$\sum _{\substack{a\in\Z _N^+(p)\\ n\in\Z /N_0}}
\gamma _{an}D_{an}
+\gamma _{\bar 0}D_{\bar 0}\, ,$$
where all $\gamma _{an}$ and $\gamma _{\bar 0}$ run over $k$;
\medskip 

\item  the appropriate subsets of \eqref{E3.4} provide us with compatible 
$k$-bases for $\bar{\c K}^D_{\alpha\beta }$ and $\bar{\c K}^{D\c P}_{\alpha }$;
\medskip

\item  the elements of $\bar{\c K}^{D\c P}$ can be presented uniquely  
as $\c P$-convergent series 
$$\sum _{\substack{a\in\Z _N^+(p)\\ n\in\Z /N_0}}
\sigma ^n(\gamma _{a})D_{an}
+\gamma _{\bar 0}D_{\bar 0}\, ,$$
where $\gamma _{\bar 0}\in \F _p$ and for $a\ne\bar 0$,  
$\gamma _{a}\in k$. 
\end{enumerate}
\medskip 

\begin{remark}
 The condition of $\c P$-convergence in (1) means that 
 for any $\alpha\in\c I$, 
 $\{\gamma _{an}\ne 0\ |\ t^{-a}\in \bar C_{\alpha }\}$ is finite. 
 Similar condition holds in (3) (where $\gamma _{an}$ should be replaced by 
 $\sigma ^n\gamma _{a}$).  
\end{remark}

Let $\otimes ^{\c P}$ be the 
$\c P$-topological tensor product. Consider 
$$\bar{\c K}^{D\c P}_{\c K}:=
\Hom _{\c P\text{-cont}}(\bar{\c K}, \c K)
=\bar{\c K}^{D\c P}\otimes ^{\c P}_{\F _p}\c K=
\bar{\c K}^{D\c P}_k\otimes ^{\c P}_k\c K\, .$$
The following property is straightforward.

\begin{Prop} \label {P3.3} 
The elements of $\bar{\c K}^{D\c P}_{\c K}$ can be presented uniquely  
as $\c P$-convergent sums  
$\sum \limits _{a\in Z^N}m_{a}t^{-a}$ 
with coefficients $m_{a}\in \bar{\c K}^{D\c P}_k$.    
\end{Prop}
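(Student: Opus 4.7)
My plan is to reduce the claim to the elementary observation that $\{t^{-a}\mid a\in\Z ^N\}$ is a $\c P$-topological $k$-basis of $\c K$, and then to transport this basis through the $\c P$-tensor product. First, I would recall from Sect.\,\ref{S2.2} that any $\xi\in\c K$ has a unique $\c P$-convergent series expansion $\xi=\sum _{a}\alpha _{a}t^{-a}$ with $\alpha _{a}\in k$, and that the bases $\c C(\c K)$ and $\c U(\c K)$ are defined by monomial-index constraints. As a consequence, for every $C_{\alpha }\in\c C(\c K)$ we have $C_{\alpha }=\varprojlim\limits _{\beta }C_{\alpha }/(C_{\alpha }\cap U_{\beta })$, each finite-dimensional $k$-quotient $C_{\alpha }/(C_{\alpha }\cap U_{\beta })$ carrying the $k$-basis $\{t^{-a}\mid t^{-a}\in C_{\alpha }\setminus U_{\beta }\}$, as recorded in Sect.\,\ref{S2.3}.

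Next, I unwrap the definition of the $\c P$-tensor product, using that $\c K=\varinjlim\limits _{\alpha }C_{\alpha }$ and that each $C_{\alpha }$ is a $\c P$-projective limit of finite-dimensional $k$-modules:
$$
\bar{\c K}^{D\c P}_{\c K}
=\bar{\c K}^{D\c P}_k\otimes ^{\c P}_k\c K
=\varinjlim _{\alpha }\varprojlim _{\beta }\bar{\c K}^{D\c P}_k\otimes _k\bigl(C_{\alpha }/(C_{\alpha }\cap U_{\beta })\bigr).
$$
Since $C_{\alpha }/(C_{\alpha }\cap U_{\beta })$ is a finite direct sum of lines $k\cdot t^{-a}$, the ordinary tensor product over $k$ collapses to the finite direct sum $\bigoplus _{t^{-a}\in C_{\alpha }\setminus U_{\beta }}\bar{\c K}^{D\c P}_k\cdot t^{-a}$. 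Passing to the projective limit in $\beta $ yields $\c P$-convergent sums $\sum _{t^{-a}\in C_{\alpha }}m_{a}t^{-a}$ with $m_{a}\in\bar{\c K}^{D\c P}_k$, and then the inductive limit in $\alpha $ allows the index $a$ to range over all of $\Z ^N$, subject to the global $\c P$-convergence condition that, for each $C_{\alpha }\in\c C(\c K)$, only the indices $a$ with $t^{-a}\in C_{\alpha }$ contribute.

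Uniqueness is immediate at each finite level: if $\sum _{a}m_{a}t^{-a}$ maps to zero in $\bar{\c K}^{D\c P}_k\otimes _k\bigl(C_{\alpha }/(C_{\alpha }\cap U_{\beta })\bigr)$, then each $m_{a}$ with $t^{-a}\in C_{\alpha }\setminus U_{\beta }$ must vanish because the monomials form a $k$-basis of the quotient; the compatibility of these bases as $\alpha ,\beta $ vary lifts this to global uniqueness. The main (minor) technical point is to verify that the $\c P$-tensor product commutes with the inductive and projective limits appearing in the description of $\c K$, which is precisely what the $\c P$-topological structure from Sect.\,\ref{S2.3} is designed to guarantee, so no essentially new input is required.
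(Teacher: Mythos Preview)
Your proposal is correct and matches the paper's own treatment: the paper simply declares the proposition ``straightforward'' and gives no argument, so your reduction to the fact that $\{t^{-a}\mid a\in\Z ^N\}$ is a $\c P$-topological $k$-basis of $\c K$, followed by commuting the $\c P$-tensor with the limits $\c K=\varinjlim _{\alpha }C_{\alpha }=\varinjlim _{\alpha }\varprojlim _{\beta }C_{\alpha }/(C_{\alpha }\cap U_{\beta })$, is exactly the intended unpacking. One small refinement: your final convergence clause (``for each $C_{\alpha }$ only the indices $a$ with $t^{-a}\in C_{\alpha }$ contribute'') records only the $\c K$-side constraint, whereas the paper's Remark after Prop.\,\ref{P3.3} states the full two-sided condition, namely that for every $\alpha ,\beta $ the set $\{a\mid t^{-a}\notin U_{\beta },\ m_{a}\notin U^D_{\alpha }\}$ is finite; this is what your projective limits actually produce once you also remember that $\bar{\c K}^{D\c P}_k=\varprojlim _{\alpha '}\bar{\c K}^{D\c P}_{\alpha ',k}$ carries its own $\c P$-topology.
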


 \begin{remark} 
The condition of $\c P$-convergency in Prop.\,\ref{P3.3} 
means that for any $\alpha\in\c I$, $\beta\in\c J$,  
$\{m_{a}\ne 0\ |\ t^{-a}\notin U_{\beta }, 
m_{a}\notin U^D_{\alpha }\}$ 
is finite. 
\end{remark}

\subsection{Lie algebras $\c L$ and $\c L^{\c P}$} \label{S3.3} \ \ 

Let $\wt{\c L}$ be a free profinite Lie algebra over $\F _p$ 
with the (profinite) module 
of free generators $\bar{\c K}^D=\Hom (\bar{\c K}, \F _p)$.  
Let $\c L=\wt{\c L}/C_p(\wt{\c L})$. Then $\c L$ is a projective limit of 
finite Lie $\F _p$-algebras generated by the finite quotients of $\bar{\c K}^D$. 

Let $\wt{\c L}^{\c P}$, resp., $\c L^{\c P}$,  be the Lie subalgebra in 
$\wt{\c L}$, resp. in $\c L$,  
generated by the elements of $\bar{\c K}^{D\c P}=\Hom _{\c P\text{-cont}}
(\bar{\c K},\F _p)\subset\bar{\c K}^D$. 
Then $C_p(\wt{\c L}^{\c P})=\wt{\c L}^{\c P}\cap C_p(\wt{\c L})$ 
(use that the profinite completion of $\bar{\c K}^{D\c P}$ is $\bar{\c K}^D$) and 
$\c L^{\c P}=\wt{\c L}^{\c P}/C_p(\wt{\c L}^{\c P})$. 
 
Note that $\c L^{\c P}$ inherits 
the $\c P$-topological structure from $\bar{\c K}^{D\c P}$ (use the 
topology of tensor product on  
$\sum\limits _{1\leqslant i<p}
\left (\bar{\c K}^{D\c P}\right )^{\otimes i}\supset\c L^{\c P}$), and the 
profinite completion of $\c L^{\c P}$ coincides with $\c L$.

Introduce the Lie algebras $\c L_{\alpha }$ 
with generators $\bar{\c K}^D_{\alpha }=\Hom (\bar C_{\alpha },\F _p)$
and $\c L^{\c P}_{\alpha }$ 
with generators $\bar{\c K}_{\alpha }^{D\c P}=
\Hom _{\c P\text{-cont}}(\bar C_{\alpha },\F _p)$. 
Then $\c L_{\alpha }^{\c P}$ has discrete topology, its  
profinite completion 
coincides with $\c L_{\alpha }$, 
$\c L=\varprojlim\limits _{\alpha }\c L_{\alpha }$ and $\c L^{\c P}=
\varprojlim\limits_{\alpha }\c L^{\c P}_{\alpha }$.
If $\c L_{\alpha\beta }$ is the subalgebra in $\c L_{\alpha }$ 
generated by $\bar{\c K}^D_{\alpha\beta }$  then $\c L_{\alpha\beta }$ is finite and 
$\c L_{\alpha }^{\c P}=\varinjlim \limits _{\beta }
\c L_{\alpha\beta }$.

The elements of the 
Lie algebra $\c L^{\c P}_k:=\c L^{\c P}\otimes _{\F _p}k$ 
appear as convergent $k$-linear 
combinations of the Lie monomials of the form 
$$\sum\limits _{D_1,\dots ,D_r}
\gamma _{D_1,\dots D_r}[\dots [D_1,\dots ],D_r]\, .$$ 
where all $D_1,\dots ,D_r$ belong to \eqref{E3.4}. 
The condition of convergency means that for any $\alpha\in\c I$, 
all but finitely many of these monomials have the zero image 
in $\c L^{\c P}_{\alpha }$. 

We can describe similarly the enveloping algebra of $\c L^{\c P}$. Namely, 
let $\frak A^{\c P}$ and $\frak A_{\alpha }^{\c P}$ be enveloping algebras for 
$\c L^{\c P}$ and, resp., $\c L_{\alpha }^{\c P}$ taken modulo $p$-th powers of 
the corresponding augmentation ideals. 
Then $\frak A^{\c P}=\underset{\alpha }\varprojlim 
\frak A^{\c P}_{\alpha }$ and $\frak A_{\alpha ,k}^{\c P}$ consists of all 
polynomials of total degree 
$<p$ in the subset of variables $D_{an}$ from \eqref{E3.4} satisfying the condition 
$t^{-a}\notin C_{\alpha }$. In other words, 
the elements of $\frak A^{\c P}$ are 
characterized in the algebra $\frak A$ 
as $\c P$-continuous polynomials on $\bar{\c K}$ with values in $\F _p$ of 
total degree $<p$. 
Of course, $\frak A$ and $\c L$ 
can be recovered as the profinite completion of 
$\frak A^{\c P}$ and, resp., $\c L^{\c P}$.

\subsection{Class of conjugated subgroups 
$\op{cl}^{\c P}(\c G_{<p})$}\label{S3.4} \ \ 

Let $\c G  =\Gal (\c K_{sep}/\c K)$ be the absolute 
Galois group of the field $\c K$. 

If $\c G _{<2}:=\c G/\c G^pC_2(\c G)$ is the 
maximal abelian quotient of period $p$ of $\c G $  
then the classical Artin-Schreier duality 
$\bar{\c K}\times \c G _{<2}\To \F _p$   
allows us to identify 
$\c G_{<2}$ with $\bar{\c K}^D=\left (\bar{\c K}^{D\c P}\right )^{DD}$ 
and to introduce  
a dense subgroup 
$\c G^{\c P}_{<2}:=\bar{\c K}^{D\c P}$ in $\c G _{<2}$. 
Note that with respect to this identification, 
the elements $D_a^{(i)}\in\bar{\c K}^{D\c P}$ from 
\eqref{E3.3} appear as elements of $\c G_{<2}$ such that if 
$T_{bj}\in\c K_{sep}$ are such that 
$T^p_{bj}-T_{bj}=\alpha _jt^{-b}$ (in notation from Sect.\,\ref{S2.3}) 
then $D_a^{(i)}(T_{bj})=T_{bj}+\delta _{ab}\delta _{ij}$. 
Similarly, the elements $D_{an}\in\bar{\c K}^{D\c P}_k$ 
from \eqref{E3.4} act as follows: if $b\in\N $, 
$\op{gcd}(b,p)=1$ and 
$T_b\in\c K_{sep}$ is such that $T_b^q-T_b=t^{-b}$ then for $0\leqslant m<N_0$, 
$D_{an}(T^{p^m}_b)=T^{p^m}_b+\delta _{ab}\delta _{nm}$. 

The identification of local class field theory 
$\c G_{<2}\simeq K_N(\c K)/p$ induces the identification 
$\c G_{<2}^{\c P}\simeq K_N^{top}(\c K)/p$, 
where 
$K_N^{top}$ is the topological version of the functor $K_N$, cf.\,e.g. 
\cite{Pa1, Pa2}. 
The subgroup $\c G_{<2}^{\c P}$ 
is considerably smaller than $\c G_{<2}$ but its profinite completion  
recovers the whole $\c G_{<2}$. In particular, $\c G_{<2}^{\c P}$ 
can be used instead of $\c G_{<2}$ when studying  
finite (abelian) extensions of $\c K$ inside 
$\c K_{<2}=\c K_{<p}^{C_2(\c G_{<p})}$.  
Our target is to introduce an analog of $\c G^{\c P}_{<2}$ in the 
case of $p$-extensions of nilpotent class $<p$. 

From now on we will consider only $e\in
\c E(\c L^{\c P}_{\c K}):=\c L^{\c P}_{\c K}\cap
\c E(\c L_{\c K})$. 
 Under this assumption if $S$ is a section of the projection 
$\Pi :\c K\To \bar{\c K}$ such that 
$e\,\op{mod}\,C_2(\c L^{\c P}_{\c K})=e_S$  
then $S$ is $\c P$-continuous.  

As earlier, choose $f\in\c F(e)$ and set $\pi =\pi _f(e):\c G_{<p}\simeq G(\c L)$. 

\begin{definition} $\op{cl}^{\c P}(\c G_{<p})$ is the class 
of conjugated subgroups of $\c G_{<p}$ containing 
$\pi _f(e)^{-1}G(\c L^{\c P})$.
\end{definition}

\begin{Thm} \label{T3.4} 
 The class $\op{cl}^{\c P}(\c G_{<p})$ does not depend on 
the choices of $e\in\c E(\c L_{\c K}^{\c P})$ and $f\in\c F(e)$. 
\end{Thm}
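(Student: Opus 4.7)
The plan is to split the statement into two independent reductions. First, I would fix $e\in\c E(\c L^{\c P}_{\c K})$ and show that varying $f\in\c F(e)$ keeps $\pi_f(e)^{-1}G(\c L^{\c P})$ inside a single $\c G_{<p}$-conjugacy class. Second, I would show that varying $e$ within $\c E(\c L^{\c P}_{\c K})$ produces, for a suitable $f'\in\c F(e')$, literally the same subgroup of $\c G_{<p}$, provided one refines Proposition \ref{P1.1} to the $\c P$-continuous setting. Combining the two reductions gives the theorem.

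For the first reduction, by Sect.\,\ref{S1.2}(a) any $f'\in\c F(e)$ has the form $f'=f\circ l$ with $l\in G(\c L)$, whence $\pi_{f'}(e)=\op{Ad}(-l)\circ\pi_f(e)$. Applying $\pi_f(e)^{-1}$ (a group isomorphism) one gets
$$\pi_{f'}(e)^{-1}\bigl(G(\c L^{\c P})\bigr)=\op{Ad}\bigl(\pi_f(e)^{-1}(l)\bigr)\cdot\pi_f(e)^{-1}\bigl(G(\c L^{\c P})\bigr),$$
so the two subgroups are conjugate in $\c G_{<p}$.

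For the second reduction, I would prove the following $\c P$-refinement of Proposition \ref{P1.1}: for $e,e'\in\c E(\c L^{\c P}_{\c K})$ there exist $x\in\c L^{\c P}_{\c K}$ and $\c A\in\op{Aut}_{\text{Lie}}\c L$ with $\c A(\c L^{\c P})=\c L^{\c P}$ and $\c A\equiv\id\ \op{mod}C_2(\c L)$ satisfying $e'=\sigma(x)\circ(\c A\otimes\id_{\c K})e\circ(-x)$. Granting this, Corollary \ref{C1.2} yields $f'\in\c F(e')$ with $\pi_{f'}(e')=\c A\circ\pi_f(e)$, and setwise preservation of $G(\c L^{\c P})$ under $\c A$ gives
$$\pi_{f'}(e')^{-1}\bigl(G(\c L^{\c P})\bigr)=\pi_f(e)^{-1}\bigl(\c A^{-1}G(\c L^{\c P})\bigr)=\pi_f(e)^{-1}\bigl(G(\c L^{\c P})\bigr).$$

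The hard part is the $\c P$-refinement itself; I would simply rerun the inductive construction of Proposition \ref{P1.1} inside the $\c P$-topological category, the only new input being Proposition \ref{P3.2}. Namely, one chooses the $\c P$-continuous section $S_{t,\alpha_0}$ from \ref{P3.2}(a), so that both $e_S$ and $e_{S'}$ lie in $M^{\c P}_{\c K}\subset\c L^{\c P}_{\c K}$ where $M^{\c P}:=\bar{\c K}^{D\c P}$. At the $r$-th step the residue $l_{r+1}\in C_{r+1}(\c L^{\c P}_{\c K})$ is decomposed via the $\c P$-continuous splitting $b=S\Pi(b)+(\sigma-\id)R_S(b)$ of Proposition \ref{P3.2}(b) applied coefficientwise, producing $l'=\sum_\alpha c_\alpha\hat\otimes S(l_\alpha)$ with $c_\alpha\in C_{r+1}(\c L^{\c P})$ and $x'\in C_{r+1}(\c L^{\c P}_{\c K})$. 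Thus the inductive data $(A_r,x_r)$ stay in the $\c P$-continuous category; in the limit one obtains $x\in\c L^{\c P}_{\c K}$ and a $\c P$-continuous section $A\colon M^{\c P}\to\c L^{\c P}$ of $\op{pr}$, whose unique Lie extension $\c A$ sends $\c L^{\c P}$ into itself. Applying the same argument to the reversed pair $(e',e)$ gives the two-sided inclusion $\c A(\c L^{\c P})=\c L^{\c P}$, completing the proof.
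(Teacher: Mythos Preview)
Your proof is correct and follows essentially the same strategy as the paper: the paper's Lemma \ref{L3.5} is exactly your ``$\c P$-refinement of Proposition \ref{P1.1}'', proved by rerunning the induction with the $\c P$-continuous operators $\c R$, $\c S$ of Propositions \ref{P3.1}--\ref{P3.2}, and the paper then concludes just as you do via $f''=x\circ(\c A\otimes\id)f\in\c F(e')$ and the fact that $\c A$ stabilises $\c L^{\c P}$.

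One small point: your ``reversed pair'' argument does not quite give $\c A(\c L^{\c P})=\c L^{\c P}$, since the automorphism $\c B$ obtained from $(e',e)$ need not equal $\c A^{-1}$. The paper sidesteps this by observing directly (and you have all the ingredients for it) that the section $A\colon\bar{\c K}^{D\c P}\to\c L^{\c P}$ makes $\c A|_{\c L^{\c P}}$ a Lie endomorphism congruent to $\id$ modulo $C_2(\c L^{\c P})$; on a Lie algebra of nilpotent class $<p$ such a unipotent endomorphism is automatically invertible, so $\c A\in\op{Aut}_{\text{Lie}}\c L^{\c P}$ and $\c A(\c L^{\c P})=\c L^{\c P}$ without further work.
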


\begin{proof}
 Suppose $e'\in\c E(\c L_{\c K}^{\c P})$, $f'\in\c F(e')$ and set $\pi '=\pi _{f'}(e')$. 
 We must prove that $\pi ^{-1}G(\c L^{\c P})$ and 
 $(\pi ')^{-1}G(\c L^{\c P})$ are conjugated 
 in $\c G_{<p}$. 

\begin{Lem} \label{L3.5} 
 There is an $x\in\c L^{\c P}_{\c K}$ and a $\c P$-continuous section 
 $A:\bar{\c K}^{D\c P}\To\c L^{\c P}$ of the natural 
projection $\c L^{\c P}\To \c L^{\c P}/C_2(\c L^{\c P})=
\bar{\c K}^{D\c P}$ 
such that 
$$e'=\sigma (x)\circ (\c A\otimes _{\F _p}^{\c P}
\id _{\c K})e\circ (-x)\, ,$$
where $\c A\in\op{Aut}_{\text{Lie}}\c L^{\c P}$ is 
such that $\c A|_{\bar{\c K}^{D\c P}}=A$. 
\end{Lem}

\begin{proof} [Proof of lemma]
The proof appears as a $\c P$-topological version of 
the proof of Prop.\,\ref{P1.1}, where we use 
the ($\c P$-continuous) operators 
from Prop.\,\ref{P3.1} and \ref{P3.2}. 
 
Let $\{l_{\alpha }\ |\ \alpha\in\c I\}$ be a $\c P$-topological  
$\F _p$-basis of $\bar{\c K}$. 
Let $\hat l_{\alpha }$, $\alpha\in\c I$,  be the dual 
($\c P$-topological) $\F _p$-basis for $\bar{\c K}^{D\c P}$, i.e. 
for any $\alpha _1,\alpha _2\in \c I$, 
$\hat l_{\alpha _1}(l_{\alpha _2})=\delta _{\alpha _1\alpha _2}$. 
Then for the corresponding sections $S$ and $S'$, we have 
the $\c P$-convergent series 
$e_S=\sum\limits _{\alpha }\hat l_{\alpha }\otimes ^{\c P}S(l_{\alpha })$ and 
$e'_{S'}=\sum\limits _{\alpha }\hat l_{\alpha }\otimes ^{\c P}S'(l_{\alpha })$. 

Apply induction on $r\geqslant 1$ to prove the 
existence of $x_r\in \c L^{\c P}_{\c K}$ and a 
section $A_r$ of the projection $\c L^{\c P}\To \bar{\c K}^{D\c P}$ such that 
 $$e'\equiv \sigma (x_r)\circ (\c A_r\otimes \id _{\c K})
 e\circ (-x_r)\,\op{mod}\,C_{r+1}(\c L^{\c P}_{\c K})\, ,$$ 
 with the appropriate $\c P$-continuous automorphism $\c A_r$ of $\c L^{\c P}$. 
 
If $r=1$ take $A_1=\id _M$ and 
$x_1=\sum\limits _{\alpha }\hat l_{\alpha }\otimes ^{\c P}x_{1\alpha }$, where 
all $x_{1\alpha }=\c R(S'(l_{\alpha })-S(l_{\alpha }))\in\c K$ and 
$\c R$ is the operator from Prop.\,\ref{P3.1}. Note that 
$x_1\in\c L^{\c P}_{\c K}$ because 
$\c R$ is $\c P$-continuous. 
 
 If $r\geqslant 1$ and such $x_r$ and $A_r$ exist then 
 there is $l_{r+1}\in C_{r+1}(\c L^{\c P}_{\c K})$ such that 
 $e'\equiv\sigma x_r\circ (\c A_r\otimes ^{\c P}\id _{\c K})e
 \circ (-x_r)\circ l_{r+1}\,\op{mod}\,C_{r+2}(\c L^{\c P}_{\c K})$. 
 
 Let $l_{r+1}=\sum _{\alpha }c_{\alpha }\otimes ^{\c P}b_{\alpha }$ 
 with all $c_{\alpha }\in C_{r+1}(\c L^{\c P})$ 
 and $b_{\alpha }\in\c K$. Then 
 $$l_{r+1}=l'+\sigma (x')-x'\, ,$$ 
 where 
 $l'=\sum\limits _{\alpha }c_{\alpha }\otimes ^{\c P}(S\Pi )(b_{\alpha })$ 
 and $x'=\sum\limits _{\alpha }c_{\alpha }\otimes ^{\c P}R_S(b_{\alpha })$ 
 belong to $C_{r+1}(\c L^{\c P}_{\c K})\subset 
 \c L^{\c P}_{\c K}$. 
 It remains to set 
 $A_{r+1}(\hat l_{\alpha })=A_r(\hat l_{\alpha })+c_{\alpha }$ and 
 $x_{r+1}=x_r+x'$. 
 The lemma is proved.  
\end{proof} 

\begin{remark} The main reason why 
the proof of Prop.\,\ref{P1.1} works in the $\c P$-topological context 
is that $\c L^{\c P}_{\c K}=\c L^{\c P}\otimes ^{\c P}\c K$ 
is stable with respect to the action of 
$\c P$-continuous operators on the factor $\c K$. 
\end{remark}

Continue the proof of the theorem. 

Denote by the same symbol $\c A$ 
the (unique) extension of $\c A$ to $\c L$ (use that 
$\c L$ is a profinite completion of $\c L^{\c P}$). 
Then $f''=x\circ (\c A\otimes \id _{sep})f\in\c F(e')$ 
and for any  
$\tau\in\c G _{<p}$, $\pi _{f''}(e')(\tau )=(\c A\cdot \pi )\tau $. 

Then $(\c A\cdot\pi )(\c G_{<p}^{\c P})=\c A(\c L^{\c P})=\c L^{\c P}$. 
This implies that $\pi ^{-1}G(\c L^{\c P})=\pi _{f''}(e')^{-1}G(\c L^{\c P})$. 
But $f',f''\in\c F(e')$ implies that 
$(\pi ')^{-1}G(\c L^{\c P})$ and 
$\pi _{f''}(e')^{-1}G(\c L^{\c P})$ are conjugated in $\c G _{<p}$.  
\end{proof}

\subsection{Galois $\c P$-correspondence} \label{S3.5} 
As earlier, consider 
$\bar{\c K}^D$, $\bar{\c K}^{D\c P}$, $e\in\c E(\c L^{\c P}_{\c K})$, 
$f\in \c F(e)$ and $\pi :=\pi _f(e):\c G _{<p}\simeq G(\c L)$. 
Suppose $\c K'$ is a finite field extension of $\c K$ in $\c K_{sep}$.  
Let $\c G '_{<p}$, $\bar{\c K}'^D$, $\bar{\c K}^{'D\c P}$, 
$e'\in\c E(\c L_{\c K'}^{'\c P})$, 
$f'\in\c F(e')$ and 
$\pi '=\pi _{f'}(e'):\c G '_{<p}\simeq G(\c L')$ 
be the similar objects for the field $\c K'$. 

The natural morphism of 
profinite groups $\Theta :\c G '_{<p}\To \c G _{<p}$ 
can be described 
in the terms of identifications $\pi $ and $\pi '$ by Prop.\,\ref{P1.3}. 
It admits the following $\c P$-version. 

\begin{Prop} \label{P3.6}  
Suppose $\c G^{'\c P}_{<p}\in\op{cl}^{\c P}\c G'_{<p}$. Then:
\medskip 

{\rm a)} there is  
$\c G^{\c P}_{<p}\in\op{cl}^{\c P}\c G_{<p}$ such that 
$\Theta (\c G^{'\c P}_{<p})\subset \c G^{\c P}_{<p}\, ;$ 
\medskip 

{\rm b)}  
$(\c G^{\c P}_{<p}:\Theta (\c G^{'\c P}_{<p}))=
(\c G_{<p}:\Theta (\c G'_{<p}))$, i.e.   
$\Theta (\c G^{'\c P}_{<p})=\Theta (\c G'_{<p})\cap\c G^{\c P}_{<p}$. 
\end{Prop}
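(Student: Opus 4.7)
The strategy is to establish a $\mathcal{P}$-topological refinement of Proposition~\ref{P1.3}: for $e'\in\mathcal{E}(\mathcal{L}^{'\mathcal{P}}_{\mathcal{K}'})$ and a suitable $f'\in\mathcal{F}(e')$, the Lie-algebra morphism $\mathcal{A}:\mathcal{L}'\to\mathcal{L}$ and the correction element $x$ of that proposition can be chosen so that $\mathcal{A}(\mathcal{L}^{'\mathcal{P}})\subset\mathcal{L}^{\mathcal{P}}$ and $x\in\mathcal{L}^{'\mathcal{P}}\otimes^{\mathcal{P}}\mathcal{K}'$. Both parts of the proposition then follow formally. To produce such $\mathcal{A}$ and $x$ I would imitate the proof of Lemma~\ref{L3.5} verbatim, with the base $\mathcal{K}$ replaced by $\mathcal{K}'$: fix a $\mathcal{P}$-topological $\F_p$-basis $\{l'_\alpha\}$ of $\bar{\mathcal{K}}'$ with dual basis $\{\hat l'_\alpha\}\subset\bar{\mathcal{K}}^{'D\mathcal{P}}$, a $\mathcal{P}$-continuous section $S'$ of $\Pi':\mathcal{K}'\to\bar{\mathcal{K}}'$ and the associated operator $\mathcal{R}_{S'}$ from Proposition~\ref{P3.2}. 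The rank-$1$ step reads off $x_1$ and the lifts $\tilde l_\alpha\in\mathcal{L}^{\mathcal{P}}$ from the $\mathcal{P}$-convergent expansion of $e\otimes_{\mathcal{K}}1$ along $\{S'(l'_\alpha)\}$, using $\mathcal{R}_{S'}$ for antiderivatives; the induction $r\to r+1$ proceeds as in Lemma~\ref{L3.5}, since the splitting $l_{r+1}=l'+(\sigma x'-x')$ inside $C_{r+1}(\mathcal{L}^{'\mathcal{P}}\otimes^{\mathcal{P}}\mathcal{K}')$ can be performed $\mathcal{P}$-continuously by Proposition~\ref{P3.2}(b) applied coefficient-wise to the $\mathcal{K}'$-factor.

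With this refinement in hand, part (a) is immediate: the identity $\pi\circ\Theta=\mathcal{A}\circ\pi'$ from Proposition~\ref{P1.3}(b) gives $\Theta(\mathcal{G}^{'\mathcal{P}}_{<p})\subset\pi^{-1}\mathcal{A}(G(\mathcal{L}^{'\mathcal{P}}))\subset\mathcal{G}^{\mathcal{P}}_{<p}$ for the distinguished representatives $\mathcal{G}^{'\mathcal{P}}_{<p}=(\pi')^{-1}G(\mathcal{L}^{'\mathcal{P}})$ and $\mathcal{G}^{\mathcal{P}}_{<p}=\pi^{-1}G(\mathcal{L}^{\mathcal{P}})$; Theorem~\ref{T3.4} then handles the conjugation ambiguity by varying $f$ and $f'$. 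For part (b), the inclusion $\Theta(\mathcal{G}^{'\mathcal{P}}_{<p})\subset\Theta(\mathcal{G}'_{<p})\cap\mathcal{G}^{\mathcal{P}}_{<p}$ is supplied by (a), so only the equality of indices needs attention. I would deduce it by reducing to the case $\mathcal{K}'\subset\mathcal{K}_{<p}$ via the subextension $\mathcal{K}''=\mathcal{K}'\cap\mathcal{K}_{<p}$, to which the same refinement applies. Once $\mathcal{K}'\subset\mathcal{K}_{<p}$, Proposition~\ref{P1.3}(c) identifies $\Theta(\mathcal{G}'_{<p})$ with $\pi^{-1}\mathcal{A}(G(\mathcal{L}'))$ and, by the $\mathcal{P}$-refinement, $\Theta(\mathcal{G}^{'\mathcal{P}}_{<p})$ with $\pi^{-1}\mathcal{A}(G(\mathcal{L}^{'\mathcal{P}}))$; the finite indices $[G(\mathcal{L}):\mathcal{A}(G(\mathcal{L}'))]$ and $[G(\mathcal{L}^{\mathcal{P}}):\mathcal{A}(G(\mathcal{L}^{'\mathcal{P}}))]$ agree because $G(\mathcal{L})$ is the profinite completion of $G(\mathcal{L}^{\mathcal{P}})$ and $\mathcal{A}(G(\mathcal{L}'))$ is the closure in $G(\mathcal{L})$ of $\mathcal{A}(G(\mathcal{L}^{'\mathcal{P}}))$, so finite indices are preserved under completion.

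The main obstacle is the $\mathcal{P}$-topological inductive splitting $l_{r+1}=l'+\sigma x'-x'$ at each rank $r<p$: the splitting must be produced with both $l'$ and $x'$ in $\mathcal{L}^{'\mathcal{P}}\otimes^{\mathcal{P}}\mathcal{K}'$, and the resulting $\mathcal{P}$-convergent series must interact correctly with Campbell--Hausdorff brackets of degree up to $p-1$. This is essentially Proposition~\ref{P3.2}(b) transferred from $\mathcal{K}$ to $\mathcal{K}'$ and applied coefficient-wise, but the bookkeeping for how $\mathcal{P}$-continuity propagates through nested commutators within $\mathcal{L}^{'\mathcal{P}}$ draws on the full $\c P$-continuous operator machinery of Section~\ref{S3.1}.
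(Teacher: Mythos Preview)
Your approach to part (a) is essentially the paper's: a $\mathcal{P}$-topological version of Proposition~\ref{P1.3}, built by transplanting the proof of Lemma~\ref{L3.5} to $\mathcal{K}'$. One small imprecision: after you obtain the $\mathcal{P}$-continuous $\mathcal{A}$ and $x'\in\mathcal{L}^{\mathcal{P}}_{\mathcal{K}'}$, the two elements $(-x')\circ f$ and $(\mathcal{A}\otimes\id)f'$ of $\mathcal{F}((\mathcal{A}\otimes\id)e')$ differ by some $l\in\mathcal{L}$, not in $\mathcal{L}^{\mathcal{P}}$ in general, so $\pi\circ\Theta=\Ad(l)\circ\mathcal{A}\circ\pi'$ rather than $\mathcal{A}\circ\pi'$. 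This only changes $\mathcal{G}^{\mathcal{P}}_{<p}$ by a conjugation inside $\mathcal{G}_{<p}$, so the conclusion of (a) survives, and the paper handles it exactly this way.

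Part (b), however, has a genuine gap. Your final step asserts that $[G(\mathcal{L}):\mathcal{A}(G(\mathcal{L}'))]=[G(\mathcal{L}^{\mathcal{P}}):\mathcal{A}(G(\mathcal{L}'^{\mathcal{P}}))]$ because ``finite indices are preserved under completion''. What is true is that if $H\subset G$ is open of finite index and $G^0\subset G$ is dense, then $(G^0:G^0\cap H)=(G:H)$. But your $H^0=\mathcal{A}(G(\mathcal{L}'^{\mathcal{P}}))$ is only known to be \emph{contained in} $G^0\cap H=G(\mathcal{L}^{\mathcal{P}})\cap\mathcal{A}(G(\mathcal{L}'))$, and to have closure $H=\mathcal{A}(G(\mathcal{L}'))$. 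These two facts do not force $H^0=G^0\cap H$: for instance, in $G^0=\bigoplus_{n\geqslant 1}\mathbb{Z}/p$ with completion $G=\prod_{n\geqslant 1}\mathbb{Z}/p$, the subgroup $H^0=\{(a_n)\mid\sum a_n=0\}$ has index $p$ in $G^0$ yet is dense in $G$, so its closure $H=G$ has index $1$. The equality $\mathcal{A}(G(\mathcal{L}'^{\mathcal{P}}))=\mathcal{A}(G(\mathcal{L}'))\cap G(\mathcal{L}^{\mathcal{P}})$ is precisely the content of (b), so your argument is circular at this point.

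The paper closes this gap not by an abstract completion argument but by induction on $[\mathcal{K}':\mathcal{K}]=p^n$. For $n=1$ one writes $\mathcal{K}'=\mathcal{K}(T')$ with $T'^p-T'=c\in\mathcal{K}$; then $\mathcal{A}(\mathcal{L}')=C_2(\mathcal{L})+\mathrm{Ann}(\bar c)$ in $\bar{\mathcal{K}}^D$, and one checks by hand that $\mathcal{A}(\mathcal{L}'^{\mathcal{P}})=C_2(\mathcal{L}^{\mathcal{P}})+L^0$ where $L^0=\mathrm{Ann}(\bar c)\cap\bar{\mathcal{K}}^{D\mathcal{P}}$ has index $p$ in $\bar{\mathcal{K}}^{D\mathcal{P}}$ (because $\bar c$ lies in some $\bar C_{\alpha_0}$ and the $\mathcal{P}$-continuous annihilator is already of index $p$). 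The inductive step passes through an intermediate $\mathcal{K}_1$ with $[\mathcal{K}_1:\mathcal{K}]=p$ and compares kernels. This explicit degree-$p$ analysis is exactly what your completion argument skips.
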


 \begin{proof} a) We can assume that  $f'\in\c F(e')$ is such that 
 $\c G^{'\c P}_{<p}=\pi '^{-1}G(\c L^{'\c P})$. 
 Then we can apply the $\c P$-topological version of the proof of Prop.\,\ref{P1.3} 
 to establish the existence of $\c P$-continuous 
$\c A\in\Hom _{\op{Lie}}(\c L^{'\c P},\c L^{\c P})$ and $x'\in \c L^{\c P}_{\c K'}$ such that 
\begin{equation} \label{E3.5} e\otimes ^{\c P}_{\c K}1_{\c K'}=
\sigma (x')\circ (\c A\otimes\id _{\c K})e'\circ (-x')\,.  
\end{equation}
 
From \eqref{E3.5} it follows that both 
$(-x')\circ f$ and $(\c A\otimes\id _{\c K'})f'$ belong to 
$\c F((\c A\otimes\id )e')
%\subset\c A(\c L')_{sep}
\subset \c L_{sep}$. 
Therefore, 
there is $l\in \c L$ such that 
$$(-x')\circ f=(\c A\otimes\id _{sep})f'\circ l\, .$$

As a result, for any $\tau '\in\c G'_{<p}$, 
$\pi (\Theta (\tau '))=(-l)\circ \c A(\pi '(\tau '))\circ l$, and 
$$\pi (\Theta (\c G^{'\c P}_{<p}))=(-l)\circ \c A(\c L^{'\c P})\circ (-l)
\subset (-l)\circ \c L^{\c P}\circ l\, .$$
Equivalently, for $g=\pi ^{-1}(l)\in \c G_{<p}$, we have 
$$\Theta (\c G^{'\c P}_{<p})\subset (-g)
\circ \pi ^{-1}(\c L^{\c P})\circ g\in\op{cl}^{\c P}
\c G_{<p}\, .$$ 
So, we can take 
$\c G_{<p}^{\c P}=\Ad (g)(\pi ^{-1}\c L^{\c P})$. 
\medskip

b)   
Assume that in the notation from a), $l=0$. This guarantees  
$\Theta (\c G'^{\c P}_{<p})\subset \c G^{\c P}_{<p}$ and $\pi =\c A\cdot\pi '$, where 
$\pi =\pi _f(e):\c G_{<p}\simeq G(\c L)$,  
$\pi '=\pi _{f'}(e'):\c G'_{<p}\simeq G(\c L')$ and 
$\c A:\c L'\To \c L$ is induced by $\Theta $. 

Let $p^n=[\c K':\c K]=
(\c G_{<p}:\Theta (\c G'_{<p}))$. 
\medskip 

$\bullet $\ {\it The case $[\c K':\c K]=p$.}
\medskip 

Here $\c K'/\c K$ is Galois of degree $p$, $(\c L:\c A(\c L'))=p$,  
$\c A(\c L')$ is an ideal in $\c L$, 
and $\c A(\c L')=C_2(\c L)+L$, where $L\subset \bar{\c K}^D$ 
is of index $p$.
\medskip 

Let 
$\c A(\c L'^{\c P})=C_2({\c L}^{\c P})+L^0\subset \c A(\c L')$, where 
in notation from Sect.\,\ref{S3.2},
$$L^0\subset 
\bar{\c K}^{\c PD}=\underset{\alpha }\varprojlim\,
\Hom _{{\c P}\text{-cont}}(\bar C_{\alpha },\F _p)\, .$$

Let $\c K'=\c K(T')$, where 
$T^{\,\prime p}-T'=c\in\c K$. Then  
$\c K'=\c K_{<p}^{H}$, with  $H=\Theta (\c G'_{<p})$ and 
$\pi (H)=G(C_2(\c L)+L)$. Therefore, 
$L\subset \bar{\c K}^D$ is characterized by 
the trivial action on $T'$ or, equivalently, $L=\op{Ann}(\bar c)$, where 
$\bar c\in\bar{\c K}$ is the image of $c$ under the natural projection 
$\Pi :\c K\To\bar{\c K}$. 

We can assume that for some index $\alpha _0$, $\Pi (c)\in \bar C_{\alpha _0}$, 
because $\bar{\c K}$ is the union of all $\bar C_{\alpha }=\Pi (C_{\alpha })$. 
As a result: 

-- the $\c P$-subgroup 
$H^{\c P}$ appears in the form $\pi ^{-1}G(C_2(\c L^{\c P})+L^0)$;

-- 
$L^0$ is the preimage 
of a subspace $L^0_{\alpha _0}
\subset \Hom _{\c P\text{-cont}}(\bar C_{\alpha _0}, \F _p)$;

-- $L^0_{\alpha _0}$  
consists of all {\sc finite} $\F _p$-linear combinations of 
the elements $D_a^{(i)}\in \bar C_{\alpha _0}$ from \eqref{E3.3} 
which annihilate $\bar c$. This means that $L^0_{\alpha _0}$ is of index $p$ in 
$\Hom _{\c P\text{-cont}}(\bar C_{\alpha _0}, \F _p)$,  
$L^0$ is of index $p$ in $\bar{\c K}^{\c PD}$, $\c A(\c L'^{\c P})$ is of index $p$ in 
$\c L^{\c P}$ and b) is proved in the case $n=1$. 
\medskip 

{\it Inductive step.}

Suppose $n\geqslant 2$ and b) is proved for field extensions of degree 
$p^{n-1}$. 

Consider the tower 
$\c K\subset \c K_1\subset \c K'$, $[\c K_1:\c K]=p$, 
$[\c K':\c K]=p^{n-1}$. 

Using similar notation for $\c K'$ and $\c K_1$ we have:
\begin{enumerate} 
 \item 
the fields tower 
$\c K\subset \c K_1\subset \c K'\subset \c K_{<p}\subset \c K_{1,<p}\subset \c K'_{<p}$, 

\item 
the compatible identifications: 

-- $\c G_{<p}\simeq G(\c L)\subset \c G_{1,<p}\simeq G(\c L_1)
\subset \c G'_{<p}\simeq G(\c L')$, 

-- $\c G^{\c P}_{<p}\simeq G(\c L^{\c P})\subset \c G^{\c P}_{1,<p}\simeq G(\c L_1)
\subset \c G'^{\c P}_{<p}\simeq G(\c L'^{\c P})$ 

\item 
the natural group homomorphisms:

-- $\Theta _1:\c G_{1,<p}\To \Theta _1(\c G_{1,<p})\subset \c G_{<p}$, 
\medskip 

-- $\Theta ':\c G'_{<p}\To \Theta '(\c G'_{<p})\subset\c G_{1,<p}$
\medskip 

-- $\Theta _1: \Theta '(\c G_{<p})\To \Theta (\c G'_{<p})
\subset \Theta _1(\c G_{1,<p})\subset \c G_{<p},$ 

\item 
the restrictions of the above $\Theta $, $\Theta '$, $\Theta _1$ 
to the corresponding $\c P$-subgroups satisfy analogs of 
relations from c). 
\end{enumerate}

Note that 
$\Ker\,\Theta _1=\op{Gal}(\c K_{1,<p}/\c K_{<p})=J$ is the 
profinite closure of 
$\c J^{\c P}:=\c G^{\c P}_{1,<p}\cap J=\Ker\,\Theta _1|_{\c G^{\c P}_{1,<p}}$. 
Therefore, $\Theta _1(\c G^{\c P}_{1,<p})=
\c G_{1,<p}/\Ker\,J^{\c P}$. 

Similarly, 
$\Theta _1$ induces a group epimorphic map 
$\Theta '(\c G_{<p})\To \Theta (\c G'_{<p})$ with the kernel $J$, 
the corresponding epimorphism 
$\Theta '(\c G^{\c P}_{<p})\To \Theta (\c G'^{\c P}_{<p})$ has the kernel 
$J^{\c P}$ and 
$\Theta (\c G'^{\c P}_{<p})=\Theta '(\c G^{\c P}_{<p})/J^{\c P}$. 

Therefore,  
$(\c G^{\c P}_{1,<p}:\Theta '(\c G^{\c P}_{<p}))=(\Theta _1(\c G^{\c P}_{1,<p}):
\Theta (\c G'^{\c P}_{<p}))$. By the inductive assumption, 
this index equals $p^{n-1}$. Finally, using the case $n=1$ we obtain 
$(\c G^{\c P}_{<p}:\Theta (\c G'^{\c P}_{<p}))=p^n$. 
\end{proof} 

\begin{definition} If $\c H=\Theta (\c G'_{<p})$ then we 
set $\c H^{\c P}=\c H\cap\c G^{\c P}_{<p}$.  
\end{definition}

Clearly, the conjugacy class of $\c H^{\c P}$ in 
its profinite completion $\c H$ is well defined.

\begin{Cor} \label{C3.7}
{\rm a)} Any extension $\c K'$ of $\c K$ in $\c K_{<p}$ (in the category of 
$N$-dimensional local fields) appears in the form 
$\c K_{<p}^{\c H}$, where $\c H$ is the profinite completion of a $\c P$-closed subgroup 
$\c H^{\c P}$ of some $\c G_{<p}^{\c P}\in\op{cl}^{\c P}\c G_{<p}$.

{\rm b)} In the above notation, 
$\c K'$ is Galois over $\c K$ iff the subgroup $\c H^{\c P}$ of 
$\c G_{<p}^{\c P}$ is normal, and  
$\op{Gal}(\c K'/\c K)=\c G^{\c P}_{<p}/\c H^{\c P}$. 
\end{Cor}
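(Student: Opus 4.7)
The strategy is to deduce the corollary directly from Proposition \ref{P3.6} combined with classical Galois theory for $\c G_{<p}=\op{Gal}(\c K_{<p}/\c K)$, exploiting throughout that the profinite completion of each $\c P$-subgroup recovers the ambient profinite group.

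For part a), given an $N$-dimensional extension $\c K'\subset \c K_{<p}$ of $\c K$, ordinary Galois theory gives $\c K'=\c K_{<p}^{\c H}$ with $\c H=\Theta (\c G^{\prime}_{<p})$, where $\Theta :\c G^{\prime}_{<p}\To\c G_{<p}$ is the natural map from Sect.\,\ref{S1.5}. Fix any $\c G^{\prime\c P}_{<p}\in\op{cl}^{\c P}(\c G^{\prime}_{<p})$. Proposition \ref{P3.6}(a) supplies a $\c G^{\c P}_{<p}\in\op{cl}^{\c P}(\c G_{<p})$ with $\Theta (\c G^{\prime\c P}_{<p})\subset \c G^{\c P}_{<p}$, and Proposition \ref{P3.6}(b) identifies $\c H^{\c P}:=\c H\cap\c G^{\c P}_{<p}$ with $\Theta (\c G^{\prime\c P}_{<p})$. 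Since $\c G^{\prime\c P}_{<p}$ is dense in its profinite completion $\c G^{\prime}_{<p}$ and $\Theta $ is continuous, the image $\c H^{\c P}$ is dense in $\c H=\Theta (\c G^{\prime}_{<p})$. As $\c H$ is closed in $\c G_{<p}$, this means $\c H$ is exactly the profinite completion of $\c H^{\c P}$, proving a).

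For part b), if $\c K'/\c K$ is Galois then $\c H$ is normal in $\c G_{<p}$, which immediately forces $\c H^{\c P}=\c G^{\c P}_{<p}\cap \c H$ to be normal in $\c G^{\c P}_{<p}$. The natural map $\c G^{\c P}_{<p}/\c H^{\c P}\To \c G_{<p}/\c H$ is injective, and the index equality $(\c G_{<p}:\c H)=(\c G^{\c P}_{<p}:\c H^{\c P})$ from Proposition \ref{P3.6}(b) promotes it to a bijection of finite sets, yielding $\op{Gal}(\c K'/\c K)\simeq \c G^{\c P}_{<p}/\c H^{\c P}$. Conversely, if $\c H^{\c P}$ is normal in $\c G^{\c P}_{<p}$, then for any $g\in\c G_{<p}$ and $h\in\c H$ we approximate $g$ by elements of $\c G^{\c P}_{<p}$ and $h$ by elements of $\c H^{\c P}$ using the two densities (the second furnished by part a)); each approximating conjugate lies in $\c H^{\c P}\subset \c H$, and joint continuity of conjugation together with closedness of $\c H$ in $\c G_{<p}$ forces $ghg^{-1}\in\c H$. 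Hence $\c H$ is normal in $\c G_{<p}$ and $\c K'/\c K$ is Galois.

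The only step requiring genuine argument is the density of $\c H^{\c P}$ in $\c H$ in part a), and even this is already essentially packaged inside Proposition \ref{P3.6}(b) together with the density of $\c G^{\prime\c P}_{<p}$ in $\c G^{\prime}_{<p}$. Beyond that, the corollary is a straightforward translation of Proposition \ref{P3.6} from the language of Galois groups into the language of $N$-dimensional field extensions, and I do not anticipate a serious obstacle.
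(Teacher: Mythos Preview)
Your argument is correct and is exactly how the paper intends the corollary to follow from Proposition~\ref{P3.6} (no separate proof is given there). Two small points deserve tightening: first, density of $\c H^{\c P}$ in the closed subgroup $\c H$ does not by itself make $\c H$ the profinite completion of $\c H^{\c P}$ (think of $\Z\subset\Z_p$); what makes it work is that $\c H^{\c P}$ has \emph{finite index} in $\c G^{\c P}_{<p}$ by Proposition~\ref{P3.6}(b), so the finite-index normal subgroups of $\c G^{\c P}_{<p}$ contained in $\c H^{\c P}$ are cofinal for both profinite topologies. Second, you do not verify that $\c H^{\c P}$ is $\c P$-closed, which is part of the assertion in a); this is not automatic for an arbitrary finite-index subgroup, but it follows from the explicit description of $\c H^{\c P}$ in the degree-$p$ step of the proof of Proposition~\ref{P3.6}(b) (as the preimage of a subspace of a discrete quotient) together with the induction there --- the paper itself later invokes this in the proof of Proposition~\ref{P3.8} with the bare phrase ``by Prop.~\ref{P3.6} $\c H^{\c P}$ is $\c P$-closed''.
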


It remains to characterize the subgroups $\c H^{\c P}\subset\c G^{\c P}_{<p}$ such that 
$\c K^{\c H}$ is an extension of $\c K$ in $\c K_{<p}$. 

\begin{Prop} \label{P3.8}
 Let $H\subset \c G_{<p}^{\c P}$ be a subgroup. Then 
 $H=\c H^{\c P}$, where $\c K^{\c H}=\c K'$ 
 is $N$-dimensional field  extension of $\c K$ iff 
 \medskip 
 
 {\rm a)} $(\c G^{\c P}_{<p}:H)<\infty $;
 \medskip 
 
 {\rm b)} $H$ is $\c P$-open in $\c G_{<p}^{\c P}$. 
\end{Prop}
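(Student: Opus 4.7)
My plan is to reduce both directions to the cyclic degree $p$ case via tower arguments in the spirit of Proposition \ref{P3.6}(b); the engine of the degree $p$ argument is the perfect $\c P$-topological pairing $\bar{\c K}\times \bar{\c K}^{D\c P}\To \F _p$ from Section \ref{S3.2}, which identifies $\c P$-continuous $\F _p$-linear functionals on $\bar{\c K}^{D\c P}$ with genuine elements of $\bar{\c K}$.

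For the ``only if'' direction, suppose $\c K'=\c K_{<p}^{\c H}$ is a finite $N$-dimensional extension and $H=\c H^{\c P}$. Finite index is exactly Proposition \ref{P3.6}(b). For $\c P$-openness I would induct on $n$ with $p^n=[\c K':\c K]$. In the base case $n=1$, the proof of Proposition \ref{P3.6} already exhibits $H$ in the explicit form $\pi ^{-1}G(C_2(\c L^{\c P})+L^0)$ with $L^0=\op{Ann}(\bar c)$ for some $\bar c\in \bar C_{\alpha _0}$ (such $\alpha _0\in \c I$ exists because $\bar{\c K}=\bigcup _{\alpha }\bar C_{\alpha }$). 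Then $L^0\supset \op{Ann}(\bar C_{\alpha _0})=U_{\alpha _0}^D$ is $\c P$-open in $\bar{\c K}^{D\c P}$, so $C_2(\c L^{\c P})+L^0$ is $\c P$-open as the pre-image of a $\c P$-open set under the projection $\c L^{\c P}\To \bar{\c K}^{D\c P}$; hence so is $H$. The inductive step inserts $\c K\subset \c K_1\subset \c K'$ with $[\c K_1:\c K]=p$ and reassembles $\c P$-open subgroups at each stage (finite intersections of $\c P$-open remain $\c P$-open), mirroring the tower argument from the proof of Proposition \ref{P3.6}.

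For the ``if'' direction, let $H$ be $\c P$-open of finite index and let $\c H$ be its Krull closure in $\c G _{<p}$. Density of $\c G _{<p}^{\c P}$ gives $[\c G _{<p}:\c H]\leqslant [\c G _{<p}^{\c P}:H]<\infty $, so $\c K':=\c K_{<p}^{\c H}$ is a finite $N$-dimensional extension of $\c K$. By Corollary \ref{C3.7} and Proposition \ref{P3.6}(b), $\c H^{\c P}:=\c H\cap \c G _{<p}^{\c P}\supset H$ is $\c P$-closed with $[\c G _{<p}^{\c P}:\c H^{\c P}]=[\c G _{<p}:\c H]$, so to conclude $H=\c H^{\c P}$ it suffices to show the index of $H$ matches that of $\c H$. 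I replace $H$ by its normal core $H_0=\bigcap _g gHg^{-1}$ (a finite intersection of $\c P$-open conjugates, hence $\c P$-open, normal, and of finite index), and filter the finite $p$-group $\c G _{<p}^{\c P}/H_0$ by a chain of normal subgroups with successive quotients cyclic of order $p$. At each step one obtains an index-$p$ $\c P$-open subgroup of an abelianized quotient; its defining $\F _p$-linear functional has $\c P$-open kernel, hence is $\c P$-continuous, hence by the pairing is evaluation at some $\bar c\in \bar{\c K}$. Lifting to $c\in \c K$ and adjoining $T$ with $T^p-T=c$ realises this step by a cyclic $N$-dimensional Artin-Schreier extension. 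Concatenating yields a tower whose total Galois group has the required order, forcing $[\c G _{<p}:\overline{H_0}]=[\c G _{<p}^{\c P}:H_0]$, and the normal-core descent gives the same equality for $H$.

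The central obstacle is the degree $p$ duality step: converting the abstract $\c P$-openness of an index-$p$ subgroup into a concrete Artin-Schreier parameter in $\c K$. Without $\c P$-openness, an index-$p$ subgroup of $\bar{\c K}^{D\c P}$ is the kernel of an $\F _p$-linear functional that need not be $\c P$-continuous and hence need not correspond to any element of $\bar{\c K}$; its Krull closure in $\bar{\c K}^D$ then has strictly smaller index, so $H\ne \c H^{\c P}$. The $\c P$-openness hypothesis, together with the perfect $\c P$-topological pairing of Section \ref{S3.2}, is exactly the input needed to produce a genuine $c\in \c K$.
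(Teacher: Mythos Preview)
Your proposal is correct and follows essentially the same strategy as the paper: reduce to the cyclic degree-$p$ case via a tower and use the $\c P$-topological duality $\bar{\c K}\leftrightarrow \bar{\c K}^{D\c P}$ to manufacture an Artin--Schreier generator.

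Two small differences are worth noting. For the ``only if'' direction the paper is shorter: it observes that $\c H^{\c P}$ is $\c P$-closed (from the construction in Prop.\,\ref{P3.6}) and that a $\c P$-closed subgroup of finite index is automatically $\c P$-open, avoiding your induction. For the ``if'' direction the paper works on the Lie-algebra side and simply picks a subnormal chain $L=L_0\subset L_1\subset\dots\subset L_n=\c L^{\c P}$ with each $L_{i-1}$ an ideal of index $p$ in $L_i$; since $\c L^{\c P}$ is nilpotent such a chain always exists, so there is no need for your normal-core detour and the subsequent descent back to $H$. Both routes lead to the same degree-$p$ computation, and your identification of the crux --- that $\c P$-openness of the kernel is exactly what is needed for the defining functional to come from an actual element of $\bar{\c K}$ --- matches the paper's argument precisely (the paper phrases this as: $L^0$ $\c P$-open implies it is the preimage of an index-$p$ subgroup of some $\Hom_{\c P\text{-cont}}(\bar C_{\alpha_0},\F_p)$, hence $\op{Ann}(\bar c)$ for some $\bar c\in\bar C_{\alpha_0}$). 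One point you leave implicit and should make explicit in a full write-up: at the $i$-th step of the tower the relevant pairing is that of the intermediate field $\c K_i$, not of $\c K$, so one must invoke the $\c P$-continuity of the map $\c A$ in Prop.\,\ref{P3.6} to transport $\c P$-openness along $\Theta$.
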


\begin{proof} If $\c K'$ is field extension of $\c K$ in the category of 
$N$-dimensional fields then $[\c K':\c K]=(\c G_{<p}:\c H)=
(\c G^{\c P}_{<p}:\c H^{\c P})<\infty $ and by Prop.\,\ref{P3.6} 
$\c H^{\c P}$ is $\c P$-closed. It is also $\c P$-open as a closed subgroup of 
finite index in $\c G^{\c P}_{<p}$. 

To proceed in the opposite direction note that $H=G(L)$, where $L$ is a 
Lie subalgebra in ${\c L}^{\c P}$ and the index 
$({\c L}^{\c P}:L)$ is a power of $p$. 
Choose   
an increasing sequence of Lie algebras 
$L=L_0\subset L_1\dots \subset L_n={\c L}^{\c P}$  
where 
each $L_{i-1}$ is ideal in $L_i$ and $(L_i:L_{i-1})=p$. 
As a result, we can proceed by induction and it will be sufficient to consider the case 
$n=1$. 

Then $L\supset C_2({\c L}^{\c P})$ and 
$L=C_2({\c L}^{\c P})+L^0$, where 
$$L^0\subset \Hom _{{\c P}\text{-cont}}
(\bar{\c K},\F _p)=\underset{\alpha }\varprojlim\,
\Hom _{{\c P}\text{-cont}}(\bar C_{\alpha },\F _p)\, ,$$ 
cf. notation from \,Sect.\,\ref{S3.2}. Since $L^0$ is $\c P$-open there is an 
index $\alpha _0$ such that 
$L^0$ is the preimage of a subgroup $L^0_{\alpha _0}$ of index $p$ in 
$\Hom _{{\c P}\text{-cont}}(\bar C_{\alpha _0},\F _p)$. Therefore, there is 
$\bar c\in \bar C_{\alpha _0}$ such that $L^0_{\alpha _0}=\op{Ann}\,\bar c$ in 
$\Hom _{{\c P}\text{-cont}}(\bar C_{\alpha _0},\F _p)$. 

Let $\c K'=\c K(T')$, where 
$T^{\,\prime p}-T'=\Pi ^{-1}\bar c$. Then  
$\c K'=\c K_{<p}^{\c H}$, where the subgroup $\c H$ of $\c G_{<p}$ is such that 
$\c H=G(C_2(\c L)+L)$ 
and $L\subset \bar{\c K}^D$ is characterized by 
the trivial action on $T'$. 
Therefore, the corresponding $\c P$-subgroup $\c H^{\c P}=G(C_2(\c L^{\c P})+L^{\c P})$,  
where $L^{\c P}$ consists of finite $\F _p$-linear combinations of 
the elements $D_a^{(i)}$ from \eqref{E3.3} which annihilate $c$. 
Therefore, $L^{\c P}=L^0$ and $\c H^{\c P}=H$. 
\end{proof} 

\subsection{More general $\c P$-groups} \label{S3.6} 

Suppose $G'\subset \Aut \,\c K'$. For example, $\c K'/\c K$ is Galois and 
 $G'=\op{Gal}(\c K'/\c K)$. Consider the group 
$\Gamma '\subset \op{Aut}\,\c K'_{<p}$ of all lifts of the elements of $G'$ 
to $\c K'_{<p}$. These lifts can be treated in terms of 
couples $(C'_g,\c A'_g)$, where $g\in G'$, 
$C'_g\in \c L'_{\c K'}$ and 
$\c A'\in\op{Aut}_{\text{Lie}}\c L'$, cf.\,Sect.\,\ref{S1.6}. 
This description uses the identification $\pi '=\pi _{f'}(e'):\c G'_{<p}\simeq G(\c L')$.   
After applying $\pi '^{-1}$ we obtain the exact sequence 
$$1\To \c G'_{<p}\To \Gamma '\To G'\To 1\,.$$
 
Consider a subgroup $\Gamma '^{\c P}$ of $\Gamma '$ coming from  
$C'_g\in\c L'^{\c P}_{\c K}=\c L'^{\c P}\otimes ^{\c P}\c K'$ and 
$\c P$-continuous $\c A'_g$. (For example, 
use the $\c P$-continuous operators $\c R$ and $\c S$ from Sect.\,\ref{S3.1} 
to recover the corresponding pairs $(C'_g,\c A'_g)$, cf. e.g. Sect.\,\ref{S4.2} below.)
We obtain the following exact sequence 
$$1\To \c G'^{\c P}_{<p}\To \Gamma '^{\c P}\To G'\To 1\,.$$
This construction of the subgroup $\Gamma '^{\c P}$ of $\Gamma '$ 
does not depend on a choice of ``$\c P$-continuous'' lifts of elements of $g\in G'$ 
(all such lifts differ by elements of $\c G'^{\c P}_{<p}$). 

The above construction in the case $G'=\op{Gal}(\c K'/\c K)$ allows us to recover 
(uniquely up to isomorphism) the group 
$\c G^{\c P}_{<p}$ from $\c H^{\c P}=\Theta (\c G'^{\c P}_{<p})$. 
Even more, if $\c K\subset \c K_1\subset \c K_2\subset \c K_{<p}$, 
$\c H_i=\op{Gal}(\c K_{<p}/\c K_i)$,  
$\c H_i^{\c P}=\c H_i\cap\c G^{\c P}_{<p}$ with $i=1,2$, and $\c K_2/K_1$ is 
Galois then $\c H_2^{\c P}=\c H_2\cap\c G_{<p}^{\c P}$ 
is uniquely (up to isomorphism) 
recovered from $\c H_1^{\c P}$.

\section{The groups $\c G^{\c P} _{\omega }$ and 
$\Gamma ^{\c P}_{\omega }$} \label{S4} 

As earlier, $\c K$ is $N$-dimensional local field  of characteristic $p$ 
with fixed system of local  parameters 
$t=\{t_1,\dots ,t_N\}$ and the last residue field $k\simeq\F _{p^{N_0}}$. 
Fix $\alpha _0\in k$ such that $\op{Tr}_{k/\F _p}(\alpha _0)=1$.  
Let $S=S_{t,\alpha _0}$ be the section from Prop.\,\ref{P3.2}. 

\ \ 

Take $e:=e_S=\sum\limits _{a\in\Z _N^+(p)}
t^{-a}D_{a\,0}+\alpha _{\bar 0}D_{\bar 0}$, choose $f\in\c F(e)$ and consider 
$\pi =\pi _f(e):\c G_{<p}\simeq G(\c L)$. 
\medskip

Fix $\bar c^{\,0}=(c^{\,0}_1,\dots ,c_N^{\,0})\in\,p\Z ^{N}$ such that $c_1^0>0$, 
set 
$t^{\bar c^0}:=t_1^{c_1^0}\dots t_N^{c_N^0}$. 
Choose $\omega =\sum\limits _{\iota \geqslant\bar 0}
\beta _{\iota}t^{(\bar c^0/p)+\iota }
\in t^{\bar c_0/p}\c O^*_{\c K}$, where  
all 
$\beta _{\iota }=\beta _{\iota }(\omega )\in k$, $\beta _{\bar 0}\ne 0$.
Let $E(X)=\exp\left (\sum\limits _{i\geqslant 0}X^{p^i}/{p^i}\right )$ 
be the Artin-Hasse exponential.

\subsection{Automorphisms $h_{\omega }^{(m)}$} 
\label{S4.1}

For $1\leqslant m\leqslant N$, 
let $h_{\omega }^{(m)}$ be 
the $\c P$-continuous automorphism   
of $\c K$ such that $h_{\omega }^{(m)}|_k=\id $, $h_{\omega }^{(m)}(t_m)=t_m
E(\omega ^p)$ and for all  
$j\ne m$, $h_{\omega }^{(m)}(t_j)=t_j$. Let $\m _{\c K}$ be 
the maximal ideal in $\c O_{\c K}$. 
Clearly, $\m _{\c K}$ consists of all $\c P$-convergent $k$-linear combinations of 
$t^{a}$, where $a\in\Z ^N_{>\bar 0}$. 

 For $n\in\Z$, let $h_{\omega }^{(m)n}$ be 
 the $n$-th iteration of $h_{\omega }^{(m)}$ 
 and, similarly, 
 denote by $h_{\omega }^{(m_1)}h_{\omega }^{(m_2)}$ the composition of 
 $h_{\omega }^{(m_1)}$ and $h_{\omega }^{(m_2)}$.  

\begin{Prop} \label{P4.1}  \ \ 

{\rm a)} For any $n\geqslant 0$, $h_{\omega }^{(m)n}(t_m)\equiv 
t_mE(n\,\omega ^p)\,\op{mod}\,t^{p\bar c^0}\m _{\c K}$;
\medskip 

{\rm b)} $h_{\omega }^{(m_1)}h_{\omega }^{(m_2)}\equiv h_{\omega }^{(m_2)}
h_{\omega }^{(m_1)}\,\op{mod}\,t^{p\bar c^0}\m _{\c K}$. 
\end{Prop}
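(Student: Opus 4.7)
The proof turns on a single sharp estimate:
\[
h_\omega^{(m)}(\omega^p) - \omega^p \in t^{(p+1)\bar c^0}\c O_{\c K}.
\]
To establish it, use $h_\omega^{(m)}(t^a) = t^a E(\omega^p)^{a_m}$ and the expansion $\omega = \sum_\iota \beta_\iota t^{\bar c^0/p + \iota}$ to write
\[
h_\omega^{(m)}(\omega) - \omega = \sum_\iota \beta_\iota t^{\bar c^0/p + \iota}\bigl(E(\omega^p)^{c_m^0/p + \iota_m} - 1\bigr).
\]
Since $E(\omega^p)^k - 1 \in \omega^p \c O_{\c K}$ for every integer $k \geq 0$, the right-hand side has $\c P$-valuation $\geq \bar c^0(1 + 1/p)$. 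Now invoke the characteristic-$p$ Frobenius identity $a^p - b^p = (a-b)^p$ to get
\[
h_\omega^{(m)}(\omega^p) - \omega^p = \bigl(h_\omega^{(m)}(\omega) - \omega\bigr)^p,
\]
whose valuation is therefore $\geq (p+1)\bar c^0$, much sharper than the naive bound $2\bar c^0$.

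For part (a), induct on $n \geq 0$; the cases $n = 0, 1$ are immediate from the definition. For the step $n \to n+1$, write $h_\omega^{(m)n}(t_m) = t_m E(n\omega^p) + r$ with $r \in t^{p\bar c^0}\m_{\c K}$ and apply $h_\omega^{(m)}$. Using that an automorphism fixing $\F_p$-coefficients commutes with any $\c P$-convergent power series, so $h_\omega^{(m)}\circ E = E \circ h_\omega^{(m)}$, we obtain
\[
h_\omega^{(m)(n+1)}(t_m) = t_m E(\omega^p)\cdot E\bigl(n\,h_\omega^{(m)}(\omega^p)\bigr) + h_\omega^{(m)}(r).
\]
Since $h_\omega^{(m)}$ preserves the $N$-valuation (each $t_j$ is sent to $t_j$ times a unit in $\c O_{\c K}$), $h_\omega^{(m)}(r) \in t^{p\bar c^0}\m_{\c K}$. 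By the key estimate, $E(n\,h_\omega^{(m)}(\omega^p)) - E(n\omega^p) \in t^{(p+1)\bar c^0}\c O_{\c K}$. It remains to show $E(\omega^p)E(n\omega^p) \equiv E((n+1)\omega^p)$ modulo $t^{p\bar c^0}\c O_{\c K}$; this is the classical Artin--Hasse defect $E(X)E(Y) - E(X+Y) \in (X,Y)^p\,\Z_p[[X,Y]]$ (proved by lifting to characteristic $0$ and using $\binom{p}{k}/p \in \Z_p$ for $1 \leq k \leq p-1$), applied with $X = \omega^p,\ Y = n\omega^p$ to give an error in $\omega^{p^2}\c O_{\c K} = t^{p\bar c^0}\c O_{\c K}$. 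Multiplying by $t_m \in \m_{\c K}$ lands this in $t^{p\bar c^0}\m_{\c K}$, closing the induction.

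For part (b), it suffices to check the congruence on the generators $t_j$: the cases $j \notin \{m_1, m_2\}$ and $m_1 = m_2$ are trivial. For $j = m_1 \neq m_2$ (the case $j = m_2$ being symmetric), since $h_\omega^{(m_2)}(t_{m_1}) = t_{m_1}$ and $h_\omega^{(m)}\circ E = E\circ h_\omega^{(m)}$,
\[
h_\omega^{(m_1)}h_\omega^{(m_2)}(t_{m_1}) = t_{m_1}E(\omega^p), \qquad h_\omega^{(m_2)}h_\omega^{(m_1)}(t_{m_1}) = t_{m_1}E\bigl(h_\omega^{(m_2)}(\omega^p)\bigr),
\]
which differ by $t_{m_1}$ times an element of $t^{(p+1)\bar c^0}\c O_{\c K}$ by the key estimate, and hence lie in $t^{p\bar c^0}\m_{\c K}$. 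The main technical point is verifying the Artin--Hasse quasi-additivity to the required precision in the $\c P$-topological setting; its derivation is classical but the bookkeeping of valuations must be done carefully. The conceptually essential input, without which the claimed precision $t^{p\bar c^0}$ is unattainable, is the characteristic-$p$ Frobenius identity: it is exactly what upgrades the naive bound on $h_\omega^{(m)}(\omega^p) - \omega^p$ from $2\bar c^0$ to $(p+1)\bar c^0$, and thereby makes the proposition true.
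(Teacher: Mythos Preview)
Your proof is correct and follows essentially the same route as the paper: both hinge on the estimate $h_\omega^{(m)}(\omega^p)\equiv\omega^p\pmod{t^{p\bar c^0}\m_{\c K}}$ (the paper derives it by noting that the exponents in $\omega^p=\sum_\iota\beta_\iota^p t^{\bar c^0+p\iota}$ are all $p$-divisible, while you bound $h_\omega^{(m)}(\omega)-\omega$ first and then raise to the $p$th power---the same Frobenius trick in different packaging, yielding in fact the slightly sharper $t^{(p+1)\bar c^0}\c O_{\c K}$), then induct for (a) using $E(X)E(Y)\equiv E(X+Y)\bmod\deg p$ and check on generators for (b). One cosmetic slip: since $\iota\geqslant\bar 0$ is only lexicographic, the exponent $c_m^0/p+\iota_m$ need not be non-negative, so your claim $E(\omega^p)^k-1\in\omega^p\c O_{\c K}$ should read ``for all $k\in\Z$'' (which holds, as $1+\omega^p\c O_{\c K}$ is a multiplicative group).
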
 

\begin{proof}  Note that 
$h_{\omega }^{(m)}(t_m)\equiv t_m\,\op{mod}\,t^{\bar c^0}\m _{\c K}$ 
and this implies for 
any $\iota \geqslant\bar 0$, that 
$h_{\omega }^{(m)}(t^{\bar c^0+p\iota })\equiv t^{\bar c^0+p\iota }\,
\op{mod}\,t^{p\bar c^0}\m _{\c K}$. As a result,  
\begin{equation} \label{E4.1} h_{\omega }^{(m)}(\omega ^p)
\equiv \omega ^p\,\op{mod}\,t^{p\bar c^0}\m _{\c K}\, .  
\end{equation} 

Apply induction on $n\geqslant 0$ to prove part a) of the proposition. 

If it is proved for some $n\geqslant 0$ then 
$$h_{\omega }^{(m)n+1}(t_m)\equiv  h_{\omega }^{(m)}(t_mE(n\omega ^p))\equiv 
t_mE(\omega ^p)E(n\omega ^p)\equiv 
t_mE((n+1)\omega ^p)$$ 
modulo $t^{p\bar c^0}\m _{\c K}$ 
(use that $E(X+Y)\equiv E(X)E(Y)\,\op{mod}\,\deg p$). 

Similarly, relation \eqref{E4.1} implies part b).
\end{proof}

\begin{remark}
 The above 
proposition can be stated also for the truncated exponential 
$\wt{\exp}(X)=1+X+\dots +X^{p-1}/(p-1)!$  instead of  $E(X)$.  
\end{remark}

\subsection{The groups ${\c G}_{\omega }$ 
and ${\c G}^{\c P}_{\omega }$} \label{S4.2} 

Let $\hat h_{\omega }^{(m)}\in\Aut\,\c K_{<p}$ 
be such that $\hat h_{\omega }^{(m)}|_{\c K}=h_{\omega }^{(m)}$.  
Denote by ${\c G}_{\omega }$  the 
subgroup in $\Aut\,\c K_{<p}$ 
generated by the elements of $\c G_{<p}$ and 
the lifts $\hat h_{\omega }^{(m)}$ with $1\leqslant m\leqslant N$. 
The elements $\hat h_{\omega }$ of ${\c G}_{\omega }$ are characterised by 
the property $\hat h_{\omega }|_{\c K}\in 
\langle h_{\omega }^{(1)}, \dots ,h_{\omega }^{(N)}\rangle \subset \Aut \c K$. 
They can be uniquely specified by the  couples 
$(C,\c A)\in\c L_{\c K}\times \op{Aut}\,\c L$ 
such that 
$$\hat h_{\omega }(f)=
C\circ (\c A \otimes\id )f\, $$
or, equivalently, such that (where $h_{\omega }=\hat h_{\omega }|_{\c K}$) 
\begin{equation} \label{E4.2} (\id _{\c L}\otimes h_{\omega })e=\sigma C
\circ (\c A\otimes\id )e\circ (-C)\, .
\end{equation}

If $\hat h'_{\omega }$ corresponds to $(C',\c A')$ then 
the composition $\hat h_{\omega }'\cdot \hat h_{\omega }$ corresponds to the couple 
$(h_{\omega }'(C)\circ \c A(C'), \c A\,\c A')$, where 
$h_{\omega }'=\hat h'_{\omega }|_{\c K}$. 
With this notation 
the subgroup $\c G_{<p}\subset {\c G}_{\omega }$ is identified 
with the subgroup of couples $(l, \op{Ad}\,l)$, where $l\in\c L$.   
Indeed, under the identification $\pi =\pi _f(e):\c G_{<p}\simeq G(\c L)$ 
from Prop.\,\ref{P1.4}, if $\tau\in\c G_{<p}$ then 
$\tau (f)=f\circ l=l\circ (\op{Ad}\,l\otimes\id )f$. 

Suppose $1\leqslant m\leqslant N$ and 
$\hat h_{\omega }^{(m)}$ is specified via the couple $(C^{(m)},\c A^{(m)})$. 
Relation \eqref{E4.2} can be treated via the following recurrent procedure.

Suppose $s\geqslant 1$ and the couple 
$(C^{(m)}_s,\c A^{(m)}_s)$ satisfies relation \eqref{E4.2} 
modulo $s$-th commutators 
$C_s(\c L_{\c K})$. Use the operators $\c R$ and $\c S$ 
from Sect.\,\ref{S3.1} to obtain 
$C'_s\in C_{s+1}(\c L_{\c K})$ and 
$\c A'_s\in\Hom _{\op{Lie}}(\c L, C_{s+1}\c L)$ 
such that 
$$\sigma C'_s-C'_s+(\c A'_s\otimes\id _{\c K})e\equiv $$
$$(\id _{\c L}\otimes h_{\omega }^{(m)})e-\sigma C^{(m)}_s
\circ (\c A^{(m)}_s\otimes\id _{\c K})e\circ (-C^{(m)}_s)
\,\op{mod}\,C_{s+1}(\c L_{\c K})\, .$$
Then the couple $(C_s+C'_s, \c A_s+\c A'_s)$ satisfies \eqref{E4.2} 
modulo $C_{s+2}(\c L_{\c K})$. 

Denote by $\hat h_{\omega }^{0(m)}$ the lift 
of $h_{\omega }^{(m)}$ which corresponds to the couple 
$(C^{0(m)}, \c A^{0(m)}):=(C^{(m)}_p, \c A^{(m)}_p)$. 
Note that $C^{0(m)}\in\c L_{\c K}^{\c P}$ and 
$\c A^{0(m)}|_{\c L^{\c P}}$ is a $\c P$-continuous 
automorphism of the Lie algebra $\c L^{\c P}$.

Using that 
$(\id _{\c L}\otimes h_{\omega }^{(m)})e\in\c L^{\c P}_{\c K}$ and 
$\c L\cap \c L_{\c K}^{\c P}=\c L^{\c P}$ 
we obtain the following property:

\begin{Prop} \label{P4.2} 
A lift $\hat h_{\omega }^{(m)}$ corresponds to a couple $(C^{(m)}, \c A^{(m)})$ 
with  $C^{(m)}\in\c L_{\c K}^{\c P}$ and 
$\c A^{(m)}\in\op{Aut}_{\c P\text{-cont}}(\c L^{\c P})$,  
if and only if there is 
$l\in\c L^{\c P}$ such that 
$C^{(m)}=C^{0(m)}\circ l$ and $\c A^{(m)}=\Ad\,l\cdot\c A^{0(m)}$.  
\end{Prop}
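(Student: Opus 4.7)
The plan is to reduce the proposition to the statement that two lifts of $h^{(m)}_\omega$ differ by a unique element of $G(\c L)$, and then use the identity $\c L\cap\c L_{\c K}^{\c P}=\c L^{\c P}$ highlighted in the text to transfer the $\c P$-continuity information from $(C^{(m)},\c A^{(m)})$ to the connecting element.

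More precisely, by the formalism of Sect.\,\ref{S1.6} (in particular Prop.\,\ref{P1.4}\,b) and the action described there), the set of all lifts of $h_{\omega}^{(m)}$ is a principal homogeneous space over $\c G_{<p}\simeq G(\c L)$, and in terms of couples this action is exactly $(C^{(m)},\c A^{(m)})\mapsto (C^{(m)}\circ l,\op{Ad}\,l\cdot\c A^{(m)})$. Thus any lift is obtained from the distinguished one $\hat h^{0(m)}_\omega$ through a unique $l\in\c L$ with $C^{(m)}=C^{0(m)}\circ l$ and $\c A^{(m)}=\op{Ad}\,l\cdot\c A^{0(m)}$. The proposition therefore reduces to the equivalence: $l\in\c L^{\c P}$ iff $C^{0(m)}\circ l\in\c L_{\c K}^{\c P}$ and $\op{Ad}\,l\cdot\c A^{0(m)}$ is $\c P$-continuous on $\c L^{\c P}$.

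For the ``if'' direction, suppose $l\in\c L^{\c P}$. The $\c P$-topological tensor product $\c L^{\c P}_{\c K}=\c L^{\c P}\otimes ^{\c P}\c K$ is a Lie subalgebra of $\c L_{\c K}$, and since the Campbell--Hausdorff formula involves only iterated brackets and finite $\F_p$-linear sums of degree $<p$, it is closed under the operation $\circ$. As $C^{0(m)},l\in\c L_{\c K}^{\c P}$, this gives $C^{0(m)}\circ l\in\c L_{\c K}^{\c P}$. Moreover $\op{Ad}\,l$ is a polynomial expression in $l\in\c L^{\c P}$, so it preserves $\c L^{\c P}$ and is $\c P$-continuous there; composing with the already $\c P$-continuous $\c A^{0(m)}|_{\c L^{\c P}}$ yields a $\c P$-continuous automorphism.

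For the ``only if'' direction, assume $C^{(m)}\in\c L_{\c K}^{\c P}$ and $\c A^{(m)}\in\op{Aut}_{\c P\text{-cont}}(\c L^{\c P})$, and let $l\in\c L$ be the unique element with $C^{(m)}=C^{0(m)}\circ l$, equivalently $l=(-C^{0(m)})\circ C^{(m)}$. By the same closure of $\c L_{\c K}^{\c P}$ under the Campbell--Hausdorff composition, $l\in\c L_{\c K}^{\c P}$. Since also $l\in\c L$, the identity $\c L\cap\c L_{\c K}^{\c P}=\c L^{\c P}$ stated just before the proposition forces $l\in\c L^{\c P}$, and the relation $\c A^{(m)}=\op{Ad}\,l\cdot\c A^{0(m)}$ then holds automatically. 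The only real technical point is the closure of $\c L_{\c K}^{\c P}$ under $\circ$, but this is immediate from its description as a Lie subalgebra and the polynomial (degree $<p$) nature of the Campbell--Hausdorff law, so no substantial obstacle remains.
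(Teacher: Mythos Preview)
Your argument is correct and is exactly the approach the paper has in mind: it states the two key inputs $(\id _{\c L}\otimes h_{\omega }^{(m)})e\in\c L^{\c P}_{\c K}$ and $\c L\cap \c L_{\c K}^{\c P}=\c L^{\c P}$ just before the proposition and leaves the routine unpacking (via the principal homogeneous space description of lifts from Prop.\,\ref{P1.4}) to the reader, which is precisely what you have written out.
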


\begin{definition} ${\c G}^{\c P}_{\omega }\subset {\c G}_{\omega}$ 
is a subgroup 
generated by $\c G_{<p}^{\c P}=\pi ^{-1}\c L^{\c P}$ and the lifts 
$\hat h_{\omega }^{0(m)}$, $1\leqslant m\leqslant N$.
\end{definition}

\begin{remark}
 
 (i) The elements of the group ${\c G}_{\omega }^{\c P}$ are specified via the 
 couples $(C,\c A)\in\c L_{\c K}^{\c P}\times \op{Aut}_{\c P\text{-cont}}\c L^{\c P}$ 
 (which satisfy relation \eqref{E4.2}). 
 
 (ii) The profinite completion of ${\c G}^{\c P}_{\omega }$ 
 coincides with ${\c G}_{\omega }$. 
\end{remark}

Obviously,  we have the following natural short exact sequences: 
\begin{equation} \label{E4.3}
1\To \c G_{<p}\To {\c G}_{\omega }\To \langle h_{\omega }^{(1)}, 
\dots ,h_{\omega }^{(N)}\rangle \To 1\, , 
\end{equation} 

\begin{equation} \label{E4.4}
1\To \c G^{\c P}_{<p}\To {\c G}^{\c P}_{\omega }
\To 
\langle h_{\omega }^{(1)}, \dots , h_{\omega }^{(N)}\rangle\To 1\, , 
\end{equation}

The structure of \eqref{E4.3} can be uniquely recovered from \eqref{E4.4} 
by going to profinite completions. 
\medskip

\subsection{The commutator subgroups 
$C_s({\c G}_{\omega }^{\c P})$} \label{S4.3}

Define the weight function  in $\c L_k^{\c P}$ 
by setting for $s\in\N $ and $(s-1)\bar c^0\leqslant a<s\bar c^0$, 
$$\op{wt} (D_{an})=s\, .$$

Introduce  
the ideal $\c L_{\bar c_0}^{\c P}(s)$ of $\c L^{\c P}$ 
such that $\c L_{\bar c^0}^{\c P}(s)_k$ is generated  
by all $[\dots [D_{a_1n_1},D_{a_2n_2}],\dots ,D_{a_rn_r}]$ 
with 
$\sum\limits _{i}\op{wt} (D_{a_in_i})\geqslant s$.  
Clearly, for any $s_1,s_2$, it holds   
$[\c L_{\bar c^0}^{\c P}(s_1),\c L_{\bar c^0}^{\c P}(s_2)]
\subset\c L_{\bar c^0}^{\c P}(s_1+s_2)$.  

Consider the lifts 
$h_{\omega }^{0(m)}\in {\c G}^{\c P}_{\omega }$ from Sect.\,\ref{S4.2}.
Denote by $\Ad ^{(m)}$ the automorphism of $G(\c L^{\c P})$ 
obtained from conjugation by $\hat h_{\omega }^{0(m)}$ on $\c G_{<p}^{\c P}$ 
with respect to the identification $\pi (=\pi _f(e)):\c G_{<p}^{\c P}\simeq G(\c L^{\c P})$. 

Let for $a\in\Z _N^+(p)$,  
$\op{Ad}^{(m)}_k(D_{a\,0})={D}^{(m)}_{a\,0}$ and 
 $\op{Ad}^{(m)}_k(D_{\bar 0})={D}^{(m)}_{\bar 0}$. 

\begin{Lem} \label{L4.3} For any $1\leqslant m,m'\leqslant N$,  

{\rm a)}\ ${D}^{(m)}_{\bar 0}\equiv D_{\bar 0}\,\op{mod}\,
\c L_{\bar c^0}^{\c P}(3)+\c L_{\bar c^0}^{\c P}(2)\cap C_2(\c L^{\c P})$;
\medskip  
 
{\rm b)}\ if $a=(\bar a^{(1)},\dots ,\bar a^{(N)})\in\Z _N^+(p)$ and 
$\op{wt}(D_{a\,n})=s$ then 
$${D}^{(m)}_{a\,0}\equiv D_{a\,0}-
\sum _{\imath \geqslant \bar 0}
A _{\imath }a^{(m)}D_{a+\bar c^0+p\imath ,0}$$ 
modulo $\c L_{\bar c_0 }^{\c P}(s+2)_k+
\c L_{\bar c_0}^{\c P}(s+1)_k\cap C_2(\c L^{\c P}_k)$, 
where the elements $A _{\iota }\in k$ are such that 
$E(\omega ^p)=1+\sum\limits  _{\iota\geqslant\bar 0}
A _{\iota }t^{\bar c^0+p\iota }$;
\medskip 

{\rm c)} the commutator $(\hat h_{\omega }^{0(m)}, 
\hat h_{\omega }^{0(m_1)})\in \pi ^{-1}G(\c L_{\bar c_0}^{\c P}(2))$. 
\end{Lem}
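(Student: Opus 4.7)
By Prop.\,\ref{P1.4}(a), the conjugation action $\Ad^{(m)}$ corresponds under $\pi=\pi_f(e)$ to the $\c P$-continuous Lie automorphism $\c A^{0(m)}$ of $\c L^{\c P}$, so parts (a) and (b) reduce to computing $\c A^{0(m)}_k(D_{\bar 0})$ and $\c A^{0(m)}_k(D_{a,0})$ modulo the indicated ideals. My plan is to run the recurrent procedure of Sect.\,\ref{S4.2} starting from $(C_1,\c A_1)=(0,\id)$, read off the first-order correction to $\c A^{(m)}$ driven by the impulse $(\id\otimes h_\omega^{(m)})e-e$, and then verify that each subsequent iteration contributes only to the target ideal $\c L^{\c P}_{\bar c^0}(s+2)+\c L^{\c P}_{\bar c^0}(s+1)\cap C_2(\c L^{\c P})$.

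Using $h_\omega^{(m)}(t^{-a})=t^{-a}E(\omega^p)^{-a^{(m)}}$ together with $E(\omega^p)-1=\sum_\iota A_\iota t^{\bar c^0+p\iota}$, the leading piece of the impulse is
\[
(\id\otimes h_\omega^{(m)})e-e\equiv-\sum_{a,\iota}a^{(m)}A_\iota\,t^{-a+\bar c^0+p\iota}D_{a,0}\ \op{mod}\,\c L^{\c P}_{\bar c^0}(s+2)_{\c K},
\]
where the binomial tails $\binom{-a^{(m)}}{k}(E(\omega^p)-1)^k$ for $k\geqslant 2$ fall in the error since each factor of $E(\omega^p)-1$ shifts the $\bar c^0$-weight by $+1$. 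Reindex by $b=a-\bar c^0-p\iota$; since $\bar c^0+p\iota\in p\Z^N$, one has $b\in\Z_N^+(p)$ iff $a\in\Z_N^+(p)$, and $a^{(m)}\equiv b^{(m)}\,\op{mod}\,p$. For $b\in\Z_N^+(p)$ the matching of the $t^{-b}$-coefficient against $\sum_b t^{-b}(\c A^{(m)}(D_{b,0})-D_{b,0})$ via the $\c P$-continuous section $S_{t,\alpha_0}$ gives the formula of (b) (after renaming $b\to a$). Terms with a strictly positive coordinate in $b$ lie in $(\sigma-\id)\c K$ and are swallowed by $\c R$ into $C'$, leaving $\c A'$ untouched. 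Crucially $b=\bar 0$ never occurs because $\bar c^0+p\iota\in p\Z^N$ is excluded from $\Z_N^+(p)$; hence the first-order (and in fact every purely linear-in-$E(\omega^p)-1$) correction to $\c A^{0(m)}(D_{\bar 0})$ vanishes, so any change of $D_{\bar 0}$ first appears at iteration $s=2$ via the Campbell--Hausdorff bracket $\tfrac12[\sigma C_1,(\c A_1\otimes\id)e]$, which sits in $C_2(\c L^{\c P})\cap\c L^{\c P}_{\bar c^0}(2)$, while iterations $r\geqslant 3$ contribute in $\c L^{\c P}_{\bar c^0}(3)$; this is (a).

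For (c), apply Prop.\,\ref{P4.1}(b): $h_\omega^{(m)}h_\omega^{(m_1)}\equiv h_\omega^{(m_1)}h_\omega^{(m)}\,\op{mod}\,t^{p\bar c^0}\m_{\c K}$. Using the composition law on couples from Sect.\,\ref{S4.2} for both orderings of $\hat h_\omega^{0(m)}\hat h_\omega^{0(m_1)}$ vs $\hat h_\omega^{0(m_1)}\hat h_\omega^{0(m)}$, the defining equation \eqref{E4.2} for the commutator couple $(C_{\mathrm{comm}},\c A_{\mathrm{comm}})$ takes the form $(\id\otimes \mathrm{comm})e\equiv e\,\op{mod}\,(t^{p\bar c^0}\m_{\c K}\otimes\c L)$. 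Feeding this negligible right-hand side through the same recursive procedure forces $C_{\mathrm{comm}}\in\c L^{\c P}_{\bar c^0}(2)_{\c K}$ and $\c A_{\mathrm{comm}}\equiv\Ad(C_{\mathrm{comm}})$ modulo the same ideal, placing the commutator in $\pi^{-1}G(\c L^{\c P}_{\bar c^0}(2))$.

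The main obstacle is the uniform weight bookkeeping: one must verify that the $\c P$-continuous operators $\c S,\c R$ from Sect.\,\ref{S3.1} preserve the $\bar c^0$-weight filtration on $\c L^{\c P}_{\c K}$, and that the Campbell--Hausdorff tails appearing at each iteration land in $C_{s+1}(\c L^{\c P})\cap\c L^{\c P}_{\bar c^0}(s+1)$. Once this compatibility between the commutator filtration and the $\bar c^0$-weight filtration is established, the explicit computations sketched above deliver the three stated congruences.
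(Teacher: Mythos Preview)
Your outline for (a) and (b) is essentially the paper's argument, but the paper organises the ``uniform weight bookkeeping'' you flag as the main obstacle by introducing the auxiliary filtered Lie algebra
\[
\c N=\sum_{s\geqslant 1}t^{-s\bar c^0}\c L^{\c P}_{\bar c^0}(s)_{\m_{\c K}},\qquad \c N(j)=t^{j\bar c^0}\c N,
\]
and checking once that $e\in\c N$, that $\c S,\c R$ preserve each $\c N(j)$, and that $[\c N(1),\c N(1)]\subset\c N(2)$, $[\c N(1),\c N]\subset t^{\bar c^0}\wt{\c N}^{(2)}$. After this, the whole recursion stays inside $\c N$, one reads off $e^{(m)}\equiv e+e_1^{(m)-}$ modulo $\c N(2)+t^{\bar c^0}\wt{\c N}^{(2)}$, and (a),(b) drop out by comparing coefficients. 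Your ad hoc tracking can be made to work, but introducing $\c N$ is exactly what converts your ``once this compatibility is established'' into a two-line verification.

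Your argument for (c) has a genuine gap. From Prop.\,\ref{P4.1}(b) you only know that the \emph{base} automorphisms $h_\omega^{(m)},h_\omega^{(m_1)}$ commute modulo $t^{p\bar c^0}\m_{\c K}$, hence that $(\id\otimes\mathrm{comm})e$ is close to $e$. But equation \eqref{E4.2} does not determine the couple $(C,\c A)$ uniquely: many different lifts satisfy it, and the recursive procedure of Sect.\,\ref{S4.2} singles out one particular lift of the base commutator, which a priori has nothing to do with the actual group commutator $(\hat h_\omega^{0(m)},\hat h_\omega^{0(m_1)})$ of the \emph{chosen} lifts. ``Feeding through the recursive procedure'' therefore bounds the wrong object. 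The paper avoids this by working with $f$ rather than $e$: it passes to $\c N_{<p}=\sum_s t^{-s\bar c^0}\c L^{\c P}_{\bar c^0}(s)_{\m_{<p}}$, shows $(\id\otimes\hat h_\omega^{0(m)})f\equiv f+f_1^{(m)}\ \op{mod}\,\c N_{<p}(2)$ with $f_1^{(m)}\in\c N_{<p}(1)$, and hence that both orderings give the same result modulo $\c N_{<p}(2)$, so $(\id\otimes(\hat h_\omega^{0(m)},\hat h_\omega^{0(m_1)}))f\equiv f\ \op{mod}\,\c N_{<p}(2)$. Since the commutator lies in $\c G_{<p}^{\c P}$, its image $l_{mm_1}\in\c L^{\c P}\cap\c N_{<p}(2)=\c L^{\c P}_{\bar c^0}(2)$. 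To repair your argument you must either carry out the composition-law computation of the commutator couple explicitly and bound it, or switch to the action on $f$ as the paper does.
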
 

We shall prove this lemma after finishing the proof of Prop.\,\ref{P4.4} below. 
 
Note that lemma \ref{L4.3} implies  $\pi C_2({\c G}_{\omega }^{\c P})
\subset G(\c L_{\bar c_0}^{\c P}(2))$. 

Set $\c L_{\omega }^{\c P}(1)=\c L^{\c P}$.

For $s\geqslant 2$, let $\c L_{\omega }^{\c P}(s)\subset\c L^{\c P}$ 
be such that $\pi C_s(\c G_{\omega }^{\c P})=G(\c L_{\omega }^{\c P}(s))$.

\begin{Prop} \label{P4.4} 
 For $1\leqslant s\leqslant p$, 
$\c L_{\omega }^{\c P}(s)=\c L_{\bar c^0 }^{\c P}(s)$. 
\end{Prop}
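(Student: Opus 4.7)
The plan is to establish both inclusions by induction on $s$, with $s=1$ holding by definition and $s=2$ being the remark recorded right after Lemma \ref{L4.3}. The principal tools are the displacement formula of Lemma \ref{L4.3}(b) (describing how $\op{Ad}^{(m)}$ shifts each generator $D_{a,0}$ to $D_{a+\bar c^0+p\imath,0}$ modulo corrections of higher weight or deeper commutator depth), the commutator estimate of Lemma \ref{L4.3}(c), the fact from Prop.\,\ref{P1.4} that commutators with the lifts $\hat h_{\omega}^{0(m)}$ correspond via $\pi$ to the operators $\op{Ad}^{(m)}-\id$ on $\c L^{\c P}$, and the ideal property $[\c L_{\bar c^0}^{\c P}(s_1),\c L_{\bar c^0}^{\c P}(s_2)]\subseteq\c L_{\bar c^0}^{\c P}(s_1+s_2)$ of the weight filtration.

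For the inclusion $\c L_{\omega}^{\c P}(s)\subseteq\c L_{\bar c^0}^{\c P}(s)$, the inductive step exploits that $C_{s+1}(\c G_{\omega}^{\c P})$ is topologically generated by commutators of generators of $\c G_{\omega}^{\c P}$ with elements of $C_s(\c G_{\omega}^{\c P})$. Through $\pi$, such a commutator is either a Lie bracket $[l,l']$ with $l\in\c L^{\c P}=\c L_{\bar c^0}^{\c P}(1)$ and $l'\in\c L_{\omega}^{\c P}(s)\subseteq\c L_{\bar c^0}^{\c P}(s)$, which lies in $\c L_{\bar c^0}^{\c P}(s+1)$ by the ideal property, or an element of the form $(\op{Ad}^{(m)}-\id)(l')$ with $l'\in\c L_{\omega}^{\c P}(s)$, which by Lemma \ref{L4.3}(b) strictly increases the weight of every generator $D_{a,0}$ and hence sends $\c L_{\bar c^0}^{\c P}(s)$ into $\c L_{\bar c^0}^{\c P}(s+1)$.

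The reverse inclusion $\c L_{\bar c^0}^{\c P}(s)\subseteq\c L_{\omega}^{\c P}(s)$ is the harder direction and constitutes the principal obstacle. The generators of $\c L_{\bar c^0}^{\c P}(s)_k$ split into two kinds. Iterated Lie brackets of length $\geqslant 2$ with total weight $\geqslant s$ are handled by writing such a bracket as $[l_1,l_2]$ with $l_i\in\c L_{\bar c^0}^{\c P}(s_i)$ and $s_1+s_2\geqslant s$, applying the inductive hypothesis at smaller weights to place $l_i\in\c L_{\omega}^{\c P}(s_i)$, and invoking the ideal property of $\c L_{\omega}^{\c P}$. Single generators $D_{a,0}$ of weight exactly $s$ are recovered by inverting Lemma \ref{L4.3}(b): given $a\in\Z_N^+(p)$ with $a-\bar c^0\in\Z_N^+(p)$ (automatic because $\bar c^0\in p\Z^N$ preserves the coprime-to-$p$ condition), one picks $m$ with $(a-\bar c^0)^{(m)}\not\equiv 0\bmod p$; then $(\op{Ad}^{(m)}-\id)(D_{a-\bar c^0,0})$, which lies in $\c L_{\omega}^{\c P}(s)$ by induction, equals $-A_{\bar 0}(a-\bar c^0)^{(m)}D_{a,0}$ plus a tail in $\c L_{\bar c^0}^{\c P}(s+1)$ and a $C_2$-correction of weight $\geqslant s$ already covered by the bracket case.

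The delicate point, and the reason for the hypothesis $s\leqslant p$, is that the tail feeds back into generators of weight $s+1$ which must themselves be inverted, producing in turn corrections of still higher weight or greater commutator depth. Since each iteration strictly raises at least one of these two indices and both are bounded above by $p$ (thanks to $C_p(\c L^{\c P})=0$ and the fact that only finitely many weight strata $\leqslant p$ appear), the cascade terminates after finitely many steps and each $D_{a,0}$ of weight $s$ is exhibited as an element of $\c L_{\omega}^{\c P}(s)$. Combining this inversion scheme with the treatment of iterated brackets and interleaving with the upward induction on $s$ yields the claimed equality $\c L_{\omega}^{\c P}(s)=\c L_{\bar c^0}^{\c P}(s)$ for all $1\leqslant s\leqslant p$.
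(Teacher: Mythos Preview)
Your approach is essentially the same as the paper's: induction on $s$, forward inclusion from $[\c L_{\bar c^0}^{\c P}(s_1),\c L_{\bar c^0}^{\c P}(s_2)]\subset\c L_{\bar c^0}^{\c P}(s_1+s_2)$ together with the shift estimate of Lemma~\ref{L4.3}(b), reverse inclusion by splitting $\c L_{\bar c^0}^{\c P}(s+1)$ into its $C_2$-part (handled by the inductive hypothesis and the bracket decomposition $\sum_{s_1+s_2=s+1}[\c L_{\bar c^0}^{\c P}(s_1),\c L_{\bar c^0}^{\c P}(s_2)]$) and its linear part (handled by inverting the shift).

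There is one inaccuracy worth correcting. When you apply Lemma~\ref{L4.3}(b) to $D_{a-\bar c^0,0}$ of weight $s-1$, the explicit tail terms $-\sum_{\imath>\bar 0}A_\imath(a-\bar c^0)^{(m)}D_{a+p\imath,0}$ need \emph{not} lie in $\c L_{\bar c^0}^{\c P}(s+1)$: for $p\imath$ small, $a+p\imath$ can still satisfy $(s-1)\bar c^0\leqslant a+p\imath<s\bar c^0$, i.e.\ have weight exactly $s$. So your cascade does not reduce to ``generators of weight $s+1$'' but rather to generators $D_{a',0}$ with strictly larger index $a'>a$ (same or higher weight). The correct inversion is therefore an upper-triangular argument in the lexicographic order on indices, using $A_{\bar 0}\ne 0$ and $(a-\bar c^0)^{(m)}\equiv a^{(m)}\not\equiv 0\bmod p$ to ensure a nonzero diagonal, together with $\c P$-continuity to guarantee convergence of the resulting $\c P$-series. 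This is exactly what the paper's assertion (c2) encodes (it takes $l'$ in the full linear part of $\c L_{\bar c^0}^{\c P}(s)$, not a single generator, and reduces only modulo the $C_2$-part). With this correction your argument goes through and matches the paper's. Note also that the group commutator $(\hat h_\omega^{0(m)},h)$ corresponds under $\pi$ to $\op{Ad}^{(m)}(\pi h)\circ(-\pi h)$ rather than $(\op{Ad}^{(m)}-\id)(\pi h)$; the difference is in $C_2$ and harmless here, but worth keeping straight.
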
 

\begin{proof} 
Use induction on $s\geqslant 1$.  

Clearly, $\c L^{\c P}_{\omega }(1)=\c L^{\c P}_{\bar c^0}(1)$.

Suppose $s_0\geqslant 1$ and for $1\leqslant s\leqslant s_0$, 
$\c L^{\c P}_{\omega }(s)=\c L^{\c P}_{\bar c^0}(s)$.

Let $\c L_{\op{lin}}^{\c P}=
\left (\sum\limits _{a,n}kD_{a\,n}\right )\cap\c L^{\c P}$ be 
\lq\lq the subspace of linear terms\rq\rq\  in $\c L^{\c P}$.

We have the following properties for all $s\leqslant s_0$:
\medskip 

-- \ $\c L^{\c P}_{\bar c^0}(s+1)=\c L_{\bar c^0}^{\c P}(s+1)
\cap\c L_{\op{lin}}^{\c P}
+\c L^{\c P}_{\bar c^0 }(s+1)\cap C_2(\c L^{\c P})$; 
\medskip 

-- \ $\c L^{\c P}_{\bar c^0}(s+1)\cap C_2(\c L^{\c P})=\sum\limits _{s_1+s_2=s+1} 
\left [\c L_{\bar c^0}^{\c P}(s_1),\c L_{\bar c^0}^{\c P}(s_2)\right ]$;
\medskip 

-- \ $\c L^{\c P}_{\omega }(s+1)$ is the ideal in $\c L^{\c P}$ generated by  
$[\c L^{\c P}_{\omega }(s),\c L^{\c P}]$ and the elements 
$\op{Ad}^{(m)}(l)\circ (-l)$, where $l\in \c L^{\c P}_{\omega }(s)$ 
and $1\leqslant m\leqslant N$. (If $s_0=1$ 
we do need part c) of Lemma \ref{L4.3}.) 
\medskip

Now statements a) and b) of Lemma \ref{L4.3} imply: 

(c1)\ {\it if $l\in \c L_{\bar c^0}^{\c P}(s)$ then 
$\op{Ad}^{(m)}(l)\circ (-l)\in \c L^{\c P}_{\bar c^0}(s+1)$};
\medskip 

(c2)\ {\it if $l\in \c L_{\op{lin}}^{\c P}\cap\c L^{\c P}_{\bar c^0}(s+1)$ 
then there are $m$ and 
$l'\in \c L_{\op{lin}}^{\c P}\cap\c L(s)$ such that} 
$$\op{Ad}^{(m)}(l')\circ (-l')\equiv l\,\op{mod}\,
\c L^{\c P}_{\bar c^0}(s+1)\cap C_2(\c L^{\c P})$$ 
({\it use that $A _{\bar 0}\ne 0$ 
and for any $a=(a^{(1)},\dots ,a^{(N)})\in\Z _N^+(p)$, there is $m$ such that} 
$a^{(m)}\not\equiv 0\,\op{mod}\,p$). 
\medskip

Then $[\c L^{\c P}_{\omega }(s_0),\c L^{\c P}]=
[\c L^{\c P}_{\bar c^0}(s_0), \c L^{\c P}(1)]\subset \c L^{\c P}_{\bar c^0}(s_0+1)$ 
and applying (c1) we obtain  
$\c L^{\c P}_{\omega }(s_0+1)\subset \c L^{\c P}_{\bar c^0}(s_0+1)$. 

For the opposite direction, note that by the inductive assumption, 

$$\c L^{\c P}_{\bar c^0}(s_0+1)\cap C_2(\c L^{\c P})=\sum\limits _{s_1+s_2=s_0+1}
\left [\c L^{\c P}_{\omega }(s_1),\c L_{\omega }^{\c P}(s_2)\right ]
\subset \c L_{\omega }^{\c P}(s_0+1)$$
and then (c2) implies that $\c L_{\op{lin}}^{\c P}\cap\c L^{\c P}_{\bar c^0}(s_0+1)
\subset \c L_{\omega }^{\c P}(s_0+1)$. As a result, 
$\c L_{\bar c^0}^{\c P}(s_0+1)\subset\c L_{\omega }^{\c P}(s_0+1)$ and  
our proposition is proved. 
\end{proof} 

\begin{proof} [Proof of Lemma \ref{L4.3}] 

Let 
$$\c N=\sum_{s\geqslant 1}t^{-\bar c^0s}\c L^{\c P}_{\bar c^0}(s)_{\m _{\c K}},$$
where $\m _{\c K}$ is the maximal ideal of the $N$-valuation ring $\c O_{\c K}$ of $\c K$. 
Clearly, $\c N$ has an induced structure of a Lie algebra over $\F _p$ 
and $e\in\c N$. 

Let 
$e^{(m)}:=(\op{Ad}^{(m)}_k \otimes\id _{\c K})e
=\sum \limits _{a\in\Z _N^+(p)}t^{-a}
{D}^{(m)}_{a\,0}+\alpha _{\bar 0}{D}^{(m)}_{\bar 0}$. 

The recovering of $C^{0(m)}\in G(\c L_{\c K}^{\c P})$ and $e^{(m)}$ 
from relation 
\begin{equation} \label{E4.5} 
(\id _{\c L^{\c P}}\otimes h_{\omega }^{(m)})e\circ C^{0(m)}=
(\sigma C^{0(m)})\circ {e}^{(m)}\,,
\end{equation}
  is a part of the recurrent procedure from Sect.\,\ref{S4.2}.  
Clearly, the operators $\c S$ and $\c R$ from Sect.\,\ref{S3.1}   
map $\c N$ to itself. Therefore, when following the recurrent procedure 
 we remain at each step in $\c N$. As a result, all 
$e^{(m)}, C^{0(m)}, \sigma C^{0(m)}\in\c N$. 

For any $j\geqslant 0$, introduce the ideals $\c N(j):=t^{\bar c^0j}\c N$ of $\c N$. 
The operators $\c R$ and $\c S$ also map the ideals $\c N(j)$ to itself.

The following properties are obtained by direct calculations: 
\medskip 

(i)\  $(\id_{\c L^{\c P}}\otimes h_{\omega }^{(m)})e=e+e^{(m)}_1\,\op{mod}\,\c N(2)$, 
$e^{(m)}_1=e_1^{(m)+}+e_1^{(m)-}\in\c N(1)$,   
$$e_1^{(m)-}=-{\hskip-8pt}\sum _{\substack{\iota\geqslant 
\bar 0\\ a\in\Z_N ^+(p)}}{\hskip -6pt}t^{-a}a^{(m)} 
A _{\iota }D_{a+\bar c^0+p\iota ,0}, \ \ 
e_1^{(m)+}=-{\hskip -10pt}\sum _{\substack{\iota\geqslant \bar 0\\ 
{\bar 0}<a<\bar c^0+p\iota }}{\hskip -8pt}
a^{(m)}A _{\iota }t^{-a+\bar c^0+p\iota }D_{a\,0}\, $$
(note that $e_1^{(m)+}\in\c L^{\c P}_{\m _{\c K}}$ and, 
therefore, $\c S(e_1^{(m)+})=0$);
\medskip 

(ii)\   the congruence $(\id _{\c L^{\c P}}\otimes h_{\omega }^{(m)})e\equiv 
e\,\op{mod}\,\c N(1)$ implies 
that $e^{(m)}\equiv e\,\op{mod}\,\c N(1)$ and 
$C^{0(m)}, \sigma C^{0(m)}\in \c N(1)$. Indeed, in the procedure of 
specification of $\hat h_{\omega }^{0(m)}$ it holds that for all $s$, 
$C^{(m)}_s,\sigma C^{(m)}_s\in\c N(1)$ and 
$(\c A^{(m)}_s\otimes\id _{\c K})e\equiv e\,\op{mod}\,\c N(1)$; 
\medskip 

(iii)\ \ \  
 $e^{(m)}=(-\sigma C^{0(m)})\circ 
 (\id _{\c L^{\c P}}\otimes h_{\omega }^{(m)})e\circ C^{0(m)}
\equiv (C^{0(m)}-\sigma C^{0(m)})+e+e^{(m)}_1\,\op{mod}\,
\c N(2)+t^{\bar c^0}\wt{\c N}^{(2)},$  
where 
$\wt{\c N}^{(2)}:=\sum \limits _{s\geqslant 2}t^{-s\bar c^0}
(\c L^{\c P}_{\bar c^0}(s)\cap C_2(\c L^{\c P}))_{\m _{\c K}}$ 
(use that $[\c N(1),\c N(1)]\subset\c N(2)$ and 
$[\c N(1),\c N]\subset t^{\bar c^0}\wt{\c N}^{(2)}$);
\medskip 

(iv)\  
$\c S(\c N(2)+t^{\bar c^0}\wt{\c N}^{(2)})\subset 
\c N(2)+t^{\bar c^0}\wt{\c N}^{(2)}$, 
$\c S(e^{(m)}-e-e_1^{(m)-})=e^{(m)}-e-e_1^{(m)-}$, 
$\c S(C^{(m)}-\sigma C^{(m)}+e_1^{(m)+})=0$. Therefore, item (iii) implies  
$$e^{(m)}\equiv e+e_1^{(m)-}\,\op{mod}\,\c N(2)+t^{\bar c^0}\wt{\c N}^{(2)}\, .$$  
More explicitly,   
\begin{equation} \label{E4.6}  e^{(m)}\equiv 
\sum \limits _{a\in\Z _N^+(p)}t^{-a}{\hskip-4pt}\left (D_{a\,0}-
a^{(m)}\sum\limits _{\iota\geqslant \bar 0}
A _{\imath}D_{a+\bar c^0+p\iota ,0}\right )+
\alpha _{\bar 0}D_{\bar 0}
\end{equation}
modulo $\c N(2)+t^{\bar c^0}\wt{\c N}^{(2)}$.  

Deduce from this congruence  statements a) and b)  of our lemma.
Consider the presentation of an element 
$l\in\c L^{\c P}_{\c K}$ in the form of a 
$\c P$-convergent series 
$l=\sum \limits _{b\in\Z _N}t^{b}l_{b}$, 
with all $l_{b}\in\c L^{\c P}_k$. 

\ \ 

Suppose $s\geqslant 1$ and $-(s-1)\bar c^0\geqslant b>-s\bar c^0$.

It follows directly from definitions that: 
\medskip 

--- \ if $l\in\c N$ then $l_{b}\in\c L^{\c P}_{\bar c^0}(s)_k$;
\medskip 

---\ if $l\in\c N(2)$ then $l_{b}\in\c L^{\c P}_{\bar c^0}(s+2)_k$;
\medskip 

--- \ if $l\in t^{\bar c^0}\wt{\c N}^{(2)}$ then 
$l_{b}\in \c L^{\c P}_{\bar c^0}(s+1)_k\cap C_2(\c L^{\c P}_k)$. 

As a result, the parts a) and b) of lemma are obtained by comparing 
coefficients in \eqref{E4.6}. 
 
 Now note that for any $m_1$, 
 \begin{equation} \label{E4.7} (\id _{\c L^{\c P}}\otimes 
 h_{\omega }^{(m_1)})e\equiv e+e_1^{(m_1)}\,\op{mod}\,\c N(2)\, .
 \end{equation} 
 
 Let $\c N_{<p}=\sum_{s\geqslant 1}
 t^{-\bar c^0s}\c L^{\c P}_{\bar c^0}(s)_{\m _{<p}},$
where $\m _{<p}$ is the maximal ideal of the $N$-valuation ring $\c O_{\c K_{<p}}$. 
Again, $\c N_{<p}$ has the induced structure 
of a Lie $\F _p$-algebra and 
for any $j\geqslant 0$, $\c N_{<p}(j)=t^{j\bar c_0}
\c N_{<p}$ is  ideal in $\c N_{<p}$. 

As earlier, $f, \sigma f\in \c N_{<p}$, and 
congruence \eqref{E4.7} implies that 
$$(\id _{\c L^{\c P}}\otimes h_{\omega }^{(m_1)}h_{\omega }^{(m))})e\equiv 
 e+e_1^{(m)}+e_1^{(m_1)}\,\op{mod}\,\c N_{<p}(2)\, .$$
where $f_1^{(m_1)}\in\c N_{<p}(1)$ is such that 
$\sigma f_1^{(m_1)}-f^{(m_1)}_1=e_1^{(m_1)}$. 

This implies 
 $$(\id _{\c L^{\c P}}\otimes \hat h_{\omega }^{0(m)}
 \hat h_{\omega }^{0(m_1)})f\equiv 
(\id _{\c L^{\c P}}\otimes \hat h_{\omega }^{0(m_1)}\hat h_{\omega }^{0(m)})f\equiv f+
f_1^{(m_1)}+f_1^{(m)}\,\op{mod}\,\c N_{<p}(2)\,$$
and, therefore, $(\id_{\c L^{\c P}} \otimes (\hat h_{\omega }^{0(m)},
\hat h_{\omega }^{0(m_1)}))f
\equiv f\,\op{mod}\,\c N_{<p}(2)$. 

On the other hand, the commutator $(\hat h_{\omega }^{0(m)}, \hat h_{\omega }^{0(m_1)})$ 
is a lift of $\id _{\c K}$, i.e. it 
coincides with $\pi ^{-1}(l_{mm_1})\in \c G_{<p}^{\c P}$. 
Therefore, $l_{mm_1}\in \c L^{\c P}\cap \c N_{<p}(2)=\c L_{\bar c^0}^{\c P}(2)$. 
The part c) is proved. 
\end{proof} 

\subsection{The group $\Gamma ^{\c P}_{\omega }$} \label{S4.4}

Let $\bar{\c L}=\c L/\c L_{\bar c_0}(p)$ and 
$\bar{\c L}^{\c P}=\c L^{\c P}/\c L_{\bar c_0}^{\c P}(p)$. 
Then $\bar{\c L}^{\c P}$ is dense in $\bar{\c L}$. 
If $\c K(p)=\c K_{<p}^{G(\c L_{\bar c^0}(p))}$ then $\bar{\c G}:=
\op{Gal}(\c K(p)/\c K)$ and the identification $\pi _f(e)$ induces the 
identification $\bar{\pi }:
\bar{\c G}\simeq G(\bar{\c L})$. This identification can be 
obtained via nilpotent Artin-Schreier theory: for $\bar e\in\bar{\c L}_{\c K}$ 
and $\bar f\in\bar{\c L}_{\c K(p)}$, we have 
$\sigma \bar f=\bar e\circ \bar f$ and 
$\bar\pi =\pi _{\bar f}(\bar e)$. However, the algebra $\bar{\c L}_{\c K}$ 
is too big for the process of linearization, cf.\,below. 
This motivates the following definitions. 

Let 
$$\c M:=\sum _{1\leqslant s<p}
 t^{-s\bar c^0}\c L_{\bar c^0}(s)_{\m _{\c K}}+\c L_{\bar c^0}(p)_{\c K}$$
$$\c M^{\c P}:=\sum _{1\leqslant s<p}
 t^{-s\bar c^0}\c L_{\bar c^0}^{\c P}(s)_{\m _{\c K}}+\c L_{\bar c^0}^{\c P}(p)_{\c K}$$
$$\c M_{<p}:=\sum _{1\leqslant s<p}
 t^{-s\bar c^0}\c L_{\bar c^0}(s)_{\m _{<p}}+\c L_{\bar c^0}(p)_{\c K_{<p}}$$  
 where (as earlier) $\m _{<p}$  is the maximal ideal 
of the $N$-valuation ring $\c O_{\c K_{<p}}$.  
Then $\c M$ and $\c M^{\c P}$ have the induced structure of  Lie 
$\F _p$-algebras (use the Lie bracket from $\c L_{\c K}$).  
For $s\geqslant 0$, $\c M(s):=t^{s\bar c^0}\c M$ and 
$\c M^{\c P}(s):=t^{s\bar c^0}\c M^{\c P}$ form a decreasing central 
filtrations of ideals in $\c M$ and $\c M^{\c P}$. 
 Similarly, 
 $\c M_{<p}$ is a Lie $\F _p$-algebra (containing $\c M$ 
 as its subalgebra),   
for $s\geqslant 0$, $\c M_{<p}(s):=t^{s\bar c^0}\c M_{<p}$ is a decreasing 
central 
filtration of ideals in $\c M_{<p}$ and      
$\c M_{<p}(s)\cap \c M^{\c P}=\c M^{\c P}(s)$. 
It can be easily seen that $e\in\c M^{\c P}$ and $f,\sigma f\in\c M_{<p}$.

There is a natural embedding  
$$\bar{\c M}^{\c P}:=\c M^{\c P}/\c M^{\c P}(p-1)\subset   
\bar{\c M}_{<p}:=\c M_{<p}/\c M_{<p}(p-1)\, ,$$
and the induced 
decreasing filtrations 
of ideals $\bar{\c M}^{\c P}(s)$ and $\bar{\c M}_{<p}(s)$ 
(where $\bar{\c M}^{\c P}(p-1)=\bar{\c M}_{<p}(p-1)=0$)  
are compatible with this embedding.  
For all $s\geqslant 0$,  
$(\id _{\bar{\c L}}\otimes h_{\omega }^{(m)}-
\id _{\bar{\c M}^{\c P}})^s\bar{\c M}^{\c P}
\subset \bar{\c M}^{\c P}(s)$.   

The algebras $\bar{\c M}^{\c P}$ and $\bar{\c M}_{<p}$ are naturally 
identified with the following 
subquotients of $\bar{\c L}^{\c P}_{\c K}$ and $\bar{\c L}_{<p}$: 

$$\bar{\c M}^{\c P}=
\left (\sum _{1\leqslant s<p}t^{-s\bar c^0}\bar{\c L}^{\c P}_{\bar c^0}
(s)_{\m }\right )\otimes 
\c O_{\c K}/t^{(p-1)\bar c^0}$$

$$\bar{\c M}_{<p}=
\left (\sum _{1\leqslant s<p}t^{-s\bar c^0}
\bar{\c L}_{\bar c^0}(s)_{\m _{<p}}\right )\otimes 
\c O_{\c K_{<p}}/t^{(p-1)\bar c^0}\,.$$

We can see easily that $\bar e\otimes 1\in\bar{\c M}^{\c P}$, 
$\bar f\otimes 1,\,\sigma \bar f\otimes 1\in\bar{\c M}_{<p}$ and 
$\sigma \bar f\otimes 1=(\bar e\otimes 1)\circ (\bar f\otimes 1)$. 
The following property shows that we still have full control 
of the identification $\bar\pi $.

\begin{Prop} \label{P4.5}  
The correspondence $\tau\mapsto 
(-\bar f\otimes 1)\circ \tau (\bar f\otimes 1)$ induces the 
natural projections $\c G_{<p}\To \bar{\c G}\simeq G(\bar{\c L})$ 
and ${\c G}^{\c P}_{<p}\To \bar{\c G}^{\c P}\simeq G(\bar{\c L}^{\c P})$.  
\end{Prop}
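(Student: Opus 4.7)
The plan is to deduce the statement from the nilpotent Artin-Schreier identification $\bar\pi=\pi_{\bar f}(\bar e)$ recalled in the preamble, by transporting the defining computation for $\bar\pi$ from $\bar{\c L}_{\c K(p)}$ down to the subquotient $\bar{\c M}_{<p}$. The advantage of $\bar{\c M}_{<p}$ is that, although it is only a truncation of $\bar{\c L}_{\c K_{<p}}$, it is small enough to admit a linearization argument in the next section, while still large enough to contain the images of $\bar f$, $\sigma\bar f$, $\bar e$ together with the composition law $\circ$ relating them.

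First I would verify that $f,\sigma f\in\c M_{<p}$, so that $\bar f\otimes 1$ and $\sigma\bar f\otimes 1$ in $\bar{\c M}_{<p}$ are exactly the images of $\bar f$ and $\sigma\bar f$ under the natural projection $\c M_{<p}\to\bar{\c M}_{<p}$. This is implicit in the proof of Lemma \ref{L4.3}: the recurrence of Section \ref{S4.2} determining $f$ via the $\c P$-continuous operators $\c S$ and $\c R$ of Section \ref{S3.1} stays at every step inside $\c N_{<p}\subset\c M_{<p}$. Any $\tau\in\c G_{<p}$ fixes $t$ and preserves $\c L_{\bar c^0}(p)_{\c K_{<p}}$, so its action on the $\c K_{<p}$-factor descends to $\bar{\c M}_{<p}$ and satisfies $\tau(\bar f\otimes 1)=\bar\tau(\bar f)\otimes 1$, where $\bar\tau\in\bar{\c G}$ is the image of $\tau$ under $\c G_{<p}\to\bar{\c G}$. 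Consequently, $(-\bar f\otimes 1)\circ\tau(\bar f\otimes 1)\in\bar{\c M}_{<p}$ coincides with the image of $(-\bar f)\circ\bar\tau(\bar f)=\bar\pi(\bar\tau)\in\bar{\c L}$ under the natural map $\bar{\c L}_{\c K(p)}\to\bar{\c M}_{<p}$, and from this the first projection $\c G_{<p}\to\bar{\c G}\simeq G(\bar{\c L})$ is recovered. The $\c P$-refinement is then immediate: for $\tau\in\c G_{<p}^{\c P}$ one has $\pi(\tau)\in\c L^{\c P}$, and its image in $\bar{\c L}=\c L/\c L_{\bar c^0}(p)$ lies in $\c L^{\c P}/\c L^{\c P}_{\bar c^0}(p)=\bar{\c L}^{\c P}$.

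The main subtle point I expect to handle is to justify that $(-\bar f\otimes 1)\circ\tau(\bar f\otimes 1)$, which a priori only lives in $\bar{\c M}_{<p}$, actually records $\bar\pi(\bar\tau)$ unambiguously --- that is, that the natural map $\bar{\c L}\otimes 1\to\bar{\c M}_{<p}$ obtained by composing $\bar{\c L}\hookrightarrow\bar{\c L}_{\c K_{<p}}$ with the quotient is injective on the Galois-invariant piece. Conceptually this follows from Galois invariance: $(-\bar f)\circ\bar\tau(\bar f)$ is $\sigma$-fixed inside $\bar{\c L}_{\c K(p)}$, and the $\sigma$-invariants of the relevant piece of $\bar{\c M}_{<p}$ reduce to $\bar{\c L}$, so the truncation at $t^{(p-1)\bar c^0}$ defining $\bar{\c M}_{<p}$ loses no information for this invariant part. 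Verifying this carefully, and checking the compatibility of the Campbell-Hausdorff composition with the passage $\c M_{<p}\to\bar{\c M}_{<p}$, is the technical heart of the argument and is what forces the choice of $\bar{\c M}_{<p}$ rather than some larger quotient of $\bar{\c L}_{\c K_{<p}}$.
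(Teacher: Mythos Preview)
Your overall approach is correct and aligned with the paper's: both observe that $(-\bar f\otimes 1)\circ\tau(\bar f\otimes 1)$ is the image in $\bar{\c M}_{<p}$ of $(-f)\circ\tau f=\pi(\tau)\in\c L$, and both then need to show that this image determines $\bar\pi(\bar\tau)\in\bar{\c L}$ unambiguously.

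The difference is in how the ``subtle point'' (injectivity of $\bar{\c L}\to\bar{\c M}_{<p}$) is handled. You propose a $\sigma$-invariance argument, computing the $\sigma$-fixed part of $\bar{\c M}_{<p}$. The paper instead dispatches this by the purely algebraic observation
\[
\c L\cap\c M(p-1)=\c L_{\bar c^0}(p),\qquad
\c L^{\c P}\cap\c M^{\c P}(p-1)=\c L^{\c P}_{\bar c^0}(p),
\]
which follows at once from the definitions: an element of $\c L$ (coefficients in $\F_p$) that lies in $t^{(p-1)\bar c^0}\c M$ must have weight $\geqslant p$, hence belongs to $\c L_{\bar c^0}(p)$. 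So the kernel of $\c L\to\bar{\c M}_{<p}$ is exactly $\c L_{\bar c^0}(p)$, giving the factorisation through $\bar{\c L}=\c L/\c L_{\bar c^0}(p)$ and the induced injection. Your $\sigma$-invariance route would also work, but it is more effort than needed; the weight-filtration check is a one-liner and avoids any analysis of $\sigma$ on the ring $\c O_{\c K_{<p}}/t^{(p-1)\bar c^0}$.
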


\begin{proof} 
$(-\bar f\otimes 1)\circ \tau \bar f\otimes 1$ comes 
from $(-f)\circ \tau f\in G(\c L)$. 
It remains to notice that $\c L\cap \c M(p-1)=\c L_{\bar c_0}(p)$ and 
$\c L\cap \c M^{\c P}(p-1)=\c L_{\bar c_0}^{\c P}(p)$. 
\end{proof} 

\begin{remark}
 In the above setting we can replace $\bar{\c M}_{<p}$ by its 
 analogue $\bar{\c M}_{\c K(p)}$, where the field $\c K(p)$ is used instead of 
 $\c K_{<p}$ (because $\bar f\in\bar{\c L}_{\c K(p)}$). 
\end{remark}

Let $\Gamma ^{\c P}_{\omega }:={\c G}^{\c P}_{\omega }/
({\c G}_{\omega }^{\c P})^pC_p({\c G}^{\c P}_{\omega })$. 
Then $\Gamma _{\omega }:={\c G}_{\omega }/
{\c G}_{\omega }^pC_p({\c G}_{\omega })$ can be recovered as the 
pro-finite completion of $\Gamma ^{\c P}_{\omega }$. 

\begin{Prop} \label{P4.6} 
Exact sequence \eqref{E4.3} induces the following 
exact sequence of profinite $p$-groups 
$$ 
1\To \bar{\c G}^{\c P}\To 
{\Gamma }^{\c P}_{\omega }\To \prod\limits _{1\leqslant m\leqslant N}
\langle h_{\omega }^{(m)}\rangle /
\langle h_{\omega }^{(m)p}\rangle \To 1\, .
$$
\end{Prop}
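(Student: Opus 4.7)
The plan is to deduce the sequence from \eqref{E4.4} by applying the functor $G\mapsto G/G^pC_p(G)$, identifying the kernel on the left through Prop.\,\ref{P4.4} and the cokernel on the right through Lemma~\ref{L4.3}(c). Since $\Gamma^{\c P}_\omega$ is by construction a pro-$p$ group of period $p$ and nilpotent class $<p$, the Campbell--Hausdorff equivalence of Sect.\,\ref{S1.1} gives $\Gamma^{\c P}_\omega=G(L^{\c P}_\omega)$ for a unique $\F_p$-Lie algebra of the same class; because the equivalence is exact, it suffices to establish
\[0\to\bar{\c L}^{\c P}\to L^{\c P}_\omega\to\prod_{m=1}^{N}\F_pl^{(m)}_\omega\to 0\]
and then translate via $G(\cdot)$.

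For the left injection, the inclusion $\c G^{\c P}_{<p}\hookrightarrow\c G^{\c P}_\omega$ induces a morphism $\c L^{\c P}\to L^{\c P}_\omega$, whose kernel corresponds (through $\pi$) to $\c G^{\c P}_{<p}\cap(\c G^{\c P}_\omega)^pC_p(\c G^{\c P}_\omega)$. By Prop.\,\ref{P4.4} at $s=p$, $C_p(\c G^{\c P}_\omega)$ is already contained in $\c G^{\c P}_{<p}$ and corresponds via $\pi$ to $G(\c L^{\c P}_{\bar c^0}(p))$; the $p$-th-power contribution collapses into this because $\c G^{\c P}_{<p}=G(\c L^{\c P})$ already has period $p$ and, more generally, multiplication by $p$ vanishes on the $\F_p$-Lie algebra $L^{\c P}_\omega$, so any $p$-th power in the group is congruent modulo $C_p(\c G^{\c P}_\omega)$ to an iterated-bracket expression by the Campbell--Hausdorff formula. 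Hence the kernel is exactly $\c L^{\c P}_{\bar c^0}(p)$ and the image of $\c L^{\c P}$ is $\bar{\c L}^{\c P}$.

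For the right surjection, the quotient $\Gamma^{\c P}_\omega/\bar{\c G}^{\c P}$ is generated by the images $\bar h^{(m)}$ of $\hat h^{0(m)}_\omega$; by Lemma~\ref{L4.3}(c) their pairwise commutators lie in $\c G^{\c P}_{<p}$, hence vanish modulo $\bar{\c G}^{\c P}$, so the quotient is abelian of exponent $p$. Independence of the $\bar h^{(m)}$ is read from the action of $\langle h^{(1)}_\omega,\ldots,h^{(N)}_\omega\rangle$ on $\c K$: by Prop.\,\ref{P4.1}(a) and $\omega\neq 0$, a product $\prod_m h^{(m)a_m}_\omega$ with some $a_{m_0}\not\equiv 0\bmod p$ sends $t_{m_0}$ to $t_{m_0}E(a_{m_0}\omega^p)\not\equiv t_{m_0}\bmod t^{p\bar c^0}\m_{\c K}$, whereas any element of $\c G^{\c P}_{<p}\cdot(\c G^{\c P}_\omega)^p$ fixes $t_{m_0}$ modulo this ideal (the $\c G^{\c P}_{<p}$-part acts trivially on $\c K$ and the $p$-th-power part contributes only $E(pb\,\omega^p)=1$). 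Hence the quotient equals $(\F_p)^N$, whose image under $G(\cdot)$ is the desired $\prod_m\langle h^{(m)}_\omega\rangle/\langle h^{(m)p}_\omega\rangle$.

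The main obstacle is the $p$-th-power absorption step in the left-kernel computation, i.e.\ showing $(\c G^{\c P}_\omega)^p\cap\c G^{\c P}_{<p}\subset C_p(\c G^{\c P}_\omega)$. This is forced by the $\F_p$-structure of $L^{\c P}_\omega$: since $p\cdot x=0$ in an $\F_p$-Lie algebra, the Campbell--Hausdorff expansion of $x^p$ reduces to commutators of length $\geqslant p$, placing $p$-th powers inside $C_p$. Once this is checked, everything else is formal transfer between Lie algebras and groups via Sect.\,\ref{S1.1}.
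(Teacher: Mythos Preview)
Your approach differs from the paper's: the paper works with the orbit of $\bar f\otimes 1$ in $\bar{\c M}_{\c K(p)}$ under $\c G^{\c P}_\omega$ and identifies the stabiliser with $(\c G^{\c P}_\omega)^pC_p(\c G^{\c P}_\omega)$ (citing \cite{Ab12}), whereas you compute the kernel and cokernel directly from Prop.~\ref{P4.4} and Lemma~\ref{L4.3}(c).

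There is a genuine gap in the $p$-th-power step. You argue that $(\c G^{\c P}_\omega)^p\subset C_p(\c G^{\c P}_\omega)$ because ``$p\cdot x=0$ in the $\F_p$-Lie algebra $L^{\c P}_\omega$, so the Campbell--Hausdorff expansion of $x^p$ reduces to commutators of length $\geqslant p$''. But $L^{\c P}_\omega$ is by definition the Lie algebra of the \emph{quotient} $\Gamma^{\c P}_\omega=\c G^{\c P}_\omega/(\c G^{\c P}_\omega)^pC_p(\c G^{\c P}_\omega)$; in that quotient both $p$-th powers and $C_p$ are already trivial, so the statement is vacuous there. The Campbell--Hausdorff formula for $x^p$ applies inside a group of the form $G(L)$; to use it in $\c G^{\c P}_\omega$ (or in $\c G^{\c P}_\omega/C_p(\c G^{\c P}_\omega)$) you would first need to know that this group is $G(L)$ for some $\F_p$-Lie algebra $L$, and nothing in the paper before this point gives you that. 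Concretely, $(\hat h^{0(m)}_\omega)^p$ restricts to $(h^{(m)}_\omega)^p\in\Aut\c K$, which by Prop.~\ref{P4.1}(a) is congruent to $\id$ only modulo $t^{p\bar c^0}\m_{\c K}$, not equal to $\id$; so you cannot conclude that $(\hat h^{0(m)}_\omega)^p\in\c G^{\c P}_{<p}$, let alone that it lies in $C_p(\c G^{\c P}_\omega)$, without further work.

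The paper's orbit argument sidesteps this: the action is computed in $\bar{\c M}_{\c K(p)}$, which is truncated modulo $t^{(p-1)\bar c^0}$, and there the $p$-fold iteration of $\id\otimes h^{(m)}_\omega$ visibly fixes $\bar f\otimes 1$ (because $(\id\otimes h^{(m)}_\omega-\id)^{p-1}\bar{\c M}^{\c P}\subset\bar{\c M}^{\c P}(p-1)=0$ forces the unipotent action to have order $p$). This is precisely the extra input your argument is missing; once you have it, the rest of your computation with Prop.~\ref{P4.4} goes through.
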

 
 \begin{proof}  
Consider the orbit of $\bar f\otimes 1$ with 
 respect to the natural action of ${\c G}_{\omega }
\subset \Aut \,\c K_{<p}$ on $f$ (recall that all ``values'' of $f$ belong to 
$\c K_{<p}\subset \c K_{sep}$). Then the stabilizer $\c H$ of $\bar f\otimes 1$ equals  
 ${\c G}_{\omega }^pC_p({\c G}_{\omega })$. 
This fact and the remaining part of the proof go along the lines 
of Prop.\,3.5 from \cite{Ab12}. 
\end{proof} 

Suppose $\pi _{\omega }:\Gamma _{\omega }^{\c P}\simeq G(L_{\omega }^{\c P})$ 
extends  
$\bar\pi =\pi _{\bar f}(\bar e)$ 
for  
a suitable Lie $\F _p$-algebra $L_{\omega }^{\c P}$ containing 
$\bar{\c L}^{\c P}$. Then the automorphisms $h_{\omega }^{(m)}$ 
give rise to Lie elements $l_{\omega }^{(m)}$ and we obtain the following property. 

\begin{Cor} \label{C4.7}  
There is a natural   
exact sequence of Lie $\F _p$-algebras 
\begin{equation} \label{E4.8} 0\To \bar{\c L}^{\c P}
\To L_{\omega }^{\c P}
\To\prod\limits _{1\leqslant m
\leqslant N} \F _pl_{\omega }^{(m)}\To 0\, .
\end{equation} 
\end{Cor}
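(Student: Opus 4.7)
The plan is to deduce this corollary directly from Proposition \ref{P4.6} by invoking the categorical equivalence between $p$-groups of period $p$ and nilpotent class $<p$ and Lie $\F_p$-algebras of the same class, as recalled in Sect.\,\ref{S1.1}. Since $\Gamma_{\omega}^{\c P} = {\c G}_{\omega}^{\c P}/({\c G}_{\omega}^{\c P})^p C_p({\c G}_{\omega}^{\c P})$ is by construction a profinite $p$-group of period $p$ and nilpotent class $<p$, the Campbell-Hausdorff functor $G \mapsto L$ applied to $\Gamma_{\omega}^{\c P}$ produces $L_{\omega}^{\c P}$ together with an identification $\pi_{\omega} : \Gamma_{\omega}^{\c P} \simeq G(L_{\omega}^{\c P})$, extending $\bar\pi$ by naturality of the equivalence.

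Next I would transport the short exact sequence of Proposition \ref{P4.6}
$$1 \To \bar{\c G}^{\c P} \To \Gamma_{\omega}^{\c P} \To \prod_{1\leqslant m\leqslant N}\langle h_{\omega }^{(m)}\rangle/\langle h_{\omega }^{(m)p}\rangle \To 1$$
through this equivalence. Since the Lie algebra functor is an equivalence of categories it is exact, so it sends the above short exact sequence of groups to a short exact sequence of Lie $\F_p$-algebras. The kernel $\bar{\c G}^{\c P}$ corresponds to $\bar{\c L}^{\c P}$ by Proposition \ref{P4.5} (noting that under $\pi_{\omega}$ the image of $\bar{\c G}^{\c P}$ is precisely $G(\bar{\c L}^{\c P})$, i.e.\ the sub-Lie-algebra generated by the image of $\bar{\c G}^{\c P}$ in $L_{\omega}^{\c P}$ is $\bar{\c L}^{\c P}$). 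The quotient is a direct product of cyclic groups of order $p$; under the Campbell-Hausdorff correspondence such an abelian $p$-group of exponent $p$ corresponds to its underlying $\F_p$-vector space with the zero Lie bracket, and the generator $h_{\omega}^{(m)}$ modulo $\langle h_{\omega}^{(m)p}\rangle$ maps to an element which we denote $l_{\omega}^{(m)}$. This gives the desired sequence \eqref{E4.8}.

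The main subtlety is to verify that the identification $\pi_{\omega}$ really does extend $\bar\pi$ and is compatible with the choices already made, namely that the lifts $\hat{h}_{\omega}^{0(m)}$ of Sect.\,\ref{S4.2} give rise to Lie elements $l_{\omega}^{(m)}$ lying outside $\bar{\c L}^{\c P}$ and independent modulo $\bar{\c L}^{\c P}$. Compatibility with $\bar\pi$ follows from applying the Lie functor to the inclusion $\bar{\c G}^{\c P} \hookrightarrow \Gamma_{\omega}^{\c P}$ and using that both identifications arise from the same nilpotent Artin-Schreier data $(\bar e, \bar f)$. Independence modulo $\bar{\c L}^{\c P}$ is automatic from the exactness of the group sequence of Proposition \ref{P4.6} together with the fact that each $\langle h_{\omega}^{(m)}\rangle / \langle h_{\omega}^{(m)p}\rangle$ contributes an independent direct factor.
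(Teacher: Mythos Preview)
Your proposal is correct and is exactly the argument the paper has in mind: the text preceding Corollary~\ref{C4.7} simply introduces $L_{\omega}^{\c P}$ via $\pi_{\omega}:\Gamma_{\omega}^{\c P}\simeq G(L_{\omega}^{\c P})$ extending $\bar\pi$, and the corollary is then the image of the exact sequence of Proposition~\ref{P4.6} under the equivalence of Sect.~\ref{S1.1}. One small terminological slip: $\Gamma_{\omega}^{\c P}$ is not profinite (that is $\Gamma_{\omega}$); it carries the $\c P$-topology, but this does not affect the argument since the Campbell--Hausdorff correspondence applies to any $p$-group of period $p$ and nilpotent class $<p$.
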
 

We recover the structure of $L^{\c P}_{\omega }$ below by 
analyzing the orbit of $\bar f$.  

\subsection{Filtered module  
$\bar{\c M}^f$ and the procedure of linearization}\label{S4.5} \ \ 

Let $\bar h_{\omega }^{(m)}\in \Gamma ^{\c P}_{\omega }$ be 
a lift of $h_{\omega }^{(m)}$ to $\c K(p)$. 
We use below the notation $\bar l_{\omega }^{(m)}$ for the corresponding 
element $\pi _{\omega }(\bar h_{\omega }^{(m)})\in L^{\c P}_{\omega }$. 
For example, cf.\,Sect.\,\ref{S4.3}, if $\bar l_{\omega }^{\,0(m)}=\pi _{\omega }
(\hat h_{\omega }^{0(m)})|_{\c K(p)})$ 
then the notation 
$\Ad\,^{(m)}$ appears as $\Ad\,(\bar l_{\omega }^{\,0(m)})$. 

Let $\Gamma ^{(m)\c P}_{\omega }$ be a subgroup in $\Gamma _{\omega }^{\c P}$ 
generated by $\bar h_{\omega }^{(m)}$ and $\bar{\c G}^{\c P}=
\bar{\pi }^{-1}G(\bar{\c L}^{\c P})$ 
(clearly, it does not depend on the choice of $\bar h_{\omega }^{(m)}$).  
Then we have the following exact sequence 
$$1\To \bar{\c G}^{\c P}\To \Gamma _{\omega }^{(m)\c P}\To 
\langle h_{\omega }^{(m)}\rangle /\langle h_{\omega }^{(m)p}\rangle \To 1\, .$$
 
Let $L^{(m)\c P}_{\omega }$ be a Lie subalgebra in $L^{\c P}_{\omega }$ 
such that $\pi _{\omega }(\Gamma ^{(m)\c P}_{\omega })=G(L^{(m)\c P}_{\omega })$. 

We obtain the following exact sequence of $\F _p$-Lie algebras 
$$0\To \bar{\c L}^{\c P}\To L^{(m)\c P}_{\omega }\To 
\F_pl_{\omega }^{(m)}\To 0.$$
obtained from \eqref{E4.8} via the natural embedding $\F_pl_{\omega }^{(m)}\To 
\prod\limits _{1\leqslant m\leqslant N}\F _pl_{\omega }^{(m)}$. 

The structure of the Lie algebras $L^{(m)\c P}_{\omega }$ (as well 
as the groups 
$\Gamma ^{(m)\c P}_{\omega }$) can be studied via the 
"linearization techniques" from \cite{Ab14, Ab13}. 

Namely, the action of $\id _{\bar{\c L}^{\c P}}\otimes h_{\omega }^{(m)}$ on 
$\bar{\c L}^{\c P}_{\c K}$ induces 
the action on $\bar{\c M}^{\c P}$, which can be presented in the form 
$\wt{\exp}(\id _{\bar{\c L}^{\c P}}\otimes dh_{\omega }^{(m)})$, where 
\linebreak 
$\id _{\bar{\c L}^{\c P}}\otimes dh_{\omega }^{(m)}$ is a derivation on 
$\bar{\c M}^{\c P}$. Indeed, the elements of 
$\bar{\c M}^{\c P}$ can be written  
uniquely as sums of elements of the form 
$l\otimes t^{-a}$, where for some $1\leqslant s<p$, 
$(s-1)\bar c^0\leqslant a<s\bar c^0$ and $l\in \bar{\c L}^{\c P}(s)_k$. 
Then this derivation comes from the correspondences  
$l\otimes t^{-a}\mapsto l\otimes (-a^{(m)})t^{-a}\omega ^p$.

Let $\bar{\c M}^f$ be the minimal Lie subalgebra 
in $\bar{\c M}_{\c K(p)}$ obtained by joining to  
$\bar{\c M}^{\c P}$ the element  $\bar f$ 
and its images under the action 
of the group generated by $\id _{\bar{\c L}}\otimes \bar h_{\omega }^{(m)}$. 
The algebra $\bar{\c M}^f$ still reflects all essential information about the structure of 
$\Gamma _{\omega }$. 
Then 
$$(\id _{\bar{\c L}}\otimes \bar h_{\omega }^{(m)})\bar f=\bar C^{(m)}
\circ (\bar{\c A}^{(m)}\otimes\id _{\c K(p)})\bar f\, ,$$ 
where 
$\bar C^{(m)}\in\bar{\c M}^{\c P}$ and 
$\bar{\c A}^{(m)}=\Ad\,\bar l_{\omega }^{(m)}
\in\Aut \bar{\c L}^{\c P}$. So, this relation determines the action of 
the lift $\id _{\bar{\c L}}
\otimes \bar h_{\omega }^{(m)}$ on $\bar{\c M}^f$.  

For any $n\geqslant 1$, we have 
$(\id\otimes \bar h_{\omega }^{(m)n})\,\bar f=\bar C^{(m,n)}
\circ (\bar{\c A}^{(m)n}\otimes\id )\bar f$,  where the element 
$\bar C^{(m,n)}\in\bar{\c M}^{\c P}$ can be presented in the following form  
$$(\id \otimes h_{\omega }^{(m)n-1})\bar C^{(m)}
\circ (\bar{\c A}^{(m)}\otimes h_{\omega }^{(m)n-2})\bar C^{(m)}\circ 
\ldots \circ (\bar{\c A}^{(m)n-1}\otimes\id )\bar C^{(m)}\, .$$ 

Let $\bar c_i^{(m)}\in\bar{\c M}^{\c P}$ be such that for all $1\leqslant n<p$, 
$\bar C^{(m,n)}=\sum\limits _{1\leqslant i<p}n^i\,\bar c_i^{(m)}$. 
(Such elements $\bar c_i^{(m)}$ are unique because $\op{det}((n^i))_{1\leqslant n,i<p}\ne 
0\,\op{mod}\,p$.) 
Summarizing our approach from \cite{Ab14, Ab13} we obtain: 
\medskip 

\begin{Prop} \label{P4.8} \ \ 

{\rm a)}    
$\bar{\c A}^{(m)}=\wt{\exp}\,\bar{\c B}^{(m)}$, where $\bar{\c B}^{(m)}$ is a 
derivation on $\bar{\c L}^{\c P}$;
\medskip 

{\rm b)}  
if $\alpha _p:=\Spec\,\F _p[U]$ with  $U^p=0$,  
is a finite group scheme with coaddition $\Delta U=
U\otimes 1+1\otimes U$ then the correspondence 
$$\bar h^{(m)U}:\bar f\mapsto (U\otimes \bar c^{(m)}_1+\dots +
U^{p-1}\otimes \bar c_{p-1}^{(m)})\circ 
(\bar{\c A}^{(m)U}\otimes\id )\bar f=$$ 
$$=(U\otimes \bar c^{(m)}_1+\dots +
U^{p-1}\otimes \bar c_{p-1}^{(m)})\circ 
\left (\sum _{0\leqslant n<p}U^n\otimes (\bar{\c B}^n/n!
\otimes\id )\right )\bar f$$ 
induces a coaction of $\alpha _p$ on $\bar{\c M}^f$;
\medskip 

{\rm c)}   
$\bar h_{\omega }^{(m)n}(\bar f)=\bar h_{\omega }^{(m)U}|_{U=n}$;
\medskip

{\rm d)}  if for all $a\in\Z _N^0(p)$, 
$V^{(m)}_{a\,0}:=\op{ad}\,\bar l_{\omega }^{(m)}
(D_{a\,0})$ then  
$$
\sigma \bar c^{(m)}_1-\bar c^{(m)}_1+
\sum _{a\in \Z _N^0(p)}t^{-a}V^{(m)}_{a\,0}= 
$$
$$-\sum _{1\leqslant k<p}\,\frac{1}{k!}\,t^{-(a_1+\dots +a_k)}
\omega ^pa_1^{(m)}
[\dots [D_{a_1\,0},D_{a_2\,0}],\dots ,D_{a_k\,0}]$$
$$-\sum _{2\leqslant k<p}\frac{1}{k!}t^{-(a _1+\dots +a_k)}
[\dots [V_{a_1\,0},D_{a_2\,0}],\dots ,D_{a_k\,0}]$$
\begin{equation} \label{E4.9} -\sum _{1\leqslant k<p}\,
\frac{1}{k!}\,t^{-(a_1+\dots +a_k)}
[\dots [\sigma \bar c^{(m)}_1,D_{a_10}],\dots ,D_{a_k0}]
\end{equation} 
(the indices  
$a_1,\dots ,a_k$ in all above sums run over $\Z _N^0(p)$);  
\medskip 

{\rm e)}   the  solutions 
$\{\bar c^{(m)}_1, V^{(m)}_{a0}\ |\ a\in\Z _N^0(p)\}$  
of \eqref{E4.9} are in bijection with the lifts 
$\bar h_{\omega }^{(m)}$ of $h_{\omega }^{(m)}$ to $\c K(p)$; 
\medskip 

{\rm f)}\   suppose $\bar c^{(m)}_1=\sum _{\iota\in\Z ^N}\bar c^{(m)}_1
(\iota)t^{\iota }$,
where all $\bar c^{(m)}_1(\iota )\in \bar{\c L}^{\c P}_k$; 
then different solutions of \eqref{E4.9} have different 
$\bar c^{(m)}_1(\bar 0)$, i.e. 
$\bar c^{(m)}_1(\bar 0)\in\bar{\c L}^{\c P}_k$ are strict invariants of 
the lifts $\bar h_{\omega }^{(m)}$. 
\end{Prop}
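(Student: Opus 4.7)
The plan is to handle the six parts in the given order, exploiting that (a)--(c) are essentially formal consequences of the Campbell--Hausdorff and $\wt{\exp}$ formalism, while (d) is the one substantive computation and (e)--(f) fall out from the Artin--Schreier structure of that equation.

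For (a), I view $\bar h_{\omega}^{(m)}\in \Gamma _{\omega}^{(m)\c P}$ via $\pi _{\omega}$ as the Lie element $\bar l_{\omega}^{(m)}\in L_{\omega}^{(m)\c P}$, so that $\bar{\c A}^{(m)}$ is the restriction of $\Ad(\bar l_{\omega}^{(m)})$ to the ideal $\bar{\c L}^{\c P}$. Inside the truncated enveloping algebra of a Lie algebra of nilpotent class $<p$, Campbell--Hausdorff conjugation is given by $\Ad(l)=\wt{\exp}(\ad\,l)$, so $\bar{\c B}^{(m)}:=\ad(\bar l_{\omega}^{(m)})|_{\bar{\c L}^{\c P}}$ is the required derivation. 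For (b), I verify the two coaction axioms: at $U=0$ the formula collapses to $\bar f$, giving the counit; coassociativity, i.e.\ that $(\Delta\otimes\id)\bar h_{\omega}^{(m)U}$ agrees with the iterate after substituting $U\otimes 1+1\otimes U$, is the formal consequence of the relation $\bar h_{\omega}^{(m)n_1}\bar h_{\omega}^{(m)n_2}=\bar h_{\omega}^{(m)n_1+n_2}$ with integers replaced by the nilpotent variables from $\alpha_p\times\alpha_p$. Part (c) follows since $\bar{\c A}^{(m)n}=\wt{\exp}(n\bar{\c B}^{(m)})=\sum_{0\leqslant k<p}n^k\bar{\c B}^{(m)k}/k!$ and $\bar C^{(m,n)}=\sum_i n^i\bar c_i^{(m)}$ by definition, so specializing $U=n$ in (b) reproduces the original formula for $\bar h_{\omega}^{(m)n}(\bar f)$.

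For (d), the heart of the proposition, I apply $\sigma$ to both sides of the formula in (b). On one side $\sigma(\bar h_{\omega}^{(m)U}(\bar f))=\bar h_{\omega}^{(m)U}(\sigma\bar f)=\bar h_{\omega}^{(m)U}(\bar e)\circ\bar h_{\omega}^{(m)U}(\bar f)$; on the other side $\sigma$ acts on $\bar c_1^{(m)},\dots,\bar c_{p-1}^{(m)}$ while commuting with $\bar{\c A}^{(m)U}\otimes\id$, since the Frobenius acts through the $\c K(p)$-factor. The $U$-linear part of $\bar h_{\omega}^{(m)U}(\bar e)$ is precisely the $\omega^p$-twisted contribution $\bar e_1^{(m)-}$ identified in the proof of Lemma~\ref{L4.3}, yielding the $\omega^p a_1^{(m)}$-term on the right of \eqref{E4.9}. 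Meanwhile the Campbell--Hausdorff expansion of $\bar c_1^{(m)}\circ(\wt{\exp}(U\bar{\c B}^{(m)})\otimes\id)\bar f$, multiplied on the left by $\bar e$ (which itself contains the $D_{a\,0}$-terms), produces via the $\wt{\exp}(\ad)$-action the remaining iterated-bracket sums. Collecting the coefficient of $U^1$ gives \eqref{E4.9}, with the $V_{a\,0}^{(m)}$-terms arising from $\bar{\c B}^{(m)}$ applied to each $D_{a\,0}$.

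Finally, (e) and (f) follow from the Artin--Schreier structure of \eqref{E4.9}: the equation expresses $(\sigma-\id)\bar c_1^{(m)}$ in terms of the unknowns $\bar c_1^{(m)}$ and $V_{a\,0}^{(m)}$, so its solutions form a torsor under $\ker(\sigma-\id)=\bar{\c L}^{\c P}_k$, matching the $G(\bar{\c L}^{\c P})=\bar{\c G}^{\c P}$-torsor of lifts $\bar h_{\omega}^{(m)}$ of $h_\omega^{(m)}$ to $\c K(p)$. The bijection is realized by sending each lift to its pair $(\bar c_1^{(m)},V_{a\,0}^{(m)})$, and the torsor action is recorded by the constant-in-$\c K$ term $\bar c_1^{(m)}(\bar 0)\in\bar{\c L}^{\c P}_k$, which proves (f). The main obstacle throughout is (d): the simultaneous bookkeeping of the Campbell--Hausdorff expansion, the $\wt{\exp}(\ad)$-action on $\bar f$, and the $U$-twist of $\bar e$, with careful extraction of the $U$-linear coefficient so that each iterated-bracket contribution in the three sums of \eqref{E4.9} is correctly accounted for; the remainder is essentially formal machinery from \cite{Ab14, Ab13}.
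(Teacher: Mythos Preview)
Your approach to parts (a)--(d) and (f) is essentially the same as the paper's: the paper treats (a) and (b) as general facts about unipotent automorphisms on filtered modules of length $<p$ (referring to \cite{Ab14}), derives (c) from (b), and obtains (d) by passing from the relation on $\bar f$ to the relation on $\bar e$ and extracting the coefficient of $U$, exactly as you do after applying $\sigma$.

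For (e), however, your torsor argument has a gap and the paper proceeds differently. Observing that lifts form a $\bar{\c G}^{\c P}$-torsor and asserting that solutions of \eqref{E4.9} form a torsor under $\ker(\sigma-\id)$ does not, on its own, show that the specific map $\bar h_{\omega}^{(m)}\mapsto(\bar c_1^{(m)},\{V_{a0}^{(m)}\})$ is a bijection; you would still need equivariance, and more fundamentally you have not explained why $\bar c_1^{(m)}$ alone (rather than the full vector $(\bar c_1^{(m)},\dots,\bar c_{p-1}^{(m)})$, which is what actually encodes $\bar C^{(m)}$) suffices to recover the lift. The paper addresses precisely this point: using the coaction identity $(\id\otimes\bar h_{\omega}^{(m)U_2})\circ\bar h_{\omega}^{(m)U_1}=\bar h_{\omega}^{(m)U_1+U_2}$ and comparing coefficients of $U_1U_2^{\,n}$, it extracts an explicit recurrence expressing $(n+1)\bar c_{n+1}^{(m)}$ in terms of $\bar c_1^{(m)},\dots,\bar c_n^{(m)}$. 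This shows that $\bar c_1^{(m)}$ determines the whole $\bar C^{(m)}$, hence (via Sect.~\ref{S1.6}) the lift. That recurrence is the substantive content of (e) which your sketch omits.
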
  

\begin{proof}
 -- a) is just a general fact about the structure of unipotent automorphisms on 
 modules with filtration of length $<p$; 
 \medskip 
 
 -- b) this is also a sufficiently general interpretation of unipotent 
 additive action on modules with filtration of length $<p$  
 (Sect.3 of \cite{Ab14} contains  
 necessary background of the specification of this 
 situation to the Campbell-Hausdorff composition law.);
 \medskip 
 
 -- c) this follows obviously from b);  
 \medskip 
 
 -- d) note that the relations 
 $$(\id \otimes h_{\omega }^{(m)n})\bar e=(\sigma \bar C^{(m,n)})\circ 
 (\Ad\,^n\bar l_{\omega }^{(m)}\otimes\id )\bar e\circ (-\bar C^{(m,n)})$$ 
 imply that 
 $$
  (\id \otimes h_{\omega }^{(m)U})\bar e=(\sigma \bar C^{(m)U})\circ 
 (\Ad\,^U\bar l_{\omega }^{(m)}\otimes\id )\bar e\circ (-\bar C^{(m)U})\, ,
 $$
where $\bar C^{(m)U}=U\bar c_1^{(m)}+\dots +U^{p-1}\bar c^{(m)}_{p-1}$. 
This implies  
 $$(\bar c_1^{(m)}U)\circ (\id \otimes h_{\omega }^{(m)}U)\bar e
 \equiv (\sigma \bar c_1^{(m)}U)\circ 
 (\Ad ^U\bar l_{\omega }^{(m)}\otimes\id )\bar e
 \,\op{mod}\,U^2\,,$$
 and we need just to follow the coefficents for $U$; 
 
 \begin{remark} 
  Relation \eqref{E4.9} can be uniquely lifted to 
  $\bar{\c L}^{\c P}_{\c K}=\c L^{\c P}_{\c K}\,\op{mod}\,
  \c L^{\c P}_{\bar c_0}(p)_{\c K}$ by taking  suitable unique lifts 
  of $\bar c_1^{(m)}$ (use that 
  $\sigma $ is nilpotent on 
  $\c M(p-1)_{\c K}\,\op{mod}\,\c L^{\c P}_{\bar c_0}(p)_{\c K}$). 
  In other words, 
  we have unique lifts of $\bar c_1^{(m)}$ to $\bar{\c L}_{\c K}$ such that  
  \eqref{E4.9} is still an equality in 
  $\bar{\c L}^{\c P}_{\c K}$. 
  We will use the same notation $\bar c_1^{(m)}$ for these lifts.   
 \end{remark}

 -- e) note, first, the corresponding data 
 $\{\bar c_1^{(m)}, V^{(m)}_{\bar a0}\ |\ \bar a\in\Z _N^0(p)\}$ are in a bijection 
with the lifts 
$\bar h_{\omega }^{(m)}$ of $h_{\omega }^{(m)}$ to $\c K(p)$, cf. Sect.\,\ref{S1.6}. 
Therefore, we should verify that $\bar c_1^{(m)}$ 
determines uniquely the whole vector $\bar c^{(m)}$. 
This follows formally from b) and can be verified as follows 
 (we used a different approach in \cite {Ab13}, cf. Remark in Sect.\,\ref{S3.5}). 
 
Since $\bar h_{\omega }^{(m)U}:\bar{\c M}^f\To\F _p[U]\otimes\bar{\c M}^f$ 
is the coaction of the group scheme 
$\alpha _p$, we have in $\F _p[U_1,U_2]\otimes \bar{\c M}^f$ that 
$$(\id _{U_1}\otimes \bar h_{\omega }^{(m)U_2})\cdot 
\bar h_{\omega }^{(m)U_1}=\bar h_{\omega }^{(m)U_1+U_2}\, .$$  
Then we obtain in $\bar{\c L}_{\c K(p)}$, 
$$(\id \otimes  h_{\omega }^{(m)U_2})\bar C^{(m)U_1}
\circ (\Ad ^{U_1}\bar l_{\omega }^{(m)}\otimes \id )\bar C^{(m)U_2}
\circ (\Ad ^{U_1+U_2}\bar l_{\omega }^{(m)}\otimes\id )\bar f$$
$$=\bar C^{(m)U_1+U_2}\circ (\Ad ^{U_1+U_2}\bar l_{\omega }^{(m)}\otimes \id )\bar f$$ 
implies 
$$\left (\sum _{n\geqslant 1}U_1^{n}
(\id\otimes h_{\omega }^{(m)U_2})\bar c^{(m)}_{n}\right )\circ 
\left (\sum _{n\geqslant 1}U^{n}_2(\bar{\c A}^{U_1}
\otimes\id )\bar c^{(m)}_{n}\right )$$
$$=\sum _{n_1,n_2}\bar c^{(m)}_{n_1+n_2}(U_1+U_2)^{n_1+n_2}$$
For $n\geqslant 1$, the coefficient for $U_1U^n_2$ in the RHS equals 
$(n+1)\bar c_{n+1}^{(m)}$. The corresponding coeffcient in the LHS 
coincides with the coeffcient in   
$$\left (U_1
(\id\otimes h_{\omega }^{(m)U_2})\bar c^{(m)}_{1}\right )
\circ\left ((\id +
U_1\bar{\c B}^{(m)}\otimes\id )\sum _{n\geqslant 0}U_2^{n}\bar c^{(m)}_{n}\right )$$
and, therefore, equals 
$(\bar{\c B}^{(m)}\otimes\id )\bar c^{(m)}_n$ plus 
$\F _p$-linear combination of 
the elements of the following form (cf. Remark below)   
$$[\ldots [(\id\otimes d^{n_1}h_{\omega }^{(m)}/n_1!)c^{(m)}_1,\bar c_{n_2}^{(m)}]
,\cdots ],\bar c^{(m)}_{n_s}]\,,$$
where $n_1+\dots +n_s=n$, $s\geqslant 1$ and $n_1\geqslant 0$.
As a result, $(n+1)\bar c_{n+1}^{(m)}$ can 
be uniquely recovered from $\bar c^{(m)}_1, \dots ,\bar c_n^{(m)}$.

\begin{remark} 
We used well-known relation, 
$$
X+UY\equiv X\circ 
\left (U\sum _{k\geqslant 1}\frac{1}{k!}
[\dots [Y,\underset{k-1\text{ times }}{\underbrace{X],
\dots ,X]}}\right )\,\op{mod}\,U^2
$$
with $U=U_1$ and $X=\sum _{n}U_2^n\bar c^{(m)}_n$, cf. references in \cite{Ab14}, 
Sect.\,3.2. 
\end{remark} 

f) follows by induction on $\op{mod}\,C_i(\bar{\c L}^{\c P}_{\c K})$, 
$i\geqslant 1$, from relation 
\eqref{E4.9}. 
\end{proof}

\subsection{The structure of $\Gamma _{\omega }^{\c P}$} \label{S4.6}  

We are going to determine the structure of the Lie algebra $L^{\c P}_{\omega }$. 
In the above section we indicated the way how to specify the lifts 
$\bar h_{\omega }^{(m)}=
\pi _{\omega }^{-1}(\bar l_{\omega }^{(m)})$, $1\leqslant m\leqslant N$. 
This can be done by applying recurrent procedure 
\eqref{E4.9} to find the elements $\bar c_1^{(m)}$ and $V_{a\,0}^{(m)}=
\ad _k\, \bar l_{\omega }^{(m)}(D_{a0})$, $a\in\Z_N ^0(p)$. 
In addition, we should specify the commutators 
$[\bar l_{\omega }^{(i)},\bar l_{\omega }^{(j)}]:=
\bar l_{\omega }[i,j]\in \bar{\c L}^{\c P}$, 
$1\leqslant i,j\leqslant N$. 

\begin{remark} 
From Lemma \ref{L4.3}\,c) it follows that all $\bar l_{\omega }[i,j]
\in C_2(L_{\omega }^{\c P})=\bar{\c L}_{\bar c^0}(2)$.
It would be very interesting to find explicit expression for the elements 
$\bar l_{\omega }[i,j]$ in terms of the involved parameters 
$A_{\iota }=A_{\iota }(\omega )$ 
(recall that $E(\omega ^p)=1+\sum _{\iota }
A_{\iota }t^{\bar c^0+p\iota }$). 
We verified by a direct calculation with relations \eqref{E4.9} 
that for any lifts 
$\bar h_{\omega }^{(i)}$ and $\bar h_{\omega }^{(j)}$, 
the corresponding elements $\bar l_{\omega }[i,j]\in C_4(L_{\omega }^{\c P})$. 
\end{remark} 

The following property could be useful to study 
the properties of the elements $\bar l_{\omega }[i,j]$.  
To simplify the notation set for all $m$, 
$\id\otimes dh_{\omega }^{(m)}=d^{(m)}$ and 
$\ad\,\bar l_{\omega }^{(m)}\otimes\id =\ad ^{(m)}$. 

\begin{Prop} \label{P4.9} For all $1\leqslant i,j\leqslant N$, 
 $$\bar l_{\omega }[i,j]=(d^{(j)}-\ad ^{(j)})\bar c_1^{(i)}
 -(d^{(i)}-\ad ^{(i)})
 \bar c_1^{(j)}+[\bar c_1^{(i)},\bar c_1^{(j)}]\,.$$ 
\end{Prop}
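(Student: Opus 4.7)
The plan is to derive Proposition \ref{P4.9} by computing, in two ways, the $\bar f$-independent part of the $U_1U_2$-coefficient of
$$\Delta := (\id\otimes \bar h^{(i)U_1}\bar h^{(j)U_2})\bar f - (\id\otimes \bar h^{(j)U_2}\bar h^{(i)U_1})\bar f,$$
using the $\alpha_p$-coactions $\bar h^{(m)U}$ from Proposition \ref{P4.8}(b)(c).

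\emph{Group side.} Proposition \ref{P4.6} shows that the commutator $(\bar h^{(i)},\bar h^{(j)})$ maps to the identity in $\prod_m\langle h_\omega^{(m)}\rangle/\langle h_\omega^{(m)p}\rangle$, hence lies in $\bar{\c G}^{\c P}$, and its $\bar\pi$-image has leading Campbell-Hausdorff term $[\bar l_\omega^{(i)},\bar l_\omega^{(j)}]=\bar l_\omega[i,j]$. Upgrading to the $\alpha_p\times\alpha_p$ deformation and using $g(\bar f)=\bar f\circ\bar\pi(g)$ for $g\in\bar{\c G}^{\c P}$ together with CH right-composition, the $\bar f$-independent part of the $U_1U_2$-coefficient of $\Delta$ equals $\bar l_\omega[i,j]$, since all further CH corrections involve $\bar f$-factors and contribute only to the $\bar f$-dependent part.

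\emph{Infinitesimal side.} From $(\id\otimes\bar h^{(m)U})\bar f = \bar C^{(m)U}\circ(\Ad^U\bar l_\omega^{(m)}\otimes\id)\bar f$ together with $\bar C^{(m)U}\equiv U\bar c_1^{(m)}\pmod{U^2}$ and $\bar{\c B}^{(m)}=\ad\,\bar l_\omega^{(m)}$ (Proposition \ref{P4.8}(a)), a Campbell-Hausdorff expansion gives
$$(\id\otimes\bar h^{(m)U})\bar f\equiv\bar f + U\bigl(G(\ad\bar f)(\bar c_1^{(m)})+\ad^{(m)}\bar f\bigr)\pmod{U^2},$$
where $G(x)=x/(e^x-1)$ is the Bernoulli generating function. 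Applying a second coaction, using that $h_\omega^{(j)U_2}\equiv\id+U_2 d^{(j)}\pmod{U_2^2}$ on the $\c K$-coefficients of $\bar c_1^{(i)}$ and that $\bar l_\omega^{(i)}\otimes 1$ is fixed by $\id\otimes\bar h^{(j)U_2}$, the $\bar f$-independent part of the $U_1U_2$-coefficient in $(\id\otimes\bar h^{(j)U_2}\bar h^{(i)U_1})\bar f$ comes out to $d^{(j)}\bar c_1^{(i)}+\ad^{(i)}\bar c_1^{(j)}+\tfrac{1}{2}[\bar c_1^{(i)},\bar c_1^{(j)}]$, the $\tfrac{1}{2}$ arising from the $B_1$-Bernoulli coefficient on the bracket $[\bar f,\bar c_1^{(i)}]$ inside $G(\ad\bar f)(\bar c_1^{(i)})$. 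Swapping $i\leftrightarrow j$ gives the analogous formula for the other ordering, and subtracting yields the identity of Proposition \ref{P4.9}.

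The main obstacle will be the careful Campbell-Hausdorff bookkeeping: one must verify that all $\bar f$-dependent terms in $\Delta$ match on both sides via the Lie identity $[\ad^{(j)},\ad^{(i)}]=-\ad\,\bar l_\omega[i,j]$, and that the cross-commutator $[d^{(i)},d^{(j)}]$ produces no spurious contribution on the filtered quotient $\bar{\c M}^{\c P}$ modulo $t^{(p-1)\bar c^0}$, which follows from the near-commutation $h_\omega^{(i)}h_\omega^{(j)}\equiv h_\omega^{(j)}h_\omega^{(i)}\pmod{t^{p\bar c^0}\m_{\c K}}$ of Proposition \ref{P4.1}(b).
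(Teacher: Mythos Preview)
Your approach is genuinely different from the paper's and has a real gap.

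The paper never touches $\bar f$. It passes to the enveloping algebra with $\bar E=\wt{\exp}\,\bar e$, linearizes the basic relation to obtain
\[
(d^{(m)}-\ad^{(m)})\bar E\equiv \sigma\bar c_1^{(m)}\cdot\bar E-\bar E\cdot\bar c_1^{(m)}\pmod{(\bar{\c J}^{\c P}_{\c K})^p},
\]
applies this twice, uses the explicit vanishing of $[d^{(i)},d^{(j)}]\bar E$ (checked directly on the monomial expansion, not via Proposition~\ref{P4.1}(b)), and reduces to $\sigma(\c X_0)\bar E\equiv\bar E\,\c X_0$ with $\c X_0=\c X-\bar l_\omega[i,j]$. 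The whole weight of the argument is then Lemma~4.10: an induction on the weight filtration of $\bar{\frak A}^{\c P}$ over $\c K$ forcing $\c X_0=0$.

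Your route computes $\Delta$ in two ways and tries to equate the ``$\bar f$-independent parts''. The problem is that this is not a well-defined operation: $\bar f$ is a specific element of $\bar{\c M}_{\c K(p)}$ whose coefficients lie in a finite (Artin--Schreier) extension of $\c K$, not a formal free variable, so there is no canonical ``degree in $\bar f$'' to project onto. What your two computations actually give is a single identity in $\bar{\c M}_{\c K(p)}$ of the shape
\[
H(\ad\bar f)\bigl(\bar l_\omega[i,j]\bigr)\;=\;(\text{explicit CH expression in }\bar c_1^{(i)},\bar c_1^{(j)},\bar f),
\]
with $H(x)=x/(1-e^{-x})$. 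Both sides contain $\bar f$-terms of all orders, and isolating $\bar l_\omega[i,j]$ requires knowing that the higher $\ad\bar f$-contributions match --- which is exactly the uniqueness step you have not supplied. The Jacobi identity $[\ad^{(j)},\ad^{(i)}]=-\ad\,\bar l_\omega[i,j]$ you invoke controls operator commutators, not the cancellation of $\bar f$-dependent CH tails; it cannot replace the weight-filtration argument of Lemma~4.10. Your final paragraph concedes this (``one must verify that all $\bar f$-dependent terms in $\Delta$ match'') but offers no mechanism.

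A secondary issue: with the paper's convention $\Ad\,l=(-l)\circ(\cdot)\circ l$ one has $\bar{\c A}^{(m)U}\equiv\id-U\,\ad^{(m)}$, so $\bar{\c B}^{(m)}=-\ad\,\bar l_\omega^{(m)}$, not $+\ad$. Tracking this (and the analogous sign in $G(-\ad\bar f)$ versus $H(\ad\bar f)$) through your two computations, the ``constant terms'' you write down come out with the opposite sign from Proposition~\ref{P4.9}. So even at the heuristic level your bookkeeping needs repair before the scheme could be salvaged.
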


\begin{proof} With above assumption we have in $\bar{\c L}^{\c P}_{\c K}[U]$ 
for all $m$, 
$$(\id +Ud^{(m)})\bar e\equiv (U\sigma\bar c_1^{(m)})\circ (\id +U\ad ^{(m)})
\bar e\circ (-U\bar c^{(m)}_1)\,\op{mod}\,U^2.$$

Let $\bar E=\wt{\exp}\,\bar e$ in 
the enveloping algebra $\bar{\frak A}^{\c P}_{\c K}$ 
of $\bar{\c L}^{\c P}_{\c K}$. 
Then we have the following congruence modulo $(U^2, (\bar{\c J}^{\c P}_{\c K})^p)$, 
where $\bar{\c J}^{\c P}_{\c K}$ is the augmentation ideal in 
$\bar{\frak A}^{\c P}_{\c K}$, 
$$(\id +Ud^{(m)})\bar E\equiv (1+\sigma\bar c^{(m)}_1U)\cdot (\id +U\ad ^{(m)})
\bar E\cdot (1-\bar c^{(m)}_1U)\, .$$
Comparing the coefficients for $U$ we obtain  
\begin{equation} \label{E4.10} 
(d^{(m)}-\ad ^{(m)})\bar E\equiv \sigma \bar c^{(m)}_1
\cdot \bar E-\bar E\cdot \bar c^{(m)}_1\, 
\op{mod}\,(\bar{\c J}^{\c P}_{\c K})^p\,. 
\end{equation}
Note that $(d^{(i)}d^{(j)}-d^{(j)}d^{(i)})\bar E\equiv 0$ because   
$$d^{(m)}\bar E=-\sum _{\bar\iota\geqslant\bar 0}A_{\bar\iota }
\sum _{s\geqslant 0}(1/s!)(a_1+\dots +a_s)^{(m)}
t^{-(a_1+\dots +a_s)+\bar c^0+p\bar\iota }D_{a\,0}\, .$$
In addition, for any $i,j$, $\ad ^{(i)}$ and $d^{(j)}$ commute one with another.   

Therefore, \eqref{E4.10} implies 
$$\ad\,\bar l_{\omega }[i,j](\bar E)\equiv (d^{(i)}-\ad ^{(i)})(d^{(j)}-\ad ^{(j)})
\bar E-(d^{(j)}-\ad ^{(j)})(d^{(i)}-\ad ^{(i)})\bar E$$ 
$$\equiv \sigma (\c X)\bar E-\bar E\c X\, ,$$ 
where $\c X=(d^{(i)}-\ad ^{(i)})\bar c^{(j)}_1-(d^{(j)}-\ad ^{(j)})\bar c^{(i)}_1+
[\bar c^{(j)}_1,\bar c^{(i)}_1]$. 
\medskip 

Let $\c X_0=\c X-l_{\omega }[i,j]$, then 
\begin{equation} \label{E4.11} \sigma (\c X_0)\bar E
\equiv\bar E\c X_0\,\op{mod}(\bar{\c J}_{\c K}^{\c P})^p\,. 
\end{equation}

It remains to prove that 
\end{proof} 

\begin{Lem} \label{L4.10} 
 $\c X_0=0$. 
\end{Lem}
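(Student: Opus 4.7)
The plan is to conjugate the congruence \eqref{E4.11} by $\bar F := \wt{\exp}(\bar f)$ in the enveloping algebra $\bar{\frak A}^{\c P}_{\c K(p)}/(\bar{\c J}^{\c P}_{\c K(p)})^p$, exploiting the enveloping-algebra form $\sigma(\bar F) = \bar E \bar F$ of the nilpotent Artin-Schreier relation $\sigma\bar f = \bar e\circ\bar f$. Setting $Y := \bar F^{-1}\c X_0\bar F$ and rewriting \eqref{E4.11} as $\sigma(\c X_0) = \bar E\c X_0\bar E^{-1}$, I compute
$$\sigma(Y) = \sigma(\bar F)^{-1}\sigma(\c X_0)\sigma(\bar F) = (\bar E\bar F)^{-1}(\bar E\c X_0\bar E^{-1})(\bar E\bar F) = \bar F^{-1}\c X_0\bar F = Y\, .$$
Conjugation by the grouplike $\bar F$ preserves primitive (Lie) elements modulo $(\bar{\c J}^{\c P})^p$, so $Y$ is Lie; combined with $\sigma$-invariance, this forces $Y \in \bar{\c L}^{\c P}$.

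Next I would exploit the Galois action of $\bar{\c G}$ by the same mechanism. For $\tau\in\bar{\c G}$, the identification $\bar\pi$ produces $\tau(\bar F) = \bar F\bar L_\tau$ with $\bar L_\tau = \wt{\exp}\bar\pi(\tau)$. The invariances $\tau(\c X_0)=\c X_0$ (since $\c X_0\in\bar{\c L}^{\c P}_{\c K}$) and $\tau(Y)=Y$ (since $Y\in\bar{\c L}^{\c P}$ is defined over $\F_p$) yield $Y = \bar L_\tau^{-1}Y\bar L_\tau$, i.e.\ $Y$ commutes with every $\bar L_\tau$. As $\bar\pi(\tau)$ ranges over all of $\bar{\c L}$, we conclude $[Y,X]=0$ for every $X\in\bar{\c L}$, so $Y \in Z(\bar{\c L})$. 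In particular $[\bar f,Y] = 0$, whence $\c X_0 = \bar F Y \bar F^{-1} = Y$ already lies in $Z(\bar{\c L}^{\c P})$.

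The remaining task is to upgrade centrality to vanishing. With $\sigma(\c X_0)=\c X_0$ the original congruence reduces to $[\c X_0,\bar e]=0$ in $\bar{\c L}^{\c P}_{\c K}$; expanding $\bar e = \sum_a t^{-a}D_{a,0} + \alpha_{\bar 0}D_{\bar 0}$ and using the $k$-linear independence of the $t^{-a}$ in $\c K$ gives $[\c X_0,D_{a,0}] = [\c X_0,D_{\bar 0}] = 0$, and iterating $\sigma$ yields $[\c X_0, D_{a,n}] = 0$ for every $n$. To extract $\c X_0 = 0$ from this I would invoke the explicit form $\c X_0 = (d^{(i)}-\ad^{(i)})\bar c_1^{(j)} - (d^{(j)}-\ad^{(j)})\bar c_1^{(i)} + [\bar c_1^{(i)},\bar c_1^{(j)}] - \bar l_\omega[i,j]$ together with the strict-invariance statement of Proposition~\ref{P4.8}(f), using induction on the weight filtration $\{\c L_{\bar c^0}^{\c P}(s)\}$ to match coefficients.

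The main obstacle is precisely this last step. Centrality alone is insufficient because $Z(\bar{\c L}^{\c P})$ need not be trivial: the quotient admits potentially nonzero central commutators once the weight budget $<p$ allows them. Extracting $\c X_0 = 0$ therefore requires genuine use of the specific construction of $\c X_0$ as a commutator obstruction and the uniqueness results for the canonical $\bar c_1^{(m)}$ established in Proposition~\ref{P4.8}.
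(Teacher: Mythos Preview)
Your conjugation-by-$\bar F$ manoeuvre and the Galois-equivariance step are correct and do yield $\c X_0 = Y \in Z(\bar{\c L}^{\c P})$. But the gap you flag is real and not easily patched: the centre of $\bar{\c L}^{\c P}$ is far from zero---every weight-$(p{-}1)$ element lies in it, since any bracket has weight $\geqslant p$ and hence dies in $\bar{\c L}$. Your suggested rescue via Proposition~\ref{P4.8}(f) does not bite: that statement records how solutions of \eqref{E4.9} are parametrised by the constant term $\bar c_1^{(m)}(\bar 0)$, not a vanishing criterion for central elements, and the ``explicit form of $\c X_0$'' gives you nothing new once you already know $\c X_0$ is a constant central element.

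The paper avoids this trap by never leaving $\bar{\frak A}^{\c P}_{\c K}$: no $\bar f$, no $\c K(p)$. It decomposes $\c X_0 = \sum_s \c X_0^{(s)}$ by the weight grading and runs induction on $s$. Reading \eqref{E4.11} modulo $\bar{\frak A}^{\c P}_{\bar c^0}(s{+}2)_{\c K}$ and applying the operators $\c R,\c S$ of Section~\ref{S3.1} yields simultaneously that $\c X_0^{(s+1)}$ is $\sigma$-invariant (hence constant) and that $[D_{a0},\c X_0^{(s)}]\in\bar{\frak A}^{\c P}_{\bar c^0}(s{+}2)$ for every $a$. For weight-$1$ generators $D_{a0}$ this bracket has weight exactly $s{+}1$, so it must vanish. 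There are infinitely many weight-$1$ generators while $\c X_0^{(s)}$ involves only finitely many; picking a fresh one and using that the centraliser of a generator in a free Lie algebra is the line it spans forces $\c X_0^{(s)}=0$. The point is that this last step happens at weight $s{+}1<p$, where the projection from the free Lie algebra to $\bar{\c L}$ is still injective---precisely the degree control that your one-shot centrality argument discards by passing directly to $Z(\bar{\c L}^{\c P})$.
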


\begin{proof} [Proof of Lemma] 
As earlier, let $\bar{\frak A}^{\c P}$ be the enveloping algebra for  
$\bar{\c L}^{\c P}$. Let $\bar{\frak A}^{\c P}_{\bar c^0}(s)$, $s\geqslant 1$, 
be the ideal in $\bar{\frak A}^{\c P}$ 
generated by the monomials $D_{a_1n_1}\dots D_{a_un_u}$ of weight $\geqslant s$, 
i.e.\,such that if $s_i\in\N $ for $1\leqslant i\leqslant u$, 
are such that 
$(s_i-1)\bar c^0\leqslant a_i<s_i\bar c_0$ then 
$s_1+\dots +s_u\geqslant s$. For each $s$, 
$\bar{\frak A}^{\c P}_{\bar c^0}(s)\cap\bar{\c L}^{\c P}_{\bar c^0}
=\bar{\c L}^{\c P}_{\bar c^0}(s)$, in particular, 
$\bar{\frak A}^{\c P}_{\bar c^0}(p)\cap\bar{\c L}^{\c P}_{\bar c^0}=0$.

Note that $\c X_0\in\bar{\c L}^{\c P}_{\c K}$ and let 
$\c X_0 =\c X^{(1)}+\dots +\c X^{(p-1)}_0$, 
where each  
$\c X_0^{(s)}$ is a $\c K$-linear combination of the Lie monomials of weight $s$, 
$$[\dots [D_{a_1n_1},D_{a_2n_2}],\dots ,D_{a_un_u}]\, .$$  
Clearly,  $\c X_0^{(s)}\in\bar{\c L}^{\c P}_{\bar c^0}(s)$ and if 
$\c X_0^{(s)}\in\bar{\frak A}^{\c P}_{\bar c^0}(s+1)$ then 
$\c X_0^{(s)}=0$. 
It will be enough to prove by induction on $1\leqslant s\leqslant p$ 
that 
$$\c X_0\in\bar{\c L}_{\bar c^0}^{\c P}(s)+
\bar{\frak A}^{\c P}_{\bar c^0}(s+1)_{\c K}\,.$$

If $s=1$ then \eqref{E4.11} implies that $\sigma (\c X_0)\equiv 
\c X_0\,\op{mod}\,\bar{\frak A}_{\bar c^0}^{\c P}(2)_{\c K}$ 
(use that $\bar E\equiv 1\,\op{mod}\,\bar{\frak A}^{\c P}_{\bar c^0}(1)_{\c K}$). 
So, 
$\sigma\c X_0^{(1)}=\c X_0^{(1)}$ in $\bar{\c L}^{\c P}_{\c K}$, i.e. 
$\c X_0^{(1)}\in\bar{\c L}^{\c P}=\bar{\c L}^{\c P}_{\bar c^0}(1)$ 
and $\c X_0\in\bar{\c L}_{\bar c^0}^{\c P}(1)+
\bar{\frak A}^{\c P}_{\bar c^0}(2)_{\c K}$. 

Suppose the lemma is proved for some $1\leqslant s\leqslant p-1$. 

So, $\c X_0=\c X_0^{(s)}+\dots +\c X_0^{(p-1)}
\in\bar{\c L}^{\c P}_{\bar c^0}(s)+\bar{\frak A}_{\bar c^0}^{\c P}(s+1)_{\c K}$, 
and \eqref{E4.11} implies

$$\sigma\c X_0^{(s+1)}-\c X_0^{(s+1)}\equiv 
\sum _{a\in\Z _N^0(p)}t^{-a}[D_{a0},\c X^{(s)}_0]\,
\op{mod}\,\bar{\frak A}_{\bar c^0}^{\c P}(s+2)_{\c K}\,.$$

Thus (compare with Prop.\,\ref{P3.1}) 
all $[D_{a0},\c X_0^{(s)}]
\in\bar{\frak A}^{\c P}_{\bar c^0}(s+2)_{\c K}$ 
and 
$$\c X_0^{(s+1)}\in \bar{\c L}_{\bar c^0}^{\c P}(s+1)+
\bar{\frak A}^{\c P}_{\bar c^0}(s+2)_{\c K}
\,.$$
If weight of $D_{a0}=1$ then $[D_{a0},\c X_0^{(s)}]$ has weight 
$s+1$, therefore, $[D_{a0}, \c X_0^{(s)}]=0$ and  
$\c X_0^{(s)}=0$. Similarly, 
$\c X_0^{(s+1)}\in\bar{\c L}^{\c P}_{\bar c^0}(s+1)$. 

The lemma is proved. 
\end{proof}

Using the notation from item f) of Prop.\,\ref{P4.8} we obtain 

\begin{Cor}\,\label{C4.11}
 For $1\leqslant i,j\leqslant N$, 
 $$\bar l_{\omega }[i,j]=\ad ^{(i)}(\bar c_1^{(j)}(\bar 0))-
 \ad ^{(j)}(\bar c^{(i)}_1(\bar 0))+
 \sum _{\iota\in\Z ^N}[\bar c_1^{(i)}(\iota ), \bar c_1^{(j)}(-\iota )]\,.$$
\end{Cor}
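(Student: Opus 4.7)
The plan is to deduce Corollary 4.11 directly from Proposition 4.9 by extracting the coefficient of $t^{\bar 0}$ on both sides of the identity
$$\bar l_{\omega }[i,j]=(d^{(j)}-\ad ^{(j)})\bar c_1^{(i)}-(d^{(i)}-\ad ^{(i)})\bar c_1^{(j)}+[\bar c_1^{(i)},\bar c_1^{(j)}]\,,$$
which is an equality in $\bar{\c L}^{\c P}_{\c K}$. Since $\bar l_{\omega}[i,j]\in\bar{\c L}^{\c P}$, its only contribution comes from the constant term, so the Corollary amounts to matching coefficients at $t^{\bar 0}$ on the right-hand side.

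For the three easier terms I would just read off the coefficient of $t^{\bar 0}$ directly from the expansion $\bar c_1^{(m)}=\sum_{\iota }\bar c_1^{(m)}(\iota )t^{\iota }$: the bracket $[\bar c_1^{(i)},\bar c_1^{(j)}]$ contributes $\sum_{\iota }[\bar c_1^{(i)}(\iota ),\bar c_1^{(j)}(-\iota )]$, and each term $\ad^{(m)}\bar c_1^{(n)}=\sum_{\iota }(\ad\bar l_{\omega}^{(m)})(\bar c_1^{(n)}(\iota ))t^{\iota }$ contributes $\ad^{(m)}(\bar c_1^{(n)}(\bar 0))$, giving the two $\ad^{(\cdot)}(\bar c_1^{(\cdot)}(\bar 0))$ summands with the correct signs.

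The heart of the argument is verifying that the two derivation terms $d^{(j)}\bar c_1^{(i)}$ and $d^{(i)}\bar c_1^{(j)}$ contribute nothing at $t^{\bar 0}$. Recall from Sect.~\ref{S4.5} that $d^{(m)}$ sends $l\otimes t^{-a}\mapsto -a^{(m)}\,l\otimes t^{-a}\omega ^p$; since we are in characteristic $p$, $\omega ^p=\sum_{\kappa\geqslant\bar 0}\beta_{\kappa}^{\,p}t^{\bar c^0+p\kappa}$. Rewriting $\bar c_1^{(i)}$ in the form $\sum_{\iota}\bar c_1^{(i)}(\iota )t^{\iota}$ and applying $d^{(j)}$ gives a sum of terms $\iota^{(j)}\bar c_1^{(i)}(\iota )\beta_{\kappa}^{\,p}t^{\iota+\bar c^0+p\kappa}$; the coefficient of $t^{\bar 0}$ therefore picks up precisely those $\iota=-\bar c^0-p\kappa$, and the scalar in front becomes $\iota^{(j)}=-(\bar c^{0(j)}+p\kappa^{(j)})$. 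But the hypothesis $\bar c^{\,0}\in p\Z^N$ forces $\bar c^{0(j)}\equiv 0\pmod p$, so this scalar vanishes in characteristic $p$ and each such contribution is zero. This is the only nontrivial point and is the step I would expect to take the most care — everything else is bookkeeping.

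Combining the four coefficient computations yields exactly
$$\bar l_{\omega }[i,j]=\ad ^{(i)}(\bar c_1^{(j)}(\bar 0))-\ad ^{(j)}(\bar c^{(i)}_1(\bar 0))+\sum _{\iota\in\Z ^N}[\bar c_1^{(i)}(\iota ),\bar c_1^{(j)}(-\iota )]\,,$$
as required. The potential obstacle, beyond the $d^{(m)}$-vanishing argument above, is making sure one is using the uniquely lifted $\bar c_1^{(m)}$ from the Remark following Prop.~\ref{P4.9} (so that the identity in Prop.~\ref{P4.9} really is an equality in $\bar{\c L}^{\c P}_{\c K}$ and comparing $t^{\bar 0}$-coefficients is legitimate) and that the infinite sum $\sum_{\iota}[\bar c_1^{(i)}(\iota ),\bar c_1^{(j)}(-\iota )]$ makes sense — this follows from the fact that $\bar c_1^{(m)}\in\bar{\c M}^{\c P}$, whose terms satisfy the weight-versus-degree restriction that forces $\c P$-convergence of the bracket sum.
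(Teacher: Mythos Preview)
Your argument is correct and is exactly the natural deduction the paper leaves implicit: Corollary~\ref{C4.11} is obtained by reading off the $t^{\bar 0}$-coefficient in Proposition~\ref{P4.9}, and the only nontrivial check is that $d^{(m)}$ contributes nothing there, which you handle correctly using $\bar c^{\,0}\in p\Z^N$. The paper does not spell this out, so your write-up simply supplies the omitted details.
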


\subsection{The structure of $\Gamma _{\omega }^{\c P}$ 
modulo third commutators} \label{S4.7}  

Consider the lift of relation \eqref{E4.9} 
to $\bar{\c L}^{\c P}_{\c K}$ taken 
modulo  
$C_2(L_{\omega }^{\c P})_{\c K}=\bar{\c L}^{\c P}_{\bar c^0}(2)_{\c K}$ 
(cf. Remark in the part d) of the proof of Prop.\,\ref{P4.8}). 

\begin{equation} \label{E4.12} \sigma\bar c^{(m)}_1-\bar c^{(m)}_1+
\sum _{a\in\Z _N^0(p)}t^{-a}V^{(m)}_{a0}\equiv 
-\sum _{\substack{a\in \Z _N^0(p)\\ \iota\geqslant \bar 0}}
A_{\iota }(\omega )t^{\bar c^0+p\iota -a}a^{(m)}
D_{\bar a0}\, .
\end{equation}

Here $V_{a0}^{(m)}=\ad\,\bar l_{\omega }^{(m)}(D_{a0})$ and 
$a^{(m)}$ is the $m$-th component of $ a\in\Z _N^0(p)$. 
Applying to \eqref{E4.12} the operator $\c R$ from Sect.\,\ref{S3.1} we obtain:
\medskip 

1)\ $V^{(m)}_{\bar 0}=\op{ad}\,\bar l_{\omega }^{(m)}(D_{\bar 0})
\in C_2(L_{\omega }^{\c P})$;
\medskip 

2)\ for all $a\in\Z _N^+(p)$, 
$$V^{(m)}_{a0}=\op{ad}\,l_{\omega }^{(m)}(D_{a0})\in 
\,\op{mod}\,C_2(L_{\omega ,k}^{\c P})\, .$$

Property 2) means that 
all generators of ${L}_{\omega ,k}^{(m)\c P}$ of the form 
$D_{an}$ with $ a>\bar c^0$ and $a^{(m)}\ne 0\,\op{mod}\, p$, 
can be eliminated from 
the system 
$$\{D_{an}\ |\ a\in\Z _N^+(p)\}\cup\{D_{\bar 0}\}\cup \{l_{\omega }^{(m)}\}$$
of  generators of $L^{(m)\c P}_{\omega ,k}$. Indeed,  
because $A_{\bar 0}\ne 0$ and $ a^{(m)}\ne 0\,\op{mod}\,p$, all 
$D_{ a+\bar c^0,0}$ belong to the 
ideal of second commutators 
$$C_2(L^{(m)\c P}_{\omega ,k})=(\op{ad}\,\bar l_{\omega }^{(m)})\bar{\c L}_k^{\c P}+
C_2(\bar{\c L}_k^{\c P})\, ,$$ 
and  
for any $n\in\Z /N_0$, all 
$D_{a+\bar c^0,n}=\sigma ^nD_{a+\bar c^0,0}$ 
also belong to $C_2(L^{(m)\c P}_{\omega ,k})$. 
Property 1) means that $L^{(m)\c P}_{\omega }$ has only one relation 
with respect to any minimal $\c P$-topological set of generators. 
 Therefore, 
$\Gamma _{\omega }^{(m)\c P}$ can be treated as a 
$\c P$-topological group with one relation.

On the other hand, the Lie algebra $L^{\c P}_{\omega ,k}$ 
has the system of generators 
$$\{D_{an}\ |\ a\in\Z _N^+(p)\}\cup\{D_{\bar 0}\}
\cup \{\bar l_{\omega }^{(m)}\ |\ 1\leqslant m\leqslant N\}$$ 
with the corresponding system of relations 
$$[D_{an}, \bar l_{\omega }^{(m)}]=V^{(m)}_{an}, \ 
[D_{\bar 0},\bar l_{\omega }^{(m)}]=V^{(m)}_{\bar 0},\ [\bar l_{\omega }^{(i)},
\bar l_{\omega }^{(j)}]=\bar l_{\omega }[i,j]\,,$$
where 
$a\in\Z _N^+(p)$ and  $1\leqslant m,i,j\leqslant N$. 

Choose for every $a\in\Z _N^+(p)$, $m_{a}\in [1,N]$ such that 
$a^{(m_{a})}\ne 0\,\op{mod}\,p$. Then the relations 
$[D_{an},\bar l_{\omega }^{(m_{a})}]=V_{an}^{(m_{a})}$, 
$a\in\Z _N^+(p)$, 
can be used to eliminate extra generators $\{D_{an}\ |\ a>\bar c_0\}$ 
and to present the structure of $L_{\omega ,k}^{\c P}$ in terms of 
the corresponding minimal system of generators 
$\{D_{an}\ |\ a\in \Z_N^+(p), a<\bar c^0\}\cup
\{D_{\bar 0}\}\cup\{\bar l_{\omega }^{(m)}\}$.

Consider second central step to obtain explicitly the above relations 
modulo $C_3(L^{\c P}_{\omega ,k})$.  

\begin{Prop} \label{P4.12} For $1\leqslant m\leqslant N$ 
and $a\in\Z ^+_N(p)$, 
there are the following congruences 
$\op{mod}\,C_3(L_{\omega ,k}^{\c P})$: 

$$V^{(m)}_{\bar 0}\equiv 
-\frac{1}{2}\sum _{\substack{\iota\geqslant\bar 0 
\\ 0\leqslant n<N_0}}
\sigma ^n\left (A_{\iota }
\sum _{\substack{a_1+a_2=\\
\bar c^0+p\iota }}a_1^{(m)}[D_{a_1,0},D_{a_2,0}]\right )\, ,
$$
$$V^{(m)}_{a0}\equiv 
-\sum _{\substack {n\geqslant 1 \\ \iota\geqslant 0}}
\sigma ^{n}\left (A_{\iota }\hskip -8pt 
\sum _{\substack{a_1+a_2/p^{n}\\ 
=\bar c^0+p\iota +a/p^{n}}}\hskip -8pt a_1^{(m)} 
[D_{a_1,0}, D_{a_2,-n}]\right )$$

$$-\sum _{\imath\geqslant\bar 0}A_{\iota }a^{(m)}
D_{\bar c^0+p\imath +a,0}-\frac{1}{2}
\sum _{\substack{n\geqslant 0\\ \iota\geqslant\bar 0 }}
\sigma ^{-n}\left (A_{\iota }\hskip -8pt 
\sum _{\substack{a_1+a_2=\\ 
\bar c^0+p\iota +ap^n}}\hskip -8pt 
a_1^{(m)}[D_{a_1,0}, D_{a_2,0}]\right )
\,.$$
\end{Prop}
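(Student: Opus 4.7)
The plan is to promote the first-order congruence \eqref{E4.12} to a second-order one modulo $C_3$ by substituting the first-order solution back into the quadratic terms of \eqref{E4.9} and then extracting the $V^{(m)}$-components via the canonical splitting $\id = S_{t,\alpha _0}\Pi + (\sigma - \id )R$ from Propositions \ref{P3.1} and \ref{P3.2}.

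First I would solve \eqref{E4.12} explicitly: the $S_{t,\alpha _0}$-image of its right-hand side yields $V^{(m)}_{a,0} \equiv -\sum _{\iota }A_{\iota }a^{(m)}D_{\bar c^0 + p\iota + a,0}$ modulo $C_2$, which is exactly the middle sum in the formula for $V^{(m)}_{a,0}$; the complementary $\c R$-image yields an explicit $\c P$-convergent expression for $\bar c_1^{(m)}$ as a $k$-linear combination of Frobenius-twisted generators $D_{a',\pm j}$. The twist pattern is dictated by which branch of $\c R$ acts on each monomial: for positive exponents $\bar c^0 + p\iota - a' > \bar 0$ one uses $-\sum _{i\geqslant 0}\sigma ^i$, producing generators $D_{a',i}$; for negative exponents of the form $-a'p^n$ one uses $\sum _{1\leqslant i\leqslant n}\sigma ^{-i}$, producing generators $D_{a',-i}$. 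Next I would take \eqref{E4.9} modulo $C_3$, keep the $k = 2$ summands of the first and second sums and the $k = 1$ summand of the third sum, and substitute the first-order $\bar c_1^{(m)}$ and $V^{(m)}_{a_1,0}$ into the respective brackets; using $\omega ^p \equiv \sum _{\iota }A_{\iota }t^{\bar c^0 + p\iota }$ then turns every quadratic contribution into a $\c P$-convergent sum of monomials $t^c[D_{a_1,0}, D_{a_2,\pm n}]$.

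Applying $S_{t,\alpha _0}$ to the resulting refined equation reads off the desired formulas. For $V^{(m)}_{\bar 0}$ the constant-term branch of $\c S$, equal to $\alpha _0\Tr _{k/\F _p} = \alpha _0\sum _{0\leqslant n<N_0}\sigma ^n$, acts on the $k = 2$ first-sum contribution with exponent $\bar c^0 + p\iota - (a_1 + a_2) = \bar 0$, producing exactly the claimed sum. For $V^{(m)}_{a,0}$ with $a\in\Z _N^+(p)$ the third sum in the statement arises from the $k = 2$ first-sum via the branch $\c S(t^{-ap^n}\alpha ) = t^{-a}\sigma ^{-n}\alpha $, which imposes the constraint $a_1 + a_2 = \bar c^0 + p\iota + ap^n$ and yields the outer $\sigma ^{-n}$; the first sum arises from the $k = 1$ third-sum contribution $-\sum t^{-a_1}[\sigma \bar c_1^{(m)}, D_{a_1,0}]$ after substituting the Frobenius-twisted expansion of $\bar c_1^{(m)}$ from step one --- this produces brackets $[D_{a_1,0}, D_{a_2,-n}]$ weighted by $A_{\iota }$, with the outer $\sigma ^n$ recording the $\c R$-branch that introduced the twist. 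The $k = 2$ second-sum, once $V^{(m)}_{a_1,0}$ is expanded to first order, combines coherently with the other two contributions without yielding new sum types, while the middle $C_2$-term already established in step one survives unchanged as the second summand of the formula for $V^{(m)}_{a,0}$.

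The main obstacle is the combinatorial bookkeeping of Frobenius twists. Each branch of $\c R$ and $\c S$ injects specific $\sigma ^{\pm i}$-shifts into the generators, and when these appear inside a Lie bracket the twist must be tracked carefully --- this is why the first sum in the statement carries $[D_{a_1,0}, D_{a_2,-n}]$ with outer $\sigma ^n$ while the third sum carries $[D_{a_1,0}, D_{a_2,0}]$ with outer $\sigma ^{-n}$, reflecting their distinct origins from substituted $\bar c_1^{(m)}$ versus direct $\c S$-extraction. The $\tfrac{1}{2}$ symmetry factors must be preserved and the substituted second-sum contribution reconciled against the first and third so that no double-counting occurs; this is the key algebraic check needed to obtain the proposition exactly as stated.
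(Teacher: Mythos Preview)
Your proposal is correct and follows essentially the same route as the paper. The paper's proof is extremely terse: it records the first-order solution $\bar c_1^{(m)}\equiv\sum_{a,\iota,n\geqslant 0}\sigma^n(A_\iota)t^{p^n(\bar c^0+p\iota-a)}a^{(m)}D_{an}$ obtained from \eqref{E4.12} via $\c R$, writes the right-hand side of \eqref{E4.9} modulo $C_3$ as the three displayed sums (the $k=1$ first sum, the $k=2$ first sum, and the $k=1$ third sum with $\sigma\bar c_1^{(m)}$ substituted), and concludes with ``applying the operators $\c R$ and $\c S$ we obtain our proposition.'' Your identification of which branch of $\c S$ produces each of the three pieces of $V^{(m)}_{a0}$ matches exactly: the constant-term trace $\alpha_0\sum_{0\leqslant n<N_0}\sigma^n$ yields $V^{(m)}_{\bar 0}$, the branch $\c S(t^{-ap^n}\alpha)=t^{-a}\sigma^{-n}\alpha$ yields the $\sigma^{-n}$-sum from the $k=2$ first sum, and the substituted $\sigma\bar c_1^{(m)}$ yields the $\sigma^n$-sum with $[D_{a_1,0},D_{a_2,-n}]$. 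One small point: the paper simply drops the $k=2$ second-sum contribution from its displayed right-hand side rather than saying it ``combines coherently''; after the substitution $V^{(m,1)}_{a_1,0}=-\sum_\iota A_\iota a_1^{(m)}D_{\bar c^0+p\iota+a_1,0}$ and relabelling it has the same shape as the $k=2$ first-sum term, so your description and the paper's omission amount to the same bookkeeping.
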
 
\medskip

\begin{proof}  
From \eqref{E4.11} we obtain 
(apply the operator $\c S$ from Sect.\,\ref{S3.1})  

$$\bar c^{(m)}_1\equiv  
\sum _{\substack{a,  
\iota\geqslant\bar 0 \\ n\geqslant 0}}
\sigma ^n(A_{\iota })t^{p^n(\bar c^0+p\iota -a)}
a^{(m)}D_{an}\,\op{mod}\,C_2({L}^{\c P}_{\omega ,\c K})\,.$$  

Then the right-hand side of \eqref{E4.9} modulo 
$C_3(L_{\omega ,\c K}^{\c P})$ appears as 

$$-\sum _{
\iota }A_{\iota }
t^{\bar c^0+p\iota -a}a^{(m)}D_{a0}-
\frac{1}{2}\sum _{\substack{a_1,a_2,  
\iota }}
A_{\imath }
t^{\bar c^0+p\iota -a_1-a_2}a^{(m)}_1
[D_{a_10},D_{a_20}]$$

$$-\sum _{\substack{a_1, a_2,  
\iota \\ n\geqslant 1}}\sigma ^{n}(A_{\iota })
t^{p^{n}(\bar c^0+p\iota -a_1)-a_2}
a^{(m)}_1[D_{a_1,n},D_{a_20}]$$

Applying the operators $\c R$ and $\c S$ we obtain our proposition. 
\end{proof} 

\begin{Cor} \label{C4.13} 
 $L^{\c P}_{\omega ,k}\,\op{mod}\,C_3(L^{\c P}_{\omega ,k})$ 
 is the maximal $\sigma $-invariant 
 quotient of nilpotent class 2 of a free Lie $k$-algebra 
 with generators 
 $$\{D_{an}\ |\ a\in\Z _N^+(p), a<\bar c^0, n\in\Z /N_0\}\cup\{D_{\bar 0}\}
 \cup \{\bar l_{\omega }^{(m)}\ |\ 1\leqslant m\leqslant N\}$$ 
satisfying for $1\leqslant m_1,m_2\leqslant N$ and $a\in\Z _N^+(p)$,  
the relations:  

$$\c R(m_1,m_2): [\bar l_{\omega }^{(m_1)}, \bar l_{\omega }^{(m_2)}]=0\, ,$$
$$\c R_{\bar 0}(m):\ \ \ [D_{\bar 0},\bar l_{\omega }^{(m)}]+\frac{1}{2}
\sum _{\substack{\iota \geqslant\bar 0
\\ 0\leqslant n<N_0}}
\sigma ^n\left (A_{\iota }
\sum _{\substack{a_1+a_2=\\
\bar c^0+p\iota }}a_1^{(m)}[D_{a_1,0},D_{a_2,0}]\right )=0\, ,
$$  
 
$$\c R_{a}(m_1,m_2): 
\sum _{\substack{n\geqslant 1\\ \iota \geqslant\bar 0} }
\sigma ^{n}\left (A_{\iota }\hskip -14pt 
\sum _{\substack{a_1+a_2/p^{n}\\ 
=\bar c^0+p\iota +a/p^{n}}}\hskip -12pt\left (a_1^{(m_1)}a_2^{(m_2)}-
a_1^{(m_2)}a_2^{(m_1)}\right )
[D_{a_1,0}, D_{a_2,-n}]\right )$$

$$+\frac{1}{2}
\sum _{\substack{n\geqslant 0\\  \iota \geqslant \bar 0}}
\sigma ^{-n}\left (A_{\iota }\hskip -8pt 
\sum _{\substack{a_1+a_2=\\ 
\bar c^0+p\iota +ap^n}}\hskip -8pt 
\left (a_1^{(m_1)}a_2^{(m_2)}-
a_1^{(m_2)}a_2^{(m_1)}\right )[D_{a_1,0}, D_{a_2,0}]\right )=0\,. $$
\end{Cor}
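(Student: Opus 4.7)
The plan is to combine the mod-$C_3$ expressions for $V_{a0}^{(m)}$ and $V_{\bar 0}^{(m)}$ furnished by Proposition~\ref{P4.12} with the elimination procedure sketched at the start of Section~\ref{S4.7}. Begin from the natural presentation of $L_{\omega,k}^{\c P}$ with the enlarged generating set
\[\{D_{an}\,|\,a\in\Z_N^+(p),\ n\in\Z/N_0\}\cup\{D_{\bar 0}\}\cup\{\bar l_\omega^{(m)}\}\]
and the defining relations
\[[D_{an},\bar l_\omega^{(m)}]=V_{an}^{(m)},\quad [D_{\bar 0},\bar l_\omega^{(m)}]=V_{\bar 0}^{(m)},\quad [\bar l_\omega^{(i)},\bar l_\omega^{(j)}]=\bar l_\omega[i,j]\]
coming from Section~\ref{S4.6}. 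The Remark at the beginning of Section~\ref{S4.6} ensures $\bar l_\omega[i,j]\in C_4(L_\omega^{\c P})$, so modulo $C_3$ these brackets vanish, which yields $\c R(m_1,m_2)$. Since $\sigma$ acts on generators by $\sigma D_{an}=D_{a,n+1}$ and fixes each $\bar l_\omega^{(m)}$, the defining relations for all $n$ are generated by their $n=0$ specialisations together with $\sigma$-invariance; this explains the \emph{maximal $\sigma$-invariant} qualifier in the statement.

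For each $a\in\Z_N^+(p)$ choose $m_a$ with $a^{(m_a)}\not\equiv 0\pmod p$. By Proposition~\ref{P4.12}, the relation for $m=m_a$ has the shape
\[[D_{a0},\bar l_\omega^{(m_a)}]+\sum_{\iota\geqslant\bar 0}A_\iota\, a^{(m_a)}\,D_{\bar c^0+p\iota+a,\,0}\equiv(\text{weight-}2\text{ brackets})\pmod{C_3}.\]
Since $A_{\bar 0}\neq 0$ and $a^{(m_a)}\not\equiv 0\pmod p$, this can be solved iteratively in the size of $\iota$ to express each $D_{b,0}$ with $b=\bar c^0+p\iota+a>\bar c^0$ as an element of $C_2$ written in the reduced generating set; applying powers of $\sigma$ removes the remaining $D_{b,n}$ for $n\neq 0$. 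The formula for $V_{\bar 0}^{(m)}$ has no $D_{b,0}$ with $b>\bar c^0$, so $[D_{\bar 0},\bar l_\omega^{(m)}]=V_{\bar 0}^{(m)}$ becomes $\c R_{\bar 0}(m)$ verbatim.

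It remains to convert the relations $[D_{a0},\bar l_\omega^{(m)}]=V_{a0}^{(m)}$ with $m\neq m_a$ into $\c R_a(m_1,m_2)$. The key move is to form the combination $a^{(m_a)}V_{a0}^{(m)}-a^{(m)}V_{a0}^{(m_a)}$: the eliminated linear terms cancel identically, and each remaining bracket acquires the coefficient $a^{(m_a)}a_1^{(m)}-a^{(m)}a_1^{(m_a)}$. Using the summation constraints from Proposition~\ref{P4.12} (namely $a_2\equiv a\pmod p$ in the $\sigma^n$-sum with $n\geqslant 1$ and $a_1+a_2\equiv a\pmod p$ in the $\sigma^{-n}$-sum, both consequences of $\bar c^0,p\iota\in p\Z^N$), modulo $p$ this coefficient becomes $a_2^{(m_a)}a_1^{(m)}-a_2^{(m)}a_1^{(m_a)}$; symmetrising the double sum in $(a_1,a_2)$ and relabelling $(m_a,m)=(m_1,m_2)$ produces exactly the antisymmetric factor $a_1^{(m_1)}a_2^{(m_2)}-a_1^{(m_2)}a_2^{(m_1)}$ of $\c R_a(m_1,m_2)$. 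The main obstacle is precisely this bookkeeping: one must verify that the resulting system is independent of the choices of the $m_a$ and that together with $\c R(m_1,m_2)$ and $\c R_{\bar 0}(m)$ it exhausts the weight-$2$ $\sigma$-invariant relations, which can be confirmed either by a direct dimension count of the weight-$2$ piece on either side, or by using the Jacobi identity in conjunction with $\c R(m_1,m_2)$ to close the system; either way one recovers the presentation of the corollary.
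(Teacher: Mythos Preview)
Your approach is correct and is precisely the one the paper intends: Corollary~4.13 is stated without proof as an immediate consequence of Proposition~4.12 together with the elimination discussion in Section~4.7, and you have filled in exactly those details (the $C_4$-claim in the Remark of Section~4.6 for $\c R(m_1,m_2)$, the direct transcription of $V_{\bar 0}^{(m)}$ for $\c R_{\bar 0}(m)$, and the linear combination $a^{(m_a)}V_{a0}^{(m)}-a^{(m)}V_{a0}^{(m_a)}$ to kill the degree-one terms for $\c R_a$).

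One remark worth making: your derivation actually produces, for $a<\bar c^0$, the term
\[
[D_{a0},\,a^{(m_2)}\bar l_{\omega}^{(m_1)}-a^{(m_1)}\bar l_{\omega}^{(m_2)}]
\]
on the left of $\c R_a(m_1,m_2)$, which is \emph{absent} from the displayed formula in Corollary~4.13 but \emph{present} in the worked example of Section~4.8 (and again in Section~5.7). So your computation is in fact more accurate than the printed statement; you should flag that the corollary as written appears to omit this bracket, rather than claiming your output ``produces exactly'' the printed $\c R_a(m_1,m_2)$.
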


\begin{remark}
 a) If $N=1$ then there is only one (Demushkin) relation 
 $\c R_{\bar 0}(1)$, cf. \cite{Ab14, Ab13}.
 
 b) The simplest example can be obtained by choosing $\bar c^0=(p,0,\ldots ,0)$, 
 $\omega ^p=t^{\bar c^0}$ and $N_0=1$, cf. Sect.\,\ref{S5.6}.
 
 c) The structure of $\Gamma ^{\c P}_{\omega }$ 
 depends only on $\omega $, more precisely, only on 
 $\omega ^p\,\op{mod}\,t^{(p-1)\bar c^0}$, i.e. on the constants 
 $A_{\imath }$ with $\iota <(p-2)\bar c^0/p$. 
 
 d) The structure of $\Gamma ^{\c P}_{\omega }$ 
 modulo $C_s(\Gamma ^{\c P}_{\omega })$, $s\leqslant p$, depends only 
 on the constants $A_{\iota }$ with $\iota <(s-2)\bar c^0/p$.
\end{remark}

\subsection{The simplest example} \label{S4.8} $N=2$, $N_0=1$, $\bar c^0=(p,0)$, 
$A_{\bar 0}=1$, all remaining $A_{\iota }=0$. 

The minimal generators: 

$\{D_a\ |\ a\in\Z _2^+(p), a<(p,0)\}
\cup \{D_{\bar 0}\}\cup \{\bar l^{(1)}, \bar l^{(2)}\}$. 

The relations:

$$\c R(1,2): [\bar l^{(1)}, \bar l^{(2)}]\, ,$$

$$\c R_{\bar 0}(1)=[D_{\bar 0}, \bar l^{(1)}]+
\sum _{\substack{1\leqslant\alpha \leqslant \frac{p-1}{2}\\ \gamma\in\Z }}
\alpha [D_{(\alpha ,\gamma )}, D_{(p-\alpha ,-\gamma )}]
$$

$$\c R_{\bar 0}(2)=[D_{\bar 0}, \bar l^{(2)}]+
\sum _{\substack{0\leqslant\alpha \leqslant \frac{p-1}{2}\\ \gamma\in\Z }}
\gamma [D_{(\alpha ,\gamma )}, D_{(p-\alpha ,-\gamma )}]$$
 
$$\c R_{a}(1,2):\ \ [D_{a}, a^{(2)}\bar l^{(1)}-
a^{(1)}\bar l^{(2)}]-\delta _{a^{(1)},0}\sum _{\gamma }a^{(2)}[D_{(p-1,\gamma )}
,D_{(p,a^{(2)}-p\gamma )}]$$
$$+a^{(1)}\hskip -10pt
\sum _{\substack{n\geqslant 1\\ \beta\in\Z ^+(p)}}\hskip -10pt \beta 
[D_{(p,-\beta )}, D_{a+(0,p^n\beta )}]+\frac{1}{2}\hskip -6pt  
\sum _{\substack{a_1+a_2=\\ (p,0)+a}}\hskip -8pt 
\left (a_1^{(1)}a_2^{(2)}-
a_1^{(2)}a_2^{(1)}\right )[D_{a_1}, D_{a_2}]\,. $$

\section{Characteristic 0 case} \label{S5} 

In this section $K$ is an $N$-dimensional local field 
of characteristic 0. We assume that the first residue field $K^{(1)}$ of $K$ has 
characteristic $p$. The last residue field $k$ of $K$ 
is isomorphic to $\mathbb F _{p^{N_0}}$, $N_0\in\mathbb N $. We also fix a system 
of local parameters 
$\pi =\{\pi _1,\cdots ,\pi _N\}$ of $K$, denote by $v^1$ the first valuation 
of $K$ such that $v^1(p)=1$ and by $O_K^1$ the corresponding valuation ring. 
Starting Sect.\,\ref{S5.2} we assume that 
$K$ contains a primitive $p$-th root of unity $\zeta _1$.

\subsection{The field-of-norms functor}\label{S5.1}

$N$-dimensional local fields are  
special cases of $(N-1)$-big fields 
used by Scholl \cite{Sch}  
to construct a higher dimensional analogue of 
the Fontaine-Wintenberger field-of-norms functor, \cite{Wtb1}. 
We can apply this construction  to the case of higher local fields 
due to the fact that 
the structure of an $N$-dimensional field is  
uniquely extended to its finite field extensions. 
We don't use here the construction of the 
field-of-norms functor from \cite{Ab10}: 
it is based on essentially close ideas but works in the category of 
higher local fields with additional structure given by 
``subfields of constants'' (because the whole theory in \cite{Ab10} 
is based only on the concept of ramification for higher local fields). 

Let $K_{alg}$ be an algebraic closure of $K$. 
Denote by the same symbol a unique 
extension of the valuation $v^{1}$ to $K_{alg}$. 
For any $0<c\leqslant 1$, let $\frak p ^c=
\{x\in K_{alg}\ |\ v^1(x)\geqslant c\}$. If $L$ is a 
field extension of $K$ in $K_{alg}$, we use 
the simpler notation $O^1_L/\frak p ^c$ instead of 
$O^1_L/(\frak p ^c\cap O^1_L)$.

An increasing fields tower $K_\d=(K_n)_{n\geqslant 0}$, where $K_0=K$,   
is strictly deeply ramified (SDR) with parameters $(n_0,c)$,  if for 
$n\geqslant n_0$, we have 
$[K_{n+1}:K_n]=p^N$ and 
there is a surjective map 
$\Omega^1_{O^1_{F_{n+1}}/O^1_{F_n}} \To (O^1_{F_{n+1}}/\frak p ^c)^{N}$ 
or, equivalently, 
the $p$-th power map induces epimorphic maps 
$$ 
i^1_n(K_\d ):O^1_{K_{n+1}}/\frak p ^c\longrightarrow O^1_{K_n}/\frak p ^c.
$$
This implies that for all $n\geqslant n_0$, the last residue fields   
$K_n^{(N)}$ are the same and 
there are systems of local parameters $\{\pi _{n1},\dots ,\pi _{nN}\}$ in 
$K_n$ such that for all $1\leqslant m\leqslant N$, 
$\pi _{n+1,m}^{p}\equiv \pi _{nm}
\,\mathrm{mod}\,\frak p ^c$, where $\pi _m=\pi _{0m}$. Equivalently, on the level of the 
$N$-valuation rings $\c O_{K_n}$, 
the $p$-th power map induces epimorphic maps 
\begin{equation} \label{E5.1}
i_n(K_\d ):\c O_{K_{n+1}}/\frak p ^c\longrightarrow \c O_{K_n}/\frak p ^c.
\end{equation}

Let 
$\c O=\underset{n}\varprojlim\c O_{K_n}/\frak p ^c$. Then $\c O$ is an 
integral domain and we can introduce 
the field of fractions $\c K$ of $\c O$. 
The field-of-norms functor $X$ associates to the  SDR tower $K_\d $  
the field $\c K=X(K_{\d })$. This field has characteristic $p$,  
it inherits a structure 
of $N$-dimensional local field such that the elements 
$t_m:=\underset {n}\varprojlim\,\pi_{nm}$, $1\leqslant m\leqslant N$, 
form a system of local parameters in $\c K$. 
Then the  
$N$-dimensional valuation ring $\c O_{\c K}$ of $\c K$ coincides with $\c O$,  
and for $n\geqslant n_0$, 
the last residue fields of $\c K$ and $K_n$ coincide. Since the 
identification $\c O_{\c K}=\varprojlim \c O_{K_n}/\frak p^c$ relates 
the appropriate power series in given systems of local parameters the 
field-of-norms 
functor is compatible with $\c P$-topological structures on the fields $K_n$ and $\c K$. 
\medskip 

Suppose $L$ is a finite extension of $K$ in $K_{alg}$. Then the tower 
$L_\d =(LK_n)_{n\geqslant 0}$ is again SDR and $X(L_{\d })=\c L$ 
is a separable extension of $\c K$ of degree $[LK_n:K_n]$, 
where $n\gg 0$. The extension $\c L/\c K$ is Galois if and only if   
for $n\gg 0$, $LK_n/K_n$ is Galois. From the definition of $\c L$ and 
$\c K$ it follows that we have a natural identification of 
groups 
$\op{Gal}(\c L/\c K)=\op{Gal}(LK_n/K_n)$. As a result, 
$X(K_{alg}):=\underset{\substack{\longrightarrow\\L}}\lim X(L_{\d})$ 
is a separable closure $\c K_{sep}$ of $\c K$ and the functor 
$X$ identifies 
$\Gal (\c K_{sep}/\c K)$ with $\Gal (K_{alg}/K_{\d }^{\infty })$, 
where $K_{\d }^{\infty }=\underset{\substack{\longrightarrow \\ n}}\lim K_n$.

Similarly to the classical 1-dimensional situation there is the following 
interpretation of the functor $X$. 
Let $\mathbb C_p(N)$ be the $v^1$-adic completion of $K_{alg}$ and let 
$R_0(N)=\underset{n\geqslant 0}\varprojlim \,\mathbb C_p(N)$ with respect to 
the $p$-th power maps on $\mathbb C_p(N)$. The operations on 
$R_0(N)$ are defined as follows: 
if $a=\{a_n\}_{n\geqslant 0}$ and $b=\{b_n\}_{n\geqslant 0}$ 
belong to $R_0(N)$ then $ab=\{a_nb_n\}_{n\geqslant 0}$ and 
$a+b=\{c_n\}_{n\geqslant 0}$, where $c_n=\underset{m\to\infty }
\lim (a_{n+m}+b_{n+m})^{p^n}$. So, $R_0(N)$ is a 
field of characteristic $p$, there is a natural embedding $\c K_{sep}\subset R_0(N)$, 
and $R_0(N)$ appears as the completion of $\c K_{sep}$ with 
respect to the first valuation. 

We denote by $R(N)\subset R_0(N)$  
the completion of the $N$-valuation ring of $\c K_{sep}$, 
and let $\m _{R(N)}$ be the maximal ideal of $R(N)$. 
Clearly, 
$$R(N)=\underset{n\geqslant 0}\varprojlim\, \c O_{\mathbb C_p(N)}=
\underset{n\geqslant 0}\varprojlim\,\c O_{K_{alg}}/p\,.$$ 
Note that 
the $\c P$-topology on $K$ is uniquely extended to $R_0(N)$ and 
this extension coincides with the extension of the 
$\c P$-topological structure of $\c K$ to $R_0(N)$ 
(as the completion of the $\c P$-topology on $\c K_{sep}$).

\subsection{} \label{S5.2}
Suppose $u\in\N $ and 
$\phi _1,\dots ,\phi _u\in \m _{K}\setminus\{0\}$ 
are independent modulo $p$-th powers, i.e. 
 for any $n_1,\dots ,n_u\in\Z $, the product 
 $\phi _{1}^{n_1}\dots \phi _{u}^{n_u}\in K^{*p}$ 
 iff all $n_i\equiv\,0\,\op{mod}\,p$. 
 Let ${K}^{\phi }=
 \underset{n\geqslant 0}\bigcup K(\phi _{1n}, \dots ,\phi _{un})$, where 
 for $1\leqslant i\leqslant u$, $\phi _i=\phi _{i0}$ and for all $n\in\N $, 
 $\phi _{i,n-1}=\phi _{in}^p$. 
 
 For $n\in\N $, let $\zeta _{n+1}\in K_{alg}$ 
 be such that $\zeta _{n+1}^{p}=\zeta _{n}$. Here $\zeta _1\in K$ is 
 our given $p$-th primitive root of unity. 
 Let $\wt{K}^{\phi }=K^{\phi }(\{\zeta _n\,|\, n\in\N\})$. 
 Then $\wt{K}^{\phi }/K$ is normal and let  
 $\Gamma ^{\phi }$ be its Galois group.

\begin{Lem} \label{L5.1} 
If $\Gamma ^{\phi }_{ <p}$ is the 
maximal quotient of $\Gamma ^{\phi }$ 
of period $p$ and nilpotent class $<p$ then there is 
a natural exact sequence of groups 
$$\op{Gal}(\wt{K}^{\phi }/K^{\phi })\To \Gamma ^{\phi}_{<p}\To 
\op{Gal}(K(\phi _{11},\dots ,\phi _{u1})/K)\To 1\, .$$
\end{Lem}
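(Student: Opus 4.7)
The plan is to verify the three defining properties of the exact sequence: the right-hand arrow is surjective, the composition of the two maps is trivial, and the image of the left-hand arrow equals the kernel of the right one.

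Put $F:=K(\phi _{11},\dots ,\phi _{u1})$. Since $\zeta _1\in K$, the extension $F/K$ is a Kummer extension of exponent $p$, hence abelian of nilpotent class $1<p$. The natural surjection $\Gamma ^{\phi }\twoheadrightarrow \Gal (F/K)$ therefore factors through the maximal period-$p$, class-$<p$ quotient $\Gamma ^{\phi }_{<p}$; this defines the right-hand arrow and verifies its surjectivity. The left-hand arrow is the restriction to $\Gal (\wt K^{\phi }/K^{\phi })$ of the projection $\Gamma ^{\phi }\to \Gamma ^{\phi }_{<p}$, and the composition of the two maps vanishes since every element fixing $K^{\phi }$ also fixes the subfield $F\subset K^{\phi }$.

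Set $N':=\Gal (\wt K^{\phi }/K^{\phi })$ and $H:=\Gal (\wt K^{\phi }/F)$. The kernel of the right arrow is the image of $H$ in $\Gamma ^{\phi }_{<p}$, so exactness at $\Gamma ^{\phi }_{<p}$ reduces to showing that the images of $H$ and $N'$ in $\Gamma ^{\phi }_{<p}$ coincide; the inclusion $N'\subseteq H$ gives one direction for free.

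For the reverse containment I would use an explicit $p$-th-power construction. Each $\sigma \in \Gamma ^{\phi }$ is described by its Kummer parameters $\sigma (\phi _{in})=\phi _{in}\zeta _n^{a_{in}(\sigma )}$; the relation $\phi _{in}^p=\phi _{i,n-1}$ makes $(a_{in}(\sigma ))_n$ a compatible sequence, assembling to $a_{i,\infty }(\sigma )\in \Z _p$, and $\sigma \in H$ is equivalent to $a_{i,\infty }(\sigma )\in p\Z _p$ for every $i$. Applying Kummer theory to $\wt K^{\phi }/K(\zeta _{p^{\infty }})$ I would produce $\tau \in \Gal (\wt K^{\phi }/K(\zeta _{p^{\infty }}))$ with parameters $a_{i,\infty }(\tau )=a_{i,\infty }(\sigma )/p$. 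Since $\tau $ fixes every $\zeta _n$, a short induction gives $\tau ^p(\phi _{in})=\phi _{in}\zeta _n^{p\,a_{in}(\tau )}=\sigma (\phi _{in})$ for all $i,n$, whence $\sigma \tau ^{-p}\in N'$. Consequently $\sigma \in N'\cdot (\Gamma ^{\phi })^p$, and projection to $\Gamma ^{\phi }_{<p}$, which kills $(\Gamma ^{\phi })^p$, shows the image of $\sigma $ already lies in the image of $N'$.

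The main subtlety I expect is the existence of $\tau $: it amounts to the surjectivity of the Kummer parameter map $\Gal (\wt K^{\phi }/K(\zeta _{p^{\infty }}))\to \Z _p^u$, equivalently to the $\phi _i$ remaining independent modulo $p$-th powers in $K(\zeta _{p^{\infty }})^*$. This is where the hypotheses $\phi _i\in \m _K$ and independence in $K^*/K^{*p}$ enter, via a valuation argument together with the fact that $K(\zeta _{p^{\infty }})/K$ is pro-$p$ abelian with controlled value group.
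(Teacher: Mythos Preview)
Your argument is essentially correct and reaches the same conclusion, but by a different route than the paper. There is one small computational slip: from $\tau^p(\phi_{in})=\sigma(\phi_{in})$ you conclude $\sigma\tau^{-p}\in N'$, but if $\sigma$ moves the $\zeta_n$ then $\sigma\tau^{-p}(\phi_{in})=\sigma(\phi_{in})\,\sigma(\zeta_n)^{-p\,a_{in}(\tau)}\neq\phi_{in}$ in general. The correct statement is $\tau^{-p}\sigma\in N'$, giving $\sigma\in\tau^p N'\subset(\Gamma^{\phi})^pN'$; since $(\Gamma^{\phi})^pC_p(\Gamma^{\phi})$ is normal, the projection to $\Gamma^{\phi}_{<p}$ still lands in the image of $N'$, so your conclusion survives.

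The paper's proof is considerably shorter because it simply writes down topological generators $\sigma,\tau_1,\dots,\tau_u$ of $\Gamma^{\phi}$ together with the single relation $\sigma^{-1}\tau_m\sigma=\tau_m^{(1+ps_0)^{-1}}$, and then reads off directly that $C_2(\Gamma^{\phi})\subset\langle\tau_1^p,\dots,\tau_u^p\rangle\subset(\Gamma^{\phi})^p$ and hence $(\Gamma^{\phi})^pC_p(\Gamma^{\phi})=\langle\sigma^p,\tau_1^p,\dots,\tau_u^p\rangle$; the exact sequence is then immediate. Your Kummer-parameter argument avoids naming generators and instead produces, for each $\sigma\in H$, a $p$-th root of $\sigma$ modulo $N'$. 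Both approaches rest on the same non-trivial input, namely that $\Gal(\wt K^{\phi}/K(\zeta_{p^\infty}))\to\Z_p^u$ is surjective (equivalently, the $\phi_i$ stay independent over the cyclotomic tower); the paper absorbs this into the word ``Clearly'' when writing down the generators, while you rightly isolate it as the one substantive point. What the paper's route buys is brevity and the extra information that $\Gamma^{\phi}_{<p}\simeq(\Z/p)^{u+1}$; what yours buys is that it would transplant more readily to a setting where an explicit presentation of $\Gamma^{\phi}$ is not already in hand.
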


\begin{proof} 
Clearly,  
$\Gamma _{\wt{K}_{\phi }/K}=\langle \sigma, \tau _1,\dots ,\tau _u\rangle $, where 
for any $1\leqslant i,m\leqslant u$ and 
some $s_0\in \Z$, 
$\sigma \zeta _n=\zeta _n^{1+ps_0}$, 
$\sigma\phi _{in}=\phi _{in}$, 
$\tau _m(\zeta _n)=\zeta _n$, $\tau _m(\phi _{in})=
\phi _{in}\zeta ^{\delta _{mi}}_n$, and 
$\sigma ^{-1}\tau _m\sigma =\tau _m^{(1+ps_0)^{-1}}$. 

Therefore, $(\Gamma ^{\phi})^p=\langle \sigma ^{p},
\tau _1^{p}, \dots ,\tau _u^{p}\rangle $ and 
for the subgroup of second commutators we have 
$C_2(\Gamma ^{\phi })\subset 
\langle\tau _1^{p},\dots ,\tau _u^{p}\rangle
\subset (\Gamma ^{\phi})^p$. As a result, it holds  
$(\Gamma ^{\phi})^pC_p(\Gamma ^{\phi })=
\langle \sigma ^{p}, \tau _1^{p},\dots ,\tau _u^{p}\rangle $  
 and the lemma is proved. 
 \end{proof}

We are going to apply the above lemma to our field $K$ and the set 
of local parameters 
$\pi =\{\pi _1,\dots ,\pi _n\}$. 
The lemma provides us with the field extensions 
$\wt{K}^{\pi }\supset K^{\pi }\supset K$. Let     
$\Gamma _{<p}:=\Gamma /\Gamma ^{p}C_p(\Gamma )$ and ${\Gamma }_{K^{\pi }}=
\Gal (K_{alg}/K^{\pi })$. The embedding ${\Gamma }_{K^{\pi }}\subset\Gamma $ 
induces a continuous homomorphism 
$\iota  ^{\pi }:{\Gamma }_{K^{\pi }}\To \Gamma _{<p}$. Denote by $\kappa ^{\pi }$ 
the natural surjection    
$\Gamma _{<p}\To \op{Gal}(K(\root p\of {\pi _1}, \dots ,\root p\of {\pi _N})/K)=
\underset{1\leqslant m\leqslant N}\prod \langle\tau _m\rangle ^{\Z /p}$, 
where $\tau _m(\root p\of{\pi _i})=\root p\of {\pi _i}\zeta _1^{\delta _{im}}$. 

\begin{Prop} \label{P5.2}
 The following sequence of profinite groups 
$$\Gamma _{K^{\pi }}\overset{\iota ^{\pi }}\To \Gamma _{<p}\overset{\kappa ^{\pi }}
\To \prod\limits _{1\leqslant m\leqslant N}
\langle \tau _m\rangle ^{\Z /p}\To 1\, $$ 
is exact. 
\end{Prop}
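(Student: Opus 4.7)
The plan is to establish surjectivity of $\kappa ^{\pi }$, the trivial containment $\op{Im}(\iota ^{\pi })\subset \ker (\kappa ^{\pi })$, and the reverse containment via a diagram chase built on Lemma \ref{L5.1}.

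For surjectivity, I first check that $\pi _1,\dots ,\pi _N$ are linearly independent in $K^*/K^{*p}$. This follows by applying the rank-$N$ lexicographic valuation $v:K^*\To \Z ^N$ with $v(\pi _i)=e_i$: any relation $\pi _1^{a_1}\dots \pi _N^{a_N}=y^p$ forces $(a_1,\dots ,a_N)\in p\Z ^N$. Since $\zeta _1\in K$, Kummer theory then gives $\Gal (K(\root p\of {\pi _1},\dots ,\root p\of {\pi _N})/K)\simeq (\Z /p)^N$. As this Galois group has period $p$ and nilpotent class $1<p$, the restriction $\Gamma \twoheadrightarrow \Gal (K(\root p\of {\pi _1},\dots ,\root p\of {\pi _N})/K)$ factors through $\Gamma _{<p}$, yielding $\kappa ^{\pi }$. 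The containment $\op{Im}(\iota ^{\pi })\subset \ker (\kappa ^{\pi })$ is immediate from $K(\root p\of {\pi _1},\dots ,\root p\of {\pi _N})\subset K^{\pi }$.

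For the reverse containment, I would apply Lemma \ref{L5.1} with $\phi =\pi $ (so $u=N$), obtaining the exact sequence
$$\Gal (\wt{K}^{\pi }/K^{\pi })\To \Gamma ^{\pi }_{<p}\To (\Z /p)^N\To 1\,.$$
The restriction $\Gamma \twoheadrightarrow \Gamma ^{\pi }$ descends to a surjection $\rho :\Gamma _{<p}\twoheadrightarrow \Gamma ^{\pi }_{<p}$, because the conditions ``period $p$'' and ``nilpotent class $<p$'' pass to quotients. The identities $(\Gamma /N)^p=\Gamma ^pN/N$ and $C_p(\Gamma /N)=C_p(\Gamma )N/N$, applied to $N=\Gal (K_{alg}/\wt{K}^{\pi })$, identify $\ker \rho $ with the image of $\Gal (K_{alg}/\wt{K}^{\pi })$ in $\Gamma _{<p}$. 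Moreover, the composition of $\rho $ with the right-hand map of Lemma \ref{L5.1} equals $\kappa ^{\pi }$, since both describe the action on $\root p\of {\pi _1},\dots ,\root p\of {\pi _N}$.

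The argument finishes by a diagram chase. Given $g\in \ker (\kappa ^{\pi })$, its image $\bar g=\rho (g)\in \Gamma ^{\pi }_{<p}$ lies in the kernel of the right-hand map of Lemma \ref{L5.1}, hence lifts to some $h\in \Gal (\wt{K}^{\pi }/K^{\pi })$; since $\wt{K}^{\pi }/K^{\pi }$ is Galois (it is generated by roots of unity over $K^{\pi }$), $h$ further lifts to some $\tilde h\in \Gamma _{K^{\pi }}$. Then $g\cdot \iota ^{\pi }(\tilde h)^{-1}$ is killed by $\rho $, so it lies in the image of $\Gal (K_{alg}/\wt{K}^{\pi })\subset \Gamma _{K^{\pi }}$, and therefore in $\op{Im}(\iota ^{\pi })$. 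Hence $g\in \op{Im}(\iota ^{\pi })$. The only technical points are the commutativity $\kappa ^{\pi }=(\text{Lemma \ref{L5.1} map})\circ \rho $ and the identification of $\ker \rho $; both are routine, and I do not foresee a serious obstacle beyond this bookkeeping.
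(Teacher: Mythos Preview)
Your proposal is correct and follows essentially the same route as the paper. Both arguments pull back the exact sequence of Lemma~\ref{L5.1} (applied with $\phi=\pi$) along the surjection $\Gamma_{<p}\twoheadrightarrow\Gamma^{\pi}_{<p}$, and then observe that $\ker\kappa^{\pi}$ is generated by the image of $\Gal(K_{alg}/\wt K^{\pi})$ together with a lift of $\sigma\in\Gal(\wt K^{\pi}/K^{\pi})$, both of which lie in $\iota^{\pi}(\Gamma_{K^{\pi}})$; the paper phrases this last step as ``$\Gamma_{K^{\pi}}$ is generated by $\Gamma_{\wt K^{\pi}}$ and a lift $\hat\sigma$ of $\sigma$,'' while you unfold it as an explicit diagram chase.
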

\begin{proof} 
Note that the elements of the group 
$\Gamma _{\wt{K}^{\pi }}=\op{Gal}(K_{alg}/\wt{K}^{\pi })$ together with a lift 
$\hat\sigma \in {\Gamma}_{K^{\pi }}$ of $\sigma $  
generate the group $\Gamma _{K^{\pi }}$. Now   
the exact sequence from Lemma \ref{L5.1} implies that 
$\op{Ker}\,\kappa ^{\pi }$ 
is generated by $\hat\sigma $ and the image of 
$\Gamma _{\wt{K}^{\pi }}\subset {\Gamma }_{K^{\pi }}$. 
As a result, this kernel coincides with the image of 
${\Gamma }_{K^{\pi }}$ in $\Gamma _{<p}$\,.  
\end{proof}

Let $R(N)$ be the ring from Sect.\,\ref{S5.1}. Recall, there is a natural embedding 
$k\subset R$ and for $1\leqslant m\leqslant N$, 
$t_m:=\underset{n}\varprojlim\,\{\pi _{mn}\}_{n\geqslant 0}\in R(N)$, 
where $\pi _{m0}=\pi _m$ and $\pi _{m, n+1}^p=\pi _{mn}$. 

Following Sect.\,\ref{S2.3} set $t=(t_1,\dots ,t_N)$ and $\c K=k((t))$. 
Then $\c K$ is a closed subfield of $R_0(N)=\op{Frac}\,R(N)$ 
with a system of local parameters $t$. 
The tower $K^{\pi }_{\d }=\{K(\pi _{1n},\dots ,\pi _{Nn})\}_{n\geqslant 0}$ 
is SDR and ${K}^{\pi }=K_{\d }^{\pi\,\infty }$. Therefore,  
the field-of-norms functor $X$ from Sect.\,\ref{S5.1}  
identifies $X(K^{\pi }_{\d })$ with $\c K$ 
and $R_0(N)$ with the completion of $\c K_{sep}$. 
In particular, there is a natural inclusion $\Gamma \To\Aut\, R_0(N)$ 
which induces the identification of $\c G=\op{Gal} (\c K_{sep}/\c K)$ and 
$\Gamma _{K^{\pi }}$. 
 
We are going to 
apply below the results of the previous sections and will use the appropriate 
notation related to our field $\c K$, e.g. $\c G_{<p}=\op{Gal}(\c K_{<p}/\c K)$, where 
$\c K_{<p}=\c K_{sep}^{\c G^pC_p(\c G)}$.  
The field-of-norms identification $\c G\simeq \Gamma _{K^{\pi }}$ 
composed with the morphism $\iota ^{\pi }$ from 
Prop.\,\ref{P5.2} induces a  
group homomorphism 
$\iota ^{\pi }_{<p}:\c G_{<p}\To\Gamma _{<p}$ and 
we obtain the following property. 

\begin{Prop} \label{P5.3} 
The following sequence of profinite groups is exact 
$$\c G_{<p}\overset{\iota ^{\pi }_{<p}}\To \Gamma _{<p}
\overset{j}\To \prod\limits _{1\leqslant m\leqslant N}
\langle \tau _m\rangle ^{\Z /p}\To 1\, .$$  
\end{Prop}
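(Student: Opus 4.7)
The plan is to derive Proposition \ref{P5.3} directly from Proposition \ref{P5.2} by passing through the universal property of $\c G_{<p}$.

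First I would use the field-of-norms identification $\c G=\op{Gal}(\c K_{sep}/\c K)\simeq \Gamma _{K^{\pi }}$ recalled above to view $\iota ^{\pi }$ as a continuous homomorphism $\c G\To \Gamma _{<p}$. Since the target $\Gamma _{<p}=\Gamma /\Gamma ^pC_p(\Gamma )$ is a pro-$p$ group of period $p$ and nilpotent class $<p$, its kernel automatically contains $\c G^pC_p(\c G)$, so this homomorphism factors uniquely through the quotient $\c G_{<p}=\c G/\c G^pC_p(\c G)$. This factorization is the definition of $\iota ^{\pi }_{<p}$, and by construction
$$\op{Im}(\iota ^{\pi }_{<p})=\op{Im}(\iota ^{\pi })\subset \Gamma _{<p}.$$

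Next I would identify $j$ with the map $\kappa ^{\pi }$ from Prop.\,\ref{P5.2}. The target of $\kappa ^{\pi }$ is $\op{Gal}(K(\root p\of{\pi _1},\dots ,\root p\of{\pi _N})/K)=\prod _{m}\langle \tau _m\rangle ^{\Z /p}$, which is exactly the target of $j$; and the defining property $\tau _m(\root p\of{\pi _i})=\root p\of{\pi _i}\zeta _1^{\delta _{im}}$ is the same in both cases. Thus $j=\kappa ^{\pi }$, and Prop.\,\ref{P5.2} already provides the surjectivity of $j$ and the equality $\op{Im}(\iota ^{\pi })=\Ker (\kappa ^{\pi })=\Ker (j)$.

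Combining these two steps yields $\op{Im}(\iota ^{\pi }_{<p})=\op{Im}(\iota ^{\pi })=\Ker (j)$, together with surjectivity of $j$, which is exactness at both spots of the sequence in Prop.\,\ref{P5.3}. The only genuine input is Prop.\,\ref{P5.2} itself; there is no real obstacle here beyond checking that passage to $\c G_{<p}$ does not shrink the image (it does not, because the surjection $\c G\onto \c G_{<p}$ is surjective, so images in $\Gamma _{<p}$ coincide). The mildest point worth verifying carefully is that the field-of-norms isomorphism $\c G\simeq \Gamma _{K^{\pi }}$ is indeed compatible with the embedding $\Gamma _{K^{\pi }}\subset \Gamma $ used to define $\iota ^{\pi }$; but this is exactly the content of the identification $\op{Gal}(\c K_{sep}/\c K)=\op{Gal}(K_{alg}/K_{\d }^{\pi \,\infty })$ recalled in Sect.\,\ref{S5.1}.
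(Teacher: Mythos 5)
Your proof is correct and matches the paper's (essentially implicit) argument: the paper simply defines $\iota ^{\pi }_{<p}$ as the map induced by composing the field-of-norms identification $\c G\simeq \Gamma _{K^{\pi }}$ with $\iota ^{\pi }$ and states the proposition as an immediate consequence of Prop.\,\ref{P5.2}. Your write-up supplies exactly the two routine verifications the paper leaves tacit — that the composition factors through $\c G_{<p}$ because $\Gamma _{<p}$ has period $p$ and nilpotent class $<p$, and that passing to the quotient does not change the image.
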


\subsection{Isomorphism $\kappa _{<p}$} \label{S5.3} 
Let $\bar c^1\in\Z ^N_{>\bar 0}$ be such that 
$p=\pi ^{\bar c^1}u$, where $u\in\c O_K^*$ (as earlier, $\pi ^{\bar c^1}=
\pi _1^{c_1^1}\dots 
\pi _N^{c_N^1}$ with $\bar c^1=(c_1^1, \dots ,c_N^1)$). 

Set $\bar c^0=p\bar c^1/(p-1)$. Note that $\bar c^0\in\Z _{>\bar 0}^N$, because 
$\zeta _1\in K$, $\zeta _1-1\in\pi ^{\bar c^2}\c O^*_K$ 
with $\bar c^2\in\Z _{>\bar 0}^N$ and 
$(p-1)\bar c^2=\bar c^1$. 

Consider the auxillary Lie algebra $\bar{\c M}_{<p}$ 
from Sect.\,\ref{S4.4} and its analogue 
$$\bar{\c M}_{R_0(N)}=\left (\sum _{1\leqslant s<p}t^{-s\bar c^0}
\bar{\c L}_{\bar c^0}(s)_{\m _{R(N)}}\right )\otimes R(N)/t^{(p-1)\bar c^0}\, .$$ 
 
Clearly,   
$\bar f\in\bar{\c L}_{R_0(N)}$ and $\bar f\otimes 1\in\bar{\c M}_{R_0(N)}$. 
(Recall that $\bar\pi =\pi _{\bar f}(\bar e):\bar{\c G}\simeq G(\bar{\c L})$.)

Consider a $v^1_{\c K}$-continuous 
embedding $\eta $ of the field $\c K$ into $R_0(N)$ 
such that the image of 
$(\id _{\bar{\c L}}\otimes \eta )\bar e$ in 
$\bar{\c M}_{R_0(N)}$ coincides with $\bar e\otimes 1$. 

Such field embedding satisfies $\eta |_k=\id $ and 
is uniquely determined by a choice 
of the elements $\eta (t_m)\in R(N)_0$, 
where $1\leqslant m\leqslant N$, such that 
$\eta (t_m)\equiv t_m\,\op{mod}\,t^{(p-1)\bar c^0}\m _{R(N)}$.

\begin{Prop} \label{P5.4}  
There is a unique lift $\bar\eta $ of $\eta $ to $\c K(p)$ such that 
$$(\id _{\bar{\c L}}\otimes \bar\eta )\bar f\otimes 1=
\bar f\otimes 1\,.$$ 
\end{Prop}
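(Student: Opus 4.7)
The plan is to extend $\eta$ arbitrarily to $\c K(p)$ first, and then correct that extension by a single element of $\bar{\c G}$, using the nilpotent Artin--Schreier identification $\bar\pi:\bar{\c G}\simeq G(\bar{\c L})$ to control the ambiguity. First I would pick any field embedding $\bar\eta_0:\c K(p)\to R_0(N)$ extending $\eta$; this exists because $\c K(p)\subset\c K_{\sep}$ and $R_0(N)$, as the completion of $\c K_{\sep}$, contains the separable closure of $\eta(\c K)$.

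Next I would compare $Y:=(\id_{\bar{\c L}}\otimes\bar\eta_0)\bar f\otimes 1$ with $X:=\bar f\otimes 1$ inside $\bar{\c M}_{R_0(N)}$. Because $\bar\eta_0$ is a field embedding it commutes with $\sigma$, and from $\sigma\bar f=\bar e\circ\bar f$ I obtain $\sigma Y=(\eta\bar e)\circ Y$ in $\bar{\c L}_{R_0(N)}$. Passing to $\bar{\c M}_{R_0(N)}$ and invoking the hypothesis $\eta\bar e=\bar e\otimes 1$, both $X$ and $Y$ satisfy the Artin--Schreier-type equation $\sigma W=(\bar e\otimes 1)\circ W$. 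A short Campbell--Hausdorff manipulation, using $\sigma(-W)=-\sigma W=(-W)\circ(-(\bar e\otimes 1))$, then yields
$$\sigma\bigl((-Y)\circ X\bigr) = (-Y)\circ\bigl(-(\bar e\otimes 1)\bigr)\circ (\bar e\otimes 1)\circ X = (-Y)\circ X,$$
so $l_0:=(-Y)\circ X$ is $\sigma$-invariant in $\bar{\c M}_{R_0(N)}$.

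Third, I would identify this invariant. Since $R_0(N)^{\sigma=\id}=\F_p$, the $\sigma$-fixed part of $\bar{\c L}_{R_0(N)}=\bar{\c L}\otimes_{\F_p}R_0(N)$ is exactly $\bar{\c L}$; and the equality $\c L\cap\c M(p-1)=\c L_{\bar c^0}(p)$ invoked in the proof of Prop.\,\ref{P4.5} ensures that $\bar{\c L}$ embeds injectively into $\bar{\c M}_{R_0(N)}$, so $l_0\in\bar{\c L}$. Choose the unique $\bar g\in\bar{\c G}$ with $\bar\pi(\bar g)=l_0$ and set $\bar\eta:=\bar\eta_0\circ\bar g$. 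Using the defining relation $\bar g(\bar f)=\bar f\circ\bar\pi(\bar g)$ of $\bar\pi$, I obtain $(\id\otimes\bar\eta)\bar f\otimes 1=Y\circ l_0=Y\circ(-Y)\circ X=X=\bar f\otimes 1$, which establishes existence.

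For uniqueness, any two extensions of $\eta$ to $\c K(p)$ differ by some $\bar g\in\bar{\c G}$, and if both satisfy the prescribed condition the identity $(\bar f\otimes 1)\circ\bar\pi(\bar g)=\bar f\otimes 1$ in $\bar{\c M}_{R_0(N)}$ forces $\bar\pi(\bar g)=0$, whence $\bar g=e$. The main technical obstacle is the third step: one has to verify carefully that $\sigma$-invariance in the quotient $\bar{\c M}_{R_0(N)}$ actually forces membership in $\bar{\c L}$, and this is exactly where the structural fact $\c L\cap\c M(p-1)=\c L_{\bar c^0}(p)$ enters to guarantee the injectivity $\bar{\c L}\hookrightarrow\bar{\c M}_{R_0(N)}$.
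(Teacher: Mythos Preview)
Your proposal is correct and follows essentially the same route as the paper's proof: extend $\eta$ arbitrarily, observe that the image of $\bar f\otimes 1$ under this extension satisfies the same equation $\sigma W=(\bar e\otimes 1)\circ W$ in $\bar{\c M}_{R_0(N)}$ as $\bar f\otimes 1$ itself, conclude that the Campbell--Hausdorff difference is $\sigma$-fixed hence lies in $\bar{\c L}$, and then correct the extension by the corresponding element of $\bar{\c G}$ via $\bar\pi$. The only cosmetic differences are that the paper extends to $\c K_{sep}$ rather than to $\c K(p)$ and computes $(-X)\circ Y$ instead of $(-Y)\circ X$; you also spell out more carefully why $\bar{\c M}_{R_0(N)}|_{\sigma=\id}=\bar{\c L}$, which the paper simply asserts.
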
 

\begin{proof}
Let $\hat\eta $ be an extension of $\eta $ to $\c K_{sep}$.
 Clearly, $\sigma ((\id _{\bar{\c L}}\otimes \hat\eta )\bar f\otimes 1)=
 (\bar e\otimes 1)\circ ((\id _{\bar{\c L}}\otimes \hat\eta )\bar f\otimes 1)$. So,  
 $$(-(\bar f\otimes 1))\circ 
 ((\id _{\bar{\c L}}\otimes \hat\eta )\bar f\otimes 1)
 \in \bar{\c M}_{R_0(N)}|_{\sigma =\id }=\bar{\c L}\,.$$
 In other words, there is $l\in\bar{\c L}$ such 
 that for $g =\bar\pi ^{-1}(l)$, it holds 
 $$(\id _{\bar{\c L}}\otimes\hat\eta )\bar f\otimes 1=
 (\bar f\otimes 1)\circ l=g(\bar f)\otimes 1\,.$$
 As a result, we can take $\bar\eta =\hat\eta \cdot g^{-1}$. 
 The uniqueness of $\bar\eta $ is obvious because 
 any two such lifts differ by an element of $\bar{\c G}$ but 
 $\bar{\c G}$ acts strictly on $\bar f\otimes 1$. 
\end{proof}

Let $\varepsilon =(\zeta _n\,\op{mod}\,p)_{n\geqslant 0}\in R\subset R(N)$ 
be Fontaine's element 
(here $\zeta _1\in K$ is our $p$-th root of unity and for all $n$, 
$\zeta _n^p=\zeta _{n-1}$). 

Let $\zeta _1=1+\pi ^{\bar c^2}\sum _{\iota\geqslant \bar 0}
[\beta _{\iota }]\pi ^{\iota }$,  
where all $[\beta _{\iota }]$ are the 
Teichmuller representatives of $\beta _{\iota }\in k$ and $\beta _{\bar 0}\ne 0$. 
Here $\pi ^{\bar c^2}\c O_{K}=
(\zeta _1-1)\c O_{K}=p^{1/(p-1)}\c O_{K}$, i.e. $p\bar c^2=\bar c_0$.

Consider the identification of rings  
$R(N)/t^{\bar c^1}\simeq \c O_{K_{alg}}/p$, coming from the 
projection 
$R(N)=\underset{n\geqslant 0}
\varprojlim \,(\c O_{K_{alg}})_n$ to 
$(\c O_{K_{alg}})_0\,\op{mod}\,p$.   
This implies  $\sigma ^{-1}\varepsilon \equiv 1+\sum _{\imath\geqslant \bar 0}
\beta _{\imath}t^{\bar c^2+\imath }\,\op{mod}\,t^{\bar c^1}R(N)$ and, 
 therefore, 
 $$\varepsilon\equiv 1+\sum _{\imath \geqslant\bar 0}\beta ^p_{\imath }
 t^{\bar c^0+p\imath }\,\op{mod}\,t^{(p-1)\bar c^0}R(N)\,.$$

Assume the morphisms  $h^{(m)}_{\omega }\in\op{Aut}\c K$ 
from Sect.\,\ref{S4.1} are determined by $\omega $ such that 
$E(\omega ^p)=1+\sum _{\iota \geqslant \bar 0}
\beta _{\iota }^pt^{\bar c^0+p\iota }$, 
i.e. for all $\iota $, $A_{\iota}(\omega )=\beta ^p_{\iota }$. 
As a result, for any $m$, 
$\tau _m(t)\equiv h_{\omega }^{(m)}(t)\op{mod}\,t^{(p-1)\bar c^0}\m _{R(N)}$. 

Suppose $\tau\in\Gamma $ (recall that $\Gamma \subset\Aut\,R_0(N)$). 
Then for some integers $m_1, \dots ,m_N$, 
we have the following congruence modulo $t^{(p-1)\bar c^0}\m _{R(N)}$ 
$$\tau (t)=\{t_1\varepsilon ^{m_1}, \dots ,t_N\varepsilon ^{m_N}\}\equiv 
\{h_{\omega }^{(1)m_1}(t_1),\dots , 
h_{\omega }^{(N)m_N}(t_N)\}\, .$$
Let 
$h_{\tau }=h_{\omega }^{(1)m_1}\dots h_{\omega }^{(N)m_N}\in\Aut\,{\c K}$. 
Then (use Prop.\,\ref{P4.1})
$$\tau |_{\c K}(t)\equiv h_{\tau }(t)
\,\op{mod}\,t^{(p-1)\bar c^0}\m _{R(N)}\, .$$ 

 This means that 
$\eta :=\tau ^{-1}|_{\c K}\cdot h_{\tau }:\c K\To R_0(N)$ satisfies the assumption 
of Prop.\,\ref{P5.4} and we can consider the corresponding 
lift $\bar\eta :\c K(p)\To R_0(N)$. Let $\hat\eta $ be a lift of $\bar\eta $ 
to $\c K_{sep}$. 

Set $\bar h_{\tau }:=(\tau\cdot\hat\eta )|_{\c K(p)}$. 

Then $\bar h_{\tau }|_{\c K}=(\tau\cdot \hat\eta )|_{\c K}=h_{\tau }$ 
and by Galois theory $\bar h_{\tau }\in\Gamma _{\omega }\subset \Aut\c K(p)$. 
\medskip 

As a result, we obtained the map 
(of sets) $\kappa :\Gamma \To \Gamma _{\omega }$ uniquely characterized by 
the property $(\id _{\bar{\c L}}\otimes \tau )\bar f = 
(\id _{\bar{\c L}}\otimes \kappa (\tau ))\bar f\,$.

\begin{Prop} \label{P5.5} 
$\kappa $ induces a group isomorphism $\kappa _{<p}:\Gamma _{<p}\To\Gamma _{\omega }$. 
 \end{Prop}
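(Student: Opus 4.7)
The plan is to verify in three steps that $\kappa$ is a group homomorphism, that it factors through $\Gamma_{<p}$, and that the induced map $\kappa_{<p}$ is an isomorphism.

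For the homomorphism property I would use the uniqueness in Prop.\,\ref{P5.4}. Writing $\bar h_{\tau_i}=(\tau_i\hat\eta_i)|_{\mathcal{K}(p)}$ and rearranging $(\tau_1\hat\eta_1)(\tau_2\hat\eta_2)=\tau_1\tau_2\cdot(\tau_2^{-1}\hat\eta_1\tau_2)\hat\eta_2$, the composite $(\tau_2^{-1}\hat\eta_1\tau_2)\hat\eta_2$ restricted to $\mathcal{K}$ equals $h_{\tau_1}h_{\tau_2}$, which by Prop.\,\ref{P4.1}\,b) agrees with $h_{\tau_1\tau_2}$ modulo $t^{(p-1)\bar c^0}\m_{R(N)}$. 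Since conjugation by $\tau_2$ preserves the condition of fixing $\bar f\otimes 1$ in $\bar{\mathcal{M}}_{R_0(N)}$, the uniqueness in Prop.\,\ref{P5.4} identifies this composite with the canonical lift attached to $\tau_1\tau_2$, modulo the kernel of $\mathcal{G}_\omega\to\Gamma_\omega$. Because $\Gamma_\omega$ has period $p$ and nilpotent class $<p$, the homomorphism $\kappa$ automatically kills $\Gamma^pC_p(\Gamma)$ and descends to $\kappa_{<p}:\Gamma_{<p}\to\Gamma_\omega$.

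For bijectivity I would fit $\kappa_{<p}$ into the commutative diagram of exact sequences from Prop.\,\ref{P5.3} and Prop.\,\ref{P4.6}:
\begin{equation*}
\begin{array}{ccccccc}
\mathcal{G}_{<p}&\xrightarrow{\iota^\pi_{<p}}&\Gamma_{<p}&\xrightarrow{j}&\prod_m\langle\tau_m\rangle^{\Z/p}&\to&1\\
\downarrow\alpha&&\downarrow\kappa_{<p}&&\downarrow\cong&&\\
\bar{\mathcal{G}}&\hookrightarrow&\Gamma_\omega&\to&\prod_m\langle h_\omega^{(m)}\rangle/\langle h_\omega^{(m)p}\rangle&\to&1
\end{array}
\end{equation*}
The right vertical map is the isomorphism $\tau_m\leftrightarrow h_\omega^{(m)}$ coming from the congruence $\varepsilon\equiv E(\omega^p)\bmod t^{(p-1)\bar c^0}\m_{R(N)}$ built into the definition of $\kappa$; commutativity of the right square is then immediate. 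For the left square, if $\tau\in\Gamma_{K^\pi}$ then $\tau(t_m)=t_m$ forces $h_\tau=\mathrm{id}_{\mathcal{K}}$, so $\kappa(\tau)$ lies in $\bar{\mathcal{G}}$ and coincides with the image of $\tau$ under the field-of-norms identification $\Gamma_{K^\pi}\simeq\mathcal{G}$ followed by $\mathcal{G}_{<p}\to\bar{\mathcal{G}}$, making the left square commute with $\alpha$ the natural surjection. Surjectivity of $\kappa_{<p}$ is then immediate. For injectivity, any $x\in\ker\kappa_{<p}$ maps trivially on the right, hence $x=\iota^\pi_{<p}(y)$ for some $y\in\mathcal{G}_{<p}$ with $\alpha(y)=0$; thus injectivity reduces to the identity $\ker\iota^\pi_{<p}=\ker\alpha$, i.e.\,that the elements of $\mathcal{G}$ whose image in $\Gamma$ falls in $\Gamma^pC_p(\Gamma)$ are precisely those whose image in $\mathcal{G}_\omega$ falls in $\mathcal{G}_\omega^pC_p(\mathcal{G}_\omega)$.

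The main obstacle is verifying this kernel identity, since both kernels encode how the passage to the $p$-central quotient interacts with the additional lifts $h_\omega^{(m)}$. A natural route is to express $p$-th powers and $p$-fold commutators in $\mathcal{G}_\omega$ (governed by Lemma\,\ref{L4.3} and the recurrent relations of Sect.\,\ref{S4.7}) in terms of the corresponding expressions in $\Gamma$, using compatibility of the field-of-norms with the $\mathcal{P}$-topology. A secondary subtlety is the homomorphism step itself, where $\tau_2^{-1}\hat\eta_1\tau_2$ must be re-interpreted as a valid input to Prop.\,\ref{P5.4} after conjugation, which requires checking that Galois-conjugate lifts still fix $\bar f\otimes 1$ modulo $t^{(p-1)\bar c^0}$ in $\bar{\mathcal{M}}_{R_0(N)}$.
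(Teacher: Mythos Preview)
Your approach to bijectivity via the commutative diagram of exact sequences from Prop.\,\ref{P5.3} and Prop.\,\ref{P4.6} is exactly what the paper does, and your identification of the kernel identity $\ker\iota^{\pi}_{<p}=\ker\alpha$ as the crux is correct; the paper asserts this point tersely as ``$\kappa_{<p}$ identifies $\kappa(\Gamma_{K^{\pi}})$ and $G(\bar{\c L})$'' without further justification.

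Where you diverge from the paper, and where your argument has a genuine problem, is the homomorphism step. Your rearrangement $(\tau_1\hat\eta_1)(\tau_2\hat\eta_2)=\tau_1\tau_2\cdot(\tau_2^{-1}\hat\eta_1\tau_2)\hat\eta_2$ does not parse as written: the $\hat\eta_i$ are field embeddings $\c K_{sep}\to R_0(N)$, not automorphisms of $R_0(N)$, so conjugation by $\tau_2$ is not well defined, and the ``secondary subtlety'' you flag is a real obstruction to making this route work cleanly. The paper bypasses this entirely by working directly with the defining property of $\kappa$: one writes $(\id_{\bar{\c L}}\otimes\kappa(\tau))\bar f=\bar C\circ(\bar{\c A}\otimes\id)\bar f$ with $\bar C\in\bar{\c L}_{\c K}$ and $\bar{\c A}\in\Aut\,\bar{\c L}$, and then computes
\[
(\id\otimes\tau_1\tau)\bar f
=(\id\otimes\tau_1)(\bar C\circ(\bar{\c A}\otimes\id)\bar f)
=(\id\otimes\tau_1)\bar C\circ(\bar{\c A}\otimes\id)(\id\otimes\tau_1)\bar f,
\]
using at the key step that $(\id\otimes\tau_1)\bar C=(\id\otimes\kappa(\tau_1))\bar C$ because $\bar C$ has coefficients in $\c K$ and the two actions agree on $\c O_{\c K}/t^{(p-1)\bar c^0}$ (this is built into $\bar{\c M}_{R_0(N)}$). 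One then replaces $(\id\otimes\tau_1)\bar f$ by $(\id\otimes\kappa(\tau_1))\bar f$ and reassembles to get $(\id\otimes\kappa(\tau_1)\kappa(\tau))\bar f$. This direct chain avoids any manipulation of the auxiliary lifts $\hat\eta_i$ and is the route you should take.
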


\begin{proof} Suppose $\tau_1,\tau \in\Gamma $. 
Let $\bar C\in\bar{\c L}_{\c K}$ and $\bar{\c A}\in\Aut _{\bar{\c L}}$ be such that 
 $(\id _{\bar{\c L}}\otimes \kappa (\tau ))\bar f=
\bar C\circ (\bar{\c A}\otimes \id _{\c K(p)})\bar f$. Then 
$$(\id _{\bar{\c L}}\otimes\kappa (\tau_1\tau ))\bar f= 
(\id _{\bar{\c L}}\otimes \tau _1\tau )\bar f
= (\id _{\bar{\c L}}\otimes \tau _1)
(\id _{\bar{\c L}}\otimes \tau )\bar f$$
$$=(\id _{\bar{\c L}}\otimes \tau _1)
(\id _{\bar{\c L}}\otimes\kappa (\tau ))\bar f= 
(\id _{\bar{\c L}}\otimes \tau _1)
(\bar C\circ (\bar{\c A}\otimes \id _{\c K(p)})\bar f)= $$
$$(\id _{\bar{\c L}}\otimes \tau _1)
\bar C\circ (\bar{\c A}\otimes \tau _1)\bar f
= (\id _{\bar{\c L}}\otimes \kappa (\tau _1))
\bar C\circ (\bar{\c A}\otimes\id _{\c K(p)})
(\id _{\bar{\c L}}\otimes \tau _1)\bar f=$$
$$(\id _{\bar{\c L}}\otimes \kappa (\tau _1))
\bar C\circ (\bar{\c A}\otimes\id _{\c K(p)})(\id _{\bar{\c L}}
\otimes \kappa (\tau _1))\bar f= (\id _{\bar{\c L}}\otimes \kappa (\tau _1))
(\bar C\circ (\bar{\c A}\otimes\id _{\c K(p)})\bar f$$
$$=(\id _{\bar{\c L}}\otimes \kappa (\tau _1))
(\id _{\bar{\c L}}\otimes\kappa (\tau ))\bar f
= (\id _{\bar{\c L}}\otimes \kappa (\tau _1)\kappa (\tau ))\bar f$$
and, therefore, $\kappa (\tau _1\tau )=\kappa (\tau _1)\kappa (\tau )$ 
(use that $\Gamma _{\omega }$ acts strictly on the orbit of $\bar f$).  
In particular, 
$\kappa $ factors through the natural projection $\Gamma \to\Gamma _{<p}$  
and defines the group homomorphism 
$\kappa _{<p}:\Gamma _{<p}\to\Gamma _{\omega }$.

Recall that we have the identification of 
$\Gamma _{K^{\pi }}=\op{Gal}(K_{alg }/K^{\pi })$ with 
$\c G=\op{Gal}(\c K_{sep}/\c K)$ and, therefore, $\kappa _{<p}$ identifies   
the groups $\kappa (\Gamma _{K^{\pi }})$ and 
$G(\bar{\c L})\subset\Gamma _{\omega }$. 
Besides, $\kappa _{<p}$ induces a group isomorphism of the group 
$\op{Gal}(K(\pi _{11},\dots,\pi _{1N})/K)=\langle \tau _1\rangle ^{\Z /p}
\times\dots\times\langle \tau _m\rangle ^{\Z /p}$ 
and the quotient $\langle h_{\omega }^{(1)}\rangle ^{\Z /p}\times 
\dots \times \langle h^{(N)}_{\omega }\rangle ^{\Z /p}$ of $\Gamma _{\omega }$. 
Now Proposition \ref{P5.3} 
implies that $\kappa _{<p}$ is a group isomorphism. 
\end{proof} 

\subsection{Groups $\Gamma ^{\c P}_{<p}$} \label{S5.4} 

Consider the group isomorphism 
$\kappa _{<p}:\Gamma _{<p}\To \Gamma _{\omega }$ from Sect.\,\ref{S5.3}. 

\begin{definition} $\op{cl}^{\c P}\Gamma _{<p}$ is the class of 
conjugated subgroups in $\Gamma _{<p}$ containing 
$\Gamma _{<p}^{\c P}:=\kappa ^{-1}(\Gamma _{\omega }^{\c P})$.   
\end{definition}

This definition involves a choice of local parameters in $K$. 

\begin{Prop} \label{P5.6}
 The class $\op{cl}^{\c P}\Gamma _{<p}$ does not depend on a choice of 
 a system of local parameters in $K$.
\end{Prop}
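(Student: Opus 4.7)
The plan is to show that if $\pi'=\{\pi'_1,\dots,\pi'_N\}$ is another system of local parameters for $K$, then the subgroup $\kappa^{\prime-1}_{<p}(\Gamma_{\omega'}^{\c P})$ of $\Gamma_{<p}$ produced from $\pi'$ by the parallel running of the constructions of Sect.\,\ref{S5.1}--\ref{S5.4} belongs to the same conjugacy class as $\Gamma_{<p}^{\c P}$. My approach is to build a $\c P$-continuous comparison map on $R_0(N)$ intertwining the two datasets and to combine it with Prop.\,\ref{P5.4} and the $\c P$-version Prop.\,\ref{P3.6} of Prop.\,\ref{P1.3}.

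First, I would carry out the parallel construction for $\pi'$: it produces a field of norms $\c K'=k((t'))$ sitting inside $R_0(N)$, a parameter $\omega'$ built from $\zeta_1$ using $\pi'$, an identification $\kappa'_{<p}\colon\Gamma_{<p}\simeq\Gamma_{\omega'}$ (by Prop.\,\ref{P5.5} applied to $\pi'$), and Lie algebras $\c L^{'\c P}$, $\bar{\c L}^{'\c P}$ with reduced Artin-Schreier element $\bar e'$. Although $\c K$ and $\c K'$ are a priori distinct subfields of $R_0(N)$ (their SDR towers $K^\pi_\infty$ and $K^{\pi'}_\infty$ differ), the intrinsic $\c P$-topology on $K$ is independent of the choice of local parameters (Sect.\,\ref{S2.2}, property c)), so the transition $\pi\leftrightarrow\pi'$ lifts level-by-level through the SDR construction to a $\c P$-continuous embedding $\eta\colon\c K\To R_0(N)$ satisfying $(\id_{\bar{\c L}}\otimes\eta)\bar e\otimes 1\equiv \bar e'\otimes 1$ modulo $t^{(p-1)\bar c^0}\m_{R(N)}$ in the auxiliary quotient built from $\pi'$. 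Applying Prop.\,\ref{P5.4} then yields a canonical lift $\bar\eta\colon\c K(p)\To R_0(N)$.

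Next, using the $\c P$-continuity of $\eta$, one invokes Prop.\,\ref{P3.6} to obtain a $\c P$-continuous Lie-algebra morphism $\c A\colon\c L^{\c P}\To\c L^{'\c P}$ together with an element $x\in\c L^{'\c P}_{\c K'}$ realizing $(\id\otimes\eta)e=\sigma(x)\circ(\c A\otimes\id)e'\circ(-x)$, so that $\c A$ sends $G(\c L^{\c P})$ into the conjugacy class $\op{cl}^{\c P}\c G'_{<p}$. Extending this transport to the couples $(C,\c A)$ of Prop.\,\ref{P4.2} which describe the lifts $\hat h_\omega^{0(m)}$ and $\hat h_{\omega'}^{0(m)}$, the $\c P$-continuity of $\eta$ propagates through the recurrent procedure of Sect.\,\ref{S4.2} and matches the generators of $\Gamma_\omega^{\c P}$ with those of $\Gamma_{\omega'}^{\c P}$ up to an inner conjugation by an element of $\c G_{<p}$. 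Pre- and post-composing with $\kappa_{<p}$ and $\kappa^{\prime-1}_{<p}$, one concludes that $\Gamma_{<p}^{\c P}$ and $\kappa^{\prime-1}_{<p}(\Gamma_{\omega'}^{\c P})$ are conjugate in $\Gamma_{<p}$, as required.

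The technical heart of the argument is verifying that the change of variables $\eta$ is genuinely $\c P$-continuous, not merely $v^1$-continuous. This reduces to checking that each $t'_m$, expressed in the $t$-adic expansion inside $R(N)$, yields a $\c P$-convergent series; this in turn rests on the fact that the transition matrix between the reductions of $\pi$ and $\pi'$ in $\c O_K/p$ lifts through the Teichmuller levels of the $R(N)$-construction to a $\c P$-continuous transformation, a point which uses only that both systems are $\c P$-topological bases of $K$. Once this is in place, the remaining compatibilities are formal applications of Prop.\,\ref{P4.2}, Prop.\,\ref{P3.6}, and Prop.\,\ref{P5.4}, and the proof is concluded by the same bookkeeping with the short exact sequence \eqref{E4.4} that underlies Prop.\,\ref{P5.5}.
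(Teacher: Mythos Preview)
Your proposal has a genuine gap at its core. The comparison you set up, $(\id_{\bar{\c L}}\otimes\eta)\bar e\otimes 1\equiv \bar e'\otimes 1$, does not typecheck: $\bar e$ lives in $\bar{\c L}_{\c K}$ while $\bar e'$ lives in $\bar{\c L}'_{\c K'}$, and $\bar{\c L}$, $\bar{\c L}'$ are \emph{different} Lie algebras (they are built from the duals of $\bar{\c K}$ and $\bar{\c K}'$ respectively). Applying $\eta$ to the scalar factor cannot by itself move an element of $\bar{\c L}\otimes R_0(N)$ into $\bar{\c L}'\otimes R_0(N)$, so Prop.\,\ref{P5.4} is not applicable in the way you describe. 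Relatedly, your invocation of Prop.\,\ref{P3.6} is misplaced: that proposition treats a \emph{finite} extension $\c K'/\c K$ inside $\c K_{sep}$, whereas here $\c K$ and $\c K'$ are fields of norms attached to two different SDR towers and have no direct finite-extension relationship; indeed $\c K'$ need not lie in $\c K_{sep}$ at all. Finally, the assertion that ``the transition $\pi\leftrightarrow\pi'$ lifts level-by-level through the SDR construction'' is not justified: the towers $K_\d^{\pi}$ and $K_\d^{\pi'}$ are genuinely different field extensions of $K$, and there is no canonical $\c P$-continuous identification of their limits compatible with the Artin--Schreier data.

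The paper avoids all of this by passing to a \emph{common} object on the characteristic-$0$ side. One forms the composite $K^{(1)}=K_1^{\pi}K_1^{\pi'}$ and observes that both $\kappa_{<p}$ and $\kappa'_{<p}$ identify $\op{Gal}(K_{<p}/K^{(1)})$ with the subgroups $\bar{\c G}^{(1)}\subset\Gamma_\omega$ and $\bar{\c G}'^{(1)}\subset\Gamma'_{\omega'}$ respectively. The crucial point is that the $\c P$-structure on $\bar{\c L}$ (and likewise on $\bar{\c L}'$) is shown to descend, via the ring isomorphism $R(N)/t^{(p-1)\bar c^0}\simeq\c O_{K_{alg}}/p$, to the \emph{intrinsic} $\c P$-structure on $\Hom(p^{-1}\m_K/(\sigma-\id)K,\F_p)$, which depends only on $K$ and not on the choice of parameters. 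This makes the two identifications $\kappa^{\pi}$ and $\kappa^{\pi'}$ simultaneously $\c P$-compatible with a single structure on $L^{(1)}$, yielding equality of the conjugacy classes at the level of $\bar{\c G}^{(1)\c P}$; one then recovers the full $\Gamma_\omega^{\c P}$ and $\Gamma'^{\c P}_{\omega'}$ via the mechanism of Sect.\,\ref{S3.6} applied to the exact sequences \eqref{E5.2}, \eqref{E5.3}. Your proposal would need to be rerouted through such a common characteristic-$0$ intermediary to become a proof.
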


\begin{proof} 
Let $\pi =\{\pi _1,\dots ,\pi _N\}$ and $\pi '=\{\pi '_1,\dots ,\pi '_{1N}\}$ 
be two systems of 
local parameters in $K$. Let $K^{\pi }_1=K(\pi _{11}, \dots ,\pi _{1N})$ and 
$K_1^{\pi '}=K(\pi '_{11}, \dots ,\pi '_{1N})$, where (as earlier) for all $m$, 
$\pi _{1m}^p=\pi _m$ and $\pi '^p_{1m}=\pi _m'$. Denote by $K^{(1)}$ 
the composite of $K^{\pi }_1$ and $K^{\pi '}_1$.

Consider the SDR-towers $K^{\pi }_{\d }$ and $K_{\d }^{\pi '}$, 
the fields-of-norms $\c K=X(K_{\d }^{\pi })$ and $\c K'=X(K_{\d }^{\pi '})$ 
with the corresponding systems of local parameters 
$t=\{t_1,\dots ,t_N\}$ and $t'=\{t'_1,\dots ,t'_N\}$. 

Then we have the appropriate field extensions 
$$\c K\subset \c K^{(1)}\subset \c K(p)\subset \c K_{<p}\subset R_0(N)$$
$$\c K'\subset \c K'^{(1)}\subset \c K'(p)\subset \c K'_{<p}\subset R_0(N)$$
where $\c K^{(1)}=X(K^{(1)}K_{\d }^{\pi })$, 
$\c K'^{(1)}=X(K^{(1)}K_{\d }^{\pi '})$, the fields $\c K_{<p}$ and $\c K(p)$ 
were defined earlier, $\c K'_{<p}$ and $\c K'(p)$ are their analogs if $\c K$ is replaced by  
$\c K'$. 
The exact sequence  from Sect.\,\ref{S4.5} 
$$1\To \bar{\c G}\To \Gamma _{\omega }\To H_{\omega }\To 1\,,$$ 
with  $H_{\omega }:=
\prod\limits _{1\leqslant m\leqslant N}
\langle h_{\omega }^{(m)}\rangle /
\langle h_{\omega }^{(m)p}\rangle $, 
gives rise to the exact sequence 
\begin{equation} \label{E5.2} 1\To \bar{\c G}^{(1)}\To \Gamma _{\omega }\To 
\op{Gal}(\c K^{(1)}/\c K)\times H_{\omega }\To 1\, ,
\end{equation} 
where $\bar{\c G}^{(1)}=\op{Gal}(\c K(p)/\c K^{(1)})$. 
We have a similar sequence for $\c K'$   
\begin{equation} \label{E5.3} 1\To \bar{\c G}'^{(1)}\To \Gamma '_{\omega '}\To 
\op{Gal}(\c K'^{(1)}/\c K')\times H'_{\omega '}\To 1\, ,
\end{equation} 
where $H'_{\omega '}$ is an analog of $H_{\omega }$. 
The isomorphisms  
$\kappa _{<p}:\Gamma _{\omega }\simeq\Gamma _{<p}$ and   
$\kappa '_{<p}:\Gamma '_{\omega '}\simeq\Gamma _{<p}$ 
induce the isomorphisms 
$$\op{Gal}(K^{(1)}/K)\simeq\op{Gal}(\c K^{(1)}/\c K)\times H_{\omega }
\simeq\op{Gal}(\c K'^{(1)}/\c K')\times H'_{\omega '}\,,$$  
\begin{equation} \label{E5.4} \op{Gal}(K_{<p}/K^{(1)})\simeq 
\bar{\c G}^{(1)}\simeq\bar{\c G}'^{(1)}\,. 
\end{equation} 
We want to study relation between the $\c P$-structures on 
$\bar{\c G}^{(1)}$ and $\bar{\c G}'^{(1)}$ 
(induced from $\bar{\c G}$ and $\bar{\c G}'$). 

Recall, cf. Sect.\,\ref{S4.4}, there is a Lie algebra 
$$\bar{\c M}=\left (\sum _{s}t^{-s\bar c^0}\m _{\c K}\bar{\c L}_{\bar c^0}(s)
\right )\otimes\c O_{\c K}/t^{(p-1)\bar c^0}$$
and its analogue $\bar{\c M}_{\c K(p)}$, where $\c K$ is replaced by $\c K(p)$. 
There are $\bar e\otimes 1\in\bar{\c M}$ and 
$\bar f\otimes 1\in\bar{\c M}_{\c K(p)}$ such that 
$\sigma (\bar f\otimes 1)=(\bar e\otimes 1)\circ (\bar f\otimes 1)$ 
and the identification 
$\pi _{\bar f}(\bar e):\bar{\c G}\simeq G(\bar{\c L})$ is given by 
$\tau\mapsto (-\bar f\otimes 1)\circ (\tau\bar f\otimes 1)$. 

The algebra 
$\bar{\c L}^{\c P}$ (as well as the group 
$\bar{\c G}^{\c P}$) is defined in terms of the $\c P$-topological 
structure on  $\bar{\c M}$ coming from the corresponding structures on 
$\c O_{\c K}/t^{(p-1)\bar c^0}$ and $\bar{\c L}$. Note that 
$\bar{\c L}$ is the image of $\c L$ and the $\c P$-topology on $\c L$  
is induced from $L/C_2(L)=\Hom (\bar{\c K},\F _p)$. 
Therefore, the $\c P$-structure on 
$\bar{\c L}$ comes from 
$$\bar{\c L}/C_2(\bar{\c L})=
\Hom (t^{-(p-1)\bar c^0}\m _{\c K}/(\sigma -\id )\c K,\F _p)\,.$$ 

As a result, 
$$\bar e\otimes 1\in\bar{\c M}^{\c P}=
\left (\sum _{s}t^{-s\bar c^0}\m _{\c K}\bar{\c L}^{\c P}_{\bar c^0}(s)
\right )\otimes ^{\c P}\c O_{\c K}/t^{(p-1)\bar c^0}$$
and $\pi _{\bar f}(\bar e)^{-1}G(\bar{\c L}^{\c P})=\bar{\c G}^{\c P}$. 

Similar construction is used to obtain (in the context of $\c K'$) that 
$$\bar e'\otimes 1\in\bar{\c M}'^{\c P}=
\left (\sum _{s}t'^{-s\bar c'^0}\m _{\c K'}\bar{\c L}'^{\c P}_{\bar c'^0}(s)
\right )\otimes ^{\c P}\c O_{\c K'}/t'^{(p-1)\bar c'^0}$$
and $\pi _{\bar f'}(\bar e')^{-1}G(\bar{\c L}'^{\c P})=\bar{\c G}'^{\c P}$. 

The $\c P$-subgroups $\bar{\c G}^{\c P}$ and $\bar{\c G}'^{\c P}$ 
``live in different worlds``, but the field-of-norms functor $X$ 
identifies  them with subgroups in $\Gamma _{<p}=\op{Gal}(K_{<p}/K)$.  
This procedure can be specified as follows. 

Let $\Gamma _{<p}=G(L)$, where $L$ is a suitable Lie $\F _p$-algebra. 
Introduce 
$$\bar{M}=\left (\sum _{1\leqslant s<p}
(\zeta _1-1)^{-s}C_s(L)_{\m _{K}}
\right )\otimes\c O_{K}/p\,.$$
$$\bar{M}_{<p}=\left (\sum _{1\leqslant s<p}
(\zeta _1-1)^{-s}C_s(L)_{\m _{K_{<p}}}
\right )\otimes\c O_{K_{<p}}/p\,.$$

The projection $\op{pr}_1:R(N)=
\underset{n\geqslant 0}\varprojlim (\c O_{K_{alg}}/p)_n\To (\c O_{K_{alg}}/p)_1$ establishes 
the ring isomorphism 
$R(N)/t^{p\bar c^1}=R(N)/t^{(p-1)\bar c^0}\simeq \c O_{K_{alg}}/p\,$,  
cf. Sect.\,\ref{S5.4} (note that $\Ker \,\op{pr}_1$ is generated by 
$t^{(p-1)\bar c^0}$). 

Consider the induced by $\op{pr}_1$ isomorphism 
$\c O_{\c K(p)}/t^{(p-1)\bar c^0}\simeq 
\c O_{K_{<p}}/p$. This gives  
for each $1\leqslant s<p$, the  
compatible identifications of the 
\linebreak 
$\c O_{\c K(p)}/t^{(p-1)\bar c^0}$-module  
$t^{-s\bar c^0}\m _{\c K(p)}\otimes \c O_{\c K(p)}/t^{(p-1)\bar c^0}$ with  
$\c O_{K_{<p}}/p$\,-module 
$(\zeta _1-1)^{-s}\m _{K_{<p}}\otimes \c O_{K_{<p}}/p$. 
In addition, the field-of-norms functor identifies the group 
$\bar{\c G}$ with $\Gamma ^{\pi }_1=
\op{Gal}(K_{<p}/K_1^{\pi })\subset\Gamma $ 
and the Lie algebra $\bar{\c L}$ with the Lie subalgebra 
$L_1^{\pi }\subset L$, where $G(L_1^{\pi })=\Gamma ^{\pi }_1$. 
As a result, we obtain the embedding 
$F_{<p}:\bar{\c M}_{\c K(p)}\To \bar{M}_{<p}$ and the induced 
embedding 
$F=F_{<p}|_{\bar{\c M}}:\bar{\c M}\To \bar{M}$. 

Set $e^{\pi }=F(\bar e\otimes 1)$ and 
$f^{\pi }=F_{<p}(\bar f\otimes 1)$. Then 
$\sigma f^{\pi }=e^{\pi }\circ f^{\pi }$ and the 
map $\tau\mapsto (-f^{\pi })\circ \tau (f^{\pi })$ recovers  
the identification 
$\Gamma _{1}^{\pi }\simeq \bar{\c G}$ or, equivalently, 
$\kappa ^{\pi }:L_1^{\pi }\simeq \bar{\c L}$.    
(We used it earlier when constructing the isomorphism $\kappa _{<p}$.) 

The identification $\kappa ^{\pi }$ is compatible with the $\c P$-topology. 

Indeed, 
the $\c P$-topological structure on $L_1^{\pi }$ 
comes (via tensor topology)  
from $L_1^{\pi }/C_2(L_1^{\pi })
=\Hom (p^{-1}\m _K/(\sigma -\id )K,\F _p)$ and the field-of-norms  
identification of 
$p^{-1}\m _K/(\sigma -\id )K$ with $t^{-(p-1)\bar c^0}\m _{\c K}
/(\sigma -\id )\c K$.   

Repeating the above argiments in the context of the system of 
parameters $\pi '$ we obtain the $\c P$-continuous identification 
$\kappa ^{\pi '}:L_1^{\pi '}\simeq \bar{\c L'}$. 

Let $\bar{\c G}^{(1)}=G(\bar{\c L}^{(1)})$ and 
$\bar{\c G}'^{(1)}=G(\bar{\c L}'^{(1)})$. 
Then isomorphism \eqref{E5.4} appears as compatible with $\c P$-structures 
isomorphisms $L^{(1)}\simeq \c L^{(1)}\simeq \c L'^{(1)}$. 
Therefore, the conjugacy classes of $(\kappa ^{\pi})^{-1}\bar{\c L}^{(1)\c P}$ and 
$(\kappa ^{\pi '})^{-1}\bar{\c L}'^{(1)\c P}$ coincide. 

Finally, applying the interpretation from Sect.\ref{S3.6} we obtain 
from exact sequences \eqref{E5.2} and \eqref{E5.3} that 
the conjugate classes of $\kappa _{<p}(\Gamma ^{\c P}_{\omega })$ and 
$\kappa '_{<p}(\Gamma '^{\c P}_{\omega '})$ coincide. 
\end{proof} 

As a result,  $\Gamma ^{\c P}_{<p}$ is provided with the $\c P$-topology induced from  
the subgroup $\bar{\c G}^{\c P}$ and this topology does not depend on 
a choice of such subgroup (i.e. on the choice of local parameters in $K$. 

For any (local $N$-dimensional) 
subfield $K\subset K'\subset K_{<p}$, set 
$H^{\c P}=H\cap\Gamma _{<p}^{\c P}$, where $H=\op{Gal}(K_{<p}/K')$.  
As earlier (in the case of local fields of characteristic $p$), 
we easily obtain the following property. 

\begin{Cor} \label{P5.7} a) 
The profinite completion of $H^{\c P}$ is $H$;
\medskip 

b) $(\Gamma _{<p}:H)=
(\Gamma _{<p}^{\c P}:H^{\c P})=[K':K]$;  

c) for all subfields $K'$, 
the subgroups $H^{\c P}$ of $\Gamma _{<p}^{\c P}$ can be 
characterized as all $\c P$-open subgroups of finite index in $\Gamma _{<p}^{\c P}$. 
\end{Cor}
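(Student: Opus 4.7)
The plan is to transfer Corollary~\ref{C3.7} and Proposition~\ref{P3.8} from the characteristic $p$ setting to the present case via the isomorphism $\kappa_{<p}: \Gamma_{<p} \simeq \Gamma_\omega$ of Proposition~\ref{P5.5}, combined with the field-of-norms functor, exploiting the $\c P$-topological compatibility established in Proposition~\ref{P5.6}. I will pass through the intermediate group $\bar{\c G}$, which sits inside $\Gamma_\omega$ via the exact sequence of Proposition~\ref{P4.6} and is identified by the field-of-norms functor with $\op{Gal}(\c K(p)/\c K)$, itself a quotient of $\c G_{<p}$ by $G(\c L_{\bar c^0}(p))$.

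The key reduction is to set $K_1^\pi = K(\pi_{11},\dots,\pi_{1N})$ and $K_2 = K'K_1^\pi$, so that $\op{Gal}(K_{<p}/K_2) = H \cap \op{Gal}(K_{<p}/K_1^\pi)$ corresponds under $\kappa_{<p}$ to the intersection $\kappa_{<p}(H) \cap \bar{\c G}$. Via the field-of-norms functor this is identified with $\op{Gal}(\c K(p)/\c K_2^\flat)$ for an $N$-dimensional extension $\c K_2^\flat \subset \c K(p) \subset \c K_{sep}$, and the $\c P$-topological identifications worked out in the proof of Proposition~\ref{P5.6} (via the embedding $F_{<p}:\bar{\c M}_{\c K(p)}\to \bar M_{<p}$) match $\c P$-open subgroups on both sides. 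Pulling $\c K_2^\flat$ back to a subgroup $\c H\subset \c G_{<p}$ (containing $G(\c L_{\bar c^0}(p))$), Corollary~\ref{C3.7} and Proposition~\ref{P3.8} then apply directly: $\c H^{\c P} := \c H\cap \c G^{\c P}_{<p}$ is $\c P$-open of finite index $[\c K_2^\flat:\c K]$ in $\c G_{<p}^{\c P}$, with profinite completion equal to $\c H$.

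The properties for $H^{\c P}$ then follow by ascending through the finite extension $1\to \bar{\c G}^{\c P}\to \Gamma_\omega^{\c P}\to H_\omega \to 1$ of Proposition~\ref{P4.6}, where $H_\omega\simeq (\Z/p)^N$: indices multiply to give $(\Gamma_{<p}^{\c P}:H^{\c P}) = (\Gamma_{<p}:H) = [K':K]$, and profinite completions commute with finite extensions, yielding (a) and (b). For (c), the forward direction is immediate since a $\c P$-closed subgroup of finite index in the $\c P$-topological group $\Gamma_{<p}^{\c P}$ is automatically $\c P$-open; the converse is obtained by reversing the reduction and invoking the \emph{iff} direction of Proposition~\ref{P3.8} at the characteristic $p$ level. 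The main obstacle will be the bookkeeping of indices across the reduction: the intersection $H\cap \op{Gal}(K_{<p}/K_1^\pi)$ corresponds to $K_2 = K'K_1^\pi$ rather than $K'$ itself, so $[K_2:K] = [K':K]\cdot [K_1^\pi:K'\cap K_1^\pi]$, and verifying that the extra factor $[K_1^\pi:K'\cap K_1^\pi]$ is accounted for identically by the quotient $\Gamma_{<p}/\op{Gal}(K_{<p}/K_1^\pi)\cdot H = \Gamma_\omega/\bar{\c G}\cdot \kappa_{<p}(H)$ on the Galois side requires a diagram chase through the exact sequence of Proposition~\ref{P4.6}, both for the full groups and for their $\c P$-subgroups.
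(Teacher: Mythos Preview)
Your proposal is correct and follows essentially the same approach as the paper, which simply states ``As earlier (in the case of local fields of characteristic $p$), we easily obtain the following property'' and leaves the details implicit. You have carefully spelled out the transfer mechanism via $\kappa_{<p}$ and the field-of-norms functor, including the bookkeeping for the index $[K_1^\pi:K'\cap K_1^\pi]$ across the exact sequence of Proposition~\ref{P4.6}, which the paper does not make explicit.
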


\subsection{Explicit structure of $\Gamma ^{\c P}_{<p}$} \label{S5.6} 

Recall that $K$ is an $N$-dimensional local field, with the first residue field 
of characteristic $p$ and the last residue field $k\simeq \F _{p^{N_0}}$. 
Review the above results about $\Gamma _{<p}$. 

Suppose $\pi =\{\pi _1,\dots ,\pi _N\}$ is a system of local parameters 
in $K$ and $\zeta _1\in K$ is a primitive $p$-th root of unity. 
Then 
$$\zeta _1=E\left (\sum _{\bar\iota \geqslant\bar 0}[\beta _{\iota }]
\pi ^{\bar c_2+\iota }\right )\,,$$ 
where $E(X)\in\Z _p[[X]]$ is the Artin-Hasse exponential, 
all $\iota\in\Z ^N_{\geqslant\bar 0}$,  
$[\beta _{\iota }]$ are the Teichmuller representatives 
of $\beta _{\iota }\in k$, 
$\beta _{\bar 0}\ne 0$. (Recall that 
$\pi ^{\bar c^2}:=\pi _1^{c_1}\dots \pi _N^{c_N}$, where 
$\bar c^2=(c_1, \dots ,c_N)$.)

Associate with $K$ the topological $\F _p$-algebra $L^{\c P}$ 
as follows.

Consider an $N$-dimensional local field $\c K$ of characteristic 
$p$ with finite residue field $k\simeq \F _{p^{N_0}}$. 
Then we have the toplogical module 
$\bar{\c K}^{\c PD}:=\Hom _{\c P\text{-cont}}(\c K/(\sigma -\id )\c K,\F _p)$, which 
generate the ''maximal`` $\F _p$-Lie algebra 
$\c L^{\c P}$ of nilpotent class $<p$, i.e. the 
quotient of free Lie algebra with module of generators 
$\bar{\c K}^{\c PD}$ by the ideal of $p$-th commutators. 
(Note that 
$\c L^{\c P}$ appears as the projective limit of ''maximal`` 
Lie algebras 
$\c L_{\bar C_{\alpha }}$ (of nilpotent class $<p$) generated by 
the elements of open subsets $\bar C^{D}_{\alpha }\subset\c M$.) 
If $t=\{t_1,\dots ,t_N\}$ is a system of local parameters in $\c K$ then 
$\c L^{\c P}_k$ is provided with natural system of 
$\c P$-topological generators 
$\{D_{an}\ |\ a\in\Z _N^+(p), n\in\Z /N_0\}\cup \{D_0\}$. 
Recall that 
$$\Z _N^+(p)=\{a\in\Z ^N_{>\bar 0}\ |\ \op{gcd}(a,p)=1\}\, , 
\Z _N^0(p)=\Z ^+(p)\cup\{\bar 0\}\,.$$

Let $\bar c^0=p\bar c^2$ and introduce the weight function $\op{wt}$ 
on $\c L^{\c P}$ by setting $\op{wt}(D_{an})=s$ 
if $(s-1)\bar c^0\leqslant a<s\bar c^0$, 
$\op{wt}(D_0)=1$. 
Let $\c L^{\c P}_k(p)$ be the ideal 
in $L^{\c P}_k$ of elements of weight $\geqslant p$,   
$\bar{\c L}_k=\c L_k/\c L(p)$. 
Let $\sigma $ be the Frobenius automorphism 
on $k$ and denote by the same symbol  
the $\sigma $-linear automorphism of $\bar{\c L}_k$ such that 
$D_{an}\mapsto D_{a,n+1}$, $D_{\bar 0}
\mapsto D_{\bar 0}$. (We also set $D_{\bar 0n}=
\sigma ^n(\alpha _0)D_{\bar 0}$, where 
$\alpha _0$ is a fixed element from 
$W(k)\subset K$ with absolute trace 1.) 

Let $\bar{\c L}^{\c P}=\c L^{\c P}_k/\c L^{\c P}_k(p)|_{\sigma =\id }$ 
with induced $\c P$-topological structure. 

Introduce the Lie $\F _p$-algebra $L^{\flat }$ as the maximal 
algebra of nilpotent class $<p$ 
containing $\bar{\c L}$ and the generators 
$\{\bar l^{(m)}\ |\ 1\leqslant m\leqslant N\}$. 
Introduce the ideal of 
relations $\c R^{\flat }_k$ in $L^{\flat }_k$ as follows.

Specify recurrent relation \eqref{E4.9} to our situation: 
$$
\sigma \bar c^{(m)}_1-\bar c^{(m)}_1+
\sum _{a\in \Z _N^0(p)}t^{-a}V^{(m)}_{a\,0}= 
$$
$$-\sum _{\iota\geqslant \bar 0}\beta _{\iota }^{p}
\sum _{1\leqslant k<p}\,\frac{1}{k!}\,t^{-(a_1+\dots +a_k)+p\iota }a_1^{(m)}
[\dots [D_{a_10},D_{a_20}],\dots ,D_{a_k0}]$$
$$-\sum _{2\leqslant k<p}\frac{1}{k!}t^{-(a _1+\dots +a_k)}
[\dots [V^{(m)}_{a_10},D_{a_20}],\dots ,D_{a_k0}]$$
\begin{equation} \label{E5.5} -\sum _{1\leqslant k<p}\,
\frac{1}{k!}\,t^{-(a_1+\dots +a_k)}
[\dots [\sigma \bar c^{(m)}_1,D_{a_10}],\dots ,D_{a_k0}]
\end{equation} 
(the indices  
$a_1,\dots ,a_k$ in all above sums run over $\Z _N^0(p)$).   
Recall that   
$\bar c_1^{(m)}\in\bar{\c L}^{\c P}_{\c K}$ and $V_{a0}\in\bar{\c L}^{\c P}_k$.  
Then the ideal $\c R^{\flat }_k\subset L^{\flat}_k$ is 
generated by the following elements: 

1) $[D_{an},\bar l^{(m)}]-\sigma ^n(V^{(m)}_{a0})$, $a\in\Z ^0_N(p)$;
\medskip 

2) $[\bar l^{(i)}, \bar l^{(j)}]-\bar l[i,j]$, 
where $\bar l[i,j]\in\bar{\c L}$ is given in notation of  
Cor.\,\ref{C4.11} 
$$\bar l[i,j]=\ad ^{(i)}(\bar c_1^{(j)}(\bar 0))-\ad ^{(j)}
(\bar c^{(i)}_1(\bar 0))+
 \sum _{\iota\in\Z ^N}[\bar c_1^{(i)}(\iota ), \bar c_1^{(j)}(-\iota )]\,.$$
 
Let $L_k=L_k^{\flat}/\c R^{\flat }_k$ and $L=L_k|_{\sigma =\id }$. 

\begin{remark}
 Taking another solutions $\{\bar c_1^{(m)}, V^{(m)}_{a0}\ |\ a\in\Z _N^0(p), 
 1\leqslant m\leqslant N\}$ of \eqref{E5.5} is equivalent to 
 replacing the generators $\bar l^{(m)}$ modulo $\bar{\c L^{\c P}}$. 
\end{remark}

Summarizing the above results we state the following theorem.  

\begin{Thm} \label{T5.8}
a) $\Gamma _{<p}^{\c P}\simeq G(L^{\c P})$;

b) $\Gamma _{<p}$ is the profinite completion of $G(L^{\c P})$. 
\end{Thm}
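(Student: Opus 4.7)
The plan is to reduce the statement to the explicit description of $L_{\omega }^{\c P}$ worked out in Section \ref{S4}, via the identification $\kappa _{<p}:\Gamma _{<p}\simeq \Gamma _{\omega }$ of Proposition \ref{P5.5}. By the Definition in Section \ref{S5.4}, $\Gamma _{<p}^{\c P}=\kappa _{<p}^{-1}(\Gamma _{\omega }^{\c P})$, and Proposition \ref{P5.6} shows that the resulting conjugacy class $\op{cl}^{\c P}\Gamma _{<p}$ is canonical (independent of the auxiliary choice of system of local parameters $\pi $ in $K$ and of the distinguished $\omega $). Combining this with the identification $\pi _{\omega }:\Gamma _{\omega }^{\c P}\simeq G(L_{\omega }^{\c P})$ from Corollary \ref{C4.7}, part a) reduces to producing an isomorphism of $\c P$-topological Lie $\F _p$-algebras $L^{\c P}\simeq L_{\omega }^{\c P}$.

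Next I would construct a candidate map using the given presentation. Send the generators $D_{an}, D_{\bar 0}$ of $L^{\flat }_k$ to their namesakes in $\bar{\c L}^{\c P}\otimes k\subset L_{\omega }^{\c P}\otimes k$, and each $\bar l^{(m)}$ to a lift $\bar l_{\omega }^{(m)}$ attached, via Proposition \ref{P4.8}\,e), to a fixed solution $\{\bar c_1^{(m)}, V^{(m)}_{a\,0}\}$ of \eqref{E4.9}. With these assignments, relations of type (1) in $\c R_k^{\flat }$ hold because by the very construction of the $V^{(m)}_{a\,0}$ one has $[D_{an},\bar l_{\omega }^{(m)}]=\sigma ^n V^{(m)}_{a\,0}$, and relations of type (2) hold by Corollary \ref{C4.11}. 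After restricting to $\sigma $-invariants, this yields a continuous homomorphism $\Phi :L^{\c P}\to L_{\omega }^{\c P}$ of $\c P$-topological Lie algebras. Surjectivity of $\Phi $ is immediate: by the exact sequence \eqref{E4.8}, $L_{\omega }^{\c P}$ is $\c P$-topologically generated by the same family $\{D_{an}, D_{\bar 0}, \bar l_{\omega }^{(m)}\}$.

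The main obstacle will be injectivity of $\Phi $, i.e., showing that $\c R_k^{\flat }$ already captures all relations in $L_{\omega ,k}^{\c P}$. The strategy is to induct along the weight filtration $\c L_{\bar c^0}^{\c P}(s)$ of Section \ref{S4.3}. By Proposition \ref{P4.4} this filtration coincides with the central one $C_s(L_{\omega }^{\c P})$, and the recurrent procedure of Section \ref{S4.2} together with parts d)--f) of Proposition \ref{P4.8} shows that the passage from weight $s$ to weight $s+1$ in $L_{\omega ,k}^{\c P}$ is entirely controlled by iterated application of the commutator rules $[D_{an},\bar l_{\omega }^{(m)}]=\sigma ^n V^{(m)}_{a\,0}$ and $[\bar l_{\omega }^{(i)},\bar l_{\omega }^{(j)}]=\bar l_{\omega }[i,j]$. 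Since these are precisely the defining relations of $L^{\c P}$, induction on $s$ gives that $\Phi $ is an isomorphism modulo $\c L_{\bar c^0}^{\c P}(s+1)$ for all $s<p$, and stabilization at $s=p$ (where both algebras have nilpotent class $<p$) yields bijectivity. The finiteness of each weight-graded piece, coming from \eqref{E1.2} and the structural results of Section \ref{S3.3}, ensures that the induction converges in the $\c P$-topology.

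Part b) is then immediate from a): by Corollary \ref{P5.7}\,a), applied to $K'=K$, the profinite completion of $\Gamma _{<p}^{\c P}$ is $\Gamma _{<p}$, and part a) identifies this profinite completion with that of $G(L^{\c P})$.
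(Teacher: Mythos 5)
Your proposal is correct and is essentially the argument the paper intends: Theorem \ref{T5.8} is stated there with no proof beyond ``summarizing the above results,'' and what you assemble — $\kappa _{<p}$ from Prop.\,\ref{P5.5}, the definition of $\Gamma _{<p}^{\c P}$ and its independence of choices (Prop.\,\ref{P5.6}), the identification $\Gamma _{\omega }^{\c P}\simeq G(L_{\omega }^{\c P})$, the relations supplied by Prop.\,\ref{P4.8} and Cor.\,\ref{C4.11}, and Cor.\,\ref{P5.7}\,a) for part b) — is exactly the chain of citations the paper relies on. The only place you add genuine content is the injectivity of $\Phi$, which the paper leaves implicit; your weight-filtration induction works, though it can be shortened by noting that the relations reduce every Lie word to an element of $\operatorname{im}(\bar{\c L}^{\c P})$ plus an $\F _p$-span of the $\bar l^{(m)}$, so $\ker\Phi$ lies in $\operatorname{im}(\bar{\c L}^{\c P})$, where $\Phi$ restricts to the injection $\bar{\c L}^{\c P}\hookrightarrow L_{\omega }^{\c P}$.
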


\subsection{Simplest example}  \label{S5.7}

The above description of $\Gamma _{<p}$ essentially uses the equivalence 
of the category of $p$-groups and $\F _p$-Lie algebras (there 
is no operation of extension of scalars in the category of $p$-groups). 
It could be also verified that the study of involved $p$-groups at  
the level of their Lie algebras gives much simpler form of the 
corresponding relations. At the same time the above presentation of 
$L^{\c P}$ as the quotient of $L^{\flat }$ by appropriate 
relations is not the simplest one. Some relations, e.g. 
$[D_{an},\bar l^{(m)}]-V_{an}$ with $a>\bar c^0$ can be used to 
exclude extra generators $D_{an}$ with $a>\bar c^0$. Ideally, the whole description 
should be done in terms of, say, the minimal system of generators 
$\{D_{an}\ |\ a<\bar c^0\}\cup\{\bar l^{(m)}\ |\ 1\leqslant m\leqslant N\}$. 
This was done in Sect.\,\ref{S4.7} where we presented our description 
modulo third commutators. 

In the case of the last residue field $k\simeq\F _p$ we do not need 
the operation of extension of scalars and can express the answer 
directly in terms of groups. This will not give any simplifications, but 
can be easily obtained when working modulo third commutators. 

Namely, if $k\simeq \F _p$ then $\Gamma ^{\c P}_{<p}\,
\op{mod}C_3(\Gamma ^{\c P}_{<p})$ appears as the group 
with $\c P$-topological generators 
$\{\tau _a\ |\ a\in\Z ^0_N(p), a<\bar c^0\}\cup 
\{\bar h^{(m)}\ |\ 1\leqslant m\leqslant N\}$ and the 
subgroup of relations generated (as normal subgroup) 
by following relations:  
\medskip 

$\bullet $\  $R(i,j)=(\bar h^{(i)}, \bar h^{(j)})$, here $1\leqslant i< j\leqslant N$;

$$\bullet \ R_{\bar 0}(m)=(\tau _{\bar 0},\bar h^{(m)})\prod _{\iota\geqslant\bar 0} 
\prod _{\substack{b+c=\\
\bar c^0+p\iota }}
(\tau _{b},\tau _{c})^{b^{(m)}\beta _{\iota }/2}\,; 
\qquad\qquad\qquad\qquad\qquad\ \ 
$$ 
 
$$\bullet \ R_a(i,j)= 
(\tau _a,\bar h^{(i)a^{(j)}}/\bar h^{(j)a^{(i)}})
\prod _{\substack{n\geqslant 1\\ \iota\geqslant 0}}  
\prod_{\substack{b+c/p^{n}=\\ \bar c^0+p\iota +a/p^{n}}}
(\tau _{b}, \tau_{c})^{(b^{(i)}c^{(j)}-
b^{(j)}c^{(i)})\beta _{\iota }}\times $$

$$\qquad\qquad\qquad\qquad \times 
\prod _{\substack{n\geqslant 0\\ \iota\geqslant\bar 0}}
\prod_{\substack{b+c=\\ 
\bar c^0+p\iota +ap^n}} 
(\tau _{b}, \tau _{c})^{(b^{(i)}c^{(j)}-
b^{(j)}c^{(i)})\beta _{\iota }/2} \,;$$
here $1\leqslant m\leqslant N$, $1\leqslant i<j\leqslant N$, $a\in\Z ^+_N(p)$. 
\medskip 

The above example could be simplified if we take $2$-dimensional 
$K=\Q _p(\zeta _1)\{\{\pi _2\}\}$ with the system 
of local parameters $\pi =\{\pi _1,\pi _2\}$, where 
$E(\pi _1)=\zeta _1$. In this case $\bar c^0=(p,0)$, 
$\beta _{\bar 0}=1$ and all remaining $\beta _{\iota }=0$. 

We have the system of minimal generators 
$$\{\tau _{a}\ |\ a\in\Z _2^+(p), 
a<(p,0)\}\cup\{\tau _{\bar 0}\}\cup\{\bar h^{(1)},\bar h^{(2)}\}$$ 
and the following relations: 
\medskip

$\bullet $\  $\c R(1,2)=(\bar h^{(1)}, \bar h^{(2)})$;

$$\bullet\ \c R_{\bar 0}(1)=(\tau_{\bar 0}, \bar h^{(1)})
\prod _{1\leqslant\alpha \leqslant \frac{p-1}{2}}
\prod_{\gamma }
(\tau _{(\alpha ,\gamma )}, \tau _{(p-\alpha ,-\gamma )})^{\alpha }
\qquad\qquad\qquad\qquad 
$$

$$\bullet\ \c R_{\bar 0}(2)=(\tau_{\bar 0}, \bar h^{(2)})
\prod _{1\leqslant\alpha \leqslant \frac{p-1}{2}}\hskip 6pt 
\prod_{\gamma }
(\tau _{(\alpha ,\gamma )}, \tau _{(p-\alpha ,-\gamma )})^{\gamma }
\qquad\qquad\qquad\qquad 
$$
 
$$\bullet\ \c R_{a}(1,2):\ (\tau _{a},\bar h^{(1)a^{(2)}}/
\bar h^{(2)a^{(1)}})\times 
\prod _{\gamma }(\tau _{(p-1,\gamma )},
\tau _{(p,a^{(2)}-p\gamma )})^{-a^{(2)}\delta _{0,a^{(1)}}}\ \ $$

$$\times 
\prod _{\substack{n\geqslant 1,\beta }} 
(\tau _{(p,-\beta )}, \tau _{a+(0,p^n\beta )})^{a^{(1)}\beta }\times  
\prod _{\substack{b+c=\\(p,0)+a}} 
(\tau _{b}, \tau _{c})^{(b^{(1)}c^{(2)}-
b^{(2)}c^{(1)})/2}\,$$

\end{document}